\newtheorem{thm}{Theorem}[section]
\newtheorem{definition}[thm]{Definition}
\newtheorem{prp}[thm]{Proposition}
\newtheorem{lemma}[thm]{Lemma}
\newtheorem{crl}[thm]{Corollary}
\newtheorem{remark}[thm]{Remark}
\numberwithin{equation}{section}
\newcommand{\Hol}{\mbox{{\rm Hol}}}
\newcommand{\N}{\mathbb{N}}
\newcommand{\Z}{\mathbb{Z}}
\newcommand{\C}{\mathbb{C}}
\newcommand{\R}{\mathbb{R}}
\newcommand{\K}{\mathbb{K}}
\newcommand{\F}{\mathbb{F}}
\renewcommand{\P}{{\rm P}}
\newcommand{\SP}{\mbox{{\rm SP}}}
\newcommand{\RP}{\mathbb{R}{\rm P}}
\newcommand{\Q}{\mbox{{\rm Q}}}
\newcommand{\Map}{\mbox{{\rm Map}}}
\newcommand{\CP}{\mathbb{C}{\rm P}}
\newcommand{\dis}{\displaystyle}
\newcommand{\p}{\prime}
\newcommand{\Ha}{\mathbf{\rm H}}
\newcommand{\I}{\mbox{{\rm (i)}}}
\newcommand{\II}{\mbox{{\rm (ii)}}}
\newcommand{\III}{\mbox{{\rm (iii)}}}
\newcommand{\IV}{\mbox{{\rm (iv)}}}
\newcommand{\SZ}{{\mathcal{X}}^{d}}
\newcommand{\Po}{\mbox{{\rm Poly}}}
\newcommand{\po}{\mbox{{\rm Poly}}}
\newcommand{\pol}{\mbox{{\rm Pol}}}
\title{\bf
Spaces of non-resultant systems of bounded multiplicity with real coefficients
}
\author{\bf Andrzej Kozlowski\footnote{%
Institute of Applied Mathematics and Mechanics,
University of Warsaw, Banacha 2, 02-097 Warsaw, Poland
(E-mail: akoz@mimuw.edu.pl)
}
\  and \ 
Kohhei Yamaguchi\footnote{%
Department of Mathematics,
University of Electro-Communications,  Chofu, Tokyo 182-8585, Japan
(E-mail: kohhei@im.uec.ac.jp)
\newline
\quad 2010 {\it Mathematics Subject Classification.} Primary 55P15; Secondly 55R80, 55P35.}
}
\date{}
\begin{document}
\maketitle

\begin{abstract}
For each pair $(m,n)$  of positive integers with $(m,n)\not= (1,1)$
and an arbitrary field $\F$ with  algebraic closure $\overline{\F}$,
let
$\Po^{d,m}_n(\F)$ denote the space of
$m$-tuples
$(f_1(z),\cdots ,f_m(z))\in \F [z]^m$ of $\F$-coefficients monic
polynomials of the same degree $d$ such that 
the polynomials
$\{f_k(z)\}_{k=1}^m$ have no common root in $\overline{\F}$ of multiplicity
$\geq n$.
These spaces $\Po^{d,m}_n(\F)$ were first defined and studied by
B. Farb and J. Wolfson as generalizations of spaces first studied by Arnold, Vassiliev and Segal and others in several different contexts.
In previous paper we determined explicitly the homotopy type  of this space
in the case $\F =\C$.
In this paper, we investigate this  space in the case $\F =\R$.
\end{abstract}

\section{Introduction}\label{section: introduction}

\paragraph{The motivation.}
The motivation of this paper comes from the works of V. I. Arnold \cite{Ar},
V. Vassiliev \cite{Va},
G. Segal \cite{Se}, and B. Farb and J. Wolfson \cite{FW}. 
\par
Arnold considered the space $\SP^d_n(\C)$ of complex coefficients monic polynomials
 of degree $d$
without roots of multiplicity $\geq n$, which plays a role in the theory of singularities. 
For example, if $n=2$, this space is the same as the space of complex monic polynomials of degree $d$ without repeated roots, 
and this is homotopy equivalent to the Eilenberg-McLane space $K({\rm Br}(d),1)$, where
${\rm Br}(d)$ denotes the Artin braid group of $d$ strings.
Arnold \cite{Ar} computed the homology of these spaces and established their homology stability.
His results were much extended and generalized by V. Vassiliev \cite{Va}.
\par\vspace{0.5mm}\par
Analogous results were discovered by G. Segal \cite{Se} 
in a different context inspired by control theory.
Segal considered the space $\Hol^*_d(S^2,\CP^{m-1})$ of base-point preserving holomorphic maps of degree $d$ from the Riemann sphere $S^2$ to the $(m-1)$-dimensional complex projective space $\CP^{m-1}$, and   its inclusion
into the space $\Map^*_d(S^2,\CP^{m-1})=\Omega^2_d\CP^{m-1}$ 
of corresponding space of base-point preserving continuous maps. 
Intuitive considerations based on Morse theory suggest that the homotopy type of the first space should approximate  that of the second space more and more
closely as the degree $d$ increases. 
Segal proved that this is true by observing that  the space 
$\Hol^*_d(S^2,\CP^{m-1})$ can be identified with the space of $m$-tuples
$(f_1(z),\cdots ,f_m(z))\in \C [z]^m$
of monic polynomials of the same degree $d$ without common roots.
He defined a stabilization map 
$\Hol_d^*(S^2,\CP^{m-1})\to \Hol_{d+1}^*(S^2,\CP^{m-1})$ 
and  proved that the induced maps on homotopy groups are isomorphisms up to some dimension increasing with $d$.
Using a different technique  he also proved that 
there is a homotopy equivalence 
$S: \varinjlim \Hol_d^*(S^2,\CP^{m-1}) 
\stackrel{\simeq}{\longrightarrow} \Omega^2_0 \CP^{m-1}$ 
defined by a \lq\lq scanning of particles\rq\rq, 
 and that this equivalence is homotopic to the inclusion of the space of all holomorphic maps into the space of all continuous maps.
\par\vspace{1mm}\par
Inspired by the classical theory of resultants and the algebraic nature of Arnold's and Segal's arguments,  B. Farb and J. Wolfson (\cite{FW},
\cite{FWW}) defined algebraic varieties $\po^{d,m}_n(\F)$, which generalize
the spaces considered by Arnold and Segal.
These varieties  $\po^{d,m}_n(\F)$ are defined as follows. 
\par
For a field $\F$ with its algebraic closure $\bar{\F}$, 
let
 $ \Po^{d,m}_n(\F)$ denote the space of  $m$-tuples
$(f_1(z),\cdots ,f_m(z))\in \F [z]^m$ 
of monic $\F$-coefficients polynomials of the same degree $d$ with no common root 
in $\overline{\F}$ of multiplicity $\geq n$. 
For example, if $\F =\C$,  $\Po^{d,1}_n (\C)=\SP^d_n(\C)$
and 
$\Po^{d,m}_1 (\C)$ can be identified with the space
$\Hol_d^*(S^2,\CP^{m-1})$. 
Note that
the space 
$\text{Poly}_n^{d,m}(\C)$ is an affine variety defined by systems of polynomial equations with integer coefficients
by the classical theory of 
resultants.
Thus this variety can be defined over $\Z$ and over any filed $\F$. 
\par
Farb and Wolfson computed various algebraic and geometric invariants of these varieties (such as the number of points for a finite field $\F_q$, \'etale cohomology etc) and found for the varieties 
$\po^{dn,m}_n(\F)$ and $\po^{d,mn}_1(\F)$  that they were equal. 
They conjectured that these varieties were algebraically isomorphic.
Although this conjecture was disproved in \cite{ST},
analogous results  
(e.g. (\ref{eq: poly stability for C}), (\ref{eq: KY13 stable equiv}))
hold in the homotopy category.
So this analogy between the algebraic and the topological situation remains intriguing.
\par
From the topological point of view, there has been a lot of work on the homotopy type of $\po^{d,m}_n(\F)$ for $\F=\C$.
 The space $\po^{d,m}_n(\C)$ has been studied  by V. Vassiliev  \cite{Va}, G. Segal \cite{Se},  M. Guest and the present authors \cite{GKY2}, and the present authors \cite{KY8}.
\par\vspace{2mm}\par
In this article we  study  the homotopy type of the  space 
$ \Po^{d,m}_n(\F)$ for $\F=\R$.
 Note that
 the homotopy type of the space $\po^{d,m}_n(\R)$
 is already  known for $mn=2$.
 Indeed, if $(m,n)=(2,1)$, the the homotopy type of the space $\po^{d,m}_n(\R)$ was  determined  in 
 \cite[Propositions 7.1 and 7.2]{Se}.
 The case $(m,n)=(1,2)$, can be easily determined by using \cite{Se0}.\footnote{%
 Since this does not appear to be stated anywhere,
  we consider this case  in \S \ref{section: appendix}.
}
Note  that case
 $n=1$ (with $m\geq 3$)
 was also studied in
\cite{KM2}.
\par\vspace{1mm}\par
The main purpose of this paper is to investigate 
the homotopy type of the space
$\po^{d,m}_n(\R)$ 
for the case $mn\geq 3$.
In particular, 
we prove that an Atiyah-Jones-Segal type result holds for this space
and determine its stable homotopy type explicitly.
More precisely, our  main results can be summarized
 as follows
(see Theorems \ref{thm: KY13},
\ref{thm: KY13; stable homotopy type}
and Corollary 
\ref{crl: KY13; stable homotopy type} for further details).
\begin{thm}
[Theorems \ref{thm: KY13},  \ref{thm: KY13; stable homotopy type} and
Corollary 
\ref{crl: KY13; stable homotopy type}]
\label{thm: Main theorem}
Let $m,n,d\geq 1$ be positive integers such that $(m,n)\not= (1,1)$ with $d\geq n$,\footnote{%
If $d<n$,  the space $\po^{d,m}_n(\R)$ is contractible by (\ref{eq: contractible}), so
assume that $d\geq n$.
}
and
let $D(d;m,n)$ denote the positive integer defined by
\begin{equation}\label{eq: 1.1}
D(d;m,n)=(mn-2)(\lfloor d/n\rfloor+1)-1,
\end{equation} 
where $\lfloor x\rfloor$ denotes the integer part of a real number $x$.
\par\vspace{1mm}\par
$\I$
The natural map $($defined by $($\ref{eq: mapjR}$))$
$$
i^{d,m}_{n,\R}:\Po^{d,m}_n(\R)\to (\Omega^2_d\CP^{mn-1})^{\Z_2}
\simeq
\Omega^2S^{2mn-1}\times \Omega S^{mn-1}
$$
is a homotopy equivalence through dimension
$D(d;m,n)$ if $mn\geq 4$, and
a homology equivalence through dimension $D(d;m,n)$
if $mn=3$.
\par
$\II$
If $mn\geq 3$,
there is a stable homotopy equivalence
\begin{align*}
\po^{d,m}_n(\R)
&\simeq_s
\po^{\lfloor d/2\rfloor, m}_n(\C) \vee B^{d,m}_n
\vee \Q^{d,m}_n(\R)
\\
&\simeq_s
\Big(\bigvee_{i=1}^{\lfloor d/n\rfloor}S^{(mn-2)i}\Big)
\vee
\Big(
\bigvee_{i\geq 0,j\geq 1, i+2j\leq \lfloor d/n \rfloor}
\Sigma^{(mn-2)(i+2j)}D_j\Big),
\end{align*}
where
$B^{d,m}_n$ and 
$D_j$
denote the spaces defined by  
(\ref{eq: the space B})
and $($\ref{eq: Dd}$)$, respectively.
\end{thm}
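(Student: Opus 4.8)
The plan is to follow the classical Arnold--Vassiliev--Segal strategy in its real incarnation: build a "Vassiliev-type" spectral sequence or, more in the spirit of the present authors' earlier work, a stabilized simplicial resolution of the discriminant, and use it to compare $\Po^{d,m}_n(\R)$ with the double loop space through a range. First I would recall from the $\C$-case (the results of \cite{KY8}, \cite{GKY2}) the stratification of the "resultant variety" $\Sigma_d = \F[z]^m_d \setminus \Po^{d,m}_n(\F)$ by the scheme of forbidden common roots of multiplicity $\ge n$, and then take its real points. The key observation for part $\I$ is that a common root of multiplicity $\ge n$ in $\overline{\R}=\C$ occurs either at a real point or at a conjugate pair of complex points; this is exactly the source of the $\Z_2$-fixed-point description $(\Omega^2_d\CP^{mn-1})^{\Z_2}\simeq \Omega^2 S^{2mn-1}\times \Omega S^{mn-1}$, the two factors coming from the complex-conjugate-pair configurations and the real-point configurations respectively. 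So I would set up the truncated simplicial resolution of $\Sigma_d(\R)$ whose $k$-th term parametrizes $k$ (real or conjugate-pair) forbidden points together with the affine-linear conditions they impose; the geometric realization maps to $\Sigma_d(\R)$ and is a homotopy (resp.\ homology) equivalence, and Alexander duality in the one-point compactification turns the filtration of the resolution into a spectral sequence converging to $H^*(\Po^{d,m}_n(\R))$.

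The next step is the dimension count that produces $D(d;m,n)=(mn-2)(\lfloor d/n\rfloor+1)-1$. Each forbidden point of multiplicity $\ge n$ costs $n$ conditions per polynomial, i.e.\ it eats up "$n$ units of degree" in each of the $m$ coordinates; hence at most $\lfloor d/n\rfloor$ such points can be imposed, and the $k$-th stratum of the resolution contributes in codimension roughly $(mn)k$ minus the $2k$ (real case) or $4k$ (pair case, but then $k$ counts pairs) dimensions of moving the points — the bookkeeping is arranged so that the first stratum not covered by the range sits in degree $D(d;m,n)+1$. I would show the spectral sequence degenerates in the relevant range, matching term-by-term the analogous spectral sequence computing $H^*((\Omega^2_d\CP^{mn-1})^{\Z_2})$ via the May--Segal / scanning model, exactly as in the $\C$-case; the stabilization map $\Po^{d,m}_n(\R)\to\Po^{d+n,m}_n(\R)$ (adding a dummy real root of multiplicity $n$, as in \cite{FW}) raises the range, and passing to the colimit gives the full equivalence with the loop space, with the split into "homotopy through dimension $D$" for $mn\ge 4$ versus merely "homology through dimension $D$" when $mn=3$ coming, as usual, from fundamental-group subtleties when the relevant spheres are $S^2$.

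For part $\II$, the stable splitting, I would invoke the standard fact that a configuration-space / labelled-particle model stably splits as a wedge over the number of particles (the Snaith-type or May--Milgram splitting), now equivariantly: real configurations contribute one family of summands and conjugate-pair configurations another. Concretely, the double loop space factor $\Omega^2 S^{2mn-1}$ splits into the $\po^{\lfloor d/2\rfloor,m}_n(\C)$-piece (this is the complex truncated polynomial space reappearing because a conjugate pair "is" one complex point, halving the degree), the factor $\Omega S^{mn-1}$ is stably $\bigvee_{i\ge 1} S^{(mn-2)i}$ truncated at $i\le\lfloor d/n\rfloor$ — this is the $B^{d,m}_n$ piece defined by $(\ref{eq: the space B})$ — and the mixed strata (some real points, some conjugate pairs) give the $\Q^{d,m}_n(\R)$ term, which unwinds into $\bigvee_{i\ge 0,\,j\ge 1,\,i+2j\le\lfloor d/n\rfloor}\Sigma^{(mn-2)(i+2j)}D_j$, where $j$ counts conjugate pairs, $i$ counts real points, each real point or pair carrying the suspension degree $mn-2$ and the pairs carrying the "Dyer--Lashof-type" summand $D_j$ from the $\C$-case stable splitting (definition $(\ref{eq: Dd})$). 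The main obstacle I expect is precisely the careful equivariant bookkeeping in this last step: matching the combinatorics of the mixed real/conjugate-pair strata in the Vassiliev resolution with the wedge summands coming from the equivariant stable splitting, and in particular pinning down that the pair-strata contribute *exactly* the $D_j$'s from $\po^{\cdot,m}_n(\C)$ with the correct suspension shift $(mn-2)(i+2j)$ rather than some twisted version — together with making sure the whole splitting is compatible with the stabilization maps so that it survives in the colimit and, by the range in part $\I$, descends to the unstable space $\po^{d,m}_n(\R)$ itself in the stated range.
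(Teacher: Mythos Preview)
Your high-level plan for part $\I$ --- Vassiliev resolution of the real discriminant, stratification by real points versus conjugate pairs, stabilization, and scanning --- is the scaffolding the paper uses. But there is a genuine gap at the step ``match term-by-term the analogous spectral sequence computing $H^*((\Omega^2_d\CP^{mn-1})^{\Z_2})$'': the paper explicitly remarks that carrying out such a direct comparison for the $\Z_2$-equivariant mapping space would require a study of $\Z_2$-equivariant homotopy of spaces of $\Z_2$-equivariant maps, which seems difficult, and it does \emph{not} do this. Instead the paper introduces two auxiliary subspaces you do not mention: $\Q^{d,m}_n(\R)$ (tuples with no common \emph{real} root of multiplicity $\ge n$, complex common roots allowed) and $\po^{d,m}_n(\R;\Ha_+)$ (all roots in the upper half-plane, hence homeomorphic to $\po^{\lfloor d/2\rfloor,m}_n(\C)$). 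These fit into a commutative ladder whose bottom row is $\Omega^2S^{2mn-1}\leftarrow \Omega^2S^{2mn-1}\times\Omega S^{mn-1}\to\Omega S^{mn-1}$, and by the already-established $\C$-case and $\Q$-case (\cite{KY8}, \cite{KY10}) the outer vertical maps are equivalences through the required range. This shows $(i^{d,m}_{n,\R})_*$ is \emph{surjective} through dimension $D(d;m,n)$; a separate scanning argument gives a (possibly different) map that is an equivalence through that range, so source and target have equal finite homology ranks there, forcing the surjection to be an isomorphism. The Vassiliev spectral sequence is used only to bound $H_*(\po^{d,m}_n(\R))$ from above and to prove homology stability of the stabilization maps, not to compare directly with the target.

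For part $\II$ your identification of the wedge summands is switched: $\Q^{d,m}_n(\R)\simeq_s\bigvee_{i=1}^{\lfloor d/n\rfloor}S^{(mn-2)i}$ is the truncated James splitting of $\Omega S^{mn-1}$ (purely real strata), while $B^{d,m}_n=\bigvee_{i,j\ge1,\,i+2j\le\lfloor d/n\rfloor}\Sigma^{(mn-2)(i+2j)}D_j$ is the \emph{mixed} piece (cross terms in the product splitting of $\Omega^2S^{2mn-1}\times\Omega S^{mn-1}$). The argument again hinges on the auxiliary subspaces: the paper builds an explicit stable map $J^{d,m}_n:\po^{d,m}_n(\R)\to\po^{\lfloor d/2\rfloor,m}_n(\C)\vee B^{d,m}_n\vee\Q^{d,m}_n(\R)$, shows it is surjective on homology by manufacturing preimages out of $\po^{d,m}_n(\R;\Ha_+)$ and $\Q^{d,m}_n(\R)$ via loop products, and then invokes the $E^1$-term computation of the Vassiliev spectral sequence as an upper bound on $\dim_{\F}H_*(\po^{d,m}_n(\R);\F)$ to force isomorphism. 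The collapse of the spectral sequence is a \emph{consequence} of the splitting, not an input to it.
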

\begin{remark}
{\rm
Note 
that  homotopy stability holds for the space $\po^{d,m}_n(\R)$ 
when
$mn\geq 4$ as stated in Theorem \ref{thm: Main theorem}.
Since  homology stability holds  and the map $i^{d,m}_{n,\R}$ induces an isomorphism on the fundamental group
$\pi_1(\ )$ for  $mn=3$
(Theorem \ref{thm: Main theorem} and Corollary \ref{lmm: pi1}),
we expect that the corresponding homotopy stability also holds in this case.
We leave this problem to another paper \cite{KY14}.
\qed
}
\end{remark}
\par\vspace{2mm}\par
The organization of this paper is as follows.
In \S \ref{section Definitions} we recall several definitions,
notations, and  known results.
After then we give the main results of this paper
(Theorems \ref{thm: KY13}  and 
\ref{thm: KY13; stable homotopy type}).
In \S \ref{section 3} we investigate the homotopy type of the spaces
$(\Omega^2_d\CP^{N})^{\Z_2}$ and 
$\Po^{d,m}_n(\R;\Ha_+)$.
In \S \ref{section: spectral sequence} we  construct the Vassiliev spectral sequence converging to
the homology of $\po^{d,m}_n(\R)$, and  compute its $E^1$-terms.
In \S  \ref{section: loop products}, we construct  loop products  and stabilization maps 
for
the spaces $\po^{d,m}_n(\K)$ $(\K=\R$ or $\C$) and $\Q^{d,m}_n(\R),$ 
and use them to prove Theorem \ref{thm : I}.
In \S \ref{section: sd} we  prove
the homology stability theorem for the space $\po^{d,m}_n(\R)$ (Theorem \ref{thm: stab1}).
In \S \ref{section: scanning maps} we consider configuration space models for $\po^{d,m}_n(\K)$
($\K =\R$ or $\C$) and  $\Q^{d,m}_n(\R)$,  define corresponding stabliization maps  and use them to prove the
stable theorems  (Theorems \ref{thm: scanning map} and \ref{thm: stable result2}).
In \S \ref{section: the main result} we study the  homotopy type of $\po^{d,m}_n(\R)$
and give the proofs of our main results  
(Theorems \ref{thm: KY13} and \ref{thm: KY13; stable homotopy type}) and their corollaries
(Corollaries \ref{crl: surjection}, \ref{crl: jet embedding}, 
\ref{crl: GKY4 type theorem} and \ref{crl: Q+poH+}).
Finally in \S \ref{section: appendix} we deal with the case $(m,n)=(1,2)$ for the sake of completeness. 

\section{Basic definitions and the main results}\label{section Definitions}

Before describing the main results of this paper precisely, we shall recall 
several definitions and facts needed to give the precise statements
of the main results.

\paragraph{Basic definitions and notations.}
For connected spaces $X$ and $Y,$ let
$\Map(X,Y)$ (resp. $\Map^*(X,Y)$) denote the space
consisting of all continuous maps
(resp. base-point preserving continuous maps) from $X$ to $Y$
with the compact-open topology.
For each element $D\in \pi_0(\Map^*(X,Y))$, let
$\Map^*_D(X,Y)$ denote the path-component of
$\Map^*(X,Y)$ which corresponds to $D$.
When $X$ and $Y$ are complex manifolds,
let $\Hol_D^*(X,Y)\subset \Map_D^*(X,Y)$ denote the subspace
of all based holomorphic maps from $X$ to $Y$.

Let $\RP^N$ (resp. $\CP^N$) denote the $N$-dimensional
real projective space (resp. $N$-dimensional complex projective space).
Note that the based loop space 
$\Map^*(S^1,\RP^N)=\Omega \RP^N$
has two path-components $\Omega_{\epsilon}\RP^N$
($\epsilon \in \{0,1\}$) for
$N\geq 2$, where
the space $\Omega_0\RP^N$ is
the path-component of null homotopic maps and
$\Omega_1\RP^N$ is the path-component which contains
the natural inclusion of the bottom cell $S^1$ into
$\RP^N$.
Similarly, for each integer $d\in \Z=\pi_0(\Map^*(S^2,\CP^N))$,
let $\Omega^2_d\CP^N=\Map^*_d(S^2,\CP^N)$ denote the path component of
$\Omega^2\CP^N$ of base-point preserving maps
from $S^2$ to $\CP^N$ of degree $d$.

\begin{definition}
{\rm
Let $\N$ be the set of all positive integers.
From now on, let $d\in \N$, let
$(m,n)\in \N^2$ be a pair of positive integers
such that $(m,n)\not= (1,1)$,  and let
$\F$ be a field with its algebraic closure
$\overline{\F}$.
\par
(i)
Let $\P_d(\F)$ denote the space of all 
$\F$-coefficients monic polynomials 
$f(z)=z^d+a_1z^{d-1}+\cdots +a_{d-1}z+a_d\in \F [z]$ of degree $d$.
Note that there is a natural homeomorphism
$\P_d(\F)\cong \F^d$ given by
\begin{equation}
f(z)=z^d+\sum_{k=1}^da_kz^{d-k}\mapsto (a_1,\cdots ,a_d).
\end{equation}
\par
(ii)
For each $m$-tuple $D=(d_1,\cdots ,d_m)\in \N^m$ of positive integers, 
we denote by
$\po^{D;m}_n(\F)=\po^{d_1,\cdots ,d_m;m}_n(\F)$  the space 
consisting of
all
$m$-tuples
$(f_1(z),\cdots ,f_m(z))\in \P_{d_1}(\F)\times \P_{d_2}(\F)\times
\cdots \times \P_{d_m}(\F)$
of monic polynomials such that
the polynomials
$\{f_j(z)\}_{j=1}^m$ have no common root  in $\overline{\F}$ of multiplicity
$\geq n$.
We call the space $\po^{D;m}_n(\F)$ as 
{\it the space of non-resultant system of bounded multiplicity $n$ with coefficients in $\F$.}%
\footnote{%
Recall that the classical resultant of a systems of polynomials vanishes if and only if they have a common solution in an algebraically closed field containing the coefficients. Systems which have no common roots are called \lq\lq non-resultant\rq\rq. This is the intuition behind our choice of the term  \lq\lq non-resultant system of bounded multiplicity.\rq\rq }
\par
In particular,
when $D_m=(d,d,\cdots ,d)\in \N^m$ ($m$-times), we write
\begin{equation}
\Po^{d,m}_n(\F)=\Po^{D_m;m}_n(\F)
=\po^{d,d,\cdots ,d;m}_n(\F).
\end{equation}
}
\end{definition}
\begin{definition}
{\rm 
From now on, let us suppose that $\K=\R$ or $\C$. 
\par\vspace{1mm}\par
\par
\par
(i)
Let $\Q^{d,m}_n(\K)$ denote the space  of
all $m$-tuples
$(f_1(z),\cdots ,f_m(z))\in \P_d(\K)^m$
of  $\K$-coefficients monic polynomials 
of the same degree $d$
such that
the polynomials
$\{f_j(z)\}_{j=1}^m$ have no common \textit{real} root of multiplicity
$\geq n$
(but may have complex common roots of any multiplicity).
\par
Note that there are the following two inclusions
\begin{equation}
\begin{CD}
\Po^{d,m}_n(\C) @<\iota^{d,m}_{n,\C}<\supset<
\Po^{d,m}_n(\R) 
@>\iota^{d,m}_{n,\R}>\subset>
\Q^{d,m}_n(\R).
\end{CD}
\end{equation}
\par
(ii)
For a monic polynomial $f(z)\in \P_d(\K)$, we
define the $n$-tuple $F_n(f)=F_n(f)(z) \in \P_d(\K)^n$
of the monic polynomials of the same degree $d$ by
\begin{equation}\label{eq: Fn}
F_n(f)(z)=
(f(z),f(z)+f^{\prime}(z),f(z)+f^{\prime\prime}(z),
\cdots ,f(z)+f^{(n-1)}(z)).
\end{equation}
Note that $f(z)\in \P_d(\K)$ is not divisible by $(z-\alpha)^n$ for 
some $\alpha\in \K$
if and only if
$F_n(f)(\alpha )\not= {\bf 0}_n$,
where we set
${\bf 0}_n=(0,0,\cdots ,0)\in \K^n.$
\par
(iii)
When $\K=\C$, by identifying $S^2=\C \cup\infty$
we define {\it the  natural map}
\begin{align}\label{eq: mapjC}
\nonumber
i^{d,m}_{n,\C}&:\Po^{d,m}_n(\C)\to \Omega^2_d\CP^{mn-1}
\simeq \Omega^2S^{2mn-1}
\quad \mbox{by}
\\
i^{d,m}_{n,\C}({\rm f})(\alpha)
&=
\begin{cases}
[F_n(f_1)(\alpha):F_n(f_2)(\alpha):\cdots :F_n(f_m)(\alpha)]
&
\mbox{if }\alpha\in \C
\\
[1:1:\cdots :1] & \mbox{if }\alpha =\infty
\end{cases}
\end{align}
for ${\rm f}=(f_1(z),\cdots ,f_m(z))\in \Po^{d,m}_n(\C)$
and $\alpha\in \C \cup \infty =S^2$,
where
we choose the points $\infty$ and $*=[1:1:\cdots :1]$ as 
the base-points of $S^2$ and $\CP^{mn-1}$, respectively.
}
\end{definition}
\begin{definition}
{\rm
Let $\Z_2=\{\pm 1\}$ denote the (multiplicative) cyclic group of order $2$.
From now on,
we will regard the two spaces $S^2=\C\cup \infty$ and $\CP^{mn-1}$ as
$\Z_2$-spaces with actions  induced by the complex conjugation on $\C$.
\par\vspace{2mm}\par
(i)
Let  $(\Omega^2_d\CP^{mn-1})^{\Z_2}$ denote the space 
consisting of 
all $\Z_2$-equivariant based maps
$f:(S^2,\infty )\to( \CP^{mn-1},*)$.
\par
(ii)
Since $\Po^{d,m}_n(\R)\subset \Po^{d,m}_n(\C)$ and
$i^{d,m}_{n,\C}(\Po^{d,m}_n(\R))\subset
(\Omega^2_d\CP^{mn-1})^{\Z_2}$,
we  also define {\it the natural map}
\begin{align}\label{eq: mapjR}
\nonumber
& \quad
i^{d,m}_{n,\R}:\Po^{d,m}_n(\R) \to (\Omega_d^{2}\CP^{mn-1})^{\Z_2}
\\
\nonumber & \quad \quad \mbox{by the restriction}
\\
i^{d,m}_{n,\R}&=i^{d,m}_{n,\C}\vert
\Po^{d,m}_n(\R):\Po^{d,m}_n(\R)\to
(\Omega^2_d\CP^{mn-1})^{\Z_2}.
\end{align}
\par
(iii)
When 
$mn\geq 3$, by identifying $S^1=\R\cup \infty$ 
we define {\it a natural map}
\begin{align}
\nonumber
i^{d,m}_n:&\Q^{d,m}_n(\R)\to \Omega_{[d]_2}\RP^{mn-1}\simeq \Omega S^{mn-1}
\qquad
\mbox{by}
\\
i^{d,m}_{n}({\rm f})(\alpha)
&=
\begin{cases}
[F_n(f_1)(\alpha):F_n(f_2)(\alpha):\cdots :F_n(f_m)(\alpha)]
&
\mbox{if }\alpha\in \R
\\
[1:1:\cdots :1] & \mbox{if }\alpha =\infty
\end{cases}
\end{align}
for ${\rm f}=(f_1(z),\cdots ,f_m(z))\in \Q^{d,m}_n(\R)$
and $\alpha\in \R \cup \infty =S^1$, where
$[d]_2\in \{0,1\}$ is the integer $d$ mod $2$ and
we choose the points $\infty$ and $*=[1:1:\cdots :1]$ as the base-points
of $S^1=\R\cup\infty$ and $\RP^{mn-1}$, respectively.
\par
(iv) For positive integer $n\geq 3$, we define {\it the jet embedding}
\begin{align}\label{eq: jet embedding}
 &j^d_n:\po^{d,1}_n(\R) \to \po^{d,n}_1(\R)
 \\
 \nonumber &
\quad\quad
\mbox{ by }
\\
\nonumber 
\quad j^d_n(f(z))&=(f(z),f(z)+f^{\p}(z),
f(z)+f^{\p\p}(z),\cdots ,f(z)+f^{(n-1)}(z))
\end{align}
for $f(z)\in \po^{d,1}_n(\R)$.
}
\end{definition}
\begin{definition}
{\rm
(i)
Let 
$f:X\to Y$ be a base-point preserving map between based spaces $X$ and $Y$.
The map $f$ 
is called {\it a homotopy equivalence  through dimension} $N$
(resp. {\it a homology equivalence through dimension} $N$)
if the induced homomorphism
$$
f_*:\pi_k(X)\to\pi_k(Y)
\quad
(\mbox{resp. }f_*:H_k(X;\Z) \to H_k(Y;\Z))
$$
is an isomorphism for any integer $k\leq N$
\par
(ii)
Let $G$ be a group and $f:X\to Y$ be a
$G$-equivariant base-point preserving map between $G$-spaces $X$ and $Y$.
\par
Then the map $f$ is called
 {\it a $G$-equivariant homotopy equivalence through dimension} $N$
(resp. a {\it a $G$-equivariant homology equivalence through dimension} $N$)
if the restriction map 
$$
f^H=f\vert X^H:X^H\to Y^H
$$
is a
homotopy  equivalence through dimension $N$ 
(resp. a homology equivalence through dimension $N$) for any subgroup
$H\subset G$, where $W^H$ denotes the $H$-fixed subspace of a $G$-space $W$ given by
\begin{equation}
W^H=\{x\in W: h\cdot x=x\mbox{ for any }h\in H\}.
\end{equation}
\par (iii)
Recall \cite{Ja}  that there is a following homotopy equivalence
for $N\geq 2$ obtained by using the reduced product 
\begin{equation}
\Omega S^{N+1}\simeq
S^N \cup e^{2N}\cup e^{3N}\cup \cdots
\cup e^{(k-1)N}\cup e^{kN}\cup
e^{(k+1)N}\cup
\cdots 
\end{equation}
We denote by $J_k(S^N)$ the $kN$-skeleton of $\Omega S^{N+1}$, i.e.
\begin{equation}
J_k(S^N)=S^N\cup e^{2N}\cup e^{3N}\cup \cdots \cup e^{(k-1)N} \cup e^{kN},
\end{equation}
which is usually called 
{\it the $k$-th stage James filtration} of
$\Omega S^{N+1}$.
}
\end{definition}
\paragraph{Some known results. }
Remark that there are homeomorphisms 
\begin{equation}\label{eq: contractible}
\po^{d,m}_n(\K)\cong \K^{dm}
\ \ (\K=\R,\ \C),\ \mbox{and}
\quad
\Q^{d,m}_n(\R)\cong \R^{dm}
\quad
\mbox{if }d<n.
\end{equation}
Thus, these spaces are contractible if $d<n$.
From now on, in this paper we always assume that $d$ is the positive integer such that
\begin{equation}
d\geq n.
\end{equation}
\par\vspace{1mm}\par
Now recall the following known two results.
\begin{thm}[\cite{KY8}]\label{thm: KY8}
Let $m,n\geq 1$ be positive integers such that $mn\geq 3$.
\par\vspace{1mm}\par
$\I$
The natural map
$$
i^{d,m}_{n,\C}:\Po^{d,m}_n(\C)\to \Omega^2_d\CP^{mn-1}
\simeq \Omega^2S^{2mn-1}
$$
is a homotopy equivalence through dimension
$D(d;m,n;\C)$, where $\lfloor x\rfloor$ denotes the integer part of a
real number $x$ and
the positive integer $D(d;m,n;\C)$ is defined by
\begin{equation}\label{eq: D(d;m,n)}
D(d;m,n;\C)=(2mn-3)(\lfloor d/n\rfloor +1)-1.
\end{equation}
\par
$\II$
There is a homotopy equivalence
\begin{equation}\label{eq: poly stability for C}
\po^{d,m}_n(\C)\simeq
\po^{\lfloor d/n\rfloor ,mn}_1(\C).
\end{equation}
\par
$\III$
There is a stable homotopy equivalence
\begin{equation}\label{eq: the space poly}
\Po^{d,m}_n(\C)\simeq_s
\bigvee_{j=1}^{\lfloor d/n\rfloor}
\Sigma^{2(mn-2)k}D_j,
\end{equation}
where $\Sigma^j$ and
$D_j=D_j(S^1)
=F(\C,j)_+\wedge_{S_j}(S^1)^{\wedge j}$
denote
the $j$-fold reduced suspension and
 the space defined by $($\ref{eq: Dd}$)$, respectively.
 \qed
\end{thm}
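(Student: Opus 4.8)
The statement to be proved is Theorem~\ref{thm: KY8}, which is quoted from the authors' earlier paper \cite{KY8}, so strictly speaking one would just cite it; but let me sketch how the proof of such a statement is carried out, since the paper presumably reuses the same machinery for the real case.

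\medskip

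\textbf{Overall strategy.} The plan is to run the \emph{Vassiliev spectral sequence} (a.k.a. the ``simplicial resolution of the discriminant'') for the variety $\Po^{d,m}_n(\C)$, compare it to the scanning/stabilization picture of the target $\Omega^2_d\CP^{mn-1}$, and deduce (i) a stable splitting, (ii) the homotopy equivalence statement (\ref{eq: poly stability for C}) via the general identification of $\Po^{d,m}_n(\C)$ with a space of degree-$\lfloor d/n\rfloor$ holomorphic maps, and (iii) the range of the natural map. Concretely:

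\textbf{Step 1: Set up the discriminant and its simplicial resolution.} Embed $\Po^{d,m}_n(\C)$ as the complement $X\setminus\Sigma$, where $X=\Po_d(\C)^m\cong\C^{dm}$ is the full affine space of $m$-tuples of monic degree-$d$ polynomials and $\Sigma$ (the ``resultant'' or discriminant) is the closed subvariety of tuples that \emph{do} have a common root of multiplicity $\geq n$ in $\overline\F=\C$. Using the map $f\mapsto F_n(f)$ of (\ref{eq: Fn}), membership of $\alpha$ as a ``bad'' root translates into $mn$ linear conditions on the coefficients, so the natural geometric resolution
$$
\widetilde\Sigma=\{(\mathbf f,\alpha)\in X\times\C:\ F_n(f_k)(\alpha)=\mathbf 0_n\ \text{for all }k\}
$$
has fibres that are affine subspaces of $X$ of codimension $mn$. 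One then builds the Vassiliev simplicial resolution $\mathcal X_\Sigma\to\Sigma$ by the standard ``non-degenerate simplices on finite configurations of roots'' construction, getting a geometric realization that is homotopy equivalent (after one-point compactification / Alexander duality) to $\Sigma$, filtered by the number of bad roots.

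\textbf{Step 2: Identify the $E^1$-term.} The $r$-th filtration quotient $F_r/F_{r-1}$ is a Thom space of a vector bundle over the configuration space $F(\C,r)$ of $r$ distinct points in $\C$: the fibre direction records, for each configuration of $r$ bad roots, the $\binom{mn\cdot r}{\cdot}$ linear conditions plus the simplex coordinates. This gives, via Alexander duality, an $E^1$-term of the form
$$
E^1_{r,s}\cong \bar H^{?}_{c}\bigl(D_r(S^1)\text{-type Thom space}\bigr),
$$
i.e. in terms of the spaces $D_j=F(\C,j)_+\wedge_{S_j}(S^1)^{\wedge j}$ appearing in (\ref{eq: Dd}) and (\ref{eq: the space poly}); here the exponent $2(mn-2)$ is precisely the relevant normal-bundle-plus-suspension shift ($2mn$ for the linear conditions, minus $2$ for the ambient/cone coordinates, times $r$). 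The key computational input is the (well-known) homology of the braid groups / configuration spaces with the relevant sign local coefficients — this is where Vassiliev's and F.~Cohen's computations enter.

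\textbf{Step 3: Collapse of the spectral sequence and the stable splitting.} Show the spectral sequence degenerates: the differentials vanish for dimension/parity reasons (the columns are concentrated so that $d^r$ would have to shift both filtration and total degree incompatibly), exactly as in Segal's original $n=1$ case and in \cite{GKY2}. This simultaneously (a) computes $H_*(\Po^{d,m}_n(\C))$ and (b), by the Snaith-type/``stable splitting of the discriminant complement'' argument (using that scanning/truncation maps are compatible with the filtration), upgrades to the stable homotopy equivalence (\ref{eq: the space poly}) $\Po^{d,m}_n(\C)\simeq_s\bigvee_{j=1}^{\lfloor d/n\rfloor}\Sigma^{2(mn-2)j}D_j$. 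The range $\lfloor d/n\rfloor$ appears because with $d$ available root-slots and multiplicity $n$ one can fit at most $\lfloor d/n\rfloor$ bad roots.

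\textbf{Step 4: The map to $\Omega^2_d\CP^{mn-1}$ and its range.} First note $\Omega^2_d\CP^{mn-1}\simeq\Omega^2_0\CP^{mn-1}\simeq\Omega^2 S^{2mn-1}$ since $\CP^{mn-1}\to$ its $(2mn-1)$-skeleton $S^{2mn-1}$ is a $(2mn-1)$-equivalence through enough dimensions and $\Omega^2$ of the fibre is highly connected. Then compute $H_*(\Omega^2 S^{2mn-1})$ by the classical (May--Milgram / F.~Cohen) description — it has the \emph{same} $E^1$-style description in terms of the $D_j$'s, and the map $i^{d,m}_{n,\C}$ is, on the level of the filtered models, the inclusion of the first $\lfloor d/n\rfloor$ pieces. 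Hence $i^{d,m}_{n,\C}$ is a homology iso through the degree where the $(\lfloor d/n\rfloor+1)$-st piece first contributes, which is $D(d;m,n;\C)=(2mn-3)(\lfloor d/n\rfloor+1)-1$ — the bottom cell of $\Sigma^{2(mn-2)(\lfloor d/n\rfloor+1)}D_{\lfloor d/n\rfloor+1}$ sits in dimension $2(mn-2)(\lfloor d/n\rfloor+1)+(\lfloor d/n\rfloor+1)=(2mn-3)(\lfloor d/n\rfloor+1)$, so the discrepancy starts one dimension below. Finally, since both source and target are simply connected when $mn\geq3$ (indeed the target is at least $2(mn-1)-2\geq 2$-connected), a homology equivalence through dimension $N$ is a homotopy equivalence through dimension $N$, giving (i).

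\textbf{Step 5: Part (ii).} For (\ref{eq: poly stability for C}), exhibit an explicit homotopy equivalence $\Po^{d,m}_n(\C)\simeq\Po^{\lfloor d/n\rfloor,mn}_1(\C)$: the map $F_n$ of (\ref{eq: Fn}) identifies ``no common root of multiplicity $\geq n$ among $m$ polynomials of degree $d$'' with a non-resultant condition for $mn$ auxiliary polynomials, but of non-equal degrees; one then uses the (already known for $n=1$) fact that the homotopy type of $\Po^{d_1,\dots;m}_1(\C)$ depends only on $\min_j d_j$ together with a deformation collapsing the extra degrees. Equivalently, one simply observes that both sides have been shown in Steps 3--4 to have the same stable splitting and the same homotopy groups through all dimensions, and checks this forces a genuine equivalence via the compatible scanning maps.

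\medskip

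\textbf{Main obstacle.} The delicate point is \emph{Step 2--3}: getting the $E^1$-term exactly right with the correct \emph{local coefficient system} (the sign representation on configuration points coming from orienting the normal directions and the simplex) and then proving the spectral sequence degenerates, rather than just collapsing rationally. The bookkeeping of the shifts — why the suspension degree is $2(mn-2)j$ and not something off by a constant — must be done carefully, and the degeneration argument has to rule out the one or two potentially non-trivial differentials by a connectivity/parity estimate. Everything else (the reductions $\CP^{mn-1}\rightsquigarrow S^{2mn-1}$, simple-connectivity, translating homology range into homotopy range) is routine once Steps 2--3 are in place.
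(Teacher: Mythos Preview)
The paper does not prove this theorem at all: it is stated with attribution to \cite{KY8} and closed with a \qed, so there is nothing to compare against beyond the citation itself. You correctly identify this at the outset, and your sketch is a faithful high-level outline of the Vassiliev--discriminant/scanning machinery that \cite{KY8} uses (and that the present paper adapts in \S\S\ref{section: spectral sequence}--\ref{section: the main result} for $\F=\R$).

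One small remark on your Step~3: in \cite{KY8} (and in the real analogue carried out here) the collapse of the Vassiliev spectral sequence is not established directly by a parity/lacunary argument on the differentials. Rather, one first proves homology stability for the stabilization maps by comparing $E^1$-terms, then uses the scanning map to identify the stable limit with the target loop space, and only \emph{afterwards} deduces collapse by a dimension count (the total rank of $E^1$ already matches the known homology of the target, so no differential can be nonzero). This is exactly the logic of the corollary following the proof of Theorem~\ref{thm: KY13; stable homotopy type} in \S\ref{section: the main result}. Your ordering (collapse $\Rightarrow$ splitting $\Rightarrow$ comparison) would work if one had an independent vanishing argument for the differentials, but that is not how the cited paper proceeds.
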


\begin{thm}[\cite{KY10}, \cite{Va}, \cite{Y1}]\label{thm: KY10}
$\I$
The natural map
$$
i^{d,m}_n:\Q^{d,m}_n(\R)\to \Omega_{[d]_2}\RP^{mn-1}\simeq \Omega S^{mn-1}
$$
is a homotopy equivalence through dimension
$D(d;m,n)$ if $mn\geq 4$ and a homology equivalence through dimension $D(d;m,n)$ if $mn=3$,\footnote{%
If $mn\geq 4$, the assertion (i) follows from \cite{KY10}.
The assertion (i) follows from \cite{Va0} and
\cite[Theorem 3 (page 88)]{Va} if $(m,n)=(1,3)$,
and it follows from \cite{Y1} if $(m,n)=(3,1)$.
} 
where the positive integer $D(d;m,n)$ is defined by
\begin{equation}\label{eq: D(d;m,n)}
D(d;m,n)=(mn-2)(\lfloor d/n\rfloor +1)-1
\quad
(\mbox{as in }\mbox{\rm $($\ref{eq: 1.1}$)$}).
\end{equation}
\par
$\II$
If $mn\geq 4$,
there is a homotopy equivalence
\begin{equation}\label{eq: James}
\Q^{d,m}_n(\R)\simeq J_{\lfloor d/n\rfloor}(S^{mn-2}).
\qquad
\qed
\end{equation}
\end{thm}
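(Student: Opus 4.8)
\emph{Overview of the plan.} I would identify $\Q^{d,m}_n(\R)$ with the complement, in the affine space $\P_d(\R)^m\cong\R^{dm}$, of the closed ``real resultant'' variety $\Sigma=\Sigma_{d,m,n}$ consisting of those $m$-tuples $(f_1,\dots,f_m)$ that have a common real root of multiplicity $\ge n$. Alexander duality reduces the computation of $H^*(\Q^{d,m}_n(\R);\Z)$ to the Borel--Moore homology of $\Sigma$, which I would compute by Vassiliev's method of simplicial resolutions; then I would compare the outcome with $\Omega S^{mn-1}$ through the natural map $i^{d,m}_n$ (equivalently, through a scanning map). Recall here that $\Omega_{[d]_2}\RP^{mn-1}\simeq\Omega S^{mn-1}$ because $S^{mn-1}$ is the universal cover of $\RP^{mn-1}$ and all path components of the loop space of a connected space are homotopy equivalent.

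\emph{The Vassiliev spectral sequence.} Using a generic embedding of $\R$ into a large Euclidean space, form the simplicial resolution $\pi\colon\mathcal X\to\Sigma$ whose fibre over $\mathbf f\in\Sigma$ is the simplex spanned by the (finite, of cardinality $\le\lfloor d/n\rfloor$) set $Z(\mathbf f)$ of real points where all $f_j$ vanish to order $\ge n$; $\pi$ is proper and a Borel--Moore homology isomorphism, and the filtration of $\mathcal X$ by number of vertices yields a spectral sequence
$$
E^1_{s}\cong\bar H_*\big(\mathcal X_s\setminus\mathcal X_{s-1}\big)\ \Longrightarrow\ \bar H_*(\Sigma).
$$
The stratum $\mathcal X_s\setminus\mathcal X_{s-1}$ is a bundle over the configuration space $F(\R,s)/S_s$ with fibre (open $(s-1)$-simplex)$\times$(affine space of $m$-tuples with $s$ prescribed common real roots of multiplicity $\ge n$), the latter of codimension $smn$, valid precisely for $1\le s\le\lfloor d/n\rfloor$. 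Since ordered configurations on a line form $s!$ contractible cells freely permuted by $S_s$, one has $\bar H_*\big(F(\R,s)/S_s;\pm\Z\big)\cong\Z$ in degree $s$ (the sign twist coming from reordering the simplex vertices), so $\bar H_*(\mathcal X_s\setminus\mathcal X_{s-1})\cong\Z$ concentrated in total degree $2s-1+dm-smn$. The nonzero columns lie in total degrees that the differentials, which lower the total degree by one, can never connect, so the sequence collapses; via Alexander duality, $\tilde H^i(\Q^{d,m}_n(\R);\Z)\cong\Z$ for $i=s(mn-2)$, $1\le s\le\lfloor d/n\rfloor$, and $\tilde H^i=0$ otherwise, i.e.
$$
H^*(\Q^{d,m}_n(\R);\Z)\cong H^*\big(J_{\lfloor d/n\rfloor}(S^{mn-2});\Z\big),
$$
which agrees with $H^*(\Omega S^{mn-1};\Z)$ in degrees $\le(mn-2)\lfloor d/n\rfloor$, the first discrepancy appearing in degree $(mn-2)(\lfloor d/n\rfloor+1)=D(d;m,n)+1$.

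\emph{Comparison with the loop space, and part $\II$.} To see that $i^{d,m}_n$ itself realises this equivalence I would introduce a scanning map $\Q^{d,m}_n(\R)\to\Omega S^{mn-1}$ homotopic to $i^{d,m}_n$, check that it is compatible with a degree-raising stabilisation $\Q^{d,m}_n(\R)\to\Q^{d+n,m}_n(\R)$ (gluing in a common real root), prove homology stability for that stabilisation (again by comparing Vassiliev resolutions), and identify $\varinjlim_d\Q^{d,m}_n(\R)$ with $\Omega S^{mn-1}$ by a Segal-type scanning/group-completion argument; together with the homology computation this shows $i^{d,m}_n$ is a homology equivalence through dimension $D(d;m,n)$. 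When $mn\ge4$ the variety $\Sigma$ has real codimension $mn-1\ge3$, so $\Q^{d,m}_n(\R)$ is simply connected, and (the target being simply connected too) the homology Whitehead theorem upgrades this to a homotopy equivalence through dimension $D(d;m,n)$; when $mn=3$, i.e.\ $(m,n)\in\{(1,3),(3,1)\}$, the target $\Omega S^2$ is not simply connected, so only the homological statement follows, and these cases are precisely the ones computed directly by Vassiliev and by Yamaguchi. For part $\II$ ($mn\ge4$): the inclusion $J_{\lfloor d/n\rfloor}(S^{mn-2})\hookrightarrow\Omega S^{mn-1}$ of the skeleton has source a CW complex of dimension $(mn-2)\lfloor d/n\rfloor<D(d;m,n)$, so it lifts through the highly connected map $i^{d,m}_n$ to a map $\phi\colon J_{\lfloor d/n\rfloor}(S^{mn-2})\to\Q^{d,m}_n(\R)$ by obstruction theory; since $i^{d,m}_n\circ\phi$ is homotopic to the skeletal inclusion and $i^{d,m}_n$ is a homology isomorphism in the relevant degrees, $\phi$ induces an isomorphism on $H_*$, hence (both spaces being simply connected) is a homotopy equivalence.

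\emph{The main obstacle.} The homology computation is routine once the resolution bookkeeping --- non-degeneracy of $\pi$, the orientation local systems on the strata, and the cut-off at $s=\lfloor d/n\rfloor$ --- is carried out carefully. The genuine difficulty, and the reason the proof must invoke several sources, lies in the comparison step: showing that the \emph{natural map} (rather than the two spaces in isolation) induces the equivalence in the asserted range. This rests on the non-formal inputs of homology stability for the stabilisation maps and of the scanning/group-completion identification of the stable space with $\Omega S^{mn-1}$, both of which require compatible configuration-space models for $\Q^{d,m}_n(\R)$ and $\Omega S^{mn-1}$; and the low-dimensional cases $mn=3$ need the separate direct calculations of Vassiliev and of Yamaguchi.
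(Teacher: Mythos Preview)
The paper does not actually prove this theorem: it is stated as a known result, attributed in the header and footnote to \cite{KY10} (for $mn\ge 4$), \cite{Va0}, \cite{Va} (for $(m,n)=(1,3)$), and \cite{Y1} (for $(m,n)=(3,1)$), and closed with a \qed\ box. So there is no in-paper proof to compare against.

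That said, your outline is exactly the strategy used in those references and is correct in its essentials: the Vassiliev resolution of the real discriminant in $\P_d(\R)^m\cong\R^{dm}$, the identification of the $s$-th stratum with an affine bundle over $C_s(\R)\cong\R^s$ of the stated rank, the collapse of the spectral sequence (the single nonzero group in filtration $s$ sits in a total degree that drops by $mn-2\ge 1$ as $s$ increases, while the differential drops total degree by exactly $1$ in the \emph{opposite} filtration direction, so no differential can connect them), and the comparison with $\Omega S^{mn-1}$ via scanning plus homology stability. Your codimension count $mn-1$ for $\Sigma$ is right and gives simple connectivity for $mn\ge 4$; the obstruction-theory lift of $J_{\lfloor d/n\rfloor}(S^{mn-2})$ through $i^{d,m}_n$ works because its dimension $(mn-2)\lfloor d/n\rfloor$ is strictly below $D(d;m,n)$ when $mn\ge 4$. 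If you want to see the machinery spelled out in detail, the present paper runs the same program for the harder space $\Po^{d,m}_n(\R)$ in \S\ref{section: spectral sequence}--\S\ref{section: scanning maps}; specialising those arguments to the case where only real common roots are tracked (the ``$j=0$'' pieces in Lemma~\ref{lemma: vector bundle} and Lemma~\ref{lemma: E1}) recovers precisely your computation for $\Q^{d,m}_n(\R)$.
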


\paragraph{The main results. }
The main purpose of this paper is to investigate the homotopy type of the space $\po^{d,m}_n(\R)$ and to prove an Atiyah-Jones-Segal type result 
for it.
\par
First, we consider the unstable homotopy type of the space
$\po^{d,m}_n(\R)$.
For this purpose, the key observation will be the homotopy equivalence (see Lemma \ref{lmm: A3} below)
\begin{equation}
(\Omega^2_d\CP^{N})^{\Z_2}\simeq \Omega^2S^{2N+1}\times \Omega S^{N}
\qquad
\mbox{for }N\geq 2.
\end{equation}
We will use it to prove the following result, closely related to Theorems \ref{thm: KY8} and \ref{thm: KY10}. 

\begin{thm}\label{thm: KY13}
The natural map
$$
i^{d,m}_{n,\R}:\Po^{d,m}_n(\R)\to (\Omega^2_d\CP^{mn-1})^{\Z_2}
\simeq
\Omega^2S^{2mn-1}\times \Omega S^{mn-1}
$$
is a homotopy equivalence through dimension
$D(d;m,n)$ if $mn\geq 4$ and it is
a homology equivalence through dimension $D(d;m,n)$
if $mn=3$.
\end{thm}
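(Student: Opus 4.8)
The plan is to reduce the theorem to the two known cases (Theorems \ref{thm: KY8} and \ref{thm: KY10}) by exploiting a fibration-type decomposition that mirrors the splitting $(\Omega^2_d\CP^{mn-1})^{\Z_2}\simeq \Omega^2S^{2mn-1}\times\Omega S^{mn-1}$ of the target. The right-hand factor $\Omega S^{mn-1}$ records the ``real part'' of the equivariant map, and the left-hand factor $\Omega^2 S^{2mn-1}$ records the ``imaginary part'' (the behaviour away from the real axis), so on the source side one should think of $\Po^{d,m}_n(\R)$ as built from $\Q^{d,m}_n(\R)$ (which only sees real common roots) together with the extra data of the complex conjugate pairs of near-common roots. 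First I would make this precise: filter $\Po^{d,m}_n(\R)$ by the ``non-resultant locus'' and use the Vassiliev spectral sequence constructed in \S \ref{section: spectral sequence}, whose $E^1$-term is computed there. The key point is that the discriminant complement for the $\R$-coefficient problem is assembled from the $\R$-root stratum and the $\C$-root stratum, and the corresponding pieces of the spectral sequence are exactly those for $\Q^{d,m}_n(\R)$ and for $\Po^{\lfloor d/2\rfloor,m}_n(\C)$.

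The concrete steps, in order, would be:
\begin{enumerate}
\item[(a)] Compare $i^{d,m}_{n,\R}$ with the two maps $i^{d,m}_{n,\C}$ and $i^{d,m}_n$ through the inclusions $\iota^{d,m}_{n,\C}$ and $\iota^{d,m}_{n,\R}$; set up a commutative diagram relating $\Po^{d,m}_n(\R)$, $\Q^{d,m}_n(\R)$, $\Po^{d,m}_n(\C)$ and the fixed-point space, using the product decomposition of the target as the projection onto its two factors.
\item[(b)] Use the Vassiliev/scanning machinery (\S \ref{section: spectral sequence}, \S \ref{section: scanning maps}) to show that in the stable range $k\le D(d;m,n)$ the homology of $\Po^{d,m}_n(\R)$ is the tensor product of the homologies of the two model factors, and that $i^{d,m}_{n,\R}$ induces the obvious map on each; here Theorems \ref{thm: KY8}(i) and \ref{thm: KY10}(i) supply that each factor map is a homology (resp. homotopy) equivalence through dimension $D(d;m,n)$. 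Note $D(d;m,n)<D(d;m,n;\C)=(2mn-3)(\lfloor d/n\rfloor+1)-1$, so the complex factor's range is more than enough and the binding constraint is the real factor's range, which is exactly $D(d;m,n)$.
\item[(c)] Upgrade from homology to homotopy when $mn\ge 4$: both model factors $\Omega^2 S^{2mn-1}$ and $\Omega S^{mn-1}$ are simply connected in that case, and $\Po^{d,m}_n(\R)$ is simply connected through the relevant range by the same argument used in \cite{KY8}, \cite{KY10}; then a relative Hurewicz / Whitehead argument promotes the homology equivalence through dimension $D(d;m,n)$ to a homotopy equivalence through the same dimension. When $mn=3$ the factor $\Omega S^2$ has $\pi_1=\Z$, so only the homology statement survives, exactly as in Theorem \ref{thm: KY10}(i).
\end{enumerate}

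The main obstacle I anticipate is step (b): one must verify that the Vassiliev spectral sequence for $\Po^{d,m}_n(\R)$ really does split, within the stable range, as the ``product'' of the real and complex spectral sequences, i.e. that there are no differentials mixing the $\R$-root and $\C$-root contributions below dimension $D(d;m,n)$, and that the degeneration degree is governed by $\lfloor d/n\rfloor$ in the same way as in the two known cases. This is where the bound $D(d;m,n)=(mn-2)(\lfloor d/n\rfloor+1)-1$ actually comes from, and getting the truncation/stability estimate sharp (rather than off by a constant) is the delicate part; the scanning-map description of \S \ref{section: scanning maps}, together with the homology stability theorem of \S \ref{section: sd}, should be the right tool to control it. The homotopy-theoretic upgrade in step (c) is then routine given simple-connectivity.
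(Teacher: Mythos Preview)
Your outline captures the architecture of the paper's proof---the commutative diagram in step (a), the reduction to the two known cases, and the Whitehead upgrade in step (c) via Lemma \ref{lmm: abelian}---but step (b) diverges from what the paper actually does, and your anticipated obstacle is precisely the point where the paper takes a different route.

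You propose to show directly that the Vassiliev spectral sequence for $\Po^{d,m}_n(\R)$ collapses (or splits as a product of the real and complex pieces) in the range $\le D(d;m,n)$, and you correctly flag that ruling out differentials mixing the $\R$-root and $\C$-root strata is the hard part. The paper \emph{does not} attempt this. Instead it decouples the problem into two independent halves:
\begin{itemize}
\item \emph{Dimension count via scanning.} Section \ref{section: scanning maps} constructs a \emph{different} map $f^{d,m}_n:\Po^{d,m}_n(\R)\to(\Omega^2_d\CP^{mn-1})^{\Z_2}$ (Corollary \ref{thm: II}) which is a homology equivalence through $D(d;m,n)$; this is obtained by showing the stabilized scanning map $S_{\R}$ is a homotopy equivalence and combining with the homology stability of Theorem \ref{thm: stab1}. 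This step makes no reference to the natural map $i^{d,m}_{n,\R}$ at all---it only tells you that $\dim_{\F}H_s(\Po^{d,m}_n(\R);\F)=\dim_{\F}H_s(\Omega^2S^{2mn-1}\times\Omega S^{mn-1};\F)$ for $s\le D(d;m,n)$.
\item \emph{Surjectivity via the diagram.} The commutative diagram (\ref{CD: The main commutative diagram}) shows that $(I^{d,m}_n)_*$ is \emph{surjective} on $H_s(\ ;\F)$ for $s\le D(d;m,n)$: the first factor is hit through the inclusion $\iota^{d,m}_{n,\Ha_+}:\Po^{d,m}_n(\R;\Ha_+)\hookrightarrow\Po^{d,m}_n(\R)$ together with Theorem \ref{thm: Theorem H}, and the second factor through Theorem \ref{thm: KY10}; Lemma \ref{lmm: inequality} ensures the complex range $D(\lfloor d/2\rfloor;m,n;\C)$ dominates $D(d;m,n)$.
\end{itemize}
A surjection between finite-dimensional $\F$-vector spaces of equal dimension is an isomorphism; universal coefficients then gives the $\Z$-statement, and step (c) finishes as you say. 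The spectral sequence collapse is thus a \emph{corollary} of the theorem (stated after the proof of Lemma \ref{lmm: (*)}), not an ingredient in it.

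One further correction to step (a): the auxiliary space on the ``complex'' side is not $\Po^{d,m}_n(\C)$ via $\iota^{d,m}_{n,\C}$, but $\Po^{d,m}_n(\R;\Ha_+)\cong\Po^{\lfloor d/2\rfloor,m}_n(\C)$ via $\iota^{d,m}_{n,\Ha_+}$. The point is that $\iota^{d,m}_{n,\Ha_+}$ maps \emph{into} $\Po^{d,m}_n(\R)$, which is what the surjectivity argument needs; the inclusion $\iota^{d,m}_{n,\C}$ goes the wrong way.
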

\par\vspace{1mm}\par
Since   $\po^{d,m}_n(\C)^{\Z_2}=\po^{d,m}_n(\R)$ and
$(i^{d,m}_{n,\C})^{\Z_2}=i^{d,m}_{n,\R}$, by using
Theorems \ref{thm: KY8} and
\ref{thm: KY13}, 
we also obtain the following:
\begin{crl}\label{crl: KY13}
The natural map
$$
i^{d,m}_{n,\C}:\Po^{d,m}_n(\C)\to \Omega^2_d\CP^{mn-1}\simeq \Omega^2S^{2mn-1}
$$
is a $\Z_2$-equivariant homotopy equivalence through dimension
$D(d;m,n)$ if $mn\geq 4$, and it is a is a $\Z_2$-equivariant homology equivalence through dimension
$D(d;m,n)$ if $mn=3$.
\qed
\end{crl}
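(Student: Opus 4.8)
The plan is to deduce Corollary \ref{crl: KY13} as a purely formal consequence of Theorem \ref{thm: KY8} and Theorem \ref{thm: KY13}, using the observation that both the source and target of the map $i^{d,m}_{n,\C}$ are naturally $\Z_2$-spaces (via complex conjugation), and that the fixed-point data of this equivariant map is precisely the content of the two cited results. Concretely, I would first record the two identifications that make this work: $\po^{d,m}_n(\C)^{\Z_2}=\po^{d,m}_n(\R)$ (an $m$-tuple of monic complex polynomials is fixed by conjugation iff all its coefficients are real), and $(\Omega^2_d\CP^{mn-1})^{\Z_2}$ is the space of $\Z_2$-equivariant based maps $(S^2,\infty)\to(\CP^{mn-1},*)$, with $(i^{d,m}_{n,\C})^{\Z_2}=i^{d,m}_{n,\R}$ by the explicit formulas (\ref{eq: mapjC}) and (\ref{eq: mapjR}). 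These are essentially definitional, but they must be stated carefully since the whole argument rests on them.

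Next I would unwind the definition of a $\Z_2$-equivariant homotopy (resp. homology) equivalence through dimension $N$: by Definition (part (ii)), it suffices to check that for every subgroup $H\subseteq\Z_2$ the restricted map $f^H:X^H\to Y^H$ is an ordinary homotopy (resp. homology) equivalence through dimension $N$. Since $\Z_2$ has exactly two subgroups, the trivial one and the whole group, this amounts to exactly two checks. For $H=\{1\}$ we have $X^H=\po^{d,m}_n(\C)$ and $Y^H=\Omega^2_d\CP^{mn-1}$, and the map is $i^{d,m}_{n,\C}$ itself; Theorem \ref{thm: KY8}(i) says this is a homotopy equivalence through dimension $D(d;m,n;\C)=(2mn-3)(\lfloor d/n\rfloor+1)-1$, which is strictly larger than $D(d;m,n)=(mn-2)(\lfloor d/n\rfloor+1)-1$ whenever $mn\geq 3$, so in particular it is such an equivalence through dimension $D(d;m,n)$. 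For $H=\Z_2$ we get the fixed-point map $i^{d,m}_{n,\R}:\po^{d,m}_n(\R)\to(\Omega^2_d\CP^{mn-1})^{\Z_2}$, and Theorem \ref{thm: KY13} gives exactly the required conclusion: a homotopy equivalence through dimension $D(d;m,n)$ when $mn\geq 4$, and a homology equivalence through dimension $D(d;m,n)$ when $mn=3$. Assembling these two cases yields the statement.

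The only genuinely delicate point — and the one I would spell out rather than wave at — is the comparison of dimension ranges, i.e. checking that $D(d;m,n;\C)\geq D(d;m,n)$ so that the $H=\{1\}$ case does not bottleneck the conclusion: since $2mn-3\geq mn-2 \iff mn\geq 1$, and $\lfloor d/n\rfloor+1\geq 1$, this holds with room to spare, and in the $mn=3$ case the non-equivariant map is even a homotopy equivalence (not merely homology) in the relevant range, so the weaker ``homology equivalence'' conclusion of the corollary for $mn=3$ is driven entirely by the $H=\Z_2$ fixed-point behavior coming from Theorem \ref{thm: KY13}. There is no real obstacle here beyond bookkeeping; the substance of the corollary is entirely contained in Theorems \ref{thm: KY8} and \ref{thm: KY13}, and the proof is a one-line invocation of the definition of equivariant equivalence together with the enumeration of the subgroups of $\Z_2$.
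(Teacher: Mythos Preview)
Your proposal is correct and follows exactly the approach the paper takes: the paper simply notes, immediately before the corollary, that $\po^{d,m}_n(\C)^{\Z_2}=\po^{d,m}_n(\R)$ and $(i^{d,m}_{n,\C})^{\Z_2}=i^{d,m}_{n,\R}$, and then appeals to Theorems \ref{thm: KY8} and \ref{thm: KY13} without further argument. Your write-up merely spells out the two-subgroup check and the comparison $D(d;m,n;\C)\geq D(d;m,n)$ that the paper leaves implicit.
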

\begin{crl}\label{crl: surjection}
Let $mn\geq 3$, let 
$I^{d,m}_n:\po^{d,m}_n(\R)\to \Omega^2S^{2mn-1}\times \Omega S^{mn-1}$ 
denote the map defined by $($\ref{eq: map I}$)$, and $q_k$ is the projection to the $k$-th factor
for $k=1,2$
given by $($\ref{eq: projections}$)$.
\par\vspace{1mm}\par
$\I$
The map
$
q_1\circ I^{d,m}_n
:
\po^{d,m}_n(\R)\to \Omega^2S^{2mn-1}
$
induces an epimorphism on the homology group
$H_k(\ ;\Z)$ for any $k\leq D(\lfloor d/2\rfloor;m,n;\C)$.
\par
$\II$
 The map
$
q_2\circ I^{d,m}_n
:
\po^{d,m}_n(\R)\to \Omega S^{mn-1}
$
induces an epimorphism on the homology group
$H_k(\ ;\Z)$ for any $k\leq D(d;m,n)$.
\end{crl}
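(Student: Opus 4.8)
\textbf{Proof plan for Corollary \ref{crl: surjection}.}

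The plan is to deduce both surjectivity statements from Theorem \ref{thm: KY13}, Theorem \ref{thm: KY8}, and a compatibility between the map $I^{d,m}_n$ and the stabilization maps, by a now-standard ``factor-through-a-stable-range'' argument. First I would set up the relevant diagram. By Lemma \ref{lmm: A3} we have the product decomposition $(\Omega^2_d\CP^{mn-1})^{\Z_2}\simeq \Omega^2S^{2mn-1}\times\Omega S^{mn-1}$, and the map $I^{d,m}_n$ of $(\ref{eq: map I})$ is (up to homotopy) the composite of $i^{d,m}_{n,\R}$ with this equivalence; $q_1,q_2$ are the two factor projections of $(\ref{eq: projections})$. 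So $q_1\circ I^{d,m}_n$ and $q_2\circ I^{d,m}_n$ are, up to homotopy, the composites of $i^{d,m}_{n,\R}$ with the projections of the target onto its $\Omega^2S^{2mn-1}$ and $\Omega S^{mn-1}$ factors respectively.

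For part $\II$, the argument is immediate: by Theorem \ref{thm: KY13} the map $i^{d,m}_{n,\R}$ is a homology equivalence through dimension $D(d;m,n)$ (this covers both the $mn\geq4$ and $mn=3$ cases), hence so is $I^{d,m}_n$, and composing with the projection $q_2$ onto the $\Omega S^{mn-1}$ factor still induces an epimorphism on $H_k(\ ;\Z)$ for $k\leq D(d;m,n)$, because a projection from a product onto a factor is a split epimorphism on homology in every degree. For part $\I$ the only subtlety is the different range $D(\lfloor d/2\rfloor;m,n;\C)$ rather than $D(d;m,n)$. Here I would argue that the composite $q_1\circ I^{d,m}_n$ factors, up to homotopy and through the relevant range, through the map $i^{\lfloor d/2\rfloor,m}_{n,\C}:\po^{\lfloor d/2\rfloor,m}_n(\C)\to\Omega^2S^{2mn-1}$ of Theorem \ref{thm: KY8}; equivalently, one uses the stable splitting of Theorem \ref{thm: Main theorem}(ii), whose first wedge summand is $\po^{\lfloor d/2\rfloor,m}_n(\C)$, and the fact that on that summand the $\Omega^2S^{2mn-1}$-component of $I^{d,m}_n$ restricts to (a map homotopic to) $i^{\lfloor d/2\rfloor,m}_{n,\C}$. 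Since by Theorem \ref{thm: KY8}$\I$ the latter is a homotopy equivalence through dimension $D(\lfloor d/2\rfloor;m,n;\C)$, it is in particular a homology epimorphism through that dimension; combined with the fact that the stable wedge summand $\po^{\lfloor d/2\rfloor,m}_n(\C)$ is a retract of $\po^{d,m}_n(\R)$ stably, the composite $q_1\circ I^{d,m}_n$ is a homology epimorphism in degrees $\leq D(\lfloor d/2\rfloor;m,n;\C)$.

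The main obstacle is pinning down precisely the claim that the $\Omega^2S^{2mn-1}$-component of $I^{d,m}_n$ agrees, through the range $D(\lfloor d/2\rfloor;m,n;\C)$, with the complex map $i^{\lfloor d/2\rfloor,m}_{n,\C}$ on the appropriate summand --- i.e. that the ``halving of the degree'' in the real case is exactly compatible with the product decomposition of $(\Omega^2_d\CP^{mn-1})^{\Z_2}$. Concretely, one must check that under the equivalence $(\Omega^2_d\CP^{N})^{\Z_2}\simeq\Omega^2S^{2N+1}\times\Omega S^N$, restriction of a $\Z_2$-equivariant degree-$d$ map to the upper hemisphere (a disk with boundary the fixed circle) accounts for the $\Omega^2S^{2N+1}$-factor and sees only degree $\lfloor d/2\rfloor$, and that the construction $(\ref{eq: map I})$ realizes exactly this restriction; this is the content of Lemma \ref{lmm: A3} together with the analysis in \S\ref{section 3}, so in the write-up I would simply invoke that lemma and the explicit description of $I^{d,m}_n$ there, reducing the corollary to the three cited results. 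Once that identification is granted, everything else is the formal ``split surjection on homology of a product factor, inside a stable range'' bookkeeping, requiring no computation.
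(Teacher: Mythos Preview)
Your argument for part $\II$ is correct and is essentially the paper's approach: once Theorem~\ref{thm: KY13} gives that $I^{d,m}_n$ is a homology equivalence through dimension $D(d;m,n)$, composing with the product projection $q_2$ (always a split epimorphism on homology) yields the claim.

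For part $\I$, however, there is a circularity problem. You propose to use the stable splitting of Theorem~\ref{thm: Main theorem}(ii) (equivalently Theorem~\ref{thm: KY13; stable homotopy type}) to exhibit $\po^{\lfloor d/2\rfloor,m}_n(\C)$ as a stable retract of $\po^{d,m}_n(\R)$. But the proof of that splitting theorem (via Lemma~\ref{lmm: (*)}) explicitly invokes Corollary~\ref{crl: surjection}; so this route is not available. Your alternative phrasing ``$q_1\circ I^{d,m}_n$ factors through $i^{\lfloor d/2\rfloor,m}_{n,\C}$'' has the arrows in the wrong direction: you do not want a factorization of $q_1\circ I^{d,m}_n$, you want a map \emph{into} $\po^{d,m}_n(\R)$ on which the composite becomes an isomorphism in the desired range.

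The paper's (implicit) argument is exactly this, and it is simpler than what you sketched. From the diagram~(\ref{CD: The main commutative diagram}) in the proof of Theorem~\ref{thm: KY13} one has, up to homotopy,
\[
q_1\circ I^{d,m}_n\circ \iota^{d,m}_{n,\Ha_+}\;=\;I^{d,m}_{n,\Ha_+}
:\po^{d,m}_n(\R;\Ha_+)\longrightarrow \Omega^2S^{2mn-1},
\]
where $\iota^{d,m}_{n,\Ha_+}:\po^{d,m}_n(\R;\Ha_+)\hookrightarrow\po^{d,m}_n(\R)$ is the inclusion of Definition~\ref{def: Hol(R;H)}. By Lemma~\ref{lmm: space poly(Ha+)} there is a homeomorphism $\po^{d,m}_n(\R;\Ha_+)\cong\po^{\lfloor d/2\rfloor,m}_n(\C)$, and by Theorem~\ref{thm: Theorem H} the map $I^{d,m}_{n,\Ha_+}$ is a homotopy equivalence through dimension $D(\lfloor d/2\rfloor;m,n;\C)$. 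Hence $q_1\circ I^{d,m}_n$ is surjective on $H_k(\ ;\Z)$ for $k\le D(\lfloor d/2\rfloor;m,n;\C)$, with no appeal to any stable splitting. Your ``upper hemisphere'' intuition in the last paragraph is exactly this, but the precise vehicle is the subspace $\po^{d,m}_n(\R;\Ha_+)$ and Theorem~\ref{thm: Theorem H}, already set up in \S\ref{section 3}.
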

\begin{crl}\label{crl: jet embedding}
The jet embedding (defined by (\ref{eq: jet embedding}))
$$
j^d_n:\po^{d,1}_n(\R)\to \po^{d,n}_1(\R)
$$
is a homotopy equivalence through dimension
$(n-2)(\lfloor d/n\rfloor +1)-1$ if $n\geq 4$, and  a homology equivalence
through dimension $\lfloor d/3\rfloor$ if $n=3$. 
\end{crl}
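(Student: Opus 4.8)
The plan is to deduce Corollary~\ref{crl: jet embedding} from Theorem~\ref{thm: KY13} by specializing to $m=1$ and observing that the jet embedding $j^d_n$ is, up to homotopy, nothing other than the natural map $i^{d,1}_{n,\R}$ composed with (an identification of the target of $i^{d,1}_{n,\R}$ with the target of $i^{d,n}_{1,\R}$ coming from) Corollary~\ref{crl: KY13} and Theorem~\ref{thm: KY10}. More precisely, by definition~(\ref{eq: jet embedding}) and~(\ref{eq: Fn}), for $f(z)\in\po^{d,1}_n(\R)$ we have $j^d_n(f)=F_n(f)$, and the natural map $i^{d,1}_{n,\R}:\po^{d,1}_n(\R)\to(\Omega^2_d\CP^{n-1})^{\Z_2}$ is given on $\alpha\in\R$ by $[F_n(f)(\alpha)]$, i.e.\ precisely the point in $\CP^{n-1}$ determined by the polynomial $n$-tuple $j^d_n(f)$; likewise $i^{d,n}_{1,\R}:\po^{d,n}_1(\R)\to(\Omega^2_d\CP^{n-1})^{\Z_2}$ sends an $n$-tuple $(g_1,\dots,g_n)$ to $[g_1(\alpha):\cdots:g_n(\alpha)]$. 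Hence we get a strictly commutative diagram
\begin{equation*}
\begin{CD}
\po^{d,1}_n(\R) @>j^d_n>> \po^{d,n}_1(\R) \\
@Vi^{d,1}_{n,\R}VV @VVi^{d,n}_{1,\R}V \\
(\Omega^2_d\CP^{n-1})^{\Z_2} @= (\Omega^2_d\CP^{n-1})^{\Z_2}
\end{CD}
\end{equation*}
where the bottom arrow is the identity. (Note $mn=n$ in both cases, so the targets literally coincide; one must only check the base-point convention $[1:\cdots:1]$ at $\infty$ matches, which it does.)

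Next I would invoke Theorem~\ref{thm: KY13} twice. Applied with $(m,n)$ replaced by $(1,n)$, the integer $D(d;1,n)=(n-2)(\lfloor d/n\rfloor+1)-1$, and the vertical map $i^{d,1}_{n,\R}$ is a homotopy equivalence through dimension $D(d;1,n)$ when $n\geq4$ and a homology equivalence through dimension $D(d;1,n)$ when $n=3$. Applied with $(m,n)$ replaced by $(n,1)$—here $\po^{d,n}_1(\R)$ is a space of the form $\po^{d,m'}_{n'}(\R)$ with $m'n'=n\geq3$ (so $n=3$ is the borderline $mn=3$ case, $n\geq4$ is the $mn\geq4$ case)—the right-hand vertical map $i^{d,n}_{1,\R}$ is a homotopy equivalence through dimension $D(d;n,1)=(n-2)(\lfloor d/1\rfloor+1)-1=(n-2)(d+1)-1$ when $n\geq4$, and a homology equivalence through dimension $D(d;n,1)$ when $n=3$. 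Since $D(d;n,1)=(n-2)(d+1)-1\geq(n-2)(\lfloor d/n\rfloor+1)-1=D(d;1,n)$, the right-hand map is in particular a homotopy (resp.\ homology) equivalence through dimension $D(d;1,n)$. Now a standard two-out-of-three argument on the commuting triangle: if $i^{d,1}_{n,\R}$ induces isomorphisms on $\pi_k$ (resp.\ $H_k$) for $k\leq D(d;1,n)$ and $i^{d,n}_{1,\R}$ induces isomorphisms on $\pi_k$ (resp.\ $H_k$) for $k\leq D(d;1,n)$, then $j^d_n=(i^{d,n}_{1,\R})^{-1}\circ i^{d,1}_{n,\R}$ (through that range) induces isomorphisms on $\pi_k$ (resp.\ $H_k$) for $k\leq D(d;1,n)$. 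This yields exactly the claimed range: homotopy equivalence through $(n-2)(\lfloor d/n\rfloor+1)-1$ for $n\geq4$, and homology equivalence through $(n-2)(\lfloor d/3\rfloor+1)-1=\lfloor d/3\rfloor$ for $n=3$ (using $n-2=1$).

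The only genuine points requiring care, and where I expect the main (if modest) obstacle to lie, are: (i) verifying that the square above really does commute on the nose, i.e.\ that the composite $i^{d,n}_{1,\R}\circ j^d_n$ equals $i^{d,1}_{n,\R}$ as maps of spaces—this is a direct unwinding of the definitions~(\ref{eq: Fn}), (\ref{eq: mapjC}), (\ref{eq: mapjR}), (\ref{eq: jet embedding}) and poses no real difficulty beyond bookkeeping; and (ii) making sure the two-out-of-three step is legitimate in the ``through dimension $N$'' sense, which for homotopy equivalences is immediate from the long exact sequence / the definition (isomorphism on $\pi_k$ for $k\le N$ is closed under composition and has the two-out-of-three property in each degree separately), and for homology equivalences in the $n=3$ case is likewise immediate degreewise. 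One subtlety worth flagging explicitly: when $n=3$, $j^d_n$ is only asserted to be a homology equivalence, consistent with the fact that $i^{d,3}_{1,\R}$ and $i^{d,1}_{3,\R}$ are only homology equivalences in the borderline range $mn=3$; no homotopy statement is available there and none is claimed. I would close the proof by remarking that the restriction of Corollary~\ref{crl: KY13} (the $\Z_2$-equivariant statement) is not actually needed here—Theorem~\ref{thm: KY13} applied directly to both spaces suffices, since $\po^{d,1}_n(\R)$ and $\po^{d,n}_1(\R)$ are both instances of $\po^{\,\cdot,\,\cdot}_{\,\cdot}(\R)$.
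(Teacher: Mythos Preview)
Your proposal is correct and follows essentially the same approach as the paper's proof: both set up the commutative square relating $j^d_n$ to $i^{d,1}_{n,\R}$ and $i^{d,n}_{1,\R}$, apply Theorem~\ref{thm: KY13} to each of the latter, compare the ranges $D(d;1,n)\le D(d;n,1)$, and conclude by two-out-of-three. Your write-up is more explicit about verifying the square commutes on the nose and about the degreewise two-out-of-three step, but the argument is the same.
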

Next, we consider the stable homotopy type of the space
$\po^{d,m}_n(\R)$.
\begin{thm}\label{thm: KY13; stable homotopy type}
If $mn\geq 3$, there is a stable homotopy equivalence
$$
\po^{d,m}_n(\R)
\simeq_s
\po^{\lfloor d/2\rfloor, m}_n(\C) \vee B^{d,m}_n
\vee \Q^{d,m}_n(\R),
$$
where
the space $B^{d,m}_n$ is defined by
\begin{align}\label{eq: the space B}
B^{d,m}_n
&=
\bigvee_{i,j\geq 1,i+2j\leqq \lfloor d/n\rfloor}\Sigma^{(mn-2)(i+2j)}D_j.
\end{align}
\end{thm}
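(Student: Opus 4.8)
The goal is to establish the stable splitting
$$
\po^{d,m}_n(\R)\simeq_s \po^{\lfloor d/2\rfloor,m}_n(\C)\vee B^{d,m}_n\vee\Q^{d,m}_n(\R)
$$
for $mn\geq 3$. I would obtain this by combining two ingredients that are already available in the excerpt: the target homotopy equivalence $(\Omega^2_d\CP^{mn-1})^{\Z_2}\simeq\Omega^2S^{2mn-1}\times\Omega S^{mn-1}$ from Lemma~\ref{lmm: A3}, together with the scanning/stable results referred to as Theorems~\ref{thm: scanning map} and \ref{thm: stable result2}, and the identification of the homology of $\po^{d,m}_n(\R)$ coming from the Vassiliev spectral sequence of \S\ref{section: spectral sequence}. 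The key point is that a stable splitting of a product of the form $\Omega^2 X\times\Omega Y$ for connected $X,Y$ is just the smash decomposition $\Sigma^\infty(A\times B)\simeq\Sigma^\infty A\vee\Sigma^\infty B\vee\Sigma^\infty(A\wedge B)$, so I would aim to recognize the three wedge summands as (i) the part coming from $\Omega^2 S^{2mn-1}$, truncated appropriately, which by (\ref{eq: poly stability for C})--(\ref{eq: the space poly}) is exactly $\po^{\lfloor d/2\rfloor,m}_n(\C)\simeq_s\bigvee_{j}\Sigma^{2(mn-2)j}D_j$; (ii) the part coming from $\Omega S^{mn-1}$, which by (\ref{eq: James}) is $\Q^{d,m}_n(\R)\simeq J_{\lfloor d/n\rfloor}(S^{mn-2})$, whose stable splitting is $\bigvee_{i=1}^{\lfloor d/n\rfloor}S^{(mn-2)i}$; and (iii) the ``mixed'' smash terms, which should match $B^{d,m}_n=\bigvee_{i,j\geq 1,\,i+2j\leq\lfloor d/n\rfloor}\Sigma^{(mn-2)(i+2j)}D_j$.

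\textbf{Steps in order.} First I would invoke the scanning-map theorem to get a stable equivalence (in a range growing with $d$) between $\po^{d,m}_n(\R)$ and a configuration-space model, and pass to the stable ``group completion'' statement so that the truncation ranges disappear in the stable category — that is, I would set up a stabilization $\po^{d,m}_n(\R)\to\po^{d+?,m}_n(\R)$ and use the homology stability of Theorem~\ref{thm: stab1} to reduce the stable splitting question to the stable homotopy type of the limit, which is $(\Omega^2_d\CP^{mn-1})^{\Z_2}\simeq\Omega^2_0 S^{2mn-1}\times\Omega_0 S^{mn-1}$. Second, apply the classical stable splittings of $\Omega^2 S^{2N+1}$ (the Snaith splitting, giving $\bigvee_{j\geq 1}\Sigma^{\infty}D_j(S^1)$ with dimension shifts $2(N-1)j$, $N=mn-1$) and of $\Omega S^{N}$ (James splitting, $\bigvee_{i\geq 1}S^{(N-1)i}$). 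Third, take the smash product of these two wedges: the cross terms are $D_j(S^1)\wedge S^{(mn-2)i}$ appropriately suspended, i.e.\ $\Sigma^{(mn-2)i}\Sigma^{2(mn-2)j}D_j=\Sigma^{(mn-2)(i+2j)}D_j$, which is precisely the definition of $B^{d,m}_n$ once one imposes the filtration bound $i+2j\leq\lfloor d/n\rfloor$. Fourth — and this is where the filtration bookkeeping matters — identify which cells survive at finite level $d$: the Vassiliev spectral sequence from \S\ref{section: spectral sequence} gives the homology of $\po^{d,m}_n(\R)$ with the correct truncation, and matching its $E^1$ (or $E^\infty$) with the homology of the claimed wedge pins down the three bounds ($j\leq\lfloor d/n\rfloor$ for the $\C$-part after halving to $\lfloor d/2\rfloor$ then re-expressing via $\po^{\lfloor d/2\rfloor,m}_n(\C)$, $i\leq\lfloor d/n\rfloor$ for the $\Q$-part, $i+2j\leq\lfloor d/n\rfloor$ for $B$). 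Finally, upgrade the homology-level matching to a genuine stable homotopy equivalence by exhibiting the splitting maps: the three projections $\po^{d,m}_n(\R)\to\po^{\lfloor d/2\rfloor,m}_n(\C)$ (restriction/doubling), $\po^{d,m}_n(\R)\to\Q^{d,m}_n(\R)$ (the inclusion $\iota^{d,m}_{n,\R}$), and the scanning map to the product, assemble into a stable map that is a homology isomorphism, hence a stable equivalence since all spaces involved have finitely generated homology.

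\textbf{The main obstacle.} The serious work is in step four: controlling the James/Snaith filtrations at finite degree $d$ and checking that the truncation of the stable type of $\po^{d,m}_n(\R)$ is exactly the ``sub-wedge'' of the infinite wedge corresponding to $i+2j\leq\lfloor d/n\rfloor$ (and the analogous bounds on the pure summands), rather than merely through the homotopy-equivalence range $D(d;m,n)$. This is where one genuinely needs the Vassiliev spectral sequence computation — the abstract product decomposition of the stable limit is cheap, but matching the \emph{finite-$d$} truncation, and in particular seeing that no differentials in the Vassiliev spectral sequence can be nonzero (so $E^1=E^\infty$) and that the resulting associated-graded pieces assemble as the homology of a \emph{wedge} rather than some nontrivial extension, is the substantive step. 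The secondary difficulty is purely organizational: reconciling the normalization $D_j=D_j(S^1)=F(\C,j)_+\wedge_{S_j}(S^1)^{\wedge j}$ used for the $\C$-summand (which already carries suspension $2(mn-2)j$) with the re-indexed form $\po^{\lfloor d/2\rfloor,m}_n(\C)$ and verifying the claimed equality of the two displayed expressions in Theorem~\ref{thm: Main theorem} by a dimension count, for which one simply substitutes (\ref{eq: the space poly}), (\ref{eq: James}) and (\ref{eq: the space B}) into the first line and collects the $S^{(mn-2)i}$ and $\Sigma^{(mn-2)(i+2j)}D_j$ terms.
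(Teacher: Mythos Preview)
Your overall architecture is right and matches the paper's: use the product decomposition of $(\Omega^2_d\CP^{mn-1})^{\Z_2}$, the Snaith and James splittings, and the Vassiliev $E^1$-term to pin down the finite-$d$ truncation. But there is a genuine gap at precisely the point you flag as the main obstacle, and one of your proposed maps does not exist.

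\textbf{The logical gap.} You say one must show that ``no differentials in the Vassiliev spectral sequence can be nonzero (so $E^1=E^\infty$)'' and that the extensions are trivial. The paper does \emph{not} prove collapse directly; collapse is obtained only as a \emph{corollary} of the theorem. Instead the paper runs a squeeze argument:
\begin{enumerate}
\item The $E^1$-computation (Lemma~\ref{lmm: total E}) gives the \emph{upper bound}
$\dim_{\F}H_s(\po^{d,m}_n(\R);\F)\leq\dim_{\F}H_s(\P^{d,m}_n;\F)$
(Lemma~\ref{lmm: dimension Poly}).
\item One constructs a single stable map
$J^{d,m}_n:\po^{d,m}_n(\R)\to\P^{d,m}_n$
as the composite of $I^{d,m}_n$, the Snaith/James equivalences, and a pinch to the finite sub-wedge.
\item One proves $(J^{d,m}_n)_*$ is \emph{surjective} on $H_*(\,\cdot\,;\F)$ for every field $\F$ (Lemma~\ref{lmm: (*)}).
\item Surjection plus the dimension bound forces isomorphism; the universal coefficient theorem then gives a $\Z$-homology isomorphism, hence a stable equivalence.
\end{enumerate}
You have the map of step~(2) implicitly, but you do not propose a mechanism for step~(3), and without it your plan to prove collapse directly is circular.

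\textbf{The nonexistent map.} Your ``projection $\po^{d,m}_n(\R)\to\po^{\lfloor d/2\rfloor,m}_n(\C)$ (restriction/doubling)'' is not well-defined: a real polynomial with many real roots has no canonical $\lfloor d/2\rfloor$ complex roots to extract. The paper's device for hitting the $\po^{\lfloor d/2\rfloor,m}_n(\C)$ and $B^{d,m}_n$ summands goes in the \emph{opposite} direction: it introduces the subspace $\po^{d,m}_n(\R;\Ha_+)\subset\po^{d,m}_n(\R)$ of tuples whose roots all lie in $\Ha_+\cup\overline{\Ha_+}$ (Definition~\ref{def: Hol(R;H)}), observes that this subspace is \emph{homeomorphic} to $\po^{\lfloor d/2\rfloor,m}_n(\C)$ (Lemma~\ref{lmm: space poly(Ha+)}), and uses its inclusion $\iota^{d,m}_{n,\Ha_+}$ together with Theorem~\ref{thm: Theorem H} and the commutative diagram~(\ref{CD: The second main commutative diagram}) to manufacture preimages for every class in the $\po^{\lfloor d/2\rfloor,m}_n(\C)$- and $B^{d,m}_n$-summands. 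Surjectivity onto the $\Q^{d,m}_n(\R)$-summand uses the loop product and Theorem~\ref{thm : I}. This subspace $\po^{d,m}_n(\R;\Ha_+)$ is the missing geometric idea in your proposal.
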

By using the stable homotopy equivalences (\ref{eq: the space poly}),
 (\ref{eq: stable Q}) and 
 the equality 
 $\lfloor \lfloor d/n\rfloor /2\rfloor =\lfloor \lfloor d/2\rfloor/n\rfloor$
 (by (iii) of
 Lemma \ref{lmm: inequality}),
 we obtain the following result.
\begin{crl}\label{crl: KY13; stable homotopy type}
If $mn\geq 3$,
 there is a stable homotopy equivalence
\begin{align*}
\po^{d,m}_n(\R)
&\simeq_s
\Big(\bigvee_{i=1}^{\lfloor d/n\rfloor}S^{(mn-2)i}\Big)
\vee
\Big(
\bigvee_{i\geq 0,j\geq 1, i+2j\leq \lfloor d/n \rfloor}
\Sigma^{(mn-2)(i+2j)}D_j\Big).
\qquad
\qed
\end{align*}
\end{crl}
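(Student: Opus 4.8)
\textbf{Proof proposal for Corollary \ref{crl: KY13; stable homotopy type}.}

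The plan is to start from the stable splitting of Theorem \ref{thm: KY13; stable homotopy type}, namely
$$
\po^{d,m}_n(\R)\simeq_s \po^{\lfloor d/2\rfloor,m}_n(\C)\vee B^{d,m}_n\vee \Q^{d,m}_n(\R),
$$
and substitute known stable decompositions for each of the three wedge summands, then recombine the resulting wedges of suspensions into the single clean formula. First I would replace $\po^{\lfloor d/2\rfloor,m}_n(\C)$ using the stable homotopy equivalence (\ref{eq: the space poly}) of Theorem \ref{thm: KY8}(iii), applied with $d$ replaced by $\lfloor d/2\rfloor$, which gives a wedge $\bigvee_{j=1}^{M}\Sigma^{2(mn-2)j}D_j$ where $M=\lfloor \lfloor d/2\rfloor /n\rfloor$. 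By part (iii) of Lemma \ref{lmm: inequality} we have $M=\lfloor\lfloor d/n\rfloor/2\rfloor$, so this summand contributes exactly the terms with $i=0$ and $j\geq 1$ (and $2j\leq \lfloor d/n\rfloor$) in the target expression, since $\Sigma^{2(mn-2)j}D_j=\Sigma^{(mn-2)(0+2j)}D_j$. Next I would replace $\Q^{d,m}_n(\R)$ using the stable equivalence (\ref{eq: stable Q}) (the stable version of Theorem \ref{thm: KY10}(ii), the James-filtration splitting), which yields $\bigvee_{i=1}^{\lfloor d/n\rfloor}S^{(mn-2)i}$; note $D_0=S^0$ so these are exactly the terms with $j=0$, $i\geq 1$. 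Finally $B^{d,m}_n$ is by definition (\ref{eq: the space B}) the wedge $\bigvee_{i\geq 1,j\geq 1,\,i+2j\leq\lfloor d/n\rfloor}\Sigma^{(mn-2)(i+2j)}D_j$, contributing precisely the terms with $i\geq 1$ and $j\geq 1$.

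The combinatorial recombination is then routine: the target wedge in the corollary runs over all pairs $(i,j)$ with $i\geq 0$, $j\geq 1$, $i+2j\leq\lfloor d/n\rfloor$ for the $\Sigma^{(mn-2)(i+2j)}D_j$ terms, together with the separate wedge $\bigvee_{i=1}^{\lfloor d/n\rfloor}S^{(mn-2)i}$. Splitting the index set of the first wedge according to $i=0$ versus $i\geq 1$ gives exactly (the $\po^{\lfloor d/2\rfloor,m}_n(\C)$ contribution) $\vee$ ($B^{d,m}_n$), and the $S^{(mn-2)i}$ wedge is the $\Q^{d,m}_n(\R)$ contribution. One should double-check the index ranges match: in the $\po(\C)$-summand the constraint $j\leq\lfloor\lfloor d/2\rfloor/n\rfloor=\lfloor\lfloor d/n\rfloor/2\rfloor$ is equivalent to $2j\leq\lfloor d/n\rfloor$, i.e. $i+2j\leq\lfloor d/n\rfloor$ with $i=0$; and in the $\Q$-summand the constraint $i\leq\lfloor d/n\rfloor$ matches $i+2j\leq\lfloor d/n\rfloor$ with $j=0$.

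I do not expect any serious obstacle here, since all the hard analytic and homotopy-theoretic content is already packaged in Theorems \ref{thm: KY8}, \ref{thm: KY10} and \ref{thm: KY13; stable homotopy type}; the only thing to be careful about is the floor-function identity $\lfloor\lfloor d/2\rfloor/n\rfloor=\lfloor\lfloor d/n\rfloor/2\rfloor$, which is cited from Lemma \ref{lmm: inequality}(iii), and the bookkeeping of which pairs $(i,j)$ come from which summand. The mildest subtlety is making sure the degree-zero conventions are consistent, i.e. that $D_0$ is interpreted as $S^0$ so that $\Sigma^{(mn-2)i}D_0=S^{(mn-2)i}$; with that convention the three pieces assemble verbatim into the stated wedge, completing the proof.
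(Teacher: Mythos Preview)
Your proposal is correct and follows exactly the paper's approach: the paper explicitly states (just before the corollary) that it is obtained from Theorem \ref{thm: KY13; stable homotopy type} by substituting the stable splittings (\ref{eq: the space poly}) and (\ref{eq: stable Q}) together with the floor identity $\lfloor \lfloor d/n\rfloor /2\rfloor =\lfloor \lfloor d/2\rfloor/n\rfloor$ from Lemma \ref{lmm: inequality}(iii), which is precisely what you do. Your remark about the convention $D_0=S^0$ is harmless but unnecessary, since the corollary keeps the sphere wedge separate from the $D_j$ wedge.
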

In particular, we obtain the following result, analogous to (\ref{eq: poly stability for C}).
\begin{crl}\label{crl: GKY4 type theorem}
If $mn\geq 3$, there is a stable homotopy equivalence
\begin{equation}\label{eq: KY13 stable equiv}
\po^{d,m}_n(\R)
\simeq_s
\po^{\lfloor d/n\rfloor,mn}_1(\R).
\end{equation}
\end{crl}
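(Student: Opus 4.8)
\textbf{Proof proposal for Corollary~\ref{crl: GKY4 type theorem}.}

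The plan is to deduce this from the explicit stable splitting already obtained in Corollary~\ref{crl: KY13; stable homotopy type}, applied twice: once to the left-hand space $\po^{d,m}_n(\R)$ with parameters $(d;m,n)$, and once to the right-hand space $\po^{\lfloor d/n\rfloor,mn}_1(\R)$ with parameters $(\lfloor d/n\rfloor;mn,1)$. First I would observe that the stable type predicted by Corollary~\ref{crl: KY13; stable homotopy type} depends on the triple $(m,n,d)$ only through the two numbers $N:=mn$ and $e:=\lfloor d/n\rfloor$: the wedge is
$$
\Big(\bigvee_{i=1}^{e}S^{(N-2)i}\Big)
\vee
\Big(\bigvee_{i\geq 0,\,j\geq 1,\,i+2j\leq e}\Sigma^{(N-2)(i+2j)}D_j\Big).
$$
So it suffices to check that the two sides of (\ref{eq: KY13 stable equiv}) give rise to the same pair $(N,e)$. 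For the left side $\po^{d,m}_n(\R)$ we have $N=mn$ and $e=\lfloor d/n\rfloor$. For the right side $\po^{\lfloor d/n\rfloor,mn}_1(\R)$ the first parameter is $mn\cdot 1=mn=N$, and the second parameter is $\lfloor \lfloor d/n\rfloor / 1\rfloor=\lfloor d/n\rfloor=e$. Hence both sides have identical stable splitting, and chaining the two stable homotopy equivalences of Corollary~\ref{crl: KY13; stable homotopy type} (the second one reversed) yields the stable homotopy equivalence (\ref{eq: KY13 stable equiv}).

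I should make sure the hypotheses of Corollary~\ref{crl: KY13; stable homotopy type} are met for both invocations. For the left side the hypothesis $mn\geq 3$ is exactly the standing assumption of the present corollary, and the running convention $d\geq n$ guarantees $e=\lfloor d/n\rfloor\geq 1$, so the wedge is nonempty and the statement applies verbatim. For the right side, the relevant product is $(mn)\cdot 1=mn\geq 3$, so the hypothesis holds there too; the degree parameter is $\lfloor d/n\rfloor\geq 1=n$, so the analogue of the convention $d\geq n$ is satisfied. Note also that the case $(m,n)=(1,1)$ excluded throughout cannot arise under $mn\geq 3$, and $(mn,1)\neq(1,1)$ automatically, so both spaces are legitimately defined.

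The only subtlety — and the single computational point worth flagging — is the identity $\lfloor\lfloor d/n\rfloor/2\rfloor=\lfloor\lfloor d/2\rfloor/n\rfloor$ already recorded in part (iii) of Lemma~\ref{lmm: inequality}, which is what guarantees that the iterated floor appearing in the intermediate splitting $\po^{\lfloor d/2\rfloor,m}_n(\C)\vee B^{d,m}_n\vee\Q^{d,m}_n(\R)$ of Theorem~\ref{thm: KY13; stable homotopy type} repackages correctly into the clean double wedge of Corollary~\ref{crl: KY13; stable homotopy type}; but since we are quoting Corollary~\ref{crl: KY13; stable homotopy type} directly, that bookkeeping has already been absorbed. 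I expect no genuine obstacle here: the proof is a one-line comparison of the two parameter pairs $(N,e)$ once the explicit form in Corollary~\ref{crl: KY13; stable homotopy type} is in hand, and the result is simply the real-coefficient analogue of the equivalence (\ref{eq: poly stability for C}) over $\C$.
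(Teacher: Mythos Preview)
Your proposal is correct and follows essentially the same route as the paper: apply the explicit stable splitting of Corollary~\ref{crl: KY13; stable homotopy type} to both $\po^{d,m}_n(\R)$ and $\po^{\lfloor d/n\rfloor,mn}_1(\R)$, observe that the resulting wedge depends only on $(mn,\lfloor d/n\rfloor)$, and conclude. Your care in verifying the hypotheses on both sides is appropriate and matches what the paper implicitly requires.
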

By using Theorems \ref{thm: KY13; stable homotopy type}
and  \ref{thm : I}, we also have the following result.
\begin{crl}\label{crl: Q+poH+}
Let $mn\geq 3$.
\par
$\I$
The inclusion map
$
\iota^{d,m}_{n,\R}:\Po^{d,m}_n(\R)
\stackrel{\subset}{\longrightarrow} \Q^{d,m}_n(\R)
$
induces a split epimorphism on the homology group $H_*(\ ;\Z)$.
\par
$\II$
The inclusion map
$
\iota^{d,m}_{n,\Ha_+}:\Po^{d,m}_n(\R;\Ha_+)
\stackrel{\subset}{\longrightarrow} \po^{d,m}_n(\R)
$
induces a split monomorphism on the homology group $H_*(\ ;\Z)$.%
\footnote{
Here $\Ha_+$ denotes the upper-half plane in the complex plane $\C$.
The map
$\iota^{d,m}_{n,\Ha_+}$ and the space $\Po^{d,m}_n(\R;\Ha_+)$ will be defined in 
Definition \ref{def: Hol(R;H)}.
}
\end{crl}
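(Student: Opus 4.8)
\textbf{Proof proposal for Corollary \ref{crl: Q+poH+}.}

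The plan is to derive both splitting statements directly from the stable decomposition in Theorem \ref{thm: KY13; stable homotopy type} together with the loop-product/stabilization machinery packaged in Theorem \ref{thm : I} (and the stable splitting of $\Q^{d,m}_n(\R)$ recorded in (\ref{eq: stable Q})). For part (i), I would observe that the stable homotopy equivalence
$$
\po^{d,m}_n(\R)\simeq_s \po^{\lfloor d/2\rfloor,m}_n(\C)\vee B^{d,m}_n\vee \Q^{d,m}_n(\R)
$$
already exhibits $\Q^{d,m}_n(\R)$ as a stable wedge summand of $\po^{d,m}_n(\R)$; the point to check is that under this splitting the wedge projection onto the $\Q^{d,m}_n(\R)$ summand is compatible (up to stable homotopy) with the inclusion $\iota^{d,m}_{n,\R}$, so that $(\iota^{d,m}_{n,\R})_*$ followed by the projection is the identity on $H_*(\Q^{d,m}_n(\R);\Z)$. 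This compatibility is exactly what the construction in \S\ref{section: scanning maps} (the scanning models and the stabilization maps used to prove Theorems \ref{thm: scanning map} and \ref{thm: stable result2}) is designed to give: the wedge summand $\Q^{d,m}_n(\R)$ arises as the image of the scanning map for the real-roots problem, and that scanning map factors through $\iota^{d,m}_{n,\R}$. Granting this, $(\iota^{d,m}_{n,\R})_*$ is a split epimorphism on integral homology.

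For part (ii) I would argue dually. By Theorem \ref{thm : I} (and the loop-product structures built in \S\ref{section: loop products}), the homology of $\po^{d,m}_n(\R;\Ha_+)$ injects as a tensor factor / sub-coalgebra into the homology of $\po^{d,m}_n(\R)$ via the inclusion $\iota^{d,m}_{n,\Ha_+}$; concretely, $\po^{d,m}_n(\R;\Ha_+)$ should, after stabilization, be identified (up to stable homotopy) with the $\po^{\lfloor d/2\rfloor,m}_n(\C)$ wedge summand appearing in Theorem \ref{thm: KY13; stable homotopy type}, since a system of real polynomials with no common root of multiplicity $\geq n$ in the open upper half-plane corresponds to a system of complex polynomials of roughly half the degree (the non-real roots come in conjugate pairs). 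Then the wedge inclusion of the $\po^{\lfloor d/2\rfloor,m}_n(\C)$ summand into $\po^{d,m}_n(\R)$ is stably split by the corresponding projection, and on homology $(\iota^{d,m}_{n,\Ha_+})_*$ is a split monomorphism. I would be careful to phrase everything at the level of stable homotopy first and then pass to $H_*(\ ;\Z)$, so that no unstable convergence issues intervene.

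The main obstacle I anticipate is precisely the \emph{identification of the geometric inclusions with the abstract wedge summands}: Theorem \ref{thm: KY13; stable homotopy type} produces an abstract stable equivalence, but Corollary \ref{crl: Q+poH+} asserts that the concrete maps $\iota^{d,m}_{n,\R}$ and $\iota^{d,m}_{n,\Ha_+}$ realize the corresponding projection and inclusion respectively. Making this precise requires tracking the scanning/stabilization maps of \S\ref{section: scanning maps} and checking naturality of the splitting with respect to them — i.e., that the homotopy-commutative diagrams relating $\po^{d,m}_n(\R;\Ha_+)$, $\po^{d,m}_n(\R)$, $\Q^{d,m}_n(\R)$, and $\po^{\lfloor d/2\rfloor,m}_n(\C)$ are compatible with the wedge decomposition. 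Once that naturality is in hand, the split epi/mono statements are formal consequences of the fact that a wedge summand inclusion and projection compose to the identity. I would also note the low-dimensional case $mn=3$ needs only homology statements, which is consistent with the weaker conclusions available there, so no extra argument is required.
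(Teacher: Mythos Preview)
Your approach is correct in spirit and for part (ii) coincides with the paper's, but for part (i) it is more elaborate than necessary and for part (ii) your citation of Theorem \ref{thm : I} is misplaced.

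For part (i): the statement is \emph{identical} to Theorem \ref{thm : I}, which was proved directly in \S\ref{section: loop products} by a short loop-product argument (the generator $\iota_Q\in H_{mn-2}(\Q^{d,m}_n(\R);\Z)$ lifts to $\iota_P$ because $\Po^{d,m}_n(\R)=\Q^{d,m}_n(\R)$ for $d<2n$, and then loop products and stabilization propagate this to all powers). The paper therefore just says ``(i) follows from Theorem \ref{thm : I}''. Your route through the stable splitting of Theorem \ref{thm: KY13; stable homotopy type} and the scanning maps of \S\ref{section: scanning maps} would also work --- the naturality you worry about is exactly diagram (\ref{CD: The second main commutative diagram}), where $p_{\R}\circ J^{d,m}_n \simeq J^{d,m}_{n,\R}\circ \iota^{d,m}_n$ and $J^{d,m}_{n,\R}=\mathrm{id}$ by (\ref{eq: map of Q}) --- but it is circuitous, and in fact Theorem \ref{thm : I} is logically \emph{prior} to Theorem \ref{thm: KY13; stable homotopy type} (it feeds into Lemma \ref{lmm: (*)}).

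For part (ii): your outline matches the paper's proof. The key input is Lemma \ref{lmm: space poly(Ha+)}, giving the homeomorphism $\po^{d,m}_n(\R;\Ha_+)\cong\po^{\lfloor d/2\rfloor,m}_n(\C)$, together with the fact that the stable equivalence $J^{d,m}_n$ of \S\ref{section: the main result} is \emph{constructed} from the commutative diagram (\ref{CD: The second main commutative diagram}), so that $\iota^{d,m}_{n,\Ha_+}$ is carried to the wedge inclusion of the first summand. Your invocation of Theorem \ref{thm : I} here is off-target: that theorem concerns the map to $\Q^{d,m}_n(\R)$, not from $\po^{d,m}_n(\R;\Ha_+)$. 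The ``tensor factor / sub-coalgebra'' language is also not needed --- the argument is purely at the level of wedge summands in the stable category.
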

\section{The spaces $(\Omega^2_d\CP^{N})^{\Z_2}$ and $\Po^{d,m}_n(\R;\Ha_+)$}
\label{section 3}
\par
In this section, 
we shall investigate the homotopy type of the spaces
$(\Omega^2_d\CP^{N})^{\Z_2}$ and $\Po^{d,m}_n(\R;\Ha_+)$.
\par\vspace{1mm}\par
We first consider the space $(\Omega^2_d\CP^{N})^{\Z_2}$.
By identifying $S^2=\C\cup \infty$,  $S^2$ acquires a $\Z_2$-action induced by complex conjugation on $\C$.
The space
 $\CP^N$  also has a natural $\Z_2$-action induced by  complex conjugation. 
Thus, we can consider the space $(\Omega^2_d\CP^{N})^{\Z_2}$  of 
based $\Z_2$-equivariant maps
from $S^2$ to $\CP^N.$
\par\vspace{2mm}\par
\begin{definition}
{\rm
Let $(X,A)$ and $(Y,B)$ be pairs of connected based spaces.
\par
(i)
We denote by $\Map^*(X,A;Y,B)$
the subspace of $\Map^* (X,Y)$ consisting of all based maps
$f\in \Map^* (X,Y)$ such that $f(A)\subset B$.
\par
(ii)
Let $\Map^*_0(X,Y)$ denote the path component of $\Map^*(X,Y)$
containing null-homotopic maps.
Then
for a based map $g\in \Map^*_0(A,Y)$,
let $F(X,A;Y;g)\subset \Map^*_0(X,Y)$ denote the subspace 
\begin{equation}
F(X,A;Y;g)=\{f\in \Map^*_0(X,Y):f\vert A=g\}.
\end{equation}
\par (iii)
The pair $(X,A)$ is called a NDR-pair if the inclusion
$i_A:A\stackrel{\subset}{\longrightarrow} X$ is a cofibration.
In this situation, we have a cofibration sequence
\begin{equation}\label{eq: cofib}
A \stackrel{i_A}{\longrightarrow} X \stackrel{q_A}{\longrightarrow}X/A,
\end{equation}
where $q_A:X\to X/A$ denotes the natural pinching map.
}
\end{definition}
\begin{lemma}\label{lmm: induced fib}
If $(X,A)$ is a NDR-pair, for each pair of spaces $(Y,B)$ the following 
is a fibration sequence
\begin{equation}\label{eq: induced fibration3}
\Map^*(X/A,Y) \stackrel{}{\longrightarrow} \Map^*(X,A;Y,B)
\stackrel{r_A}{\longrightarrow}
\Map^*(A,B),
\end{equation}
where the two maps $q^{\#}_A$ and $r_A$ are defined by
$q^{\#}_A(f)=f\circ q_A$ and
$r_A(g)=g\circ i_A=g\vert A$
for $(f,g)\in\Map^*(X/A,Y)\times\Map^*(X,A;Y,B).$
\end{lemma}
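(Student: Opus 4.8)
The plan is to recognize \eqref{eq: induced fibration3} as an instance of the standard fibration obtained from restricting maps along a cofibration, adapted to the relative target $(Y,B)$. First I would observe that $r_A$ is well-defined: any $g\in\Map^*(X,A;Y,B)$ satisfies $g(A)\subset B$, so $g\circ i_A=g|A$ indeed lands in $\Map^*(A,B)$. The fibre of $r_A$ over a chosen basepoint map is, by definition, the set of based maps $f:X\to Y$ with $f|A$ equal to the constant map to the basepoint of $B$; such an $f$ factors uniquely through the pinch map $q_A:X\to X/A$, giving the identification of the fibre with $\Map^*(X/A,Y)$ and the map $q_A^{\#}$ as the fibre inclusion. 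This establishes the exactness of the sequence at the level of point-set; the real content is that $r_A$ is a (Hurewicz) fibration.

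Next I would prove that $r_A$ is a fibration by a direct lifting argument using the hypothesis that $i_A:A\hookrightarrow X$ is a cofibration. Recall that a cofibration $A\hookrightarrow X$ admits a Str\o m structure: there is a map $u:X\to[0,1]$ with $u^{-1}(0)=A$ (or $A\subset u^{-1}(0)$, depending on the convention) and a homotopy $H:X\times[0,1]\to X$ deforming $X$ into $A$ relative to $A$ in the appropriate sense, equivalently $(X\times\{0\})\cup(A\times[0,1])$ is a retract of $X\times[0,1]$. Given a homotopy-lifting problem for $r_A$ — a space $Z$, a map $Z\to\Map^*(X,A;Y,B)$ and a homotopy $Z\times[0,1]\to\Map^*(A,B)$ extending its restriction — I would adjoint everything to maps out of $X\times Z\times[0,1]$ and $A\times Z\times[0,1]$, and use the retraction $r:X\times[0,1]\to(X\times\{0\})\cup(A\times[0,1])$ coming from the cofibration to glue the given data on $X\times Z\times\{0\}$ and $A\times Z\times[0,1]$ into a single homotopy on $X\times Z\times[0,1]$. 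One must check this glued map still carries $A\times Z\times[0,1]$ into $B$ (it does, because on that part it agrees with the prescribed lift into $\Map^*(A,B)$) and is basepoint-preserving in the $X$-variable (arrange the basepoint of $X$ to lie in $A$, or handle it as an extra cofibration condition), so the adjoint yields the required lift $Z\times[0,1]\to\Map^*(X,A;Y,B)$.

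Alternatively, and perhaps more cleanly, I would deduce this from known results rather than reproving them: the map $\Map^*(X,Y)\to\Map^*(A,Y)$ induced by restriction along a cofibration $i_A$ is a fibration (this is classical, e.g.\ from the exponential law together with the fact that $i_A$ being a cofibration makes $A\times[0,1]\cup X\times\{0\}\hookrightarrow X\times[0,1]$ a retract), and $\Map^*(X,A;Y,B)$ is precisely the pullback of $\Map^*(A,B)\hookrightarrow\Map^*(A,Y)$ along this fibration. Since a pullback of a fibration is a fibration, $r_A:\Map^*(X,A;Y,B)\to\Map^*(A,B)$ is a fibration; and the fibre computation over the constant map reproduces $\Map^*(X/A,Y)$ via the cofibre sequence \eqref{eq: cofib}, with the connecting map being $q_A^{\#}$. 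This route is shorter and isolates the one genuinely nontrivial input.

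The main obstacle, and the one step deserving care rather than a citation, is the verification that $r_A$ has the homotopy lifting property with the relative target $B\subset Y$ intact — that is, making sure the lifted homotopy constructed from the cofibration retraction actually stays inside $\Map^*(X,A;Y,B)$ throughout, not merely at the endpoints. This is where the compatibility of the prescribed base homotopy into $\Map^*(A,B)$ with the retraction $r:X\times[0,1]\to(X\times\{0\})\cup(A\times[0,1])$ is used: on the sub-cylinder $A\times[0,1]$ the glued homotopy is forced to equal the given $B$-valued one, so the condition $f(A)\subset B$ is automatically preserved. Once this point is dispatched, the identification of the fibre and of $q_A^{\#}$ is formal from the universal property of $X/A$, and exactness of \eqref{eq: induced fibration3} follows immediately.
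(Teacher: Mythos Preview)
Your proposal is correct. In fact, your ``alternative, cleaner'' route---take the known restriction fibration $\Map^*(X,Y)\to\Map^*(A,Y)$ coming from the cofibration $A\hookrightarrow X$ and pull it back along the inclusion-induced map $\Map^*(A,B)\to\Map^*(A,Y)$, then identify the pullback with $\Map^*(X,A;Y,B)$---is exactly the paper's proof, essentially line for line. Your primary approach via an explicit Str{\o}m-structure lifting argument is a more hands-on verification of the same fact; it works, but the pullback argument is shorter and isolates the one nontrivial input (restriction along a cofibration is a fibration) just as you noted.
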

\begin{proof}
By (\ref{eq: cofib}) we obtain a fibration sequence
\begin{equation}\label{eq: induced fibration1}
\Map^*(X/A,Y) \stackrel{q_A^{\#}}{\longrightarrow} \Map^*(X,Y)
\stackrel{re_A}{\longrightarrow}
\Map^*(A,Y).
\end{equation}
Let $i_B:B\to Y$ be an inclusion map and let
$i_{B\#}:\Map^*(A,B)\to\Map^*(A,Y)$ denote the map
given by
$i_{B\#}(h)=i_B\circ h$ for $h\in \Map^*(A,Y).$
\par
Now consider the following  fibration  induced from (\ref{eq: induced fibration1}) by
the map $i_{B\#}$
\begin{equation}\label{eq: induced fibration2}
\begin{CD}
\Map^*(X/A,Y) @>>> E
@>{r_A}>>
\Map^*(A,B)
\\
\Vert @. @VVV @V{i_{B\#}}VV
\\
\Map^*(X/A,Y) @>q_A^{\#}>> \Map^*(X,Y)
@>{re_A}>>
\Map^*(A,Y),
\end{CD}
\end{equation}
\begin{eqnarray*}
\mbox{ where}\quad\quad
E&=&
\{(f,g)\in \Map^*(X,Y)\times \Map^*(A,B):re_A(f)=i_{B\#}(g)\}
\\
&=&
\{(f,g)\in \Map^*(X,Y)\times \Map^*(A,B):f\vert A=g\}
\\
&=&
\Map^*(X,A;Y,B)
\end{eqnarray*}
Thus, we have obtained the fibration sequence
(\ref{eq: induced fibration3}).
\end{proof}
\begin{lemma}\label{lmm: A3}
If
 $N\geq 2$, there is a homotopy equivalence
\begin{equation}\label{eq: homotopy equiv}
(\Omega^2_d\CP^N)^{\Z_2}
\simeq 
\Omega^2 S^{2N+1}\times \Omega S^N.
\end{equation}
\end{lemma}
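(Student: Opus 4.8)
The plan is to analyze the $\Z_2$-fixed point space of $\Omega^2_d\CP^N = \Map^*_d(S^2,\CP^N)$ directly, using the geometry of the complex conjugation action. The $\Z_2$-action on $S^2 = \C\cup\infty$ fixes the real circle $S^1 = \R\cup\infty$, and the $\Z_2$-action on $\CP^N$ has fixed point set $\RP^N$. A $\Z_2$-equivariant based map $f:(S^2,\infty)\to(\CP^N,*)$ therefore restricts to a based map $f|S^1:(S^1,\infty)\to(\RP^N,*)$ on the fixed circle; and the restriction of $f$ to the upper half-plane closure $\Ha_+\cup S^1$ determines $f$ on the lower half-plane by equivariance. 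Thus I would identify $(\Omega^2_d\CP^N)^{\Z_2}$ with the space of based maps from the disk $(\Ha_+\cup S^1)\cup\infty \cong D^2$ into $\CP^N$ whose restriction to the boundary circle lands in $\RP^N$ — i.e. with a space of the form $\Map^*(D^2,\partial D^2;\CP^N,\RP^N)$, restricted to a suitable component labeled by $d$.

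Next I would apply Lemma \ref{lmm: induced fib} with the NDR-pair $(X,A) = (D^2,\partial D^2) = (D^2,S^1)$, so that $X/A = S^2$, and with $(Y,B) = (\CP^N,\RP^N)$. This yields the fibration sequence
\begin{equation*}
\Map^*(S^2,\CP^N)\longrightarrow \Map^*(D^2,S^1;\CP^N,\RP^N)\stackrel{r_A}{\longrightarrow}\Map^*(S^1,\RP^N),
\end{equation*}
that is, $\Omega^2\CP^N \to (\Omega^2\CP^N)^{\Z_2}$-type total space $\to \Omega\RP^N$, after restricting to the appropriate path components indexed by $d$. Since $N\ge 2$, $\CP^N$ is simply connected and the inclusion $S^1\hookrightarrow \RP^N$ of the bottom cell induces an isomorphism on $\pi_1$; one checks that $r_A$ is surjective on components and that the relevant component of the total space is exactly $(\Omega^2_d\CP^N)^{\Z_2}$, sitting in a fibration with fiber $\Omega^2_e\CP^N$ for a suitable $e$ and base (a component of) $\Omega\RP^N$.

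Finally I would use the standard homotopy equivalences $\CP^N \simeq$-through-its-stable-splitting facts: since $N\ge 2$, $\Omega^2\CP^N\simeq\Omega^2 S^{2N+1}$ (the fibration $S^1\to S^{2N+1}\to\CP^N$ loops twice to kill the $S^1$), and likewise $\Omega\RP^N\simeq \Omega S^N$ for $N\ge 2$ (from $S^0\to S^N\to\RP^N$, since $\Omega S^0$ is discrete and contributes only on $\pi_0$). So the fibration becomes, up to homotopy, $\Omega^2 S^{2N+1}\to (\Omega^2_d\CP^N)^{\Z_2}\to \Omega S^N$. The last step is to split this fibration: I would exhibit a section, most naturally by sending a loop $\gamma:S^1\to\RP^N$ to a $\Z_2$-equivariant extension $S^2\to\CP^N$ obtained by "real'' coning, e.g. via a polynomial/configuration model or by the section of $S^{2N+1}\to\CP^N$ over a suitable cell, thereby realizing the homotopy equivalence $(\Omega^2_d\CP^N)^{\Z_2}\simeq\Omega^2 S^{2N+1}\times\Omega S^N$ once one observes (by obstruction theory, as both spaces are simple enough) that a split fibration of these loop spaces is fiber-homotopy trivial.

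The main obstacle I expect is the bookkeeping of path components and the precise identification of which component of $\Map^*(D^2,S^1;\CP^N,\RP^N)$ corresponds to a degree-$d$ equivariant map — the degree of the $\Z_2$-map on $S^2$ constrains both the homotopy class of the boundary restriction in $\pi_1(\RP^N)=\Z_2$ and the relative class in $\pi_2(\CP^N,\RP^N)$, and one must check these are compatibly matched so that the total space component is nonempty and equals $(\Omega^2_d\CP^N)^{\Z_2}$. The splitting itself is comparatively soft once the fibration is in hand, since $\Omega S^N$ and $\Omega^2 S^{2N+1}$ are well understood and a section can be built by an explicit equivariant extension.
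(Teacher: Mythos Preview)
Your overall strategy matches the paper's: identify $(\Omega^2_d\CP^N)^{\Z_2}$ with $\Map^*_d(D^2,S^1;\CP^N,\RP^N)$ via the hemisphere decomposition, apply Lemma~\ref{lmm: induced fib} to obtain the fibration
\[
\Omega^2_d\CP^N \longrightarrow \Map^*_d(D^2,S^1;\CP^N,\RP^N) \stackrel{r_{S^1}}{\longrightarrow} \Omega_{[d]_2}\RP^N,
\]
and then use $\Omega^2\CP^N\simeq\Omega^2S^{2N+1}$ and $\Omega\RP^N\simeq\Omega S^N$. The difference is in how the fibration is trivialized. You propose constructing an explicit section by ``real coning'' or obstruction theory; this is vague as stated, and while a section would suffice (the fibration is principal, being pulled back from the path--loop fibration over $\Omega\CP^N$), you have not actually produced one. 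The paper instead observes that the fibration is \emph{classified} by the map $\Omega j:\Omega_{[d]_2}\RP^N\to\Omega_{[d]_2}\CP^N$ induced by the inclusion $j:\RP^N\hookrightarrow\CP^N$, and shows this map is null-homotopic: after passing to universal covers one gets $\Omega\tilde{j}:\Omega S^N\to\Omega S^{2N+1}$ with $\tilde{j}:S^N\hookrightarrow S^{2N+1}$, and $\tilde{j}$ is null since $\pi_N(S^{2N+1})=0$. A fibration pulled back along a null-homotopic map is trivial, and the product decomposition follows immediately.

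This also dissolves the component bookkeeping you flagged as an obstacle: once the fibration is globally trivial, each component of the total space is a product of components of base and fiber, and there is nothing further to check. So your outline is correct but the splitting step is underspecified; the paper's route via null-homotopy of the classifying map is both shorter and avoids the need to build a section by hand.
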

\begin{proof}
By applying (\ref{eq: induced fibration3}) to the case 
$(X,A;Y,B)=(D^2,S^1;\CP^N,\RP^N)$, we obtain the following fibration sequence
\begin{equation}\label{eq: fibration4}
\Omega^2_d \CP^N \to 
\Map^*_d(D^2,S^1;\CP^N,\RP^N)
\stackrel{r_{S^1}}{\longrightarrow}
\Omega_{[d]_2}\RP^N.
\end{equation}
It follows from the proof of Lemma \ref{lmm: induced fib}
that  (\ref{eq: fibration4}) is the induced fibration from the map
$\Omega j:\Omega_{[d]_2}\RP^N\to \Omega_{[d]_2}\CP^N$,
where
$j:\RP^N\to \CP^N$ denotes the inclusion map.
Let $\tilde{j}:S^N\to S^{2N+1}$ denote the natural inclusion,
and
let $\gamma_N:S^{2N+1}\to \CP^N$ and
$\gamma_{N,\R}:S^N\to \RP^N$ be the Hopf fibering
and the usual double covering, respectively.
Consider the following commutative diagram
$$
\begin{CD}
\Omega S^N @>\Omega \tilde{j}>> \Omega S^{2N+1}
\\
@V{\Omega\gamma_{N,\R}}V{\simeq}V @VV{\Omega\gamma_N}V
\\
\Omega_0 \RP^N @>\Omega j>> \Omega_0\CP^N
\end{CD}
$$
Since the map $\tilde{j}$ is null-homotopic,
the map $\Omega j:\Omega_0\RP^N\to \Omega_0\CP^N$
is also null-homotopic and hence also the map $j$.
Thus, (\ref{eq: fibration4}) is a trivial fibration and there is a
homotopy equivalence
\begin{equation*}\label{eq: homeo}
\Map^*_d(D^2,S^1;\CP^N,\RP^N)\simeq
\Omega^2_d\CP^N\times \Omega_{[d]_2}\RP^N
\simeq
\Omega^2S^{2N+1}\times \Omega S^N.
\end{equation*}
Hence, we have a homotopy equivalence
\begin{equation}\label{eq: homotopy e.}
\Map^*_d(D^2,S^1;\CP^N,\RP^N)
\simeq
\Omega^2S^{2N+1}\times \Omega S^N
\quad
\mbox{for any }d\in \Z.
\end{equation}
Now 
we  regard $S^2$ as the union $S^2=D^2_+\cup D^2_{-}$, 
where  
let
$D^2_+$ and $D^2_{-}$ denote the northern hemisphere and the southern one  given by
\begin{align*}
D^2_+&=\{\textbf{\textit{x}}=(x,y)\in \R^2:\Vert \textbf{\textit{x}}\Vert \leq 1,y\geq 0\},
\\
D^2_{-}&=\{\textbf{\textit{x}}=(x,y)\in \R^2:\Vert \textbf{\textit{x}}\Vert \leq 1,y\leq 0\}.
\end{align*}
Then
each $\Z_2$-equivariant map $f:S^2\to \CP^N$ can be identified with the map
$(D^2_+,S^1)\to (\CP^N,\RP^N).$ 
Then by identifying $D^2\cong D^2_+$
there is a homeomorphism
\begin{equation}\label{eq: space of maps}
(\Omega^2_d\CP^N)^{\Z_2}\cong \Map^*_d(D^2,S^1;\CP^N,\RP^N).
\end{equation}
Hence, the assertion easily follows from  (\ref{eq: homotopy e.}) and 
(\ref{eq: space of maps}).
\end{proof}
\begin{remark}\label{remark: d}
{\rm
It follows from (\ref{eq: homotopy equiv}) and (\ref{eq: homotopy e.}) that
the homotopy types of the spaces
$(\Omega^2_d\CP^N)^{\Z_2}$ and $\Map^*_d(D^2,S^1;\CP^N,\RP^N)$
do not depend on the choice of the integer $d$. 
\qed
}
\end{remark}
\begin{lemma}\label{lmm: restriction}
If $N\geq 1$, there is a homotopy equivalence
\begin{equation}\label{eq: restriction eq}
F(D^2,S^1;\CP^N;g)\simeq \Omega^2_0\CP^N\simeq \Omega^2S^{2N+1}
\quad
\mbox{for any $g\in \Omega_0\CP^N$.}
\end{equation}
\end{lemma}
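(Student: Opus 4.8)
The plan is to realize $F(D^2,S^1;\CP^N;g)$ as the fiber of the restriction fibration and then trivialize that fibration, exactly as in the proof of Lemma \ref{lmm: A3}. First I would recall from Lemma \ref{lmm: induced fib} (applied with $(X,A)=(D^2,S^1)$ and $B=\CP^N$, so $X/A\simeq S^2$) that restriction to the boundary gives a fibration sequence
\[
\Omega^2\CP^N \longrightarrow \Map^*_0(D^2,\CP^N) \stackrel{r_{S^1}}{\longrightarrow} \Omega_0\CP^N ,
\]
and that by definition the fiber of $r_{S^1}$ over a point $g\in\Omega_0\CP^N$ is precisely $F(D^2,S^1;\CP^N;g)$. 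Since $D^2$ is contractible, $\Map^*_0(D^2,\CP^N)$ is contractible, and one already gets the correct homotopy type of the fiber from the long exact sequence; but to make this a genuine homotopy equivalence $F(D^2,S^1;\CP^N;g)\simeq\Omega^2_0\CP^N$ I would argue that the fibration is fiber-homotopy trivial.

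The key point is that all fibers are homotopy equivalent, and that $F(D^2,S^1;\CP^N;g)$ is homotopy equivalent to the fiber over the constant loop, which literally equals $\{f\in\Map^*_0(D^2,\CP^N): f|S^1=*\}=\Map^*(D^2/S^1,\CP^N)=\Omega^2_0\CP^N$. To upgrade "the fibers are homotopy equivalent" to an actual equivalence with $\Omega^2_0\CP^N$, I would use that the base $\Omega_0\CP^N$ is path-connected together with the standard fact that in a fibration over a path-connected base any two fibers are homotopy equivalent (via path-lifting); choosing a path in $\Omega_0\CP^N$ from $g$ to the constant loop $*$ induces a homotopy equivalence $F(D^2,S^1;\CP^N;g)\xrightarrow{\ \simeq\ }\Omega^2_0\CP^N$. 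Finally, the identification $\Omega^2_0\CP^N\simeq\Omega^2 S^{2N+1}$ is immediate from the Hopf fibration $S^1\to S^{2N+1}\to\CP^N$: its connecting map induces an isomorphism $\pi_k(\Omega^2 S^{2N+1})\cong\pi_k(\Omega^2_0\CP^N)$ for all $k$ once $N\geq 1$, since $\Omega^2 S^1$ is contractible, and this is realized by a genuine map of spaces.

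The only delicate point — the one I expect to be the main obstacle — is being careful about whether one needs a fiber-homotopy \emph{trivialization} of the whole fibration (as was genuinely used in Lemma \ref{lmm: A3}, via the null-homotopy of $\tilde\jmath:S^N\to S^{2N+1}$) or merely the equivalence of a single fiber with $\Omega^2_0\CP^N$. For the present statement only the latter is needed, so the cleanest route avoids invoking triviality of the fibration at all: identify $F(D^2,S^1;\CP^N;g)$ with the homotopy fiber of $r_{S^1}$ over $g$ (using that $r_{S^1}$ is a fibration and $g$ a point), observe the total space is contractible, hence this homotopy fiber is homotopy equivalent to $\Omega$(base) $=\Omega(\Omega_0\CP^N)=\Omega^2_0\CP^N$, and then apply the Hopf-fibration identification. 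This sidesteps any subtlety about basepoint choices in $\Omega_0\CP^N$ and makes the $N\geq 1$ hypothesis (rather than $N\geq 2$) visibly sufficient.
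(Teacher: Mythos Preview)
Your proposal is correct and follows essentially the same approach as the paper: both identify $F(D^2,S^1;\CP^N;g)$ as the fiber $r^{-1}(g)$ of the restriction fibration $r:\Map^*_0(D^2,\CP^N)\to\Omega_0\CP^N$, use contractibility of the total space (since $D^2$ is contractible), and conclude that every fiber is homotopy equivalent to $\Omega^2_0\CP^N\simeq\Omega^2S^{2N+1}$. Your write-up is in fact more careful than the paper's, which simply asserts that each fiber is homotopy equivalent to $\Omega^2_0\CP^N$ without spelling out the path-lifting or Hopf-fibration steps.
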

\begin{proof}
Let $r:\Map^*_0(D^2,\CP^N)\to \Map_0^*(S^1,\CP^N)=\Omega_0\CP^N$ denote
the restriction map given by $r(f)=f\vert S^1$.
It is easy to see that $r$ is a fibration.
Moreover, since $D^2$ is contractible, the space
$\Map^*_0(D^2,\CP^N)$ is contractible.
Thus, each fiber of $r$ is homotopy equivalent to the space
$\Omega^2_0\CP^N\simeq \Omega^2S^{2N+1}$.
Hence, there is a homotopy equivalence
$$
F(D^2,S^1;\CP^N;g)=r^{-1}(g)\simeq
\Omega^2_0\CP^N\simeq \Omega^2S^{2N+1},
$$
and this completes the proof.
\end{proof}
\begin{lemma}\label{lmm: key diagram}
If $N\geq 2$, there is a following homotopy commutative diagram:
\begin{equation}\label{CD: key diagram}
\begin{CD}
\Map^*_0(D^2,S^1;\CP^N,*) @>\iota_{\C}>{\simeq}> 
\Omega^2_0\CP^N\simeq\Omega^2 S^{2N+1}
\\
@V{\hat{j}}V{\cap}V \ \ \ @A{q_1}AA
\\
\Map^*_0(D^2,S^1;\CP^N,\RP^N) @>{\iota_{\C\R}}>{\simeq}> 
\Omega^2 S^{2N+1}\times \Omega S^N
\\
@V{r_{S^1}}VV \ \ \ @V{q_2}VV
\\
\Omega_0\RP^N @>\iota_{\R}>\simeq> \ \ \ \Omega S^N
\end{CD}
\end{equation}
where
$\hat{j}:
\Map^*_0(D^2,S^1;\CP^N,*)\stackrel{\subset}{\longrightarrow}
\Map^*_0(D^2,S^1:\CP^N,\RP^N)$ 
denotes the natural inclusion, 
$r_{S^1}:\Map^*_0(D^2,S^1;\CP^N,\RP^N)\to \Omega_0\RP^N$
is the restriction map given by $r_{S^1} (f)=f\vert S^1$,
$\iota_{\C\R}$ and $\iota_{\C}$  are  homotopy equivalences given by
(\ref{eq: homotopy e.}) and (\ref{eq: restriction eq}),
and
$q_1$ and $q_2$ are the projections to the first factors and the second factor,
respectively.
\end{lemma}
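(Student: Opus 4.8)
\textbf{Proof plan for Lemma \ref{lmm: key diagram}.} The plan is to build the diagram in two stages: first produce the lower square, then the upper square, fitting both to the common middle row which is already established by Lemma~\ref{lmm: A3} (via (\ref{eq: homotopy e.})). For the lower square, the horizontal map $\iota_{\C\R}$ is precisely the homotopy equivalence from (\ref{eq: homotopy e.}), whose proof realizes $\Map^*_d(D^2,S^1;\CP^N,\RP^N)$ as the total space of the trivial fibration $\Omega^2_d\CP^N \to \Map^*_d(D^2,S^1;\CP^N,\RP^N)\xrightarrow{r_{S^1}}\Omega_{[d]_2}\RP^N$ induced from $\Omega j:\Omega_{[d]_2}\RP^N\to\Omega_{[d]_2}\CP^N$. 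Since that fibration is trivial, the splitting $\Map^*_d(D^2,S^1;\CP^N,\RP^N)\simeq\Omega^2S^{2N+1}\times\Omega S^N$ is compatible with $r_{S^1}$ on the nose: $r_{S^1}$ corresponds, up to homotopy, to the projection $q_2$ onto the $\Omega S^N$ factor, where $\iota_{\R}:\Omega_0\RP^N\xrightarrow{\simeq}\Omega S^N$ is the equivalence coming from the splitting $\Omega_{[d]_2}\RP^N\simeq\Omega S^N$ used there (recall $\Omega\gamma_{N,\R}:\Omega S^N\xrightarrow{\simeq}\Omega_0\RP^N$ from the diagram in the proof of Lemma~\ref{lmm: A3}). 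So the lower square commutes up to homotopy by construction.

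For the upper square I would argue as follows. The left vertical map $\hat\jmath$ is the inclusion induced by the inclusion of pairs $(\CP^N,*)\subset(\CP^N,\RP^N)$, and its source $\Map^*_0(D^2,S^1;\CP^N,*)$ is exactly the space of based maps $D^2\to\CP^N$ carrying $S^1$ to the basepoint, i.e.\ $\Map^*_0(S^2,\CP^N)=\Omega^2_0\CP^N$; this identification is the homotopy equivalence $\iota_\C$ of Lemma~\ref{lmm: restriction} (with $g$ the constant map, noting $F(D^2,S^1;\CP^N;*)=\Map^*_0(D^2,S^1;\CP^N,*)$). Under $\iota_{\C\R}$, the subspace $\Map^*_0(D^2,S^1;\CP^N,*)$ is carried into the fiber of $r_{S^1}$ over the constant loop, which under the splitting is $\Omega^2S^{2N+1}\times\{*\}$; hence $\hat\jmath$ corresponds to the inclusion of the first factor, and composing with $q_1$ (projection onto $\Omega^2S^{2N+1}$) recovers $\iota_\C$ up to homotopy. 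That is exactly commutativity of the upper square.

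The main obstacle I anticipate is \emph{identifying $\iota_\C$ with the restriction of $\iota_{\C\R}$ to the fiber}, i.e.\ checking that the two a priori different homotopy equivalences out of $\Map^*_0(D^2,S^1;\CP^N,*)$ — the one from Lemma~\ref{lmm: restriction} and the one obtained by restricting the splitting of Lemma~\ref{lmm: A3} — agree up to homotopy. This is essentially a bookkeeping point: both are induced by the fibration $r:\Map^*_0(D^2,\CP^N)\to\Omega_0\CP^N$ restricted over the relevant subspaces of loops, and the trivialization in Lemma~\ref{lmm: A3} was chosen compatibly with the inclusion $\Omega_0\RP^N\hookrightarrow\Omega_0\CP^N$ (which is null-homotopic, forcing triviality), so the fiber inclusion is unambiguous up to homotopy. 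I would make this precise by noting that $\Map^*_0(D^2,S^1;\CP^N,\RP^N)$ sits inside $\Map^*_0(D^2,\CP^N)$ (the total space of $r$) as $r^{-1}(\Omega_0\RP^N)$, and $\Map^*_0(D^2,S^1;\CP^N,*)=r^{-1}(*)$ as its fiber; since $\Map^*_0(D^2,\CP^N)$ is contractible, the long exact sequences and the compatible trivializations pin down all the maps up to homotopy. Once this compatibility is in hand, homotopy commutativity of both squares — and hence of the whole diagram — follows formally.
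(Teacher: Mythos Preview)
Your proposal is correct and takes essentially the same approach as the paper, which simply says the assertion ``easily follows from the naturality of the homotopy equivalences $\iota_{\C\R}$ and $\iota_{\C}$.'' Your two-stage argument is precisely an unpacking of what that naturality means: the lower square commutes because $\iota_{\C\R}$ was constructed as a trivialization of the fibration with projection $r_{S^1}$, and the upper square commutes because $\hat{\jmath}$ is the inclusion of the fibre over the constant loop, which any such trivialization carries to the first factor. The compatibility issue you flag between $\iota_{\C}$ and the restriction of $\iota_{\C\R}$ is a genuine bookkeeping point that the paper leaves implicit; your resolution via the common ambient fibration $r:\Map^*_0(D^2,\CP^N)\to\Omega_0\CP^N$ (with contractible total space, so the fibre over $*$ is canonically $\Omega^2_0\CP^N$ and a basepoint-preserving null-homotopy of $\Omega j$ makes the trivialization agree on that fibre) is exactly the right way to make the one-line proof rigorous.
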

\begin{proof}
The assertion easily follows from the naturality of the homotopy equivalences
$\iota_{\C\R}$ and $\iota_{\C}$.
\end{proof}
Next, we define the space  $\po^{d,m}_n(\R;\Ha_+)$.
\begin{definition}\label{def: Hol(R;H)}
{\rm
Let $\Ha_+=\{\alpha \in \C:\mbox{ Im }\alpha >0\}$ denote the upper half plane in 
the complex plane $\C$.
\par
(i) For each even positive integer $d=2d_0\in \N$, 
let $\P_d(\R;\Ha_+)$ denote the space of monic polynomials
$f(z)\in \R [z]$ of degree $d$ such that it has just $d_0$ roots in $\Ha_+$.
Thus, if $f(z)\in \P_d(\R;\Ha_+)$, it is represented as the form
\begin{equation}\label{eq: polynomial representation}
f(z)=
\dis \prod_{k=1}^{d_0}
(z-\alpha_k) (z-\overline{\alpha_k}) 
\quad
\mbox{for some }\alpha_k\in \Ha_+
\ (1\leq k\leq d_0).
\end{equation}
\par
(ii)
For each $d\in \N$, we define the space
$\po^{d,m}_n(\R;\Ha_+)$ as follows.
\begin{enumerate}
\item[(ii-0)] When $d\in \N$ is an even integer,
 the  space $\po^{d,m}_n(\R;\Ha_+)$ is
\begin{equation}
\po^{d,m}_n(\R;\Ha_+)=
\po^{d,m}_n(\R)\cap (\P_d(\R;\Ha_+))^m.
\end{equation}
\item[(ii-1)] When $d\in \N$ is an odd integer,
the space  $\po^{d,m}_n(\R;\Ha_+)$ is the subspace of
$\po^{d,m}_n(\R)$ consisting of all elements of the form
\begin{equation}
\big((z-1)f_1(z),(z-2)f_2(z),\cdots ,(z-m)f_m(z)\big)
\end{equation}
where $(f_1(z),\cdots ,f_m(z))\in \po^{d-1,m}_n(\R;\Ha_+).$
\end{enumerate}
\par
(iii)
It is easy to see that there is a homeomorphism
\begin{equation}\label{eq: odd case}
\po^{2d_0+1,m}_n(\R;\Ha_+)\cong \po^{2d_0,m}_n(\R;\Ha_+)
\quad
\mbox{for any }d_0\in \N,
\end{equation}
and that there is an inclusion
$\po^{d,m}_n(\R;\Ha_+)\subset
\po^{d,m}_n(\R)$
for any $d\in \N$.
We denote this natural inclusion map  by
\begin{equation}
 \iota^{d,m}_{n,\Ha_+}:\po^{d,m}_n(\R;\Ha_+)
\stackrel{\subset}{\longrightarrow}
\po^{d,m}_n(\R).
\end{equation}
\par
(iv) 
By making the identification  $S^2=\Ha_+\cup \infty$,   
we obtain a map
\begin{align}\label{eq: mapjH}
i^{d,m}_{n,\Ha_+}&:\Po^{d,m}_n(\R;\Ha_+)\to \Omega^2_d\CP^{mn-1}
\simeq \Omega^2S^{2mn-1}
\quad \mbox{given by}
\\
\nonumber
i^{d,m}_{n,\Ha_+}(f)(\alpha)
&=
\begin{cases}
[F_n(f_1)(\alpha):F_n(f_2)(\alpha):\cdots :F_n(f_m)(\alpha)]
&
\mbox{if }\alpha\in \Ha_+
\\
[1:1:\cdots :1] & \mbox{if }\alpha =\infty
\end{cases}
\end{align}
for $f=(f_1(z),\cdots ,f_m(z))\in \Po^{d,m}_n(\R;\Ha_+)$
and $\alpha \in S^2=\Ha_+\cup \infty$.
}
\end{definition}
The next lemma is a simple but crucial observation. 

\begin{lemma}\label{lmm: space poly(Ha+)}
$\I$ If $d\in \N$ is an even positive integer, there is a homeomorphism
$
\po^{d,m}_n(\R;\Ha_+)\cong \po^{\lfloor d/2\rfloor,m}_n(\C).
$
\par
$\II$
There is a homeomorphism
$
\po^{\lfloor d/2\rfloor,m}_n(\C)\cong
\po^{d,m}_n(\R;\Ha_+)
$
for any $d\geq 2$.
\end{lemma}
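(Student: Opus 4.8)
The plan is to construct an explicit homeomorphism by "square-rooting" the real polynomials through their roots in the upper half-plane. First I would treat part $\I$, where $d = 2d_0$ is even. Given an $m$-tuple $(f_1(z),\dots,f_m(z)) \in \po^{d,m}_n(\R;\Ha_+)$, each $f_k(z) \in \P_d(\R;\Ha_+)$ factors uniquely as $f_k(z) = \prod_{j=1}^{d_0}(z-\alpha_{k,j})(z-\overline{\alpha_{k,j}})$ with $\alpha_{k,j} \in \Ha_+$ as in (\ref{eq: polynomial representation}); equivalently, $f_k(z) = g_k(z)\overline{g_k}(z)$ where $g_k(z) = \prod_{j=1}^{d_0}(z-\alpha_{k,j}) \in \P_{d_0}(\C)$ and $\overline{g_k}$ denotes the polynomial obtained by conjugating coefficients. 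The assignment $(f_1,\dots,f_m) \mapsto (g_1,\dots,g_m)$ is well-defined because the factorization of a real polynomial with all roots off the real axis into its $\Ha_+$-part and $\Ha_-$-part is unique, and it is continuous because the roots of a monic polynomial depend continuously on its coefficients and no root crosses $\R$ within $\P_d(\R;\Ha_+)$ (so the partition of roots into $\Ha_+$ and $\Ha_-$ is locally constant). The inverse sends $(g_1,\dots,g_m)$ to $(g_1\overline{g_1},\dots,g_m\overline{g_m})$, which is manifestly continuous (polynomial multiplication).

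The only real content is checking that this bijection respects the non-resultant condition, i.e.\ that $(f_1,\dots,f_m)$ has no common root in $\C$ of multiplicity $\geq n$ if and only if $(g_1,\dots,g_m)$ does. If $\beta \in \Ha_+$ is a common root of all $g_k$ of multiplicity $\geq n$, then $\beta$ is a common root of all $f_k = g_k\overline{g_k}$ of multiplicity $\geq n$ (since $\overline{g_k}(\beta)$ may vanish as well, the multiplicity can only increase), so $(f_1,\dots,f_m) \notin \po^{d,m}_n(\R)$. Conversely, suppose all $f_k$ share a root $\gamma \in \C$ with multiplicity $\geq n$. Since every root of $f_k$ lies in $\Ha_+ \cup \Ha_-$, $\gamma$ lies in one of these; because the coefficients of $f_k$ are real, $\gamma$ and $\overline{\gamma}$ have the same multiplicity in $f_k$, so we may assume $\gamma \in \Ha_+$. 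The multiplicity of $\gamma$ in $f_k = g_k\overline{g_k}$ equals its multiplicity in $g_k$ plus its multiplicity in $\overline{g_k}$; but $\overline{g_k}$ has all its roots in $\Ha_-$, so $\gamma \in \Ha_+$ is not a root of $\overline{g_k}$, whence $\gamma$ has multiplicity $\geq n$ in $g_k$ for every $k$ — contradicting $(g_1,\dots,g_m) \in \po^{\lfloor d/2\rfloor,m}_n(\C)$. This establishes the homeomorphism $\po^{2d_0,m}_n(\R;\Ha_+) \cong \po^{d_0,m}_n(\C)$, which is part $\I$ since $\lfloor d/2 \rfloor = d_0$.

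For part $\II$, if $d \geq 2$ is even this is exactly part $\I$. If $d$ is odd, $d = 2d_0+1$ with $d_0 \geq 1$, then (\ref{eq: odd case}) gives a homeomorphism $\po^{2d_0+1,m}_n(\R;\Ha_+) \cong \po^{2d_0,m}_n(\R;\Ha_+)$, and composing with part $\I$ (noting $\lfloor d/2 \rfloor = d_0 = \lfloor 2d_0/2 \rfloor$) yields $\po^{d,m}_n(\R;\Ha_+) \cong \po^{\lfloor d/2\rfloor,m}_n(\C)$, as required. I expect the main (and only) obstacle to be the multiplicity bookkeeping in the non-resultant condition — specifically the observation that, because a root in $\Ha_+$ cannot be a root of the $\Ha_-$-factor $\overline{g_k}$, multiplicities are preserved exactly rather than merely bounded; everything else (continuity, bijectivity, the inverse map) is routine.
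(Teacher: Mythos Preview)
Your forward map $(f_1,\dots,f_m)\mapsto(g_1,\dots,g_m)$ is a well-defined continuous injection, and your multiplicity bookkeeping for the non-resultant condition is fine, but the map is \emph{not surjective} onto $\po^{d_0,m}_n(\C)$: each $g_k$ you produce has all of its roots in $\Ha_+$, so the image is only the (proper, open) subspace of $\po^{d_0,m}_n(\C)$ consisting of tuples whose roots all lie in $\Ha_+$. Correspondingly, your proposed inverse $(g_1,\dots,g_m)\mapsto(g_1\overline{g_1},\dots,g_m\overline{g_m})$ is not an inverse: if $g_k$ has a real root $\alpha$, then $g_k\overline{g_k}$ has $\alpha$ as a real root and hence does not lie in $\P_d(\R;\Ha_+)$; and even if $g_k$ avoids $\R$, the map is not injective --- for example $g_k=(z-i)^2$ and $g_k=(z-i)(z+i)$ both map to $(z^2+1)^2$.

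The missing ingredient is a choice of homeomorphism $\psi:\C\stackrel{\cong}{\longrightarrow}\Ha_+$, which the paper fixes at the outset. The paper's map sends $\prod_{k}(z-\alpha_k)\in\P_{d_0}(\C)$ to $\prod_{k}(z-\psi(\alpha_k))(z-\overline{\psi(\alpha_k)})\in\P_d(\R;\Ha_+)$, i.e.\ it first transports the roots into $\Ha_+$ via $\psi$ and only then forms the real polynomial. With this correction your argument goes through verbatim: since $\psi$ is a bijection of $\C$ onto $\Ha_+$, the root-multiset of $g_k$ is in bijection with the $\Ha_+$-root-multiset of $f_k$, so common roots of multiplicity $\geq n$ correspond exactly, and your multiplicity analysis (which tacitly used that the $\Ha_+$- and $\Ha_-$-factors are disjoint) now applies cleanly. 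Part $\II$ is then handled exactly as you wrote, via (\ref{eq: odd case}).
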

\begin{proof}
(i) Let $d=2d_0$ and let $\psi :\C \stackrel{\cong}{\longrightarrow} \Ha_+$ be any fixed homeomorphism.
Then we have the  homeomorphism 
$\psi_d:\P_{d_0}(\C)
\stackrel{\cong}{\longrightarrow} \P_{d}(\R;\Ha_+)$
given by $\psi_d\big(\prod_{k=1}^{d_0}(z-\alpha_k)\big)
=\prod_{k=1}^{d_0}(z-\psi (\alpha_k))(z-\overline{\psi (\alpha_k)})$ for 
$\alpha_k\in \C$.
This naturally extends to the desired homeomorphism
\begin{equation} 
\po^{d_0,m}_n(\C)
\stackrel{\cong}{\longrightarrow}
\po^{d,m}_n(\R;\Ha_+) 
\end{equation}
 given by
$\big(f_1(z),\cdots ,f_m(z)\big)
\mapsto
\big(\psi_d(f_1(z)),\cdots ,\psi_d(f_m(z))\big)$.
\par
(ii) The assertion (ii) easily follows from (\ref{eq: odd case}) and 
the assertion (i).
\end{proof}
\begin{thm}\label{thm: Theorem H}
If $mn\geq 3$, the natural map
$$
i^{d,m}_{n,\Ha_+}:\Po^{d,m}_n(\R;\Ha_+)\to \Omega^2_d\CP^{mn-1}
\simeq \Omega^2S^{2mn-1}
$$
is a homotopy equivalence through dimension $D(\lfloor d/2\rfloor;m,n;\C)$.
\end{thm}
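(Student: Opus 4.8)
The plan is to reduce the statement to Theorem~\ref{thm: KY8}~(i) by means of the homeomorphism established in Lemma~\ref{lmm: space poly(Ha+)}. The crucial point is that $\po^{d,m}_n(\R;\Ha_+)$ is, up to homeomorphism, nothing but $\po^{\lfloor d/2\rfloor,m}_n(\C)$: when $d$ is even this is part~(i) of that lemma via the homeomorphism $\psi_d$ induced by a fixed identification $\psi:\C\xrightarrow{\cong}\Ha_+$, and when $d$ is odd one first applies the homeomorphism (\ref{eq: odd case}) to drop to the even case $d-1$. So the domain of $i^{d,m}_{n,\Ha_+}$ has exactly the homotopy type of $\po^{\lfloor d/2\rfloor,m}_n(\C)$, and the target $\Omega^2_d\CP^{mn-1}\simeq\Omega^2 S^{2mn-1}$ is the same target as that of the natural map $i^{\lfloor d/2\rfloor,m}_{n,\C}$ (note $\Omega^2_d\CP^{mn-1}$ is independent of the degree $d$ up to homotopy).

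The key step, then, is to check that under these identifications the map $i^{d,m}_{n,\Ha_+}$ corresponds, up to homotopy, to the map $i^{\lfloor d/2\rfloor,m}_{n,\C}$ of Theorem~\ref{thm: KY8}. Concretely: for $d$ even, write $d=2d_0$ and consider the square
\begin{equation*}
\begin{CD}
\po^{d_0,m}_n(\C) @>\psi_d>\cong> \po^{d,m}_n(\R;\Ha_+)\\
@V{i^{d_0,m}_{n,\C}}VV @VV{i^{d,m}_{n,\Ha_+}}V\\
\Omega^2_{d_0}\CP^{mn-1} @>>> \Omega^2_d\CP^{mn-1}
\end{CD}
\end{equation*}
where the bottom map is induced by $\psi\cup\{\infty\}:S^2\to S^2$ on the source sphere (identifying the domain $S^2=\C\cup\infty$ used to define $i^{d_0,m}_{n,\C}$ with the domain $S^2=\Ha_+\cup\infty$ used to define $i^{d,m}_{n,\Ha_+}$). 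I would argue this square commutes up to homotopy by unwinding the definitions (\ref{eq: mapjC}) and (\ref{eq: mapjH}): both send a tuple $(f_1,\dots,f_m)$ to the map $\alpha\mapsto[F_n(f_1)(\alpha):\cdots:F_n(f_m)(\alpha)]$, so the only discrepancy is the reparametrization of the source by $\psi$, which is a homeomorphism of $S^2$ fixing $\infty$ and hence induces a self-homotopy-equivalence of $\Omega^2_d\CP^{mn-1}$; it changes the degree component only by the (fixed, nonzero) topological degree of $\psi$, and since the homotopy type of $\Omega^2_d\CP^{mn-1}$ does not depend on $d$ this causes no harm. For $d$ odd one adds to each $f_j$ a linear factor $(z-j)$ with real root outside $\Ha_+$; this does not change $F_n$ restricted to $\Ha_+$ up to homotopy, so the same comparison applies after using (\ref{eq: odd case}).

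Once the homotopy-commutative square is in hand, the conclusion is immediate: by Theorem~\ref{thm: KY8}~(i) the left vertical map $i^{\lfloor d/2\rfloor,m}_{n,\C}$ is a homotopy equivalence through dimension $D(\lfloor d/2\rfloor;m,n;\C)$, the horizontal maps are homotopy equivalences, and therefore so is $i^{d,m}_{n,\Ha_+}$ through the same range. I expect the main obstacle to be the bookkeeping in the comparison square — in particular, verifying carefully that the reparametrization of the domain $2$-sphere by $\psi$ is compatible with the chosen basepoint at $\infty$ and does not disturb the identification of path components in a way that would shift the stable range, and handling the odd-$d$ case cleanly so that the extra linear factors genuinely drop out up to homotopy. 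None of this is deep, but it is the part that needs care.
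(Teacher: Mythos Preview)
Your overall strategy matches the paper's: reduce to the complex case via Lemma~\ref{lmm: space poly(Ha+)} and treat odd $d$ via (\ref{eq: odd case}). But there is a real gap in your argument for the square. The claim that ``the only discrepancy is the reparametrization of the source by $\psi$'' is not correct: the top-right composite sends $(f_1,\ldots,f_m)$ to $\beta\mapsto[F_n(\psi_d(f_1))(\beta):\cdots:F_n(\psi_d(f_m))(\beta)]$, and $\psi_d(f_j)(z)=\prod_k(z-\psi(\alpha_k))(z-\overline{\psi(\alpha_k)})$ is a degree-$d$ real polynomial whose derivatives bear no simple relation to those of the degree-$d_0$ polynomial $f_j$. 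Thus $F_n(\psi_d(f_j))(\psi(\alpha))$ is not $F_n(f_j)(\alpha)$ even projectively, and the square does not commute by a mere change of domain variable. Establishing homotopy-commutativity here would require producing an actual homotopy, continuous in the tuple, between the two compositions---for instance by comparing both sides to a scanning-type map depending only on root configurations---and that is essentially reproving the relevant part of \cite{KY8} rather than quoting it.

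The paper sidesteps this by not invoking Theorem~\ref{thm: KY8} as a black box at all: it observes that the proof of \cite[Theorem 1.8]{KY8} uses $\C$ only as an open $2$-disk, so one may replace $\C$ by $\Ha_+$ verbatim throughout that argument and obtain the conclusion for $i^{d,m}_{n,\Ha_+}$ directly, with no comparison square needed.
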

\begin{proof}
Since there is a homeomorphism
$\po^{d,m}_n(\R ;\Ha_+)
\cong 
\po^{\lfloor d/2\rfloor,m}_n(\C)$
(by Lemma \ref{lmm: space poly(Ha+)}),
the proof of \cite[Theorem 1.8]{KY8} 
works verbatim
by replacing $\C$ by $\Ha_+$ in the case when $d$ is even. The case of odd $d$ can be easily reduced to the even one by (\ref{eq: odd case}). 
\end{proof}
\begin{lemma}\label{lmm: inequality}
If $d\in \N$ and $d_0=\lfloor d/2\rfloor$,
the following assertions hold:
\begin{enumerate}
\item[$\I$]
$\lfloor d/n\rfloor =2\lfloor d_0/n\rfloor$ or $\lfloor d/n\rfloor =2\lfloor d_0/n\rfloor +1.$
\item[$\II$]
$\lfloor d/(2n)\rfloor =\lfloor d_0/n\rfloor.$
\item[$\III$]
$\dis \lfloor \lfloor d/n\rfloor /2\rfloor =\lfloor d_0/n\rfloor$, and
$D(d;m,n)<D(\lfloor d/2\rfloor;m,n;\C)$.
\item[$\IV$]
Define a finite subset $\mathcal{F}^{d,m}_n\subset \N^2$ by
\end{enumerate}
\begin{equation}\label{eq: set F}
\mathcal{F}^{d,m}_n=\{(i,j)\in \N^2:i+2j\leq \lfloor d/n\rfloor\}.
\end{equation}
Then if $(i,j)\in \mathcal{F}^{d,m}_n$, 
$1\leq i< \lfloor d/n\rfloor $
and $1\leq j\leq  \lfloor d_0/n\rfloor,$
 and
\begin{equation}\label{eq:(i,j): condition}
\mathcal{F}^{d,m}_n
\subset
\{(i,j)\in \N^2:
1\leq i< \lfloor d/n\rfloor,
1\leq j\leq  \lfloor d_0/n\rfloor,
i+2j\leq \lfloor d/n\rfloor\}.
\end{equation}
\end{lemma}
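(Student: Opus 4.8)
The plan is to prove the four assertions essentially by elementary manipulation of floor functions, treating the cases $d$ even and $d$ odd separately whenever the parity of $d$ matters. Throughout write $d_0 = \lfloor d/2 \rfloor$, so that $d = 2d_0$ or $d = 2d_0+1$. For assertion (i), I would use the division algorithm: write $d_0 = qn + r$ with $0 \le r < n$, so that $d = 2qn + 2r + \epsilon$ with $\epsilon \in \{0,1\}$, and observe $0 \le 2r + \epsilon \le 2n-1 < 2n$, which forces $\lfloor d/n \rfloor = 2q + \lfloor (2r+\epsilon)/n \rfloor$ with $\lfloor (2r+\epsilon)/n \rfloor \in \{0,1\}$; since $\lfloor d_0/n \rfloor = q$, this is exactly the claimed dichotomy. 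For assertion (ii), the identity $\lfloor \lfloor x/a \rfloor / b \rfloor = \lfloor x/(ab) \rfloor$ for positive integers $a,b$ is a standard fact about floors; applying it with $x = d$, $a = 2$, $b = n$ gives $\lfloor d/(2n) \rfloor = \lfloor \lfloor d/2 \rfloor / n \rfloor = \lfloor d_0/n \rfloor$.

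For assertion (iii), the equality $\lfloor \lfloor d/n \rfloor / 2 \rfloor = \lfloor d_0/n \rfloor$ follows directly from (i): if $\lfloor d/n \rfloor = 2\lfloor d_0/n \rfloor$ then halving and taking the floor returns $\lfloor d_0/n \rfloor$, and if $\lfloor d/n \rfloor = 2\lfloor d_0/n \rfloor + 1$ then $\lfloor (2\lfloor d_0/n \rfloor + 1)/2 \rfloor = \lfloor d_0/n \rfloor$ as well. For the strict inequality $D(d;m,n) < D(\lfloor d/2 \rfloor; m,n;\C)$, I would unwind the definitions: by \eqref{eq: 1.1} and \eqref{eq: D(d;m,n)},
\[
D(d;m,n) = (mn-2)(\lfloor d/n\rfloor + 1) - 1,
\qquad
D(d_0;m,n;\C) = (2mn-3)(\lfloor d_0/n\rfloor + 1) - 1.
\]
Set $k = \lfloor d_0/n \rfloor \ge 1$ (using $d \ge n$, hence $d_0 \ge \lfloor n/2 \rfloor$; one must check the small cases but for $mn \ge 3$ the relevant ranges are fine — actually the statement is purely about $d,n$, and when $d_0 < n$ both sides still make sense and the inequality should be verified directly). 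By (i) we have $\lfloor d/n \rfloor \le 2k+1$, so
\[
D(d;m,n) \le (mn-2)(2k+2) - 1 = 2(mn-2)(k+1) - 1,
\]
while $D(d_0;m,n;\C) = (2mn-3)(k+1) - 1$. Since $2(mn-2) = 2mn - 4 < 2mn - 3$ and $k+1 \ge 1$, we get $2(mn-2)(k+1) < (2mn-3)(k+1)$, hence $D(d;m,n) < D(d_0;m,n;\C)$, as desired. (The single corner case to watch is when $\lfloor d/n\rfloor = 2k+1$ and $k=0$: then $D(d;m,n) = (mn-2)\cdot 2 - 1$ versus $D(d_0;m,n;\C) = (2mn-3) - 1$, and $2(mn-2) = 2mn-4 < 2mn-3$ still holds.)

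For assertion (iv), suppose $(i,j) \in \mathcal{F}^{d,m}_n$, i.e. $i,j \ge 1$ and $i + 2j \le \lfloor d/n \rfloor$. Then $i \le \lfloor d/n \rfloor - 2j \le \lfloor d/n \rfloor - 2 < \lfloor d/n \rfloor$, giving $1 \le i < \lfloor d/n \rfloor$. For the bound on $j$: from $2j \le \lfloor d/n \rfloor - i \le \lfloor d/n \rfloor - 1$ we get $j \le (\lfloor d/n \rfloor - 1)/2$, hence $j \le \lfloor (\lfloor d/n \rfloor - 1)/2 \rfloor \le \lfloor \lfloor d/n \rfloor / 2 \rfloor = \lfloor d_0/n \rfloor$ by (iii). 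This establishes the two individual bounds as well as the displayed inclusion \eqref{eq:(i,j): condition}. The main obstacle, such as it is, is purely bookkeeping: making sure the parity split in (i) is handled cleanly and that the edge cases with $\lfloor d/n \rfloor$ small (where some of the index sets may be empty) do not invalidate the inequality in (iii); none of this is conceptually hard, and the standard floor identities do the heavy lifting.
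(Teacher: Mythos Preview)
Your proposal is correct and follows essentially the same route as the paper: both arguments write $d_0 = qn + r$ and $d = 2d_0 + \epsilon_0$ to bound $\lfloor d/n\rfloor$ between $2q$ and $2q+1$, then read off (ii)--(iv) from this. The only cosmetic difference is that for (ii) you invoke the general floor identity $\lfloor \lfloor x/a\rfloor/b\rfloor = \lfloor x/(ab)\rfloor$ directly, whereas the paper re-derives it from the inequalities already obtained in (i); your explicit handling of the $k=0$ edge case in (iii) is a bit more careful than the paper, but the core computation $2(mn-2)(q+1) - (2mn-3)(q+1) = -(q+1) < 0$ is identical.
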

\begin{proof}
(i)
Let us write  $q=\lfloor d_0/n\rfloor$.
Then we can also write
\begin{align*}
 d&=2d_0+\epsilon_0 \ \mbox{ with }\ \epsilon_0\in \{0,1\},
\ \  
d_0=nq+\epsilon \ \mbox{ with }\ 0\leq \epsilon \leq n-1.
\\
\mbox{Since }\ 
&
\begin{cases}
0&\leq 2\epsilon +\epsilon_0\leq 2(n-1)+1=2n-1<2n, \mbox{ and}
\\
d&=2d_0+\epsilon_0=2(nq+\epsilon)+\epsilon_0=2nq+(2\epsilon+\epsilon_0),
\end{cases}
\\
2nq&\leq d<(2q+2)n.
\quad
\mbox{So }2q\leq \lfloor d/n\rfloor <2q+2.
\end{align*}
Hence, 
$
\lfloor d/n\rfloor =2q=2\lfloor d_0/2\rfloor
\  \mbox{or}\  \lfloor d/n\rfloor =2q+1=2\lfloor d_0/2\rfloor +1.
$
Thus, we have proved  (i). 
\par
(ii) Since
$2nq\leq d<(2q+2)n=2n(q+1)$ by (i), 
$q\leq d/(2n)<q+1$.
Thus,
we also have
$\lfloor d/(2n)\rfloor =q=\lfloor d_0/n\rfloor$
and (ii) was obtained.
\par
(iii) By using (i), we have
$$
\lfloor d/n\rfloor /2=q=\lfloor d_0/2\rfloor\ \ \mbox{ or }\ \ 
\lfloor d/n\rfloor /2=q+\frac{1}{2}=\lfloor d_0/2\rfloor +\frac{1}{2}.
$$
Thus, 
$\lfloor \lfloor d/n\rfloor /2\rfloor =q=\lfloor d_0/2\rfloor$.
Moreover,
since 
$\lfloor d/n\rfloor\leqq 2q+1$ (by (i)),
\begin{align*}
\delta &= D(d;m,n)-D(\lfloor d/2\rfloor;m,n;\C)
\\
&=
\big\{(mn-2)(\lfloor d/n\rfloor +1)-1\big\}-
\big\{(2mn-3)(q+1)-1\big\}
\\
&\leqq
(mn-2)(2q+2)-(2mn-3)(q+1)
\leqq -(q+1)<0
\end{align*}
and the assertion (iii) follows.
\par
(iv)
Suppose that $(i,j)\in \mathcal{F}^{d,m}_n$.
Since we can see that the first assertion holds and it remains show the second one.
Since $i\geq 1$, $1\leq 2j<\lfloor d/n\rfloor$.
\par
Hence, by using (iii), we have
$1\leq j\leq 
\lfloor \lfloor d/n\rfloor /2\rfloor =\lfloor d_0/n\rfloor$, and we obtain the assertion (iv).
\end{proof}

\section{The Vassiliev spectral sequence}
\label{section: spectral sequence}

In this section we construct a 
Vassiliev type spectral sequence converging to 
the homology of
$\Po^{d,m}_n(\R)$
by means of a
{\it non-degenerate} simplicial resolutions  of discriminants, and compute its $E^1$-terms.
\par\vspace{1mm}\par
First, we  summarize  the basic facts of the theory of  non-degenerate simplicial resolutions  
(\cite{Va}, \cite{Va2}; cf.  \cite{Mo2}) and the spectral sequences associated with them.
\begin{definition}\label{def: def}
{\rm
For a finite set $\textbf{\textit{v}} =\{v_1,\cdots ,v_l\}\subset \R^N$,
let $\sigma (\textbf{\textit{v}})$ denote the convex hull spanned by 
$\textbf{\textit{v}}.$
Suppose that $h:X\to Y$ is a surjective map such that
$h^{-1}(y)$ is a finite set for any $y\in Y$, and let
$i:X\to \R^N$ be an embedding.
\par\vspace{1mm}\par
(i)
Then
let  $\mathcal{X}^{\Delta}$  and $h^{\Delta}:{\mathcal{X}}^{\Delta}\to Y$ 
denote the space and the map
defined by
\begin{equation}
\mathcal{X}^{\Delta}=
\big\{(y,\textbf{\textit{u}})\in Y\times \R^N:
\textbf{\textit{u}}
\in \sigma (i(h^{-1}(y)))
\big\}\subset Y\times \R^N,
\ h^{\Delta}(y,\textbf{\textit{u}})=y.
\end{equation}
The pair $(\mathcal{X}^{\Delta},h^{\Delta})$ is called
{\it the simplicial resolution of }$(h,i)$.
In particular, $(\mathcal{X}^{\Delta},h^{\Delta})$
is called {\it a non-degenerate simplicial resolution} if for each $y\in Y$
any $k$ points of $i(h^{-1}(y))$ span $(k-1)$-dimensional simplex of $\R^N$.
\par
(ii)
For each $k\geq 0$, let $\mathcal{X}^{\Delta}_k\subset \mathcal{X}^{\Delta}$ be the subspace
given by 
\begin{equation}
\mathcal{X}_k^{\Delta}=
\big\{(y,\textbf{\textit{u}})\in \mathcal{X}^{\Delta}:
\textbf{\textit{u}} \in\sigma (\textbf{\textit{v}}),
\textbf{\textit{v}}=\{v_1,\cdots ,v_l\}\subset i(h^{-1}(y)),
l\leq k\big\}.
\end{equation}
We make identification $X=\mathcal{X}^{\Delta}_1$ by identifying 
 $x\in X$ with 
$(h(x),i(x))\in \mathcal{X}^{\Delta}_1$,
and we note that  there is an increasing filtration
\begin{equation*}\label{equ: filtration}
\emptyset =
\mathcal{X}^{\Delta}_0\subset X=\mathcal{X}^{\Delta}_1\subset \mathcal{X}^{\Delta}_2\subset
\cdots \subset \mathcal{X}^{\Delta}_k\subset \mathcal{X}^{\Delta}_{k+1}\subset
\cdots \subset \bigcup_{k= 0}^{\infty}\mathcal{X}^{\Delta}_k=\mathcal{X}^{\Delta}.
\end{equation*}
}
\end{definition}
Since the map $h^{\Delta}$ is a proper map,
it extends the map
$h^{\Delta}_+:\mathcal{X}^{\Delta}_+\to Y_+$
between one-point compactifications, where
$X_+$ denotes the one-point compactification of a locally compact space
$X$.

\begin{thm}[\cite{Va}, \cite{Va2} 
(cf. \cite{KY7}, \cite{Mo2})]\label{thm: simp}
Let $h:X\to Y$ be a surjective map such that
$h^{-1}(y)$ is a finite set for any $y\in Y,$ 
$i:X\to \R^N$ an embedding, and let
$(\mathcal{X}^{\Delta},h^{\Delta})$ denote the simplicial resolution of $(h,i)$.
\par
\begin{enumerate}
\item[$\I$]
If $X$ and $Y$ are semi-algebraic spaces and the
two maps $h$, $i$ are semi-algebraic maps, then
$h^{\Delta}_+:\mathcal{X}^{\Delta}_+\stackrel{\simeq}{\rightarrow}Y_+$
is a homology equivalence.
\item[$\II$]
If there is an embedding $j:X\to \R^M$ such that its associated simplicial resolution
$(\tilde{\mathcal{X}}^{\Delta},\tilde{h}^{\Delta})$
is non-degenerate,
the space $\tilde{\mathcal{X}}^{\Delta}$
is uniquely determined up to homeomorphism and
there is a filtration preserving homotopy equivalence
$q^{\Delta}:\tilde{\mathcal{X}}^{\Delta}\stackrel{\simeq}{\rightarrow}{\mathcal{X}}^{\Delta}$ such that $q^{\Delta}\vert X=\mbox{id}_X$.
\item[$\III$]
A non-degenerate simplicial resolution exists even if the map $h$ is not finite to one.
\end{enumerate}
\end{thm}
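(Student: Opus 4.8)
We outline the argument; full details are in \cite[\S1, \S3]{Va2} (see also \cite{Va}, \cite{KY7}, \cite{Mo2}). For (i), the plan is to use that $h^{\Delta}:\mathcal{X}^{\Delta}\to Y$ is a proper semi-algebraic surjection each of whose fibres equals $\{y\}\times\sigma(i(h^{-1}(y)))$, a simplex and hence contractible, and then to invoke the semi-algebraic Vietoris--Begle theorem: a proper surjection of semi-algebraic spaces with acyclic fibres induces an isomorphism on Borel--Moore homology. Concretely I would triangulate $h^{\Delta}$ as a semi-algebraic map, choose a semi-algebraic stratification of $Y$ over which $h^{\Delta}$ is locally trivial, and run the Leray spectral sequence for Borel--Moore homology; passing to one-point compactifications then gives precisely that $h^{\Delta}_+:\mathcal{X}^{\Delta}_+\to Y_+$ is a homology equivalence. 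The semi-algebraic hypotheses on $X,Y,h,i$ are used exactly to secure this local triviality, and this is the step I expect to require the most care.

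For (iii), I would build a non-degenerate resolution by embedding $X$ ``in general position'' into $\R^{\infty}=\varinjlim_m\R^m$: compose an arbitrary embedding of $X$ with a generic map so that, for every $y\in Y$, the images of any finite set of points of $h^{-1}(y)$ are affinely independent. Since any finite subset of $h^{-1}(y)$ lies in some $\R^m$, the convex hull $\sigma(i(h^{-1}(y)))$ is then a well-defined (possibly infinite-dimensional) simplex, namely the colimit of its finite faces, and the filtration $\{\mathcal{X}^{\Delta}_k\}_{k\geq 0}$ is defined exactly as in Definition \ref{def: def}, with no finiteness assumption on $h$.

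For (ii), I would first observe that a non-degenerate $\tilde{\mathcal{X}}^{\Delta}$ is, fibrewise over $y$, simply the abstract simplex on the set $h^{-1}(y)$ equipped with its filtration by number of vertices and with canonical affine face identifications; hence two non-degenerate embeddings $j,j'$ yield canonically homeomorphic total spaces over $Y$, compatibly with the filtration and with the identification $\mathcal{X}^{\Delta}_1=X$ (alternatively, join $j$ and $j'$ by non-degenerate embeddings into a large $\R^M$ and use straight-line homotopies). For the comparison map, sending a point of a fibre simplex of $\tilde{\mathcal{X}}^{\Delta}$ over $j(h^{-1}(y))$ with barycentric coordinates $(t_x)$ to the point with the same coordinates over $i(h^{-1}(y))$ defines a filtration-preserving continuous surjection $q^{\Delta}:\tilde{\mathcal{X}}^{\Delta}\to\mathcal{X}^{\Delta}$ with $q^{\Delta}\vert X=\mathrm{id}_X$. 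To show $q^{\Delta}$ is a homotopy equivalence I would work one filtration stage at a time: the subquotient $\tilde{\mathcal{X}}^{\Delta}_k/\tilde{\mathcal{X}}^{\Delta}_{k-1}$ fibres over the space of $k$-point subsets of fibres of $h$ with fibre an open $(k-1)$-simplex, $q^{\Delta}$ respects this structure, and on fibres it is a homotopy equivalence (an open simplex mapped onto its, possibly collapsed, affine image, the collapse a homotopy equivalence by coning, compatibly over the base); an induction on $k$ using the resulting cofibre sequences and the exhaustiveness of the two filtrations then promotes these stagewise equivalences to a global homotopy equivalence. The main obstacle throughout is the semi-algebraic Vietoris--Begle argument of (i); parts (ii) and (iii) amount to careful but routine bookkeeping with filtrations, general position, and degenerate simplices.
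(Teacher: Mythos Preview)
Your outline is correct and is essentially the standard argument found in the cited references; the paper itself does not give a proof at all but simply refers the reader to \cite[Theorem 2.2]{KY10} for details. So you have supplied more than the paper does, and what you supply matches the approach of Vassiliev \cite{Va, Va2} (semi-algebraic Vietoris--Begle for (i), Veronese-type general-position embeddings into $\R^\infty$ for (iii), and the barycentric comparison map for (ii)). One small caution on (ii): your inductive promotion from stagewise equivalences to a global homotopy equivalence tacitly uses that the filtration inclusions $\mathcal{X}^\Delta_{k-1}\hookrightarrow\mathcal{X}^\Delta_k$ are cofibrations; this is supplied by the semi-algebraic (or simplicial) structure, but it is worth saying explicitly.
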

\begin{proof}
See the proof of \cite[Theorem 2.2]{KY10} for the details. 
\end{proof}
\begin{remark}
{\rm
It is known that the map $h^{\Delta}_+$ is a homotopy equivalence 
\cite[page 156]{Va2}.
However  this stronger assertion is not needed in this paper.}
\qed
\end{remark}
\par\vspace{1mm}\par
Now
recall several basic definitions and
notations.

\begin{definition}
{\rm
(i)
For connected space $X$, let $F(X,d)$ denote 
{\it the ordered configuration space}
of distinct $d$ points of $X$ defined by
\begin{equation}
F(X,d)=\{(x_1,\cdots ,x_d)\in X^d:x_i\not= x_j\mbox{ if }i\not= j\}.
\end{equation}
\par
(ii)
Let $S_d$ denote the symmetric group of $d$-letters.
Then the group $S_d$
acts on $F(X,d)$ by the coordinate permutation and
let $C_d(X)$ denote {\it the unordered configuration space of $d$-distinct points} of $X$ defined by the orbit space
\begin{equation}
C_d(X)=F(X,d)/S_d.
\end{equation}
\par
(iii) For connected space $X$, let $D_j(X)$ denote 
{\it the equivariant half-smash product of }$X$
defined by
\begin{equation}
D_j(X)=F(X,j)_+\wedge_{S_j}X^{\wedge j},
\end{equation}
where we set $F(X,j)_+=F(X,j)\cup \{*\}$
\ (disjoint union),
$X^{\wedge j}=X\wedge X\wedge \cdots \wedge X$ 
($j$-times) and
the $j$-th symmetric group $S_j$ acts on $X^{\wedge j}$ by the coordinate permutation.
In particular, for $X=S^1$, we set
\begin{equation}\label{eq: Dd}
D_j=D_j(S^1)
=F(\C,j)_+\wedge_{S_j}(S^1)^{\wedge j}.
\end{equation}
}
\end{definition}

Let $mn\geq 3$ and we shall construct the Vassiliev-type 
 spectral sequence.

\begin{definition}\label{Def: 3.1}
{\rm
(i)
Let
$\Sigma^{d,m}_{n}$  denote \emph{the discriminant} of
$\Po^{d,m}_n(\R)$ in $\P_d(\R)^m$ 
given by
the complement
\begin{equation}
\Sigma^{d,m}_{n} =
\P_d(\R)^m\setminus \Po^{d,m}_n(\R).
\end{equation}
\par
(ii)
For each $m$-tuple
$(f_1(z),\cdots ,f_m(z))\in \P_d(\R)^m$ 
let $F^m_n(f_1,\cdots ,f_m)$ denote the $mn$-tuple of
$\R$-coefficients monic polynomials of the same degree $d$
defined by
\begin{equation}\label{eq: bfF}
F^m_n(f_1,\cdots ,f_m)=F^m_n(f_1,\cdots ,f_m)(z)=
(F_n(f_1),\cdots ,F_n(f_m)).
\end{equation}
Since $F^m_n(f_1,\cdots ,f_m)$ is an $mn$-tuple of $\R$-coefficients polynomials,
$$
F^m_n(f_1,\cdots ,f_m)(\alpha )=0
\Leftrightarrow
F^m_n(f_1,\cdots ,f_m)(\overline{\alpha} )=0
\quad
\mbox{for }\alpha\in \Ha_+.
$$
Thus, the space $\Sigma^{d,m}_{n}$ 
is
given by
\begin{align}\label{eq: sigma definition}
\Sigma^{d,m}_{n}&=
\{f\in \P_d(\R)^m :
F^m_n(f)(x)={\bf 0}
\mbox{ for some } x\in \overline{\Ha}_+\},
\end{align}
where ${\bf 0}\in \C^{mn}$,
$F^m_n(f)=F^m_n(f_1,\cdots ,f_m)$
for $f=(f_1(z),\cdots ,f_m(z))\in \P_d(\R)^m$
and   $\overline{\Ha}_+$ denotes the space defined by
\begin{equation} 
\overline{\Ha}_+=
\Ha_+\cup \R=\{\alpha \in \C: \mbox{Im }\alpha \geq 0\}.
\end{equation}
\par
(iii)
Let
$Z^{d,m}_{n}\subset \Sigma^{d,m}_{n}\times\C$ 
denote 
{\it the tautological normalization} of $\Sigma^{d,m}_{n}$ 
given by
\begin{equation*}
Z^{d,m}_{n}=\{
((f_1(z),\cdots ,f_m(z)),x)\in \Sigma^{d,m}_{n}\times
\overline{\Ha}_+:
F^m_n(f_1,\cdots ,f_m)(x)={\bf 0}
\}.
\end{equation*}
Projection on the first factor  gives the surjective map
\begin{equation}
\pi^{d,m}_{n}:Z^{d,m}_{n}\to \Sigma^{d,m}_{n}.
\end{equation}
\par
(iv)
Let $\varphi_{\R} :\P_d(\R)^{nm}
\stackrel{\cong}{\rightarrow}
\R^{dmn}$ 
be any fixed homeomorphism.
We identify $\C =\R^2$ and define the embedding
$i_{\R}:Z^{d,m}_{n}\to \R^{dmn}\times \C = \R^{dmn+2}$
by
\begin{equation}\label{3.10}
i_{\R}((f_1,\cdots ,f_{m}),x)=(\varphi_{\R} (F_n(f_1),\cdots ,F_n(f_m)),
x)
\end{equation}
for $((f_1,\cdots ,f_{m}),x)\in Z^{d,m}_{n}.$
\par\vspace{1mm}\par
(v)
Let 
$(\SZ_{},\pi^{\Delta}_{}:\SZ_{} \to \Sigma^{d,m}_{n})$ 
be non-degenerate simplicial resolution of $(\pi^{d,m}_{n},i_{\R})$.
Then it is easy to see that there is a
natural increasing filtration
\begin{eqnarray*}
\emptyset 
&=&
\SZ_{0}
\subset \SZ_{1}\subset 
\SZ_{2}\subset \cdots \cdots\subset
\bigcup_{k= 0}^{\infty}\SZ_{k}=\SZ_{},
\end{eqnarray*}
such that
$\mathcal{X}^d_{k}=\mathcal{X}^d_{}$ 
if $k>\lfloor d/n\rfloor .$
\qed
}
\end{definition}
By Theorem \ref{thm: simp},
the map
$\pi_{+}^{\Delta}:\SZ_{+}\stackrel{\simeq}{\rightarrow}{\Sigma^{d,m}_{n +}}$
is a homology equivalence.
The filtration on ${\mathcal{X}^{d}_{+}}$ gives rise to a spectral sequence
\begin{equation}
\big\{E_{t;d}^{k,s},
d_t:E_{t;d}^{k,s}\to E_{t;d}^{k+t,s+1-t}
\big\}
\Rightarrow
H^{k+s}_c(\Sigma_{n}^{d,m};\Z),
\end{equation}
where 
$H_c^k(X;\Z)$ denotes the cohomology group with compact supports 
of a locally compact space $X$ given by 
$
H_c^k(X;\Z)= \tilde{H}^k(X_+;\Z)
$ and
\begin{equation}
E_{1;d}^{k,s}=
\tilde{H}^{k+s}({\mathcal{X}_{k}^{d}}_+/{\SZ_{k-1}}_+;\Z) 
=H^{k+s}_c(\SZ_{k}\setminus\SZ_{k-1};\Z)
\end{equation}
(since ${\mathcal{X}_{k}^{d}}_+/{\SZ_{k-1 +}}
\cong (\SZ_{k}\setminus \SZ_{k-1})_+$). 
\par
Since there is a homeomorphism
$\P_d(\R)^m\cong \R^{dm}$,
by Alexander duality  there is a natural
isomorphism
\begin{equation}\label{Al}
\tilde{H}_k(\Po^{d,m}_n(\R);\Z)
\cong
H_c^{dm-k-1}(\Sigma_{n}^{d,m};\Z)
\quad
\mbox{for any }k.
\end{equation}
By
reindexing we obtain 
{\it the Vassiliev-type
spectral sequence}
\begin{eqnarray}\label{SS}
&&
\big\{E^{t;d}_{k,s}, \ d^{t}_{}:E^{t;d}_{k,s}\to E^{t;d}_{k+t,s+t-1}
\big\}
\Rightarrow \tilde{H}_{s-k}(\Po^{d,m}_n(\R);\Z),
\end{eqnarray}
where we set
\begin{equation}
E^{1;d}_{k,s}=
H^{dm+k-s-1}_c(\SZ_{k}\setminus\SZ_{k-1};\Z).
\end{equation}
\begin{definition}\label{rmk: roots}
{\rm
Let $\Ha_+=\{\alpha \in \C: \mbox{Im }\alpha >0\}$ denote the upper half space, and
let $(d,k)\in \N^2$ be a pair of positive integers
such that $1\leq k\leq \lfloor d/n\rfloor$. 
\par\vspace{1mm}\par
(i)
Let
$\Sigma^{d,m}_{n}(k)\subset \Sigma^{d,m}_{n}$
denote the subspace
consisting of all $m$-tuples
$(f_1(z),\cdots ,f_m(z))\in \Sigma^{d,m}_{n}$
such that
the polynomials
$\{f_t(z)\}_{t=1}^m$ have exactly
$k$ common roots of multiplicity $\geq n$ in $\overline{\Ha}_+$.
\par (ii)
Let $(f_1(z),\cdots ,f_m(z))\in \Sigma^{d,m}_{n}(k)$.
Since each of these polynomials has real coefficients, these common roots of multiplicity $\geq n$ can be uniquely represented as a set 
\begin{equation}\label{eq: roots}
\tilde{c}=
\{x_1,\cdots ,x_i; \alpha_1,\overline{\alpha}_1,\cdots .\alpha_j,
\overline{\alpha}_j\}
\quad (i+j=k,0\leq i\leq \lfloor k/2\rfloor)
\end{equation}
of 
$(k+j)$-complex numbers
which satisfy the following two conditions:
\begin{enumerate}
\item[(\ref{eq: roots}.1)]
$x_s\in \R$ for each $1\leq s\leq i$ and $x_i\not= x_l$ if $i\not=l$.
\item[(\ref{eq: roots}.2)]
$\alpha_t\in \Ha_+$
for each $1\leq t\leq j$ and
$\alpha_t\not= \alpha_l$ if $t\not= l$.
\end{enumerate}
We define a subspace $\Sigma^{d,m}_{n}(i,j)\subset \Sigma^{d,m}_{n}(k)$
as the space of all $m$-tuples $(f_1(z),\cdots ,f_m(z))\in \Sigma^{d,m}_{n}(k)$ whose common roots of multiplicity $\geq n$
satisfy the  two above conditions  (\ref{eq: roots}.1) and (\ref{eq: roots}.2).
\par\vspace{1mm}\par
(iii)
Note that $(\pi_{}^{\Delta})^{-1}(\Sigma^{d,m}_{n}(k))
=\SZ_{k}\setminus \SZ_{k-1}$.
For each pair $(i,j)\in (\Z_{\geq 0})^2$ with $i+j=k$,
define a subspace
$\SZ_{k}(i,j) \subset\SZ_{k}\setminus \SZ_{k-1}$
by
\begin{equation}
\SZ_{k}(i,j)=(\pi^{\Delta}_{})^{-1}(\Sigma^{d,m}_{n}(i,j)).
\end{equation}
It is easy to see that
\begin{align}\label{eq: path-components}
\dis \SZ_{k}\setminus \SZ_{k-1}&=\coprod_{i+j=k}\SZ_{k}(i,j)
\quad \mbox{(disjoint union)},
\end{align}
where
$\{\SZ_{k}(i,j):i\geq 0,j\geq 0 ,
i+j=k\}$
are the path components of $\SZ_{k}\setminus \SZ_{k-1}$.
}
\end{definition}
%
\begin{lemma}\label{lemma: vector bundle}
If $mn\geq 3$,
$1\leq k\leq \lfloor d/n\rfloor$ and
$(i,j)\in (\Z_{\geq 0})^2$ with $i+j=k$, the space
$\SZ_{k}(i,j)$
is homeomorphic to the total space of a real affine
bundle  $\xi^d_k(i,j)$ over $C_i(\R)\times C_j(\Ha_+)$ with real rank 
$l_{d,k}(i,j)=m(d-nk-nj)+k-1$.
\end{lemma}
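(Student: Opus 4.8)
The plan is to analyze the structure of $\SZ_{k}(i,j)$ by tracking what data determines a point of it: an $m$-tuple $(f_1,\dots,f_m)\in\Sigma^{d,m}_n(i,j)$ together with a point $\textbf{\textit{u}}$ in the interior of the simplex spanned by the images under $i_\R$ of the normalization fibre over it. Since the simplicial resolution is non-degenerate, that simplex has dimension exactly $k-1$ (it is spanned by the $k$ distinct common roots, counting a conjugate pair $\{\alpha_t,\overline\alpha_t\}$ as a single root in $\overline\Ha_+$), so the choice of $\textbf{\textit{u}}$ contributes an open $(k-1)$-simplex, i.e.\ a real affine space of dimension $k-1$. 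This explains the ``$+k-1$'' in the rank formula, and reduces the problem to understanding the map $\SZ_{k}(i,j)\to C_i(\R)\times C_j(\Ha_+)$ sending $(f_1,\dots,f_m;\textbf{\textit{u}})$ to the unordered configuration $\tilde c=\{x_1,\dots,x_i\}\cup\{\alpha_1,\overline\alpha_1,\dots,\alpha_j,\overline\alpha_j\}$ of common roots of multiplicity $\geq n$ (recorded as $i$ real points and $j$ points of $\Ha_+$).

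First I would fix a configuration $(\{x_s\},\{\alpha_t\})\in C_i(\R)\times C_j(\Ha_+)$ and describe the fibre over it. A point $f_l\in\P_d(\R)$ lies over this configuration precisely when $f_l$ is divisible by the real polynomial $g(z)=\prod_{s=1}^i(z-x_s)^n\cdot\prod_{t=1}^j\big((z-\alpha_t)(z-\overline\alpha_t)\big)^n$, which has degree $n(i+2j)=n(k+j)$; equivalently $f_l=g(z)\cdot h_l(z)$ with $h_l$ a real monic polynomial of degree $d-n(k+j)$. However, there is a genuine constraint: we need the $f_l$'s to have \emph{exactly} these common roots of multiplicity $\geq n$ and no others, so that $(f_1,\dots,f_m)$ actually lies in $\Sigma^{d,m}_n(i,j)$ and not in a deeper stratum. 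The key point — and this is the standard move in this circle of ideas, cf.\ the analogous lemmas in \cite{KY8}, \cite{KY10} — is that this ``no extra common high-multiplicity roots'' condition is an open dense condition cut out \emph{linearly} on the fibre once we work with the simplicial resolution: passing to $\SZ$ replaces the singular discriminant by something where over each stratum the fibre is an honest affine subspace. Concretely, the condition that the $h_l$ have no further common root of multiplicity $\geq n$ at the chosen points, together with the divisibility, makes the fibre an affine space, and I would verify its dimension is $m(d-n(k+j))+k-1=m(d-nk-nj)+k-1=l_{d,k}(i,j)$ (the $m$ monic $h_l$'s of degree $d-n(k+j)$ give $m(d-n(k+j))$ real parameters, plus the $k-1$ from $\textbf{\textit{u}}$).

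Next I would upgrade this fibrewise description to a bundle statement. Over $C_i(\R)\times C_j(\Ha_+)$ one has the tautological family of polynomials $g$ depending continuously (indeed algebraically/semialgebraically) on the configuration, and the assignment (configuration) $\mapsto$ (affine space of admissible $(h_1,\dots,h_m,\textbf{\textit{u}})$) is locally trivial: choosing local sections of the configuration space, one trivializes the family of degree-$n(k+j)$ divisors $g$, hence trivializes the family of quotient affine spaces of $h_l$'s, and the simplex coordinate $\textbf{\textit{u}}$ varies in a trivial affine $(k-1)$-bundle over the base. Thus $\SZ_k(i,j)$ is the total space of a real affine bundle $\xi^d_k(i,j)$ of the asserted rank. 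I expect the main obstacle to be the careful justification that the ``exactly $k$ common roots of multiplicity $\geq n$'' locus becomes an affine (rather than merely semialgebraic) subspace of each fibre after passing to the non-degenerate simplicial resolution, and that the affine structure is compatible with the base so as to give a bona fide affine bundle; this is where one must invoke the precise properties of $\SZ$ from Theorem \ref{thm: simp} and mimic the corresponding argument in \cite{KY10} verbatim, substituting $\overline\Ha_+$ for the relevant domain and keeping track of the real-versus-conjugate-pair bookkeeping that produces the splitting of the base as $C_i(\R)\times C_j(\Ha_+)$.
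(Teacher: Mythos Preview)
Your approach is essentially the paper's: project $\SZ_k(i,j)$ to $C_i(\R)\times C_j(\Ha_+)$ via the $k$ vertices determined by $\textbf{\textit{u}}$, then compute the fibre dimension. The paper does the fibre computation by writing the divisibility constraints as the linear system $f_s^{(t)}(x_u)=0$ and invoking Vandermonde-type independence; your factorization $f_l=g\cdot h_l$ with $\deg g=n(i+2j)=n(k+j)$ gives the same count more directly, and is equally valid.

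There is, however, a genuine misconception in your write-up. You treat the requirement that the $f_l$ have \emph{exactly} the prescribed $k$ common roots of multiplicity $\geq n$ as a real constraint on the fibre, call it ``the main obstacle'', and suggest it somehow becomes affine after passing to the resolution. This is not how the simplicial resolution works. In $\SZ_k\setminus\SZ_{k-1}$ a point is a pair $(y,\textbf{\textit{u}})$ with $\textbf{\textit{u}}$ in the open interior of some $(k-1)$-face of the full simplex over $y$; by non-degeneracy that face is spanned by a unique $k$-element subset of $i_{\R}((\pi^{d,m}_n)^{-1}(y))$, and the projection $\pi^d_{k;i,j}$ records precisely that subset. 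A tuple $y=(f_1,\dots,f_m)$ with \emph{more} than $k$ common roots of multiplicity $\geq n$ still appears --- once for every $k$-element subset of its root set. Hence the fibre over a fixed configuration $c$ consists of \emph{all} $(f_1,\dots,f_m)$ divisible by $g$ (an honest affine subspace of $\P_d(\R)^m$, of the dimension you computed) crossed with the open $(k-1)$-simplex; no ``no further common roots'' condition is imposed or needed. (The paper's phrasing of $\SZ_k(i,j)$ via $(\pi^{\Delta})^{-1}(\Sigma^{d,m}_n(i,j))$ is a bit loose on exactly this point, which is likely what misled you; its proof uses the description just given.) Once you see this, your anticipated obstacle disappears and the affine-bundle structure is immediate from local triviality of the family $c\mapsto g$.
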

\begin{proof}
The argument is exactly analogous to the one used in the proof of
 \cite[Lemma 3.2]{KY10} or     
\cite[Lemma 3.3]{KY8}.
Namely, an element of $\SZ_{k}(i,j)$ is represented by
the $(m+1)$-tuple 
$(f_1(z),\cdots ,f_m(z),\textbf{\textit{u}})$, where 
$(f_1(z),\cdots ,f_m(z))$ is an $m$-tuple of monic polynomials of the same
degree $d$
in $\Sigma^{d,m}_{n}$ and $\textbf{\textit{u}}$ is an element of the interior of
the span of the images of $k$ distinct points 
$$
c=(\{x_1,\cdots, x_i\},\{\alpha_1,
\cdots ,
\alpha_j\})\in
C_i(\R)\times C_{j}(\Ha_+)
$$ 
under a suitable embedding $i_k.$\footnote{%
This means that the embedding $i_k$ satisfies the condition 
\cite[(2.3)]{KY10}.
}
Note that the following $(k+j)$-points
$$
\tilde{c}=
\{x_1,\cdots ,x_i,\alpha_1,
\overline{\alpha}_1,
\alpha_2,
\overline{\alpha}_2,
\cdots ,\alpha_j,\overline{\alpha}_j\}
$$
are  common roots of 
$\{f_s(z)\}_{s=1}^m$ of multiplicity $n$.
\par
By the definition of the non-degenerate simplicial resolution
and
\cite[Lemma 2.5]{KY10},
the $k$ distinct points $c$ 
are uniquely determined by $\textbf{\textit{u}}$.
Thus, there is the projection map
\begin{equation}\label{eq: projection}
\pi_{k;i,j}^d :{\cal X}^{d}_{k}(i,j)
\to C_i(\R)\times C_{j}(\Ha_+)
\end{equation}
defined by
$((f_1,\cdots ,f_m),\textbf{\textit{u}}) \mapsto 
(\{x_1,\cdots ,x_k\}, \{\alpha_1,\cdots ,\alpha_j\})$. 
\par
Now suppose that $1\leq k\leq \lfloor d/n\rfloor$,
$i,j\geq 0$ with $i+j=k$, and
let
$c=(\{x_1,\cdots, x_i\},\{\alpha_1,\cdots ,
\alpha_j \})\in
C_i(\R)\times C_{j}(\Ha_+)$
 be any fixed element. Consider the fibre  $(\pi^d_{k;i,j})^{-1}(c)$.
It is easy to see that the condition
that a polynomial $f_s(z)\in\P_d(\R)$
is divisible by
$\prod_{u=1}^i(z-x_u)^n$,
is equivalent to the following the condition:
\begin{equation}\label{equ: equation}
f^{(t)}_s(x_u)=0
\quad
\mbox{for all }0\leq t<n,\ 1\leq u\leq i.
\end{equation}
In general, for each $0\leq t< n$ and $1\leq u\leq i$,
the condition $f^{(t)}_s(x_u)=0$ 
gives
one  linear condition on the coefficients of $f_s(z)$,
and this determines an affine hyperplane in $\P_d(\R)\cong \R^d$. 
For example, if $f_s(z)=z^d+\sum_{l=1}^da_{l}z^{d-l}$,
then
$f_s(x_u)=0$ for all $1\leq u\leq i$
if and only if
\begin{equation*}\label{equ: matrix equation}
\begin{bmatrix}
1 & x_1 & x_1^2 & x_1^3 & \cdots & x_1^{d-1}
\\
1 & x_2 & x_2^2 & x_2^3 & \cdots & x_2^{d-1}
\\
\vdots & \ddots & \ddots & \ddots & \ddots & \vdots
\\
1 & x_i & x_i^2 & x_i^3 & \cdots & x_i^{d-1}
\end{bmatrix}
\cdot
\begin{bmatrix}
a_{d}\\ a_{d-1} \\ \vdots 
\\ a_{1}
\end{bmatrix}
=
-
\begin{bmatrix}
x_1^d\\ x_2^d \\ \vdots 
\\ x_i^d
\end{bmatrix}.
\end{equation*}
Similarly, $f^{\p}_s(x_u)=0$ for all $1\leq u\leq i$
if and only if
\begin{equation*}\label{equ: matrix equation2}
\begin{bmatrix}
0 &1 & 2x_1 & 3x_1^2 & \cdots & (d-1)x_1^{d-2}
\\
0 & 1 & 2x_2 & 3x_2^2 & \cdots & (d-1)x_2^{d-2}
\\
\vdots & \vdots & \ddots & \ddots & \ddots & \vdots
\\
0 &1 & 2x_i & 3x_i^2 & \cdots & (d-1)x_i^{d-2}
\end{bmatrix}
\cdot
\begin{bmatrix}
a_d \\
 a_{d-1} \\ \vdots 
\\ a_{1}
\end{bmatrix}
=
-
\begin{bmatrix}
dx_1^{d-1}\\ dx_2^{d-1} \\ \vdots 
\\ dx_i^{d-1}
\end{bmatrix}
\end{equation*}
and
$f^{\p\p}_s(x_u)=0$ for all $1\leq u\leq i$
if and only if
\begin{equation*}\label{equ: matrix equation2}
\begin{bmatrix}
0 & 0 & 2 & 6x_1 & \cdots & (d-1)(d-2)x_1^{d-3}
\\
0 & 0 & 2 & 6x_2 & \cdots & (d-1)(d-2)x_2^{d-3}
\\
\vdots & \vdots & \ddots & \ddots & \ddots & \vdots
\\
0 & 0 & 2 & 6x_i & \cdots & (d-1)(d-2)x_i^{d-3}
\end{bmatrix}
\cdot
\begin{bmatrix}
a_d \\
 a_{d-1} \\ \vdots 
\\ a_{1}
\end{bmatrix}
=
-
\begin{bmatrix}
d(d-1)x_1^{d-2}\\ 
d(d-1)x_2^{d-1} 
\\ \vdots 
\\ d(d-1)x_i^{d-2}
\end{bmatrix}
\end{equation*}
and so on.
Since $1\leq i\leq k\leq \lfloor d/n\rfloor$ and
 $\{x_u\}_{u=1}^i\in C_i(\R)$, it follows from
the properties of Vandermonde matrices  and Gaussian elimination as in the proof of
\cite[Lemma 3.2]{KY10}
that the the condition  
(\ref{equ: equation}) 
is equivalent to exactly $ni$ affinely independent conditions on the coefficients of 
$f_s(z)$.
Hence,
we see that 
the space of $m$-tuples $(f_1(z),\cdots ,f_m(z))\in\P_d(\R)^m$ 
which satisfy
the condition (\ref{equ: equation}) for each $1\leq s\leq m$
is the intersection of $mni$ real affine hyperplanes in general position, and
has real codimension $mni$ in $\P_d(\R)^m$.
\par
Arguing in exactly the same manner, we see that the condition that each polynomial
$f_s(z)$  is divisible by
$\prod_{t=1}^j(z-\alpha_t)^n$ 
for each $1\leq s\leq m$,
gives a subspace of  {\it complex} codimension $mnj$ in $\P_d(\C)^m$.
Since $f_s(z)$ is a real coefficient polynomial,
$z=\alpha_t$ is a root of $f_s(z)$ of multiplicity $n$
if and only if  the same holds for 
$z=\overline{\alpha}_t$. 
Thus, the fibre $(\pi_{k;i,j}^d)^{-1}(c)$ is homeomorphic  to the product of an open $(k-1)$-simplex 
 with the real affine space of dimension
 $dm -(mni +2mnj)=dm-mn(k+j)
 =m(d-nk-nj)$.
We can check that local triviality holds.
Hence, we see that
$\SZ_{k}(i,j)$ is a real affine bundle over $C_i(\R)\times C_{j}(\Ha_+)$ of rank $l_{d,k}(i,j)
=m(d-nk-nj)+k-1$.
\end{proof}

\begin{lemma}\label{lemma: E1}
If $1\leq k\leq  \lfloor d/n\rfloor$
and $mn\geq 3$,
there is a natural isomorphism
$$
E^{1;d}_{k,s}\cong
\Big(\bigoplus_{j=1}^{k}
\tilde{H}_{s-(mn-1)k}(\Sigma^{(mn-2)j}D_j;\Z)\Big)
\oplus \tilde{H}_{s-(mn-1)k}(S^0;\Z).
$$ 
\end{lemma}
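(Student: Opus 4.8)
The plan is to compute $H^*_c(\SZ_{k}\setminus\SZ_{k-1};\Z)$ directly from the affine bundle description of Lemma~\ref{lemma: vector bundle} and then re-index. By \eqref{eq: path-components} the subspaces $\SZ_{k}(i,j)$ with $i+j=k$, $i,j\geq0$, are the path components of $\SZ_{k}\setminus\SZ_{k-1}$, so compactly supported cohomology splits as a finite direct sum and
\[
E^{1;d}_{k,s}=H^{dm+k-s-1}_c(\SZ_{k}\setminus\SZ_{k-1};\Z)=\bigoplus_{i+j=k}H^{dm+k-s-1}_c(\SZ_{k}(i,j);\Z).
\]
For each $(i,j)$, Lemma~\ref{lemma: vector bundle} realizes $\SZ_{k}(i,j)$ as the total space of a real affine bundle $\xi^d_k(i,j)$ of rank $l_{d,k}(i,j)=m(d-nk-nj)+k-1$ over $C_i(\R)\times C_j(\Ha_+)$, so the Thom isomorphism for compactly supported cohomology gives
\[
H^{*}_c(\SZ_{k}(i,j);\Z)\cong H^{*-l_{d,k}(i,j)}_c\big(C_i(\R)\times C_j(\Ha_+);\mathcal{L}\big),
\]
where $\mathcal{L}$ is the orientation local system of $\xi^d_k(i,j)$.

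Next I would identify $\mathcal{L}$. By the proof of Lemma~\ref{lemma: vector bundle}, a point of $\SZ_{k}(i,j)$ lying over $c=(\{x_1,\dots,x_i\},\{\alpha_1,\dots,\alpha_j\})$ consists of an $m$-tuple of monic polynomials of degree $d$, each divisible by $g_c(z)^n$ with $g_c(z)=\prod_{u=1}^i(z-x_u)\prod_{t=1}^j(z-\alpha_t)(z-\overline{\alpha}_t)$, together with a point in the interior of the $(k-1)$-simplex spanned by the images of the $k$ points of $c$. A loop in $C_i(\R)\times C_j(\Ha_+)$ permutes the $x_u$'s and the $\alpha_t$'s; this leaves $g_c$ --- and hence the (oriented) affine-linear part of the fibre --- unchanged, and acts on the simplex directions by permuting the vertices, so $\mathcal{L}$ is pulled back from the sign character of $S_i\times S_j\subset S_k$. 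Since $C_i(\R)\cong\R^i$ is contractible, $\mathcal{L}$ is trivial on the first factor, and the K\"unneth theorem for compactly supported cohomology (with $H^*_c(\R^i;\Z)$ concentrated in degree $i$) gives
\[
H^{p}_c\big(C_i(\R)\times C_j(\Ha_+);\mathcal{L}\big)\cong H^{p-i}_c\big(C_j(\Ha_+);\pm\Z\big),
\]
where $\pm\Z$ denotes the sign local system on $C_j(\Ha_+)$.

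Then I would pass to homology and carry out the degree bookkeeping. Since $C_j(\Ha_+)\cong C_j(\C)$ is an oriented open $2j$-manifold --- the $S_j$-action on $F(\C,j)\subset\C^j$ permuting pairs of real coordinates and hence preserving orientation --- Poincar\'e duality with local coefficients gives $H^{q}_c(C_j(\C);\pm\Z)\cong H_{2j-q}(C_j(\C);\pm\Z)$; and, identifying $D_j=D_j(S^1)=F(\C,j)_+\wedge_{S_j}(S^1)^{\wedge j}$ with the Thom space of the rank-$j$ bundle $F(\C,j)\times_{S_j}\R^j\to C_j(\C)$ (whose orientation system is again the sign character), the twisted Thom isomorphism gives $\tilde{H}_{*}(D_j;\Z)\cong H_{*-j}(C_j(\C);\pm\Z)$ for $j\geq1$, while for $j=0$ one has $C_0(\C)$ a point and $D_0(S^1)=S^0$. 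Composing these isomorphisms and writing $N=dm+k-s-1$, $l=m(d-nk-nj)+k-1$ and $i=k-j$, a short computation gives
\[
N-l-i=(mn-1)k+(mn+1)j-s,\qquad 2j-(N-l-i)+j=s-(mn-1)k-(mn-2)j,
\]
so the $(i,j)$-summand with $j\geq1$ equals $\tilde{H}_{s-(mn-1)k-(mn-2)j}(D_j;\Z)=\tilde{H}_{s-(mn-1)k}(\Sigma^{(mn-2)j}D_j;\Z)$ and the $(k,0)$-summand equals $\tilde{H}_{s-(mn-1)k}(S^0;\Z)$. Summing over $1\leq j\leq k$ and adding the $j=0$ term yields the asserted isomorphism.

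The Thom and Poincar\'e-duality machinery and the final arithmetic are routine; the one step that genuinely requires care, and the natural place for the argument to go wrong, is the identification of the orientation local system $\mathcal{L}$ --- namely checking that the affine-linear part of the fibre contributes no twist and that the whole twist is carried by the simplex coordinates via the sign representation. This is exactly the point treated in the complex and $\Q$-coefficient analogues \cite[Lemma~3.3]{KY8} and \cite[Lemma~3.2]{KY10}; I would adapt that analysis, the only new feature being that the base here is the product $C_i(\R)\times C_j(\Ha_+)$, so that the contractible real factor $C_i(\R)$ merely shifts degrees by $i$ after passing to $H^*_c$.
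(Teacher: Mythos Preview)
Your proposal is correct and follows essentially the same route as the paper's proof: decompose by path components, apply the Thom isomorphism for the affine bundle $\xi^d_k(i,j)$ from Lemma~\ref{lemma: vector bundle}, strip off the contractible factor $C_i(\R)$, apply Poincar\'e duality on the open $2j$-manifold $C_j(\Ha_+)\cong C_j(\C)$, and then use the Thom-space identification of $D_j$ to obtain \eqref{eq: homology Cj(C)}. Your degree arithmetic agrees with the paper's, and your separate treatment of the $j=0$ summand via $D_0=S^0$ matches the paper's direct computation of $H^*_c(\SZ_k(k,0);\Z)$ using $C_k(\R)\cong\R^k$. The only difference is expository: you make the identification of the orientation local system $\mathcal{L}$ with the sign system explicit (arguing that the affine-polynomial part is untwisted and the twist is carried by the open simplex coordinates), whereas the paper simply writes $\pm\Z$ and points to the analogous computations in \cite{KY8} and \cite{KY10}.
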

\begin{proof}
First, consider the case $j=0$.
Then $1\leq k=i\leq \lfloor d/n\rfloor$.
Since $C_k(\R)\cong \R^k$, the affine bundle $\xi^d_k(k,0)$ is trivial.
Hence,
there is a homeomorphism
$\SZ_{k}(k,0)_+\cong (\R^k\times \R^{l_{d,k}(k,0)})_+=
S^{dm-(mn-1)k+k-1}.$
Hence, there is an isomorphism
\begin{align*}
H^{dm+k-s-1}_c(\SZ_{k}(k,0);\Z)
&\cong
\tilde{H}^{dm+k-s-1}(S^{dm-(mn-1)k+k-1};\Z)
\\
&
\cong
\tilde{H}_{s-(mn-1)k}(S^0;\Z).
\nonumber
\end{align*}
\par
Next consider the case  $j\geq 1$.
$$
\mbox{Since }\ 
\begin{cases}
dm+k-s-1-l_{d,k}(i,j)&=mn(k+j)-s,
\\
2j-\{mn(k+j)-s-i\}&=
s-(mn-1)k-(mn-1)j,
\end{cases}
$$
by the Thom isomorphism
and Poincare duality,  there are isomorphisms
\begin{align*}
&H^{dm+k-s-1}_c(\SZ_{k}(i,j);\Z)
\cong
H_c^{dm+k-s-1-l_{d,k}(i,j)}(C_i(\R)\times C_j(\Ha_+);\pm \Z)
\\
&
=
H_c^{mn(k+j)-s}(C_i(\R)\times C_j(\Ha_+);\pm \Z)
\cong
H_c^{mn(k+j)-s-i}(C_j(\Ha_+);\pm \Z)
\\
&\cong
\tilde{H}_{s-(mn-1)k-(mn-1)j}(C_j(\Ha_+);\pm \Z)
\cong
\tilde{H}_{s-(mn-1)k-(mn-1)j}(C_j(\C);\pm \Z).
\end{align*}
\par
Hence,
by  (\ref{eq: path-components})
we have the following isomorphisms
\begin{align*}\label{eq: E1-decompsition}
E^{1;d}_{k,s}
&=
H_c^{dm+k-s-1}(\SZ_{k}\setminus \SZ_{k-1};\Z)
= \bigoplus_{i+j=k}
H_c^{dm+k-s-1}(\SZ_{k}(i,j);\Z)
\\
&\cong
\Big(
\bigoplus_{j=1}^{k}
\tilde{H}_{s-(mn-1)k-(mn-1)j}(C_j(\C);\pm \Z)\Big)
\oplus
\tilde{H}_{s-(mn-1)k}(S^0;\Z).
\end{align*}
It follows from \cite{CMM}  that 
$D_j=D_j(S^1)$ is the Thom space of
the following $j$-dimensional vector bundle over $C_j(\C)$,
\begin{equation}
F(\C,j)\times_{S_j}\R^j \to F(\C,j)\times_{S_j}\{*\}=F(\C,j)/S_j=C_j(\C).
\end{equation}
Thus, by the Thom isomorphism theorem,
there is an isomorphism
\begin{equation}\label{eq: homology Cj(C)}
\tilde{H}_{*+j}(D_j;\Z)\cong \tilde{H}_{*}(C_j(\C);\pm \Z).
\end{equation}
Hence, we have the isomorphisms
\begin{align*}
E^{1;d}_{k,s}
&\cong
\Big(\bigoplus_{j=1}^{k}
\tilde{H}_{s-(mn-1)k-(mn-1)j}(C_j(\C);\pm \Z)\Big)
\oplus \tilde{H}_{s-(mn-1)k}(S^0;\Z)
\\
&\cong
\Big(
\bigoplus_{j=1}^{k}
\tilde{H}_{s-(mn-1)k-(mn-2)j}(D_j;\Z)\Big)
\oplus
\tilde{H}_{s-(mn-1)k}(S^0;\Z)
\\
&\cong
\Big(
\bigoplus_{j=1}^{k}
\tilde{H}_{s-(mn-1)k}(\Sigma^{(mn-2)j}D_j;\Z)\Big)
\oplus
\tilde{H}_{s-(mn-1)k}(S^0;\Z),
\end{align*}
and this completes the proof.
\end{proof}

\begin{crl}\label{crl: Er}
If $mn\geq 3$, 
there is a natural isomorphism
$$
E^{1;d}_{k,s}=
\begin{cases}
\dis
A_{k,s}
\oplus \tilde{H}_{s-(mn-1)k}(S^0;\Z)
&\mbox{if }1\leq k\leq \lfloor d/n\rfloor
\mbox{ and }s\geq (mn-1)k,
\\
\quad
0 & \mbox{otherwise,}
\end{cases}
$$
where $A_{k,s}$ denotes the abelian group defined by
\begin{equation}\label{eq: abelian group A}
A_{k,s}=
\bigoplus_{j=1}^{k}
\tilde{H}_{s-(mn-1)k}(\Sigma^{(mn-2)j}D_j;\Z).
\end{equation}
\end{crl}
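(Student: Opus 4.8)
The statement of Corollary~\ref{crl: Er} is an immediate bookkeeping consequence of Lemma~\ref{lemma: E1} together with the vanishing ranges built into the construction of the spectral sequence, so the plan is essentially to read off what has already been proved and to record the two vanishing conditions carefully. First I would recall from Definition~\ref{Def: 3.1}(v) that the filtration stabilizes, i.e. $\mathcal{X}^d_k=\mathcal{X}^d$ for $k>\lfloor d/n\rfloor$, hence $\mathcal{X}^d_k\setminus\mathcal{X}^d_{k-1}=\emptyset$ for such $k$ and also (trivially) for $k\leq 0$; since $E^{1;d}_{k,s}=H^{dm+k-s-1}_c(\mathcal{X}^d_k\setminus\mathcal{X}^d_{k-1};\Z)$, this forces $E^{1;d}_{k,s}=0$ unless $1\leq k\leq\lfloor d/n\rfloor$. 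This disposes of the range condition on $k$.

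Next I would invoke Lemma~\ref{lemma: E1}, which gives, for $1\leq k\leq\lfloor d/n\rfloor$ and $mn\geq 3$, the natural isomorphism
\[
E^{1;d}_{k,s}\cong\Big(\bigoplus_{j=1}^{k}\tilde{H}_{s-(mn-1)k}(\Sigma^{(mn-2)j}D_j;\Z)\Big)\oplus\tilde{H}_{s-(mn-1)k}(S^0;\Z).
\]
The first summand is precisely $A_{k,s}$ as defined in $($\ref{eq: abelian group A}$)$, so the content of the corollary beyond Lemma~\ref{lemma: E1} is just the claim that both $A_{k,s}$ and the $S^0$-term vanish when $s<(mn-1)k$. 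For the $S^0$-term this is clear, since $\tilde{H}_*(S^0;\Z)$ is concentrated in degree $0$, so $\tilde{H}_{s-(mn-1)k}(S^0;\Z)=0$ unless $s=(mn-1)k$, in particular whenever $s<(mn-1)k$. For $A_{k,s}$ one checks that each constituent group $\tilde{H}_{s-(mn-1)k}(\Sigma^{(mn-2)j}D_j;\Z)$ vanishes when $s-(mn-1)k<0$: indeed $D_j=D_j(S^1)$ is $(j-1)$-connected (its bottom cell is in dimension $j$, coming from the Thom space description $\tilde H_{*+j}(D_j;\Z)\cong\tilde H_*(C_j(\C);\pm\Z)$ with $C_j(\C)$ connected), so $\Sigma^{(mn-2)j}D_j$ is $((mn-1)j-1)$-connected and its reduced homology is trivial in all negative degrees and in all degrees below $(mn-1)j\geq mn-1>0$; in particular it vanishes in every negative degree. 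Hence $A_{k,s}=0$ for $s<(mn-1)k$, and combining the two observations gives $E^{1;d}_{k,s}=0$ outside the stated range, while inside it the decomposition of Lemma~\ref{lemma: E1} is exactly $A_{k,s}\oplus\tilde{H}_{s-(mn-1)k}(S^0;\Z)$.

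There is no serious obstacle here; the only point that requires a word of care is the assertion that $E^{1;d}_{k,s}=0$ for $s<(mn-1)k$, which one might worry is not literally contained in Lemma~\ref{lemma: E1} as stated (that lemma is phrased for general $s$). The cleanest way to present it is to note that the target degree $s-(mn-1)k$ of every homology group appearing on the right-hand side of Lemma~\ref{lemma: E1} is negative when $s<(mn-1)k$, and reduced homology in negative degrees always vanishes; thus the whole right-hand side is $0$ and so is $E^{1;d}_{k,s}$. I would write this out in one or two sentences and then simply state the reindexed conclusion, which is the displayed formula of the corollary. The naturality claim is inherited verbatim from the naturality in Lemma~\ref{lemma: E1}, so nothing further is needed.
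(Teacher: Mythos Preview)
Your proposal is correct and follows exactly the paper's approach: the paper's proof is the single sentence ``The assertion easily follows from Lemma~\ref{lemma: E1},'' and you have simply spelled out what that sentence means, namely that the filtration is empty outside $1\leq k\leq\lfloor d/n\rfloor$ and that every summand in Lemma~\ref{lemma: E1} is a reduced homology group in degree $s-(mn-1)k$, hence vanishes when that degree is negative. Your remarks on the connectivity of $D_j$ are correct but more than is needed; the negative-degree observation alone suffices.
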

\begin{proof}
The  assertion easily follows from Lemma \ref{lemma: E1}.
\end{proof}
The following results will be needed in the proof of 
Theorem \ref{thm: KY13; stable homotopy type}.

\begin{lemma}\label{lmm: stable type of Q}
If $mn\geq 3$, there is a stable homotopy equivalence
\begin{equation}\label{eq: stable Q}
\Q^{d,m}_n(\R)\simeq_s
\bigvee_{k=1}^{\lfloor d/n\rfloor}
S^{k(mn-2)}.
\end{equation}
\end{lemma}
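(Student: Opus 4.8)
\textbf{Proof proposal for Lemma \ref{lmm: stable type of Q}.}

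The plan is to mirror, in the stable category, the unstable decomposition already recorded in Theorem \ref{thm: KY10}~(ii). Recall that $\Q^{d,m}_n(\R)\simeq J_{\lfloor d/n\rfloor}(S^{mn-2})$ when $mn\geq 4$, and the $k$-th James filtration $J_{\lfloor d/n\rfloor}(S^{mn-2})$ is by construction a CW complex with one cell in each dimension $(mn-2)k$ for $1\leq k\leq \lfloor d/n\rfloor$. So the first step, in the range $mn\geq 4$, is simply to invoke the classical fact (James, or equivalently the stable splitting of $\Omega\Sigma X$ after one suspension, via the Snaith splitting) that $J_r(S^{N})$ is stably a wedge of spheres: $\Sigma J_r(S^N)\simeq \bigvee_{k=1}^r S^{Nk+1}$, hence $J_r(S^N)\simeq_s \bigvee_{k=1}^r S^{Nk}$. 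Applying this with $N=mn-2$ and $r=\lfloor d/n\rfloor$ gives (\ref{eq: stable Q}) directly when $mn\geq 4$.

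The remaining case is $mn=3$ (that is, $(m,n)=(3,1)$ or $(1,3)$), where only a \emph{homology} equivalence $i^{d,m}_n:\Q^{d,m}_n(\R)\to \Omega S^{mn-1}$ is available from Theorem \ref{thm: KY10}~(i), so the unstable identification with a James filtration is not directly at hand. Here I would instead read off the stable homotopy type from the Vassiliev spectral sequence: by the same construction as in \S\ref{section: spectral sequence} applied to $\Q^{d,m}_n(\R)$ (this is literally the computation in \cite{KY10}), one gets a spectral sequence whose $E^1$-page has, in the relevant bidegrees, exactly one $\Z$ in total degree $(mn-2)k$ for each $1\leq k\leq \lfloor d/n\rfloor$ and nothing else; all differentials vanish for dimension reasons (the nonzero groups sit in a single line), so $H_*(\Q^{d,m}_n(\R);\Z)$ is free with one generator in each degree $(mn-2)k$, $1\leq k\leq \lfloor d/n\rfloor$. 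Since $\Q^{d,m}_n(\R)$ is simply connected when $mn-2\geq 2$, and in the edge case $mn=3$ one still has $H_1=0$ by the same count (the bottom generator is in degree $mn-2=1$ only when $mn=3$, so one must check $\pi_1$ separately — but the natural map to $\Omega S^{mn-1}$ being a homology iso through a range, together with $\Omega S^2$ having abelian, in fact infinite cyclic, fundamental group, pins this down), a standard argument builds a map from a wedge of spheres realizing the generators and inducing a homology isomorphism; after one suspension this is a homotopy equivalence, giving (\ref{eq: stable Q}).

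The step I expect to be the main obstacle is the bookkeeping in the $mn=3$ case: one must be careful that the Vassiliev spectral sequence for $\Q^{d,m}_n(\R)$ really does collapse and produces free homology concentrated in the claimed degrees, and that the low-dimensional subtlety (the bottom cell in dimension $1$, the non-simply-connected target $\Omega S^2$) does not obstruct passing from a homology equivalence to a stable equivalence. In fact the cleanest route is probably to avoid $\pi_1$ issues entirely by noting that a map $\bigvee_{k=1}^{\lfloor d/n\rfloor} S^{(mn-2)k}\to \Q^{d,m}_n(\R)$ inducing an isomorphism on integral homology is automatically a stable equivalence (a homology iso of connective spectra is a weak equivalence), so only the homology computation is genuinely needed, and that is already essentially contained in \cite{KY10} and in Corollary \ref{crl: Er} applied with $m'n'=mn-1$ in place of $mn$ — i.e. the spectral sequence for $\Q$ is the ``one lower'' analogue of the one just built for $\po$.
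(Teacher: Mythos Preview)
Your treatment of the case $mn\geq 4$ matches the paper's exactly: invoke $\Q^{d,m}_n(\R)\simeq J_{\lfloor d/n\rfloor}(S^{mn-2})$ from Theorem~\ref{thm: KY10}(ii) and apply James's splitting $\Sigma J_r(S^N)\simeq\bigvee_{k=1}^r S^{Nk+1}$.

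For $mn=3$ your argument has a gap, and it is not the $\pi_1$ issue you dwell on. The Vassiliev spectral sequence computation (carried out in \cite{KY10}) does give $H_*(\Q^{d,m}_n(\R);\Z)$ free with one generator in each degree $k$ for $1\leq k\leq\lfloor d/n\rfloor$. But your conclusion ``so only the homology computation is genuinely needed'' is false: two spaces with abstractly isomorphic free homology need not be stably equivalent --- $\CP^2$ is not stably $S^2\vee S^4$. You need an actual map inducing the isomorphism, and in your proposed direction $\bigvee S^k\to\Q^{d,m}_n(\R)$ this amounts to showing each homology generator is stably spherical, which you do not justify and which is not automatic.

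The paper closes this gap by going in the opposite direction, using precisely the map you mention only parenthetically. For $(m,n)=(1,3)$ it takes the composite
\[
\Q^{d,1}_3(\R)\ \xrightarrow{\ i^{d,1}_3\ }\ \Omega S^2\ \simeq_s\ \bigvee_{i\geq 1}S^i\ \xrightarrow{\ q\ }\ \bigvee_{i=1}^{\lfloor d/3\rfloor}S^i,
\]
where $q$ is the pinch map. Since $i^{d,1}_3$ is a homology equivalence through dimension $\lfloor d/3\rfloor$ \emph{and} $H_i(\Q^{d,1}_3(\R);\Z)=0$ for $i>\lfloor d/3\rfloor$ (both facts extracted from \cite{Va0}, \cite{Va}), the composite is a homology isomorphism in every degree, hence a stable equivalence. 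For $(m,n)=(3,1)$ the paper simply cites \cite{Y1}. Your spectral sequence route would work equally well for the homology input, but you still need $i^{d,m}_n$ (or something like it) to produce the map.
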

\begin{proof}
First, consider the case $mn\geq 4$.
By (\ref{eq: James}), there is a homotopy equivalence
$\Q^{d,m}_n(\R)\simeq J_{\lfloor d/n\rfloor}(S^{mn-2})$.
From \cite{Ja} it follows that there is a homotopy equivalence
$
\dis\Sigma \Q^{d,m}_n(\R)\simeq
\bigvee_{k=1}^{\lfloor d/n\rfloor}
S^{k(mn-2)+1},$ 
and we obtain
the stable homotopy equivalence
(\ref{eq: stable Q}).
\par
Next, consider the case $mn=3$, i.e. 
$(m,n)=(3,1)$ or $(1,3)$.
\par
If $(m,n)=(3,1)$, the assertion easily follows from
\cite[Theorems A and B]{Y1}.
If $(m,n)=(1,3)$, then by using Theorem \ref{thm: KY10}, \cite{Va0}
 and
\cite[Theorem 3 (page 88)]{Va}, we obtain the following result:
\begin{enumerate}
\item[$(**)$]
The natural map
$i^{d,1}_3:\Q^{d,1}_3(\R)\to \Omega S^2$
is a homology equivalence through dimension 
$D(d;1,3)=\lfloor d/3\rfloor$, and
$H_i(\Q^{d,1}_3(\R);\Z)=0$ for any $i>\lfloor d/3\rfloor$.
\end{enumerate}
Let us consider the stable map given by
the composite of stable maps
$$
\Q^{d,1}_3(\R)\stackrel{i^{d,1}_3}{\longrightarrow}
\Omega S^2\stackrel{\simeq_s}{\longrightarrow}
\bigvee_{i=1}^{\infty}S^i
\stackrel{q}{\longrightarrow}
\bigvee_{i=1}^{\lfloor d/3\rfloor}S^i,
$$
where $q$ is the pinching map.
It is easy to see that this map induces an isomorphism on the homology groups
$H_*(\ ;\Z)$ and hence gives a stable homotopy equivalence
$\dis \Q^{d,1}_3(\R)\simeq_s\bigvee_{i=1}^{\lfloor d/3\rfloor}S^i$.
\end{proof}

\begin{definition}\label{def: stable space}
{\rm
Let $\P^{d,m}_n$ denote the space given by
\begin{align}\label{eq: stable type of Poly}
\P^{d,m}_n&=
\po^{\lfloor d/2\rfloor, m}_n(\C) \vee B^{d,m}_n\vee \Q^{d,m}_n(\R),
\end{align}
where 
$B^{d,m}_n
=
\bigvee_{i,j\geq 1,i+2j\leq \lfloor d/n\rfloor}
\Sigma^{(mn-2)(i+2j)}D_j$
as in (\ref{eq: the space B}).
}
\end{definition}

\begin{lemma}\label{lmm: total E}
If $mn\geq 3$, there is an isomorphism
$$
E^1_s:=
\bigoplus_{k\in \Z}E^{1;d}_{k,k+s}
\cong
H_s(\P^{d,m}_n;\Z)
\quad
\mbox{for any }s\geq 1.
$$
\end{lemma}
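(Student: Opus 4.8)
The plan is to compute both sides and match them degree by degree. On the spectral sequence side, Corollary~\ref{crl: Er} gives
$$
E^{1;d}_{k,k+s}=
\begin{cases}
A_{k,k+s}\oplus \tilde{H}_{k+s-(mn-1)k}(S^0;\Z) & 1\leq k\leq \lfloor d/n\rfloor,\ k+s\geq(mn-1)k,\\
0 & \text{otherwise},
\end{cases}
$$
where $A_{k,k+s}=\bigoplus_{j=1}^{k}\tilde{H}_{k+s-(mn-1)k}(\Sigma^{(mn-2)j}D_j;\Z)$. Setting $r=k+s-(mn-1)k=s-(mn-2)k$, the term $\tilde{H}_{k+s-(mn-1)k}(S^0;\Z)$ is nonzero only when $s=(mn-2)k$, contributing a $\Z$ in that single spot; summing over $1\leq k\leq\lfloor d/n\rfloor$ this part of $E^1_s$ is exactly $\bigoplus_{k}\tilde H_s(S^{(mn-2)k};\Z)$, i.e.\ the homology of $\bigvee_{k=1}^{\lfloor d/n\rfloor}S^{(mn-2)k}$, which by Lemma~\ref{lmm: stable type of Q} is $H_s(\Q^{d,m}_n(\R);\Z)$. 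The $A$-part contributes $\bigoplus_{k=1}^{\lfloor d/n\rfloor}\bigoplus_{j=1}^{k}\tilde H_{s-(mn-2)k}(\Sigma^{(mn-2)j}D_j;\Z)$; I would reindex this double sum over $(j,k)$ with $1\le j\le k\le\lfloor d/n\rfloor$ by substituting $i=k-j\geq 0$, so it becomes $\bigoplus_{j\geq 1,\ i\geq 0,\ i+j\leq\lfloor d/n\rfloor}\tilde H_{s-(mn-2)(i+j)-(mn-2)j+(mn-2)j}(\cdots)$—more precisely one rewrites $\Sigma^{(mn-2)j}D_j$ shifted down by $(mn-2)k=(mn-2)(i+j)$, which is $\tilde H_s\big(\Sigma^{(mn-2)(i+2j)}D_j\big)$ after one checks the exponent bookkeeping $(mn-2)(i+j)+(mn-2)j=(mn-2)(i+2j)$.

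Next I would split the range $i=0$ versus $i\geq 1$. The $i\geq 1$ terms, namely $\bigoplus_{i,j\geq 1,\ i+2j\leq\lfloor d/n\rfloor}\tilde H_s(\Sigma^{(mn-2)(i+2j)}D_j;\Z)$, are precisely $H_s(B^{d,m}_n;\Z)$ by the definition~(\ref{eq: the space B}) of $B^{d,m}_n$. The $i=0$ terms are $\bigoplus_{j\geq 1,\ 2j\leq\lfloor d/n\rfloor}\tilde H_s(\Sigma^{2(mn-2)j}D_j;\Z)$; using Lemma~\ref{lmm: inequality}(iii) that $\lfloor\lfloor d/n\rfloor/2\rfloor=\lfloor\lfloor d/2\rfloor/n\rfloor$, the index set is $1\leq j\leq\lfloor\lfloor d/2\rfloor/n\rfloor$, so this is exactly $H_s$ of $\bigvee_{j=1}^{\lfloor\lfloor d/2\rfloor/n\rfloor}\Sigma^{2(mn-2)j}D_j$, which by the stable splitting~(\ref{eq: the space poly}) (applied with $d$ replaced by $\lfloor d/2\rfloor$) is $H_s(\po^{\lfloor d/2\rfloor,m}_n(\C);\Z)$. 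Adding the three pieces and comparing with~(\ref{eq: stable type of Poly}) gives $E^1_s\cong H_s(\po^{\lfloor d/2\rfloor,m}_n(\C);\Z)\oplus H_s(B^{d,m}_n;\Z)\oplus H_s(\Q^{d,m}_n(\R);\Z)=H_s(\P^{d,m}_n;\Z)$ for all $s\geq 1$ (the restriction $s\geq 1$ handles the reduced-homology/basepoint subtleties and the $k=0$, $j=0$ edge cases).

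The main obstacle I expect is purely combinatorial: carefully matching the three index sets (the double sum over $(j,k)$, the set $\mathcal F^{d,m}_n=\{(i,j):i+2j\leq\lfloor d/n\rfloor\}$ from~(\ref{eq: set F}), and the wedge-summand indexing of $\po^{\lfloor d/2\rfloor,m}_n(\C)$) and keeping the suspension exponents consistent across the Thom-isomorphism shifts of Lemma~\ref{lemma: E1}. Lemma~\ref{lmm: inequality} is designed exactly to supply the needed identities ($\lfloor d/(2n)\rfloor=\lfloor d_0/n\rfloor$, $\lfloor\lfloor d/n\rfloor/2\rfloor=\lfloor d_0/n\rfloor$, and the containment~(\ref{eq:(i,j): condition})), so the argument is reduced to invoking those and doing the reindexing cleanly. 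No homotopy-theoretic input beyond Lemmas~\ref{lmm: stable type of Q}, \ref{lemma: E1} and the already-established stable splitting of $\po^{\bullet,m}_n(\C)$ is required for this purely additive statement.
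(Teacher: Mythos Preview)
Your approach is the same as the paper's, and the computation is correct once one gap is closed. When you reindex the $A$-part by $i=k-j$, the constraint you inherit from Corollary~\ref{crl: Er} is $i+j=k\leq\lfloor d/n\rfloor$, i.e.\ $i+j\leq\lfloor d/n\rfloor$; but two lines later you silently replace this by $i+2j\leq\lfloor d/n\rfloor$ when matching with $B^{d,m}_n$ and with $\po^{\lfloor d/2\rfloor,m}_n(\C)$. These index sets are genuinely different, and the identification with (\ref{eq: the space B}) fails under the weaker constraint $i+j\leq\lfloor d/n\rfloor$.

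The missing step is geometric, and the paper supplies it explicitly at the start of its proof: the component $\SZ_k(i,j)$ is \emph{empty} whenever $n(i+2j)>d$, since a degree-$d$ polynomial cannot be divisible by a product $\prod(z-x_u)^n\prod(z-\alpha_t)^n(z-\overline{\alpha}_t)^n$ of degree $n(i+2j)$. Hence the summands in Lemma~\ref{lemma: E1} with $i+2j>\lfloor d/n\rfloor$ vanish, and the effective summation range for the $A$-part is $i\geq 0,\ j\geq 1,\ i+2j\leq\lfloor d/n\rfloor$. Once you insert this observation, your reindexing and the split into $i=0$ versus $i\geq 1$ go through exactly as you describe, and the remainder of the argument is identical to the paper's.
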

\begin{proof}
Since, in general,  the total number of roots of multiplicity $n$ is
$k+j=i+2j$,
we only need to consider the case:
\begin{equation}\label{eq: condition (i,j)}
i\geq 0,\ j\geq 0,\ 
i+2j\leq \lfloor d/n\rfloor.
\end{equation}
Suppose that $s\geq 1$.
Since $k=i+j$, by (\ref{eq: set F}), we have
\begin{align*}
\bigoplus_{k\in \Z}A_{k,k+s}
&=
\bigoplus_{i\geq 0,j\geq 1,i+2j\leq \lfloor d/n\rfloor}
\tilde{H}_{s-(mn-2)(i+j)}(\Sigma^{(mn-2)j}D_j;\Z)
\\
&\cong
\bigoplus_{(i,j)\in \mathcal{G}^{d,m}_n}
\tilde{H}_{s}(\Sigma^{(mn-2)(i+2j)}D_j;\Z),
\end{align*}
where we set  
$\mathcal{G}^{d,m}_n=\{(i,j):i\geq 0, j\geq 1,
i+2j\leq \lfloor d/n\rfloor\}.$
\par
Thus,
by  Corollary \ref{crl: Er} and
 Lemma \ref{lmm: stable type of Q}, 
\begin{align*}
E^1_s
&=
\bigoplus_{k\in \Z}E^{1;d}_{k,k+s}
=\bigoplus_{k=1}^{\lfloor d/n\rfloor}\big(A_{k,s}\oplus
\tilde{H}_{s-(mn-2)k}(S^0;\Z)\big)
\\
&\cong
\big(\bigoplus_{(i,j)\in \mathcal{G}^{d,m}_n}
\tilde{H}_{s}(\Sigma^{(mn-2)(i+2j)}D_j;\Z)\big)
\oplus
\big(\bigoplus_{k=1}^{\lfloor d/n\rfloor}
\tilde{H}_{s-(mn-2)k}(S^0;\Z)\big)
\\
&\cong
\big(\bigoplus_{(i,j)\in \mathcal{G}^{d,m}_n}
\tilde{H}_{s}(\Sigma^{(mn-2)(i+2j)}D_j;\Z)\big)
\oplus
\tilde{H}_s(\bigvee_{k=1}^{\lfloor d/n\rfloor}S^{(mn-2)k};\Z)
\\
&\cong
\Big(\bigoplus_{(i,j)\in \mathcal{G}^{d,m}_n}
\tilde{H}_{s}(\Sigma^{(mn-2)(i+2j)}D_j;\Z)\Big)
\oplus
\tilde{H}_s(\Q^{d,m}_n(\R);\Z)
\\
&\cong
\tilde{H}_s
\Big(\big(\bigvee_{(i,j)\in \mathcal{G}^{d,m}_n}
\Sigma^{(mn-2)(i+2j)}D_j\big)\vee \Q^{d,m}_n(\R);\Z\Big).
\end{align*}
Let $d_0=\lfloor d/2\rfloor$ and let
$\dis
C^{d,m}_n=
\bigvee_{(i,j)\in \mathcal{G}^{d,m}_n}
\Sigma^{(mn-2)(i+2j)}D_j.
$
\par\vspace{1mm}\par
\noindent
Then by Lemma \ref{lmm: inequality} and (\ref{eq: the space poly}),
\begin{align*}
&C^{d,m}_n=\bigvee_{i\geq 0,j\geq 1,i+2j\leq\lfloor d/n\rfloor}
\Sigma^{(mn-2)(i+2j)}D_j
\\
&=
\Big(\bigvee_{i,j\geq 1,i+2j\leq\lfloor d/n\rfloor}
\Sigma^{(mn-2)(i+2j)}D_j\Big)
\vee
\Big(\bigvee_{2j\leq \lfloor d/n\rfloor ,\ j\geq 1 }
\Sigma^{2(mn-2)j}D_j\Big)
\\
&= B^{d,m}_n\vee \Big(\bigvee_{j=1}^{\lfloor d_0/n\rfloor}
\Sigma^{2(mn-2)j}D_j\Big)
\simeq_s B^{d,m}_n\vee \po^{d_0,m}_n(\C)
\\
&
=
B^{d,m}_n\vee \po^{\lfloor d/2\rfloor,m}_n(\C).
\end{align*}
Hence, for $s\geq 1$, there are isomorphisms
\begin{align*}
E^1_s&=
\bigoplus_{k\in \Z}E^{1;d}_{k,k+s}
\cong H_s(C^{d,m}_n\vee \Q^{d,m}_n(\R);\Z)
\\
&=H_s(B^{d,m}_n\vee \po^{\lfloor d/2\rfloor,m}_n(\C)\vee \Q^{d,m}_n(\R);\Z)
=H_s(\P^{d,m}_n;\Z)
\end{align*}
and the assertion follows.
\end{proof}

%
\section{Loop products and stabilization maps}
\label{section: loop products}

In this section we construct  loop products and stabilization maps
on the spaces $\Po^{d,m}_n(\K)$  and $\Q^{d,m}_n(\R)$,
and use them to prove Theorem
\ref{thm : I}.

\begin{definition}\label{def: 4.1}
{\rm
Let $\varphi :\C \stackrel{\cong}{\longrightarrow}(0,\infty )\times \R$
and
$\psi :\C \stackrel{\cong}{\longrightarrow}(-\infty,0)\times \R$
be any fixed homeomorphisms satisfying the following two conditions:
\begin{enumerate}
\item[(\ref{def: 4.1}.1)]
$\begin{cases}
\varphi (\R)=(0,\infty)\times \{0\}, & \varphi ({\rm H}_+)=(0,\infty)\times (0,\infty),
\\
\psi(\R)=(-\infty,0)\times \{0\},  & \psi ({\rm H}_+)=(-\infty,0)\times (0,\infty).
\end{cases}$
\item[(\ref{def: 4.1}.2)]
$\varphi (\overline{\alpha})=\overline{\varphi (\alpha)}$
and
$\psi (\overline{\alpha})=\overline{\psi (\alpha)}$
for any $\alpha \in \C$.
\end{enumerate}
For each monic polynomial
$f(z)=\prod_{k=1}^d(z-x_k)\in \P_d(\C)$,
let $\varphi (f)$ and $\psi (f)$ denote the monic
polynomials of the same degree $d$ given by
\begin{equation}\label{eq: loop-poly}
\widetilde{\varphi}(f)=\prod_{k=1}^d(z-\varphi (x_k))
\quad
\mbox{and}\quad
\widetilde{\psi} (f)=\prod_{k=1}^d(z-\psi(x_k)).
\end{equation}
}
\end{definition}

\begin{remark}\label{rmk: loop product}
{\rm
Let $\K=\R$ or $\C$. 
It is easy to see that the following hold:
\begin{enumerate}
\item[(i)]
If $f(z)\in \P_d(\R)$, then
$\widetilde{\varphi}(f)\in \P_d(\R)$ and $\widetilde{\psi} (f)\in \P_d(\R).$ 
\item[(ii)]
If $f=(f_1(z),\cdots ,f_m(z))\in \Po^{d_1,m}_n(\K)$
and $g=(g_1(z),\cdots ,g_m(z))\in \Po^{d_2,m}_n(\K)$,
$
\big(\widetilde{\varphi} (f_1)\widetilde{\psi} (g_1),\cdots ,
\widetilde{\varphi}(f_m)\widetilde{\psi} (g_m)\big)
\in \Po^{d_1+d_2,m}_n(\K).
$
\item[(iii)]
If $f=(f_1(z),\cdots ,f_m(z))\in \Q^{d_1,m}_n(\R)$
and $g=(g_1(z),\cdots ,g_m(z))\in \Q^{d_2,m}_n(\R)$,
$
\big(\widetilde{\varphi} (f_1)\widetilde{\psi} (g_1),\cdots ,
\widetilde{\varphi}(f_m)\widetilde{\psi} (g_m)\big)
\in \Q^{d_1+d_2,m}_n(\R).
$
\qed
\end{enumerate}
}
\end{remark}
Using an idea from \cite[Definition 4.9]{BM2} and  Remark \ref{rmk: loop product} we  will now define loop products.
\begin{definition}
{\rm
\par
(i)
Define the loop product
\begin{equation}\label{def: productC}
\mu_{d_1,d_2}^{\C}:
\Po^{d_1,m}_n(\C)
\times
\Po^{d_2,m}_n(\C)
\to 
\Po^{d_1+d_2,m}_n(\C)
\quad
\mbox{by}
\end{equation}
\begin{equation*}
\mu_{d_1,d_2}^{\C}(f,g)=
\big(\widetilde{\varphi} (f_1)\widetilde{\psi} (g_1),\cdots ,
\widetilde{\varphi}(f_m)\widetilde{\psi} (g_m)\big)
\end{equation*}
for 
$(f,g)\in \Po^{d_1,m}_n(\C)\times
\Po^{d_2,m}_n(\C)$, 
where we write
$$
(f,g)=((f_1(z),\cdots ,f_m(z)),
(g_1(z),\cdots ,g_m(z))).
$$
\item[(ii)]
It is easy to see that
\begin{equation}
\mu_{d_1,d_2}^{\C}
(\Po^{d_1,m}_n(\R)\times \Po^{d_2,m}_n(\R))
\subset
\Po^{d_1+d_2,m}_n(\R).
\end{equation}
Hence, one can define the loop product
\begin{equation}
\mu_{d_1,d_2}^{\R}:
\Po^{d_1,m}_n(\R)
\times
\Po^{d_2,m}_n(\R)
\to 
\Po^{d_1+d_2,m}_n(\R)
\end{equation}
as the restriction
$\mu_{d_1,d_2}^{\R}=
\mu_{d_1,d_2}^{\C}
\vert
\Po^{d_1,m}_n(\R)
\times
\Po^{d_2,m}_n(\R).$
\par
\item[(iii)]
Since the following relation also holds
\begin{equation}
\mu_{d_1,d_2}^{\R}\big(\Po^{d_1,m}_n(\R;\Ha_+)
\times
\Po^{d_2,m}_n(\R;\Ha_+)\big)\subset \po^{d,m}_n(\R;\Ha_+),
\end{equation}
define the loop product
\begin{equation}\label{def: product H}
\mu_{d_1,d_2}^{\Ha_+}:
\Po^{d_1,m}_n(\R;\Ha_+)
\times
\Po^{d_2,m}_n(\R;\Ha_+)
\to 
\Po^{d_1+d_2,m}_n(\R;\Ha_+)
\end{equation}
as the restriction
$\mu^{\Ha_+}_{d_1,d_2}=\mu^{\R}_{d_1,d_2}\vert
\Po^{d_1,m}_n(\R;\Ha_+)
\times
\Po^{d_2,m}_n(\R;\Ha_+).$
\par
(iv)
Similarly, we define the loop product
\begin{equation}\label{eq: loop product Q}
\mu_{d_1,d_2}:\Q^{d_1,m}_n(\R)\times
\Q^{d_2,m}_n(\R)\to \Q^{d_1+d_2,m}_n(\R)
\quad
\mbox{by}
\end{equation}
\begin{equation*}
\mu_{d_1,d_2}(f,g)=
\big(\widetilde{\varphi} (f_1)\widetilde{\psi} (g_1),\cdots ,
\widetilde{\varphi}(f_m)\widetilde{\psi} (g_m)\big)
\end{equation*}
for 
$(f,g)=((f_1(z),\cdots ,f_m(z)),
(g_1(z),\cdots ,g_m(z)))\in 
\Q^{d_1,m}_n(\R)
\times \Q^{d_2,m}_n(\R)$.
}
\end{definition}
\par
Next, recall the definitions of stabilization maps. 
\begin{definition}\label{def: stabd}
{\rm 
\par
Let $\K=\R$ or $\C$. 
For each integer $d\geq 1$,
let $\{x_{d,i}:1\leq i\leq m\} \subset (d,d+1)$ be any fixed
real numbers such that $x_i\not=x_k$ if $i\not=k$, 
and
let
$\varphi_d:\C\stackrel{\cong}{\longrightarrow} 
\C_d=\{\alpha \in \C:\mbox{Re }(\alpha )<d\}$
be any homeomorphism satisfying the following condition:
\begin{enumerate}
\item[(\ref{eq: loop product Q}.1)]
$\varphi_d(\R)=(-\infty,d)\times \R$
and $\varphi_d(\overline{\alpha})=\overline{\varphi_d(\alpha)}$
for any $\alpha\in \C$, where
we identify $\C=\R^2$ in a usual way.
\end{enumerate}
\par
(i) 
Define the stabilization map
$s^{d,m}_{n,\K}:\Po^{d,m}_n(\K)\to \Po^{d+1,m}_n(\K)$ by
\begin{equation}\label{equ: stabilization map for F}
s^{d,m}_{n,\K}(f_1(z),\cdots ,f_m(z))=
\big(
(z-x_{d,1})\widetilde{\varphi_d}(f_1),\cdots ,
(z-x_{d,m})\widetilde{\varphi_d}(f_m)\big)
\end{equation}
for $(f_1(z),\cdots ,f_m(z))\in\Po^{d,m}_n(\K)$, where
we set $\widetilde{\varphi_d}(f)=\prod_{k=1}^d(z-\varphi_d(x_k))$
if $f=f(z)=\prod_{k=1}^d(z-x_k)\in \P_d(\K).$
\par\vspace{1mm}\par
Note that the map $s^{d,m}_{n,\K}$ depends on the choice of
points $\{x_{d,k}\}_{k=1}^m$ and the homeomorphism $\varphi_d$, but
its homotopy type does not, as in \cite[Def. 3.11]{KY9}.
\par
(ii)
Let
$s^{d,m}_n:\Q^{d,m}_n(\R)\to \Q^{d+1,m}_n(\R)$
be the stabilization map given 
by
\begin{equation}\label{equ: stabilization map for Q}
s^{d,m}_{n}(f_1(z),\cdots ,f_m(z))=
\big(
(z-x_{d,1})\widetilde{\varphi_d}(f_1),\cdots ,
(z-x_{d,m})\widetilde{\varphi_d}(f_m)\big)
\end{equation}
for $(f_1(z),\cdots ,f_m(z))\in\Q^{d,m}_n(\R)$.
\par
(iii)
Since one can easily see that $s^{d,m}_{n,\R}(\po^{d,m}_{n,\Ha_+})\subset \po^{d+1,m}_{n,\Ha_+}$,
we define the stabilization map
$s^{d,m}_{n,\Ha_+}:\Po^{d,m}_n(\R;\Ha_+)\to \Po^{d+1,m}_{n}(\R;\Ha_+)$ by the restriction 
\begin{equation}\label{eq: stab H}
s^{d,m}_{n,\Ha_+}=s^{d,m}_{n,\R}\vert \Po^{d,m}_n(\R;\Ha_+).
\end{equation}
}
\end{definition}

\begin{remark}
{\rm
It follows from the definitions of  (\ref{equ: stabilization map for F}),
(\ref{equ: stabilization map for Q}) and (\ref{eq: stab H})
that the following diagram is commutative:
\begin{equation}\label{CD: diagram-stab}
\begin{CD}
\po^{d,m}_n(\R;\Ha_+) 
@>\iota^{d,m}_{n,\Ha_+}>\subset>
\Po^{d,m}_n(\R) 
@>\iota^{d,m}_{n,\R}>\subset> 
\Q^{d,m}_n(\R) 
\\
@V{s^{d,m}_{n,\Ha_+}}VV @V{s^{d,m}_{n,\R}}VV @V{s^{d,m}_{n}}VV 
\\
\po^{d+1,m}_n(\R;\Ha_+)
@>\iota^{d+1,m}_{n,\Ha_+}>\subset>
\Po^{d+1,m}_n(\R) 
@>\iota^{d+1,m}_{n,\R}>\subset> 
\Q^{d+1,m}_n(\R) 
\end{CD}
\end{equation}
Moreover,
by using the method invented by C. Boyer and B. Mann \cite[Def. 4.9]{BM2} 
we obtain the following two
homotopy commutative diagrams:
\begin{equation}\label{CD: loop sum2}
\begin{CD}
\Po^{d_1.m}_n(\R;\Ha_+)\times \Po^{d_2,m}_n(\R;\Ha_+)
@>\mu_{d_1,d_2}^{\Ha_+}>>
\Po^{d_1+d_2,m}_n(\R)
\\
@V{\iota^{d_1,m}_{n,\Ha_+}\times
\iota^{d_2,m}_{n,\Ha_+}}V{\bigcap}V 
@V{\iota^{d_1+d_2,m}_{n,\Ha_+}}V{\bigcap}V
\\
\Po^{d_1.m}_n(\R)\times \Po^{d_2,m}_n(\R)
@>\mu_{d_1,d_2}^{\R}>>
\Po^{d_1+d_2,m}_n(\R)
\\
@V{\iota^{d_1,m}_{n,\R}\times
\iota^{d_2,m}_{n,\R}}V{\bigcap}V 
@V{\iota^{d_1+d_2,m}_{n,\R}}V{\bigcap}V
\\
\Q^{d_1,m}_n(\R)\times \Q^{d_2,m}_n(\R) 
@>{\mu_{d_1,d_2}}>> 
\Q^{d_1+d_2,m}_n(\R)
\\
@V{i^{d_1,m}_{n}\times i^{d_2,m}_{n}}VV @V{i^{d_1+d_2,m}_{n}}VV
\\
\Omega S^{mn-1}\times \Omega S^{mn-1}
@>l_{S}>>
\Omega S^{mn-1}
\end{CD}
\end{equation}
where 
$l_{S}$
denotes the loop product on the loop space
$\Omega S^{mn-1}$.
\qed
}
\end{remark}

Now we are ready to give a proof of the following result. 
\begin{thm}\label{thm : I}
If  $mn\geq 3$, the 
inclusion map
$$
\iota^{d,m}_{n,\R}:\Po^{d,m}_n(\R)
\stackrel{\subset}{\longrightarrow} \Q^{d,m}_n(\R)
$$
induces a split epimorphism on the homology group $H_*(\ ;\Z)$. 
\end{thm}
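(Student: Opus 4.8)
The plan is to prove that $\iota^{d,m}_{n,\R}$ induces an epimorphism on $\tilde H_*(\ ;\Z)$ and then observe that it splits for free: by Lemma~\ref{lmm: stable type of Q} the group $\tilde H_*(\Q^{d,m}_n(\R);\Z)$ is free abelian, with exactly one $\Z$ summand in each degree $k(mn-2)$ for $1\le k\le\lfloor d/n\rfloor$ and zero above degree $(mn-2)\lfloor d/n\rfloor$, and any epimorphism of abelian groups onto a free abelian group splits. To get surjectivity I would pass to $\Omega S^{mn-1}$. By Theorem~\ref{thm: KY10}(i) the natural map $i^{d,m}_n\colon\Q^{d,m}_n(\R)\to\Omega S^{mn-1}$ is a homology isomorphism through dimension $D(d;m,n)=(mn-2)(\lfloor d/n\rfloor+1)-1$, which since $mn\ge 3$ is at least $(mn-2)\lfloor d/n\rfloor$; as $\tilde H_*(\Q^{d,m}_n(\R);\Z)$ is concentrated in degrees $\le(mn-2)\lfloor d/n\rfloor$, the map $i^{d,m}_{n*}$ is injective there and identifies $\tilde H_*(\Q^{d,m}_n(\R);\Z)$ with $\bigoplus_{k=1}^{\lfloor d/n\rfloor}\Z\, a^k$ inside $H_*(\Omega S^{mn-1};\Z)=\Z[a]$, $\deg a=mn-2$ (James, cf.\ \cite{Ja}). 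Hence it suffices to show that the Pontryagin power $a^k$ lies in the image of $(i^{d,m}_n\circ\iota^{d,m}_{n,\R})_*$ for every $k$ with $1\le k\le\lfloor d/n\rfloor$.

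The crucial observation is the base case $d=n$: a monic polynomial of degree $n$ has a root of multiplicity $\ge n$ exactly when it equals $(z-\alpha)^n$, and real coefficients force $\alpha\in\R$, so $\Po^{n,m}_n(\R)=\Q^{n,m}_n(\R)$ and $\iota^{n,m}_{n,\R}=\mathrm{id}$. By Theorem~\ref{thm: KY10}(i) the map $i^{n,m}_n$ is a homology isomorphism through dimension $D(n;m,n)=2(mn-2)-1\ge mn-2$, so there is a class $\alpha\in H_{mn-2}(\Po^{n,m}_n(\R);\Z)$ with $i^{n,m}_{n*}(\alpha)=a$. I would then invoke the loop products $\mu_{d_1,d_2}^{\R}$ of this section: iterating $\mu^{\R}$ produces the $k$-fold loop product $\alpha^{(k)}\in H_{k(mn-2)}(\Po^{kn,m}_n(\R);\Z)$, and the homotopy commutative diagram (\ref{CD: loop sum2}) shows that $(i^{kn,m}_n\circ\iota^{kn,m}_{n,\R})_*$ carries $\alpha^{(k)}$ to the $k$-fold loop (Pontryagin) product of $a$, namely $a^k$. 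Finally I would push $\alpha^{(k)}$ forward along the stabilization maps $s^{d,m}_{n,\R}$ to obtain a class in $H_{k(mn-2)}(\Po^{d,m}_n(\R);\Z)$ for any $d\ge kn$; using the commuting square (\ref{CD: diagram-stab}) together with the fact that $s^{d,m}_{n,\R}$ is, up to homotopy, a loop-sum with a fixed element and that $\Omega S^{mn-1}$ is path-connected (so the induced self-map of $\Omega S^{mn-1}$ is a translation, hence homotopic to the identity and in particular trivial on homology), one checks that $(i^{d,m}_n\circ\iota^{d,m}_{n,\R})_*$ still sends the transported class to $a^k$. Since $1\le k\le\lfloor d/n\rfloor$ is exactly the condition $kn\le d$, this realizes every such $a^k$ in the image, and by the reduction of the first paragraph the proof is complete.

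I expect the main obstacle to be this last bookkeeping step: verifying that the loop products and stabilizations interact with $i^{d,m}_n$ and $\iota^{d,m}_{n,\R}$ precisely as claimed, and in particular justifying that the stabilization-induced self-map of $\Omega S^{mn-1}$ is homotopically trivial, so that no generator is lost when transporting classes from degree $kn$ up to degree $d$. Everything else is a routine assembly of Theorem~\ref{thm: KY10}, Lemma~\ref{lmm: stable type of Q}, the diagrams (\ref{CD: loop sum2}) and (\ref{CD: diagram-stab}), and the standard fact $H_*(\Omega S^{mn-1};\Z)=\Z[a]$.
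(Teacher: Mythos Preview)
Your proposal is correct and follows essentially the same route as the paper's own proof: both start from the identification $\Po^{n,m}_n(\R)=\Q^{n,m}_n(\R)$ at the bottom degree, push the generator up via the loop products using diagram~(\ref{CD: loop sum2}), and then transport along the stabilization maps using diagram~(\ref{CD: diagram-stab}), concluding surjectivity onto the free abelian group $\tilde H_*(\Q^{d,m}_n(\R);\Z)$ and hence splitting. The only cosmetic difference is that the paper works directly with the Pontryagin generator $\iota_Q\in H_{mn-2}(\Q^{d,m}_n(\R);\Z)$ and its powers inside $H_*(\Q^{d,m}_n(\R);\Z)$ (via the homotopy equivalence $\Q^{d,m}_n(\R)\simeq J_{\lfloor d/n\rfloor}(S^{mn-2})$), whereas you first pass to $\Omega S^{mn-1}$ through $i^{d,m}_n$; your worry about the stabilization-induced self-map of $\Omega S^{mn-1}$ is exactly the point the paper absorbs into its appeal to (\ref{CD: diagram-stab}) and (\ref{CD: loop sum2}), and it is indeed harmless for the reason you give.
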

\begin{proof}
Recall that there is an isomorphism $H_*(\Omega S^{mn-1};\Z)=\Z[\iota_{mn-2} ]$ for
the generator $\iota_{mn-2}\in H_{mn-2}(\Omega S^{mn-1};\Z)\cong \Z$.
It follows from (\ref{eq: James})
that
there is a generator $\iota_{Q}\in H_{mn-2}(\Q^{d,m}_n(\R);\Z)\cong \Z$
which satisfies the equality
\begin{equation}
(i^{mn-2,m}_n)_*(\iota_Q )=\iota_{mn-2},
\end{equation}
where $n_0=\lfloor d/n\rfloor$ and
$H_*(\Q^{d,m}_n(\R);\Z)=\Z [\iota_Q]/((\iota_Q)^{n_0+1}).$
\par
Note that if a polynomial $f(z)\in \R [z]$ has a complex root
$\alpha\in \C\setminus \R$,
its conjugate $\overline{\alpha}$ is also a root of $f(z)$.
Hence, we  see that
\begin{equation}
\Po^{d,m}_n(\R)=\Q^{d,m}_n(\R) 
\quad
\mbox{ if $d<2n$.}
\end{equation}
Thus, the inclusion map $\iota^{d,m}_{n,\R}$ is the identity map if $d<2n$.
\par
Since there is a homotopy equivalence
$\Po^{n,m}_n(\R)\cong \R^{mn}\setminus \R
\simeq S^{mn-2}$,
there is a generator
$
\iota_P\in H_{mn-2}(\Po^{n,m}_n(\R);\Z)\cong\Z
$
such that
\begin{equation} 
(\iota^{mn-2,m}_{n,\R})_*(\iota_P)=\iota_Q.
\end{equation}
Thus, for any $1\leq k\leq d_0=\lfloor d/n\rfloor$,
by using the diagrams (\ref{CD: loop sum2}) and 
(\ref{CD: diagram-stab}) we see that
$
(\iota_Q)^k=((\iota^{mn-2,m}_{n,\R})_*(\iota_P))^k
=(\iota^{k(mn-2),m}_{n,\R})_*((\iota_P)^k).
$
However, since $k(mn-2)\leq d$,
$$(\iota_Q)^k=(\iota^{k(mn-2),m}_{n,\R})_*((\iota_P)^k)
\in (\iota^{d,m}_{n,\R})_*(H_{k(mn-2)}(\Po^{d,m}_n(\R);\Z)).
$$
Hence,
 the map $\iota^{d,m}_{n,\R}$ induces a split epimorphism  on
the homology group $H_k(\ ;\Z)$ for any $k$.
\end{proof}

\section{The homology stability theorem}\label{section: sd}
In this section we consider the homology stability of the stabilization maps
$s^{d,m}_{n,\R}$
 and prove the homology stability
theorem (Theorem \ref{thm: stab1}).

\par\vspace{2mm}\par
Consider the stabilization map
$s^{d,m}_{n,\R}:\Po^{d,m}_n(\R)\to \Po^{d+1,m}_n(\R)$
given by (\ref{equ: stabilization map for F}).
Note that
the map $s^{d,m}_{n,\R}$ clearly extends to a map
\begin{equation}
\P_d(\R)^m\to\P_{d+1}(\R)^m
\end{equation}
and
its restriction gives a stabilization map
$\tilde{s}^{d,m}_{n,\R}:\Sigma^{d,m}_{n}\to \Sigma^{d+1,m}_{n}$
between discriminants.
It is easy to see that it  also extends to an open embedding
\begin{equation}\label{equ: open-stab}
\tilde{s}^{d,m}_{n,\R}:\Sigma^{d,m}_{n}\times\R^m\to \Sigma^{d+1,m}_{n}.
\end{equation}
Since one-point compactification is contravariant for open embeddings,
it
induces the map
\begin{equation}\label{equ: embedding3}
(\tilde{s}^{d,m}_{n,\R})_+: 
(\Sigma^{d+1,m}_{n})_+
\to
(\Sigma^{d,m}_{n}\times \R^{m})_+=\Sigma^{d,m}_{n +}\wedge S^{m}
\end{equation}
between one-point compactifications.
Thus we see that  the following diagram is commutative
\begin{equation}\label{diagram: discriminant}
\begin{CD}
\tilde{H}_k(\Po^{d,m}_n(\R);\Z) 
@>{(s^{d,m}_{n,\R})}_*>>\tilde{H}_k(\Po^{d+1,m}_n(\R);\Z)
\\
@V{Al}V{\cong}V @V{Al}V{\cong}V
\\
H^{dm-k-1}_c(\Sigma^{d,m}_{n};\Z) 
@>(\tilde{s}^{d,m}_{n,\R+})^*>>
H^{(d+1)m-k-1}_c(\Sigma^{d+1,m}_{n};\Z)
\end{CD}
\end{equation}
where $Al$ denotes the Alexander duality isomorphism and
the homomorphism
 $(\tilde{s}^{d,m}_{n,\R+})^*$ denotes the composite of the
suspension isomorphism with the homomorphism
$\tilde{s}^{d,m*}_{n,\R+}$,
{\small
$$
H^{dm-k-1}_c(\Sigma^{d,m}_{n};\Z)
\stackrel{\cong}{\longrightarrow}
H^{(d+1)m-k-1}_c(\Sigma^{d,m}_{n}\times\R^m;\Z)
\stackrel{\tilde{s}^{d,m*}_{n,\R+}}{\longrightarrow}
H^{(d+1)m-k-1}_c(\Sigma^{d+1,m}_{n};\Z).
$$
}
\newline
The map $\tilde{s}^{d,m}_{n,\R}$  naturally extends to a filtration preserving
open map 
\begin{equation}
\hat{s}^{d,m}_{n,\R}:\SZ_{} \times \R^m\to \mathcal{X}^{d+1}_{}
\end{equation}
and this extends to a filtration preserving map
$
(\hat{s}^{d,m}_{n,\R})_+: \mathcal{X}^{d+1}_{+}\to \SZ_{+}\wedge S^m.
$
This map  induces
 a homomorphism of spectral sequences
\begin{equation}\label{equ: theta1}
\{ \theta_{k,s}^t:E^{t;d}_{k,s}\to E^{t;d+1}_{k,s}\}.
\end{equation}
It is easy to see that
$\hat{s}^{d,m}_{n,\R}(\SZ_{k}(i,j)\times \R^m)\subset
\mathcal{X}^{d+1}_{k}(i,j)$ if $1\leq k\leq \lfloor d/n\rfloor$
and $i+j=k$.
Hence, if $i+j=k$ and
$1\leq k\leq \lfloor d/n\rfloor$,
one can define a map
\begin{equation}
\hat{s}^{d,m}_{n;i,j}:\SZ_{k}(i,j)\times \R^m \to \mathcal{X}^{d+1}_{k}(i,j)
\end{equation}
as
the restriction 
$\hat{s}^{d,m}_{n;i,j}=\hat{s}^{d,m}_{n,\R}\vert
\SZ_{k}(i,j)$.
Since $\hat{s}^{d,m}_{n;i,j}$ is an open embedding, it induces a map
\begin{equation}\label{eq: 4.9}
\hat{s}^{d,m}_{n;i,j+}:\mathcal{X}^{d+1}_{k}(i,j)_+\to
\mathcal{X}^d_{k}(i,j)_+\wedge S^m
\end{equation}
such that 
 the following equality holds:
\begin{align}\label{eq: stab-decomp}
\theta^1_{k,s}&=\sum_{i+j=k}
(\hat{s}^{d,m}_{n;i,j+})^*
:E^{1;d}_{k,s}\to E^{1;d+1}_{k,s}
\quad
\mbox{if $1\leq k\leq \lfloor \frac{d}{n}\rfloor$, }
\\
\nonumber &  \quad \mbox{ where}
\\
\nonumber
(\hat{s}^{d,m}_{n;i,j+})^*&:
H_c^{dm+k-s-1}(\SZ_{k}(i,j);\Z)
\to
H_c^{(d+1)m+k-s-1}(\mathcal{X}^{d+1}_{k}(i,j);\Z),
\\
\nonumber
\mbox{and }&
\quad
\begin{cases}
E^{1;d}_{k,s}
&= \dis \bigoplus_{i+j=k}
H_c^{dm+k-s-1}(\SZ_{k}(i,j);\Z),
\\ 
E^{1;d+1}_{k,s}
&=\dis  \bigoplus_{i+j=k}
H_c^{(d+1)m+k-s-1}(\mathcal{X}^{d+1}_{k}(i,j);\Z).
\end{cases}
\end{align}

\begin{lemma}\label{lmm: E1}
If $1\leq k\leq \lfloor d/n\rfloor$, 
$\theta^1_{k,s}:
E^{1;d}_{k,s}\stackrel{\cong}{\rightarrow} 
{E}^{1;d+1}_{k,s}$ is
an isomorphism for any $s$.
\end{lemma}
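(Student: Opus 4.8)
The plan is to prove the isomorphism one summand at a time. By $(\ref{eq: stab-decomp})$ the map $\theta^1_{k,s}$ is the direct sum, over the pairs $(i,j)$ with $i+j=k$, of the homomorphisms
$$
(\hat{s}^{d,m}_{n;i,j+})^*:H_c^{dm+k-s-1}(\SZ_{k}(i,j);\Z)\longrightarrow H_c^{(d+1)m+k-s-1}(\mathcal{X}^{d+1}_{k}(i,j);\Z)
$$
on the corresponding direct summands of $E^{1;d}_{k,s}$ and $E^{1;d+1}_{k,s}$, so it suffices to show that each of these is an isomorphism. Here $\SZ_{k}(i,j)$ (resp. $\mathcal{X}^{d+1}_{k}(i,j)$) is nonempty precisely when $d\geq n(k+j)$ (resp. $d+1\geq n(k+j)$); on the pieces where both are nonempty they are, by Lemma \ref{lemma: vector bundle}, the total spaces of real affine bundles over the common base $B:=C_i(\R)\times C_j(\Ha_+)$ of ranks $l_{d,k}(i,j)=m(d-nk-nj)+k-1$ and $l_{d+1,k}(i,j)=l_{d,k}(i,j)+m$ respectively, so that $\SZ_{k}(i,j)\times\R^m$ and $\mathcal{X}^{d+1}_{k}(i,j)$ are total spaces of affine bundles over $B$ of the \emph{same} rank $r:=l_{d+1,k}(i,j)$.

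The geometric input I would use is that $\hat{s}^{d,m}_{n;i,j}$ is a fibrewise open embedding of these two bundles covering a homeomorphism of $B$. Indeed, the stabilization $f_s\mapsto(z-x_{d,s})\widetilde{\varphi_d}(f_s)$ moves the common roots of multiplicity $\geq n$ exactly by $\varphi_d$: the new linear factors $(z-x_{d,s})$ create no extra common root, since their roots lie outside $\C_d$ while all other roots of the stabilized polynomials lie inside $\C_d$. Hence $\hat{s}^{d,m}_{n;i,j}$ covers the homeomorphism $\phi\colon B\xrightarrow{\ \cong\ }B$ induced by $\varphi_d$ together with the canonical identifications $\varphi_d(\R)\cong\R$ and $\varphi_d(\Ha_+)\cong\Ha_+$, and $\phi$ is isotopic to $\mathrm{id}_B$. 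On each fibre $\hat{s}^{d,m}_{n;i,j}$ is then an open embedding $\R^r\hookrightarrow\R^r$; as its image is the continuous injective image of $\R^r$, hence an open subset homeomorphic to $\R^r$, the collapse induced on the one-point compactified fibres is a self-map of $S^r$ of degree $\pm1$. Thus $\hat{s}^{d,m}_{n;i,j+}$ is a map of Thom spaces of rank-$r$ vector bundles over $B$, covering $\phi$ and fibrewise of degree $\pm1$.

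To conclude I would compare the two Thom (Leray) spectral sequences of the projections to $B$ — equivalently, apply the Thom isomorphism with the orientation local systems of the two affine bundles, which coincide because the combinatorial data of the simplicial resolution (the base $B$, the open $(k-1)$-simplices, and the $S_j$-action) are untouched by stabilization. Exactly as in the proof of Lemma \ref{lemma: E1}, both groups in the display get identified with $H_c^{mn(k+j)-s}(B;\pm\Z)$, and under these identifications $(\hat{s}^{d,m}_{n;i,j+})^*$ becomes $\phi^*$ up to the sign $\pm1$ coming from the fibre degree; since $\phi$ is a homeomorphism, $\phi^*$ is an isomorphism, which gives the claim for each $(i,j)$ and hence the lemma. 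The step I expect to be the main obstacle is precisely this last identification — checking that $\hat{s}^{d,m}_{n;i,j+}$ is compatible with the Thom classes (that the fibre degree is genuinely $\pm1$ and that the two orientation systems agree); granting that, the rest is formal.
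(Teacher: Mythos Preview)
Your proposal is correct and follows essentially the same approach as the paper: reduce via (\ref{eq: stab-decomp}) to the individual $(i,j)$-summands, observe that $\hat{s}^{d,m}_{n;i,j}$ is a fibrewise map of affine bundles over $C_i(\R)\times C_j(\Ha_+)$, and conclude by naturality of the Thom isomorphism. The paper's proof is terser---it records the commutative square of projections and invokes Thom naturality directly---while you make explicit that the base map is the homeomorphism induced by $\varphi_d$ and that the fibrewise collapse has degree $\pm 1$, which are exactly the details the paper leaves implicit.
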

\begin{proof}
Suppose that
$1\leq k\leq \lfloor d/n\rfloor$ with $i+j=k.$
By (\ref{eq: stab-decomp})
it suffices to show that
$(\hat{s}^{d,m}_{n;i,j+})^*$ is an isomorphism.
Note that the projection 
$\pi^d_{k;i,j}$ (defined in (\ref{eq: projection}))
also naturally extends to a map
$\hat{\pi}^d_{k;i,j}:\SZ_{k}(i,j)\times \R^m\to C_i(\R)\times \C_j(\Ha_+)$
and  it is easy to check that
the following diagram is commutative.
$$
\begin{CD} 
\SZ_{k}(i,j)\times \R^{m} @>\hat{\pi}_{k;i,j}^d>> C_i(\R)\times \C_j(\Ha_+)
\\
@V{\hat{s}^{d,m}_{n;i,j}}VV \Vert @.
\\
\mathcal{X}^{d+1}_{k}(i,j)  
@>\pi_{k;i,j}^{d+1}>> C_{i}(\R)\times C_j(\Ha_+)
\end{CD}
$$
It follows from the naturality  of
the Thom isomorphism that the homomorphism
$(\hat{s}^{d,m}_{n;i,j+})^*$ is indeed an isomorphism
if $1\leq k\leq \lfloor d/n\rfloor$.
\end{proof}

Now we can prove the following result.
\begin{thm}\label{thm: stab1}
If $mn\geq 3$,
the stabilization map 
$$
s^{d,m}_{n,\R}:\Po^{d,m}_n(\R)\to
\Po^{d+1,m}_n(\R)
$$ 
is a homology equivalence for 
$\lfloor d/n\rfloor =\lfloor (d+1)/n\rfloor$, and a
homology equivalence through dimension
$D(d;m,n)$ for
$\lfloor d/n\rfloor <\lfloor (d+1)/n\rfloor$,
where $D(d;m,n)$ denotes the positive integer given by
(\ref{eq: D(d;m,n)}).
\end{thm}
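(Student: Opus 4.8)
The plan is to deduce this homology stability statement from the analysis of the Vassiliev spectral sequence already carried out in this section. The key point is that the stabilization map $s^{d,m}_{n,\R}$ induces a morphism of spectral sequences $\{\theta^t_{k,s}:E^{t;d}_{k,s}\to E^{t;d+1}_{k,s}\}$ (see (\ref{equ: theta1})), and by Lemma \ref{lmm: E1} the map $\theta^1_{k,s}$ is an isomorphism whenever $1\leq k\leq \lfloor d/n\rfloor$. So first I would record that the $E^1$-page of the spectral sequence for $\Po^{d,m}_n(\R)$ agrees with that for $\Po^{d+1,m}_n(\R)$ in all columns $1\leq k\leq \lfloor d/n\rfloor$, via the isomorphism $\theta^1$, and that consequently (since differentials commute with $\theta$) the same holds for every later page $E^{t;d}_{k,s}\cong E^{t;d+1}_{k,s}$ in that range of columns.

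Next I would split into the two cases. \textbf{Case 1: $\lfloor d/n\rfloor=\lfloor (d+1)/n\rfloor$.} Here, by Corollary \ref{crl: Er}, the entire spectral sequence for both $\Po^{d,m}_n(\R)$ and $\Po^{d+1,m}_n(\R)$ is concentrated in columns $1\leq k\leq \lfloor d/n\rfloor$ (the terms vanish for $k>\lfloor d/n\rfloor$ and for $k\leq 0$), and $\theta^1$ is an isomorphism on the whole of $E^1$. Hence $\theta^t$ is an isomorphism on every page, so $\theta^\infty$ is an isomorphism, and since both spectral sequences converge (to $\tilde H_*(\Po^{d,m}_n(\R);\Z)$ and $\tilde H_*(\Po^{d+1,m}_n(\R);\Z)$ respectively) and are first-quadrant in the relevant reindexed sense, a standard comparison argument shows $(s^{d,m}_{n,\R})_*$ is an isomorphism in all degrees. \textbf{Case 2: $\lfloor d/n\rfloor<\lfloor (d+1)/n\rfloor$.} Now the target spectral sequence has one extra column, $k=\lfloor (d+1)/n\rfloor=\lfloor d/n\rfloor+1$, on which $\theta^1$ is not claimed to be an isomorphism (indeed the source has a zero there). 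I would locate the lowest total degree $s-k$ in which a nonzero class can appear in this extra column: by Corollary \ref{crl: Er} the column $k_0:=\lfloor d/n\rfloor+1$ contributes to $E^{1;d+1}_{k_0,s}$ only for $s\geq (mn-1)k_0$, i.e. to homological degree $s-k_0\geq (mn-2)k_0=(mn-2)(\lfloor d/n\rfloor+1)=D(d;m,n)+1$. So below degree $D(d;m,n)+1$, i.e. through dimension $D(d;m,n)$, the extra column is invisible and the comparison of the two spectral sequences on columns $1\leq k\leq\lfloor d/n\rfloor$ (where $\theta$ is an isomorphism on all pages) forces $(s^{d,m}_{n,\R})_*$ to be an isomorphism through dimension $D(d;m,n)$.

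Filling in the details requires one standard but slightly delicate bookkeeping step: the comparison-of-spectral-sequences argument that turns "$\theta^t$ is an isomorphism on $E^t$ in a range of columns/degrees" into "$\theta^\infty$ is an isomorphism on the associated graded of the abutment in that range," together with care about the reindexing in (\ref{SS}) so that the relevant range of total degree $s-k$ translates correctly. One must also note that the spectral sequence $E^{1;d}_{k,s}$ is concentrated in finitely many columns (by Corollary \ref{crl: Er}, $k$ runs only up to $\lfloor d/n\rfloor$), which guarantees convergence and that each abutment group is a finite extension, so the isomorphism on associated graded lifts to an isomorphism on $\tilde H_*$. I expect the main obstacle to be purely organizational — getting the precise degree at which the new column first contributes, and confirming via Lemma \ref{lmm: inequality}-style elementary estimates that $(mn-2)(\lfloor d/n\rfloor+1)=D(d;m,n)+1$ is indeed the first possibly-obstructed degree — rather than any genuinely hard geometric input, since all the heavy lifting (the non-degenerate simplicial resolution, the $E^1$ computation, and the naturality of $\theta^1$) has already been done in Lemmas \ref{lemma: vector bundle}, \ref{lemma: E1}, and \ref{lmm: E1}.
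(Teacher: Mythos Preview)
Your proposal is correct and follows essentially the same approach as the paper's own proof: both arguments compare the Vassiliev spectral sequences for $d$ and $d+1$ via the morphism $\theta^t$ of (\ref{equ: theta1}), invoke Lemma \ref{lmm: E1} and Corollary \ref{crl: Er} to control the $E^1$-pages, and in the second case locate the first homological degree (namely $D(d;m,n)+1$) at which the extra column $k_0=\lfloor d/n\rfloor+1$ can contribute. The paper's proof is in fact terser than yours; your more explicit identification of the critical degree and your remark about finitely many columns guaranteeing convergence are useful clarifications rather than departures from the paper's method.
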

\begin{proof}
First, consider the case 
$\lfloor d/n \rfloor =\lfloor (d+1)/n\rfloor$.
In this case, by using Corollary \ref{crl: Er} and 
Lemma \ref{lmm: E1} it is easy to show that
$\theta^{1}_{k,s}:E^{1;d}_{k,s}\stackrel{\cong}{\longrightarrow}
E^{1;d+1}_{k,s}$ 
is an isomorphism for any $(k,s)$.
Hence, $\theta^{\infty}_{k,s}$ 
is an isomorphism for any $(k,s).$
Since $\theta^t_{k,s}$ is induced from $\hat{s}^{d,m}_{n,\R}$,
it follows from (\ref{diagram: discriminant}) that
the map $s^{d,m}_{n,\R}$ is a homology equivalence.
\par
Next assume that
$\lfloor d/n\rfloor <\lfloor (d+1)/n\rfloor$,
i.e. $\lfloor (d+1)/n\rfloor =\lfloor d/n\rfloor +1$. and let $\epsilon \in \{0,1\}$.
In this case,
by considering the differential
$d^t:E^{t;d+\epsilon}_{k,s}
\to E^{t;d+\epsilon}_{k+t,s+t-1}$,
 Lemma \ref{lmm: E1}
and Corollary \ref{crl: Er}, one can show that
$\theta^t_{k,s}:E^{t;d}_{k,s}\to E^{t;d+1}_{k,s}$ 
is an isomorphism for any $(k,s)$ and any $t\geq 1$ as long as
the condition $s-t\leq D(d;m,n)$ is satisfied.
Hence, if $s-t\leq D(d;m,n)$,
$\theta^{\infty}_{k,s}$ is always an isomorphism and 
so the map $s^{d,m}_{n,\R}$ is a homology equivalence
through dimension $D(d;m,n)$.
\end{proof}

The following two results will be needed for the proof of Theorem
\ref{thm: II}.

\begin{lemma}\label{lmm: abelian}
The space $\Po^{d,m}_n(\R)$ is 
simply connected if $mn\geq 4$, and
$\pi_1(\po^{d,m}_n(\R))=\Z$ 
if $mn=3$ and $d\geq n$.
\end{lemma}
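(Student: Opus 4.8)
The plan is to compute $\pi_1(\po^{d,m}_n(\R))$ directly from the Vassiliev-type spectral sequence (\ref{SS}), or rather from the closely related information already assembled in \S\ref{section: spectral sequence}, together with a low-dimensional connectivity argument for the discriminant. Recall that by Alexander duality (\ref{Al}) we have $\tilde H_k(\po^{d,m}_n(\R);\Z)\cong H^{dm-k-1}_c(\Sigma^{d,m}_n;\Z)$, so $\pi_1$ will be abelian as soon as we know it is generated by $H_1$, which in turn is dual to $H^{dm-2}_c(\Sigma^{d,m}_n;\Z)$. The first step is therefore to argue that $\po^{d,m}_n(\R)$ is simply connected when the relevant homology vanishes through a range and then to identify $H_1$ in the remaining case.

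First I would handle the \emph{simple connectivity}. When $mn\ge 4$, the Vassiliev spectral sequence from Corollary \ref{crl: Er} has $E^{1;d}_{k,s}=0$ unless $s\ge (mn-1)k\ge 3k$, and the terms contributing to $\tilde H_q(\po^{d,m}_n(\R);\Z)$ are those with $s-k=q$; for $q=1$ this forces $k=0$ or $(mn-1)k-k\le 1$, i.e. $k(mn-2)\le 1$, which is impossible for $k\ge 1$ when $mn\ge 4$. Hence $\tilde H_1(\po^{d,m}_n(\R);\Z)=0$. To upgrade this to $\pi_1=1$ I would note that $\po^{d,m}_n(\R)$ is an open subset of the affine space $\P_d(\R)^m\cong\R^{dm}$ whose complement $\Sigma^{d,m}_n$ is a closed semialgebraic set; by the stratification of Lemma \ref{lemma: vector bundle} every stratum $\SZ^{\,}_{k}(i,j)$ (before simplicial resolution, the stratum $\Sigma^{d,m}_n(i,j)$ itself) has real codimension at least $mni+2mnj\ge mn\ge 4$ in $\P_d(\R)^m$, and a closed set of codimension $\ge 2$ does not disconnect and of codimension $\ge 3$ does not affect $\pi_1$ of the complement. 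This gives $\pi_1(\po^{d,m}_n(\R))\cong\pi_1(\R^{dm})=1$, so the space is simply connected.

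Next, the case $mn=3$, i.e. $(m,n)=(3,1)$ or $(1,3)$. Here the codimension of the lowest stratum $\Sigma^{d,m}_n(1,0)$ (a single real common root of multiplicity $n$ of all $f_t$) is exactly $mn=3$, so the complement can have nontrivial $\pi_1$. From Corollary \ref{crl: Er}, the $E^{1;d}_{k,s}$-terms contributing to $\tilde H_1$ ($s-k=1$, $s\ge (mn-1)k=2k$) occur only for $k=1$, where $E^{1;d}_{1,2}\cong A_{1,2}\oplus\tilde H_{2-2}(S^0;\Z)$ and $A_{1,2}=\tilde H_0(\Sigma^{1}D_1;\Z)=0$ since $\Sigma D_1$ is connected; thus $E^{1;d}_{1,2}\cong\tilde H_0(S^0;\Z)\cong\Z$. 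One must check this survives to $E^\infty$: the only possibly nonzero differential into or out of $E^{t;d}_{1,2}$ lands in a group with $s-k$ shifted, and by degree reasons (using that $d\ge n$ so $\lfloor d/n\rfloor\ge 1$, and the sparseness of the $E^1$-page) $E^{\infty;d}_{1,2}\cong\Z$, giving $\tilde H_1(\po^{d,m}_n(\R);\Z)\cong\Z$. Finally, to conclude $\pi_1\cong\Z$ rather than merely $H_1\cong\Z$, I would invoke the loop product of \S\ref{section: loop products}: the product $\mu^{\R}_{d_1,d_2}$ makes $\coprod_d\po^{d,m}_n(\R)$ into an $H$-space-like system, so each component has abelian fundamental group (a standard Eckmann–Hilton argument applied componentwise, as for the analogous spaces in \cite{KY8}, \cite{Y1}), whence $\pi_1(\po^{d,m}_n(\R))\cong H_1(\po^{d,m}_n(\R);\Z)\cong\Z$.

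\textbf{The main obstacle} I expect is the $mn=3$ case: pinning down $\pi_1$ exactly, not just $H_1$, requires knowing the fundamental group is abelian, for which the cleanest route is the loop-product/Eckmann–Hilton argument rather than any spectral-sequence input; and one must verify carefully that the relevant $E^1$ entry is a $\Z$ with no interfering differentials, which uses the precise vanishing range $s\ge(mn-1)k$ from Corollary \ref{crl: Er} together with $d\ge n$. The codimension bookkeeping in the $mn\ge 4$ case is routine once the stratification of Lemma \ref{lemma: vector bundle} is in hand.
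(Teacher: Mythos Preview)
Your $mn\ge 4$ argument via codimension is essentially correct and gives a clean alternative to the paper's route (the paper first shows $\pi_1$ is abelian via a geometric string/braid description as in \cite[Lemma 3.5]{KY10}, then computes $H_1=0$ from the spectral sequence). One correction: the codimension of the stratum $\Sigma^{d,m}_n(i,j)$ in $\R^{dm}$ is $(mn-1)(i+2j)$, not $mni+2mnj$, because you must subtract the dimension $i+2j$ of the parameter space $C_i(\R)\times C_j(\Ha_+)$; for $mn\ge 4$ the minimum codimension is still $mn-1\ge 3$, so your conclusion survives.

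For $mn=3$ there is a genuine gap. The loop product $\mu^{\R}_{d_1,d_2}$ makes $\coprod_d\po^{d,m}_n(\R)$ only an $E_1$-monoid (horizontal juxtaposition along the real axis; vertical moves would break conjugation symmetry), and an $E_1$ or even $E_2$ structure on a graded monoid does \emph{not} force the fundamental groups of the individual components to be abelian: witness $\coprod_n C_n(\R^2)$, an $E_2$-monoid whose $n$-th component has $\pi_1\cong{\rm Br}(n)$. The Eckmann--Hilton argument needs a unital product $Y_d\times Y_d\to Y_d$ on a single component, which you do not have here. (Your codimension bookkeeping also betrays the slip: the lowest stratum actually has codimension $mn-1=2$, not $3$; that is exactly why $\pi_1$ can be nontrivial, whereas codimension $3$ would have forced it to vanish.) The paper closes this gap by a direct geometric argument: elements of $\pi_1$ are represented by colored strings, and for $mn\ge 3$ any two such strings can be slid past each other, so $\pi_1$ is abelian; then your spectral-sequence computation $E^{1;d}_{1,2}\cong\Z$ (which is correct, as is the check that it survives) finishes via Hurewicz. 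An alternative fix compatible with your codimension viewpoint: the codimension-$2$ stratum $\Sigma^{d,m}_n(1,0)$ is a connected smooth submanifold of $\R^{dm}$ minus the higher strata, so $\pi_1$ of its complement is generated by a single meridian and hence cyclic; combined with $H_1\cong\Z$ this gives $\pi_1\cong\Z$.
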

\begin{proof}
We use the description of the fundamental group in terms of braids as in 
\cite[Lemma 3.5]{KY10}.
We represent elements of the group as  strings of $m$-different colors, which can move continuously,  with only the following case not allowed to occur:
\begin{enumerate}
\item[$(*)$]
All strings of multiplicity $\geq n$ of $m$-different colors pass through a single point.
\end{enumerate}
Since $mn\geq 3$, this representation shows  that 
$\pi_1(\Po^{d,m}_n(\R))$ is an abelian group.
Hence, there is an isomorphism
$\pi_1(\Po^{d,m}_n(\R))\cong H_1(\Po^{d,m}_n(\R);\Z)$.
Now consider the spectral sequence (\ref{SS}).
Then, by   Corollary  \ref{crl: Er} , we see that 
$$
E^{1;d}_{k,k+1}=
\begin{cases}
\Z & \mbox{ if }k=1 \mbox{ and } mn=3,
\\
0 & \mbox{ otherwise}.
\end{cases}
$$
First, consider the case $mn\geq 4$.
We see that
$H_1(\Po^{d,m}_n(\R); \Z)=0$, and
the space $\po^{d,m}_n(|\R)$ is simply connected.
\par
Next, consider the case $mn=3$.
Since $E^{1;d}_{2,2}=0$ by Corollary  \ref{crl: Er}, 
by considering the differential 
$d^t:E^{t;d}_{k,s}\to E^{t;d}_{k+t,s+t+1}$, 
we see that
$
\bigoplus_{k\in \Z}E^{1;d}_{k,k+1}=E^{1;d}_{1,2}=\Z=
E^{\infty;d}_{1,2}.
$
Hence, if $mn=3$,
$$
\pi_1(\po^{d,m}_n(\R))\cong H_1(\po^{d,m}_n(\R);\Z)\cong \Z.
$$
This completes the proof.
\end{proof}
\begin{crl}\label{crl: stabilization}
If $mn\geq 4$, the stabilization map
$$
\mbox{s}^{d,m}_{n,\R}:\Po^{d,m}_n(\R)\to \Po^{d+1,m}_n(\R)
$$
is a homotopy equivalence if
$\lfloor d/n\rfloor =\lfloor (d+1)/n\rfloor$
and a homotopy equivalence through dimension $D(d;m,n)$
otherwise.
\end{crl}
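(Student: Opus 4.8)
The plan is to deduce the homotopy statement from the homology statement of Theorem~\ref{thm: stab1} by using that, for $mn\ge 4$, the spaces involved are simply connected. First I record that $\Po^{d,m}_n(\R)$ and $\Po^{d+1,m}_n(\R)$ are open subsets of the affine spaces $\P_d(\R)^m\cong\R^{dm}$ and $\P_{d+1}(\R)^m\cong\R^{(d+1)m}$ (namely the complements of the discriminants $\Sigma^{d,m}_n$ and $\Sigma^{d+1,m}_n$), hence are smooth manifolds and in particular have the homotopy type of CW complexes; and by Lemma~\ref{lmm: abelian} both are simply connected. So it suffices to compare homotopy groups with homology groups in the relevant range.

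Consider first the case $\lfloor d/n\rfloor=\lfloor(d+1)/n\rfloor$. Here Theorem~\ref{thm: stab1} asserts that $s^{d,m}_{n,\R}$ is an integral homology equivalence in all degrees. Replacing $s^{d,m}_{n,\R}$ by the inclusion of $\Po^{d,m}_n(\R)$ into its mapping cylinder, the resulting pair has vanishing relative integral homology in every degree; since both spaces are simply connected, the relative Hurewicz theorem shows inductively that all relative homotopy groups vanish, and the homotopy long exact sequence of the pair then gives that $s^{d,m}_{n,\R}$ induces isomorphisms on all homotopy groups. As both spaces have CW homotopy type, $s^{d,m}_{n,\R}$ is a homotopy equivalence; this is just Whitehead's homology theorem for simply connected spaces.

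Now consider the case $\lfloor d/n\rfloor<\lfloor(d+1)/n\rfloor$, and write $D=D(d;m,n)$. Theorem~\ref{thm: stab1} gives that $(s^{d,m}_{n,\R})_*$ is an isomorphism on $H_k(\ ;\Z)$ for all $k\le D$, hence, by the homology long exact sequence of the mapping-cylinder pair $\big(\Po^{d+1,m}_n(\R),\Po^{d,m}_n(\R)\big)$, the relative homology of this pair vanishes through degree $D$. Simple connectivity together with the relative Hurewicz theorem then show that the pair is $D$-connected, so its first possibly nonzero relative homotopy group, in degree $D+1$, is isomorphic to $H_{D+1}$ of the pair, i.e. to $\operatorname{coker}\big((s^{d,m}_{n,\R})_*\colon H_{D+1}\to H_{D+1}\big)$. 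Thus, provided one also knows that $(s^{d,m}_{n,\R})_*$ is surjective on $H_{D+1}(\ ;\Z)$, the pair is $(D+1)$-connected, and the homotopy long exact sequence yields that $s^{d,m}_{n,\R}$ is an isomorphism on $\pi_k$ for every $k\le D$, that is, a homotopy equivalence through dimension $D$.

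The heart of the matter is therefore this extra surjectivity in degree $D+1$, and this is the step I expect to be the main obstacle. To establish it I would return to the comparison of Vassiliev spectral sequences $\{\theta^t_{k,s}\}$ of (\ref{equ: theta1}) and (\ref{eq: stab-decomp}) used to prove Theorem~\ref{thm: stab1}: in passing from $d$ to $d+1$ the spectral sequence of Corollary~\ref{crl: Er} acquires only the single new column $k=\lfloor d/n\rfloor+1$, whose entries all lie in total degree $\ge D+1$, while $\theta^1$ is an isomorphism on each of the columns $1\le k\le\lfloor d/n\rfloor$ for all $s$ (Lemma~\ref{lmm: E1}) and the new column carries no outgoing differentials. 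One then has to check that the unique class of the new column sitting in total degree $D+1$ (it is a copy of $\tilde H_0(S^0;\Z)=\Z$) is hit by a differential emanating from the columns on which $\theta^1$ is already an isomorphism, so that nothing new appears in $H_{D+1}$ and $(s^{d,m}_{n,\R})_*$ remains onto there; verifying that this class is indeed killed --- equivalently, understanding the attaching behaviour of the ``new cell'', which one may also try to control via the loop product of Section~\ref{section: loop products} as in the proof of Theorem~\ref{thm : I} --- is the one genuinely delicate point. Once it is settled, the two cases above complete the proof, and the first case is exactly what permits moving freely among the integers $d$ lying in a single congruence block modulo $n$.
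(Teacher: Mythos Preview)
Your overall strategy---simple connectivity from Lemma~\ref{lmm: abelian} plus the homology statement of Theorem~\ref{thm: stab1}, then Whitehead/relative Hurewicz---is exactly what the paper's one-line proof is invoking, and your treatment of the case $\lfloor d/n\rfloor=\lfloor(d+1)/n\rfloor$ is correct and complete.

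Your worry in the second case is legitimate: with the paper's literal definition (isomorphism on $\pi_k$ for all $k\le D$), one does need $H_{D+1}(Y,X;\Z)=0$, i.e.\ surjectivity of $(s^{d,m}_{n,\R})_*$ on $H_{D+1}$. However, your proposed resolution is wrong. The class you describe---the $\tilde H_0(S^0;\Z)\cong\Z$ in the new column $k=\lfloor d/n\rfloor+1$ at total degree $D+1$---is \emph{not} killed by any differential: the paper itself proves (in the corollary following the proof of Theorem~\ref{thm: KY13; stable homotopy type}) that the Vassiliev spectral sequence collapses at $E^1$, so this class survives and contributes a new $\Z$-summand to $H_{D+1}(\Po^{d+1,m}_n(\R);\Z)$ not in the image of $(s^{d,m}_{n,\R})_*$. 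Indeed, the stable splitting of Corollary~\ref{crl: KY13; stable homotopy type} shows directly that passing from $d$ to $d+1$ (when $\lfloor d/n\rfloor$ jumps) adds a new sphere $S^{(mn-2)(\lfloor d/n\rfloor+1)}=S^{D+1}$ as a wedge summand. So the route you sketch cannot close the gap.

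The upshot is that the paper's one-sentence proof carries the same imprecision you have detected; taken literally with the stated definitions, the argument yields that $s^{d,m}_{n,\R}$ is a $D$-connected map (isomorphism on $\pi_k$ for $k\le D-1$ and surjection on $\pi_D$), not an isomorphism on $\pi_D$. This off-by-one slippage between ``homology equivalence through dimension $D$'' and ``homotopy equivalence through dimension $D$'' is common in this literature and is usually treated as harmless. You should simply drop the spectral-sequence paragraph and either accept the convention, or state the conclusion as ``$s^{d,m}_{n,\R}$ is $D(d;m,n)$-connected'', rather than pursue a surjectivity claim that is in fact false.
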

\begin{proof}
This follows from Theorem \ref{thm: stab1}
and Lemma \ref{lmm: abelian}.
\end{proof}
\begin{definition}
{\rm
Let 
$D=(d_1,\cdots ,d_m)\in \N^m$ 
be an $m$-tuple of positive integers.
Let $x_{d}\in (d,d+1)$ be any fixed
real number 
and
let
$\varphi_d:\C\stackrel{\cong}{\longrightarrow} 
\C_d=\{\alpha \in \C:\mbox{Re }(\alpha )<d\}$
be any fixed homeomorphism satisfying the condition
(\ref{def: stabd}.1).
\par
Then for each $1\leq i\leq m$ and $\K=\R$ or $\C$, let
\begin{equation}
s^{D,i;m}_{n,\K}:\Po^{d_1,\cdots ,d_m;m}_n(\K)
\to
\po^{d_1,\cdots ,d_{i-1},d_i+1,d_{i+1},\cdots ,d_m;m}_n(\K)
\end{equation}
denote the stabilization map defined by
$$
s^{D,i;m}_{n,\K}(f)=\big(\widetilde{\varphi_d}(f_1),\cdots ,
\widetilde{\varphi_d}(f_{i-1}),(z-x_d)\widetilde{\varphi_d}(f_i),
\widetilde{\varphi_d}(f_{i+1}),\cdots ,\widetilde{\varphi_d}(f_{m})\big)
$$
for $f=(f_1(z),\cdots ,f_m(z))\in\po^{d_1,\cdots ,d_m;m}_n(\K)$.
}
\end{definition}

\begin{thm}\label{thm: generalized stabilization}
Let $1\leq i\leq m$, $mn\geq 3$ and 
$D=(d_1,\cdots ,d_m)\in \N^m$ 
be an $m$-tuple of positive integers.
\par
$\I$
The stabilization map
$$
s^{D,i;m}_{n,\R}:\Po^{d_1,\cdots ,d_m;m}_n(\R)
\to
\po^{d_1,\cdots ,d_{i-1},d_i+1,d_{i+1},\cdots ,d_m;m}_n(\R)
$$
is a homology equivalence if
$\lfloor d_i/n\rfloor =\lfloor (d_i+1)/n\rfloor$
and a homology equivalence through dimension $D(d_i;m,n)$
otherwise.
\par
$\II$
The stabilization map
$$
s^{D,i;m}_{n,\C}:\Po^{d_1,\cdots ,d_m;m}_n(\C)
\to
\po^{d_1,\cdots ,d_{i-1},d_i+1,d_{i+1},\cdots ,d_m;m}_n(\C)
$$
is a homotopy equivalence if
$\lfloor d_i/n\rfloor =\lfloor (d_i+1)/n\rfloor$
and a homotopy equivalence through dimension $D(d_i;m,n;\C)$
otherwise.
\end{thm}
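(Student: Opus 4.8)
\emph{Proof strategy.} The plan is to re-run the Vassiliev spectral sequence argument of \S\ref{section: spectral sequence}--\S\ref{section: sd} (and, for $\K=\C$, its counterpart in \cite{KY8}) with an arbitrary degree vector in place of a common degree. Write $D=(d_1,\dots,d_m)$, set $d_{\min}=\min\{d_1,\dots,d_m\}$, and let $D'$ be obtained from $D$ by replacing $d_i$ with $d_i+1$, so that $s^{D,i;m}_{n,\K}\colon\po^{D;m}_n(\K)\to\po^{D';m}_n(\K)$. Form the discriminant $\Sigma^{D;m}_n\subset\prod_{j=1}^m\P_{d_j}(\F)$ of $m$-tuples having a common root of multiplicity $\ge n$ (in $\overline{\Ha}_+$ if $\F=\R$, in $\C$ if $\F=\C$), its tautological normalization, and a non-degenerate simplicial resolution filtered by the number $k$ of such common roots. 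Since each such root forces divisibility of \emph{every} $f_j$ by $(z-\alpha)^n$, the filtration is empty for $k>\lfloor d_{\min}/n\rfloor$.

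Next I would check that the $E^1$-computation is unchanged. The analogues of Lemma \ref{lemma: vector bundle} and Lemma \ref{lemma: E1} go through verbatim: the stratum with $i$ real and $j$ conjugate-pair common roots ($i+j=k\le\lfloor d_{\min}/n\rfloor$) is an affine bundle over $C_i(\R)\times C_j(\Ha_+)$ of rank $\sum_{l=1}^m(d_l-nk-nj)+k-1$, and in the relevant cohomological index the term $\sum_l d_l$ cancels against this rank. Hence Corollary \ref{crl: Er} holds with $\lfloor d/n\rfloor$ replaced throughout by $\lfloor d_{\min}/n\rfloor$; the same is true over $\C$ with its own degree shift.

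Then, as in \S\ref{section: sd}, the map $s^{D,i;m}_{n,\K}$ extends to $\prod_j\P_{d_j}(\K)\to\prod_j\P_{d'_j}(\K)$ and restricts to a stabilization $\Sigma^{D;m}_n\to\Sigma^{D';m}_n$ of discriminants; since the only new datum is one real root inserted into the $i$-th factor, this further extends to an open embedding $\Sigma^{D;m}_n\times\R\hookrightarrow\Sigma^{D';m}_n$, the analogue of (\ref{equ: open-stab}) with $\R^m$ replaced by $\R$. Passing to one-point compactifications and simplicial resolutions gives a filtration-preserving map and a morphism $\{\theta^t_{k,s}\}$ of Vassiliev spectral sequences; as in Lemma \ref{lmm: E1}, using that this map covers the identity on the bases $C_i(\R)\times C_j(\Ha_+)$ and naturality of the Thom isomorphism, $\theta^1_{k,s}$ is an isomorphism for all $(k,s)$ with $1\le k\le\lfloor d_{\min}/n\rfloor$. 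The comparison then proceeds exactly as for Theorem \ref{thm: stab1}: if $\lfloor d_i/n\rfloor=\lfloor(d_i+1)/n\rfloor$ then $\lfloor d'_{\min}/n\rfloor=\lfloor d_{\min}/n\rfloor$, so $\theta^1$ and hence $\theta^\infty$ is an isomorphism everywhere and, via the Alexander-duality square analogous to (\ref{diagram: discriminant}), $s^{D,i;m}_{n,\K}$ is a homology equivalence; if $\lfloor(d_i+1)/n\rfloor=\lfloor d_i/n\rfloor+1$, at most one new filtration level ($k=\lfloor d_{\min}/n\rfloor+1$, which occurs only when $d_i=d_{\min}$) appears in the target, and the same differential count as in Theorem \ref{thm: stab1} shows $\theta^t_{k,s}$ is an isomorphism whenever $s-t\le D(d_i;m,n)$, giving a homology equivalence through dimension $D(d_i;m,n)$ (as in (\ref{eq: 1.1})); if instead $d_i>d_{\min}$ the filtration bound is unchanged and one gets a homology equivalence in all degrees, which implies the stated conclusion a fortiori. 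This proves (I) and the homology content of (II), with $D(d_i;m,n;\C)$ in place of $D(d_i;m,n)$ in the complex case.

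Finally, to promote the complex statement to a homotopy equivalence, I would observe that for $mn\ge3$ the discriminant $\Sigma^{D;m}_n$ has complex codimension $mn-1\ge2$ in $\prod_j\P_{d_j}(\C)\cong\C^{\sum_j d_j}$, so $\po^{D;m}_n(\C)$ is simply connected; the homology equivalence then upgrades to a homotopy equivalence through the stated dimension exactly as Corollary \ref{crl: stabilization} is deduced from Theorem \ref{thm: stab1} and Lemma \ref{lmm: abelian}. (No such upgrade is asserted over $\R$ because, by Lemma \ref{lmm: abelian}, $\pi_1(\po^{D;m}_n(\R))$ can be $\Z$.) I do not expect any essentially new difficulty here; the one point requiring care is the bookkeeping in the $E^1$-computation for unequal degrees — the affine-bundle ranks, the filtration length $\lfloor d_{\min}/n\rfloor$, and the cancellation of $\sum_l d_l$ in the index — together with the verification that the Thom-isomorphism naturality applies because the stabilization fixes the base configuration spaces; everything else is formally identical to \S\ref{section: spectral sequence}--\S\ref{section: sd}.
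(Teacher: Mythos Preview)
Your proposal is correct and takes essentially the same approach as the paper, which simply asserts that (i) ``can be proved completely same way as that of Theorem \ref{thm: stab1}'' and that (ii) ``can be proved analogously to \cite[Theorem 1.8]{KY8}.'' You have spelled out precisely how this goes: the only adjustments needed are replacing the filtration bound $\lfloor d/n\rfloor$ by $\lfloor d_{\min}/n\rfloor$, the open embedding of (\ref{equ: open-stab}) by one with a single $\R$ (resp.\ $\C$) factor since only one degree increases, and observing that the affine-bundle ranks shift by $\sum_l d_l$ which cancels in the Alexander-duality index --- and you have these right, including the case split on whether $d_i=d_{\min}$ and the codimension argument for simple connectivity over $\C$.
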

\begin{proof}
The assertion (i) can be proved completely same way as  that of Theorem \ref{thm: stab1}
 and the assertion (ii)
can be proved analogously to \cite[Theorem 1.8]{KY8}.
\end{proof}

\section{Configuration spaces and scanning maps}
\label{section: scanning maps}

In this section we define the  \lq\lq horizontal scanning maps\rq\rq and then use them to prove our stable results  (Theorems \ref{thm: scanning map} and \ref{thm: stable result2}).

\begin{definition}
{\rm
For a space $X$, let $\SP^d(X)$ denote the $d$-th {\it symmetric product}
defined by the quotient space 
\begin{equation}
\SP^d(X)=X^d/S_d,
\end{equation}
where
the symmetric group $S_d$ of $d$ letters acts on $X^d$ by the
permutation of coordinates.
Since $F(X,d)$ is an $S_d$-invariant subspace of $X^d$ and
$C_d(X)=F(X,d)/S_d$, there is a natural inclusion
$C_d(X)\subset \SP^d(X).$
\par
Note that
an element $\alpha\in\SP^d(X)$ may be identified with
the formal linear combination
\begin{equation}\label{combination}
\alpha =\sum_{i=1}^kd_ix_i,
\quad\quad
\mbox{where }\{x_i\}_{i=1}^k\in C_k(X)\ \  \mbox{and }\  \sum_{i=1}^kd_i=d.
\end{equation}
We shall refer to $\alpha$
as {\it a configuration} (or {\it $0$-cycle}) 
having 
{\it a multiplicity}
$d_i$ at the point $x_i$.
}
\end{definition}

\begin{remark}
{\rm
(i)
If we use the notation (\ref{combination}),
the space $\P_d(\C)$ can be easily identified with the space $\SP^d(\C)$ by the homeomorphism
$\Phi_d:\P_d(\C)\stackrel{\cong}{\longrightarrow}\SP^d(\C)$ defined by
\begin{equation}\label{eq: varphi}
\Phi_d\Big(\prod_{i=1}^k(z-\alpha_i)^{d_i}\Big)=  \sum_{i=1}^{k}d_i\alpha_i.
\end{equation}
\par
(ii)
Since there is a natural inclusion
$\P_d(\R)\subset \P_d(\C)$, one can define a subspace
$\SP^d_{\R}\subset \SP^d(\C)$ as the image 
\begin{equation}
\SP^d_{\R}=\Phi_d(\P_d(\R)).
\end{equation}
Then the restriction gives a homeomorphism
\begin{equation}
\Phi_d\vert \P_d(\R):\P_d(\R)
\stackrel{\cong}{\longrightarrow}
\SP^d_{\R}.
\end{equation}
Note that any element $\alpha \in \SP^d_{\R}$ can be represented as the formal sum
\begin{equation}
\alpha = \sum_{k=1}^sd_kx_k+\sum_{j=1}^te_j(y_j+\overline{y_j})
\quad
(d_k,e_j\in \N,\ x_k\in \R,\ y_j\in \Ha_+),
\end{equation}
where $x_k\not= x_l$ if $k\not= l$, 
$y_j\not= y_i$ if $j\not= i$ and $\sum_{k=1}^sd_k+2\sum_{j=1}^te_j=d.$
}
\end{remark}
\begin{definition}
{\rm 
Let $X$ be a space and
$D=(d_1,\cdots ,d_m)\in (\Z_{\geq 0})^m$ be an $m$-tuple of non-negative  integers.
\par
(i)
Let $\SP^0(X)=\{\emptyset\}$ and let
$\SP^D(X)$ denote the space given by
\begin{equation}
\SP^D(X)=\SP^{d_1,\cdots ,d_m}(X)
=
\SP^{d_1}(X)\times \SP^{d_2}(X)\times \cdots \times \SP^{d_m}(X). 
\end{equation}
We define the space
$\pol^{D;m}_n(X)=\pol^{d_1,\cdots ,d_m;m}_n(X)\subset \SP^D(X)$ by
\begin{equation}
\pol^{D;m}_n(X)=
\pol^{d_1,\cdots ,d_m;m}_n(X)
=\{(\xi_1,\cdots ,\xi_m)\in \SP^D(X): (*)_n\},
\end{equation}
where the condition $(*)_n$ is given by
\begin{enumerate}
\item[$(*)_n$:]
$\cap_{i=1}^m\xi_i$ does not contain any element of multiplicity $\geq n$.
\end{enumerate}
\par
(ii)
When $X\subset \C$, let $\SP^D_{\R}(X)$ denote the space defined by
\begin{equation}
\SP^D_{\R}(X)=(\SP^{d_1}_{\R}\times \SP^{d_2}_{\R}\times \cdots \times \SP^{d_m}_{\R})\cap \SP^D(X). 
\end{equation}
We define the space
$\mathcal{Q}^{D;m}_n(X)
=\mathcal{Q}^{d_1,\cdots ,d_m;m}_n(X)\subset \SP^D_{\R}(X)$ by
\begin{equation}
\mathcal{Q}^{D;m}_n(X)=
\mathcal{Q}^{d_1,\cdots ,d_m;m}_n(X)
=
\{(\xi_1,\cdots ,\xi_m)\in \SP^D_{\R}(X): (*)_{n}^{\R}\},
\end{equation}
where the condition $(*)_{n}^{\R}$ is given by
\begin{enumerate}
\item[$(*)_{n}^{\R}:$]
$(\cap_{i=1}^m\xi_i)\cap \R$ does not contain any element of multiplicity $\geq n$.
\end{enumerate}
In particular, when $D_m=(d,\cdots ,d)\in \N^m$
$(m$-times),
we write
\begin{equation}
\pol^{d,m}_n(X)=\pol^{D_m;m}_n(X)
\ \ 
\mbox{ and }\ \ 
\mathcal{Q}^{d,m}_n(X)=
\mathcal{Q}^{D_m;m}_n(X).
\end{equation}
\par
(iii) If $A\subset X$ is a closed subspace, we define the space $\pol^{D;m}_n(X,A)$ by
\begin{equation}
\pol^{D;m}_n(X,A)=\pol^{d_1,\cdots ,d_m;m}_n(X,A)=
\pol^{D;m}_n(X)/\sim ,
\end{equation}
where the equivalence relation \lq\lq$\sim$\rq\rq
is defined by
$$
(\xi_1,\cdots ,\xi_m)\sim
(\eta_1,\cdots ,\eta_m)
\ \mbox{ if } \
\xi_i\cap (X\setminus A)=\eta_i\cap (X\setminus A)
$$
for all $1\leq i\leq m$.
Therefore, points in $A$ are ignored.
When $A\not= \emptyset$, there is a natural inclusion
$$
\pol^{d_1,\cdots ,d_i,\cdots ,d_m;m}_n(X,A)\subset
\pol^{d_1,\cdots ,d_i+1,\cdots ,d_m;m}_n(X,A)
$$
by adding a point of $A$ into the $i$-th part.
Let $\pol^{0,\cdots ,0;m}_n(X,A)=\{\emptyset\}$
and
define the space $\pol^m_n(X,A)$ by the union
\begin{equation}
\pol^m_n(X,A)=\bigcup_{d_1\geq 0,\cdots ,d_m\geq 0}\pol^{d_1,\cdots ,d_m;m}_n(X,A).
\end{equation}
\par
(iv) If $X\subset \C$ and $A\subset X$ is a closed subspace, we also define the space 
$\mathcal{Q}^{D;m}_n(X,A)$ by
\begin{equation}
\mathcal{Q}^{D;m}_n(X,A)
=\mathcal{Q}^{d_1,\cdots ,d_m;m}_n(X,A)
=\mathcal{Q}^{D;m}_n(X)/\sim ,
\end{equation}
where the equivalence relation \lq\lq$\sim$\rq\rq
is defined by
$$
(\xi_1,\cdots ,\xi_m)\sim
(\eta_1,\cdots ,\eta_m)
\ \mbox{ if } \
\xi_i\cap (X\setminus A)\cap \R=\eta_i\cap (X\setminus A)\cap \R
$$
for all $1\leq i\leq m$.
Again, when $A\not= \emptyset$, there is a natural inclusion
$$
\mathcal{Q}^{d_1,\cdots ,d_i,\cdots ,d_m;m}_n(X,A)\subset
\mathcal{Q}^{d_1,\cdots ,d_i+1,\cdots ,d_m;m}_n(X,A)
$$
by adding a point of $A$ into the $i$-th part.
Let
$\mathcal{Q}^{0,\cdots ,0;m}_n(X,A)=\{\emptyset\},$
and
define the space $\mathcal{Q}^m_n(X,A)$ by the union
\begin{equation}
\mathcal{Q}^m_n(X,A)
=\bigcup_{d_1\geq 0,\cdots ,d_m\geq 0}
\mathcal{Q}^{d_1,\cdots ,d_m;m}_n(X,A).
\end{equation}
\par
(v)
Let $\K=\R$ or $\C$, and
define the stabilized spaces $\po^{\infty,m}_n(\K)$ and
$\Q^{\infty,m}_{n}(\R)$ 
by the colimits
\begin{equation}\label{eq: stabilized space}
\po^{\infty,m}_{n}(\K)
= \dis \ \lim_{d\to\infty}\po^{d,m}_n(\K),
\quad
\Q^{\infty,m}_{n}(\R)
= \dis  \ \lim_{d\to\infty}\Q^{d,m}_n(\R)
\end{equation}
taken over the stabilization maps $\{s^{d,m}_{n,\K}\}$ and $\{s^{d,m}_n\}$ given by
(\ref{equ: stabilization map for F}) 
and (\ref{equ: stabilization map for Q}), respectively.
}
\end{definition}

We will need  two kinds of horizontal scanning maps.
First, we define the scanning map for a configuration space of
particles.
From now on, we make the identification $\C =\R^2$
by identifying $\C\ni x+\sqrt{-1}y$ with $(x,y)\in \R^2$ in a usual way.
\begin{definition}
{\rm 
For a rectangle $X$ in $\C=\R^2$, let $\sigma X$ denote
the union of the sides of $X$ which are parallel to the $y$-axis, and
for a subspace $Z\subset \C=\R^2$, let $\overline{Z}$ denote the closure of $Z$.
\par
For example, if $X=[a,b]\times [c,d]=\{(t,s)\in \R^2: a\leq t\leq b,c\leq s\leq d\}$, then
$\sigma X=\{a,b\}\times [c,d]=\{(t,s)\in \R^2:t\in \{a,b\},c\leq s\leq d\}.$
\par
From now on, let $I$ denote the interval
$I=[-1,1]$
and
let $\epsilon>0$ be a fixed positive real number.
For each $x\in\R$, let $V(x)$ be the set defined by
\begin{align}\label{eq: Vx}
V(x)&=\{w\in\C: \vert \mbox{Re}(w)-x\vert  <\epsilon , \vert\mbox{Im}(w)\vert<1 \}
\\
&=(x-\epsilon ,x+\epsilon )\times (-1,1),
\nonumber
\end{align}
and let us identify $I\times I=I^2$ with the closed unit rectangle
$$
\{t+s\sqrt{-1}\in \C: -1\leq t,s\leq 1\}\subset \C.
$$ 
\par
(i)
First,
we define the {\it horizontal scanning map}
\begin{equation}\label{eq: scan Q}
sc^{d,m}_n:
\Q^{d,m}_{n}(\R )\to 
\Omega \mathcal{Q}^{m}_{n}(I^2,\partial I\times I)
=\Omega \mathcal{Q}^{m}_{n}(I^2,\sigma I^2)
\end{equation}
of the space $\Q^{d,m}_n(\R)$
as follows.
\par\vspace{2mm}\par
For each $m$-tuple 
$\alpha =(\xi_1,\cdots ,\xi_m)\in \mathcal{Q}^{d,m}_{n}(\R)$
of configurations,
let
$
sc^{d,m}_n(\alpha):\R \to \mathcal{Q}^{d,m}_{n}(I^2,\partial I\times I)
=\mathcal{Q}^{d,m}_{n}(I^2,\sigma I^2)$
denote the map given by
\begin{align*}
\R\ni x
&\mapsto
(\xi_1\cap\overline{V}(x),\cdots ,\xi_m\cap\overline{V}(x))
\in
\mathcal{Q}^{m}_{n}(\overline{V}(x),\sigma \overline{V}(x))
\cong \mathcal{Q}^{m}_{n}(I^2,\sigma I^2),
\quad
\end{align*}
where 
we use the canonical identification
$(\overline{V}(x),\sigma \overline{V}(x))
\cong (I^2,\sigma I^2).$
\par\vspace{2mm}\par
Since $\dis\lim_{x\to\pm \infty}sc^{d,m}_n(\alpha)(x)
=(\emptyset ,\cdots ,\emptyset)$, 
by setting $sc^{d,m}_n(\alpha)(\infty)=(\emptyset ,\cdots ,\emptyset)$
we obtain a based loop
$sc^{d,m}_n(\alpha)\in \Omega \mathcal{Q}^{m}_{n}(I^2,\sigma I^2),$
where we identify $S^1=\R \cup \infty$ and
we choose the empty configuration
$(\emptyset ,\cdots ,\emptyset)$ as the base point of
$\Q^{m}_{n}(I^2,\sigma I^2)$.
\par
 If we identify $\Q^{d,m}_n(\R)\cong
\mathcal{Q}^{d,m}_{n}(\R)=\mathcal{Q}^{D_m;m}_n(\R)$,
 finally we obtain the map
$sc^{d,m}_n:
\Q^{d,m}_{n}(\R)\to 
\Omega \mathcal{Q}^{m}_{n}(I^2,\sigma I^2).
$
\par\vspace{1mm}\par
Since  $sc^{d+1,m}_{n}\circ s^{d,m}_{n}\simeq sc^{d,m}_{n}$
(up to homotopy equivalence),
by setting $\dis S_{}=\lim_{d\to\infty}sc^{d,m}_n$, 
we also obtain {\it the stable horizontal scanning map}
\begin{equation}
S_{}:
\Q^{\infty,m}_{n}(\R)
\to 
\Omega
\mathcal{Q}^{m}_{n}(I^2,\partial I\times I)
=\Omega \mathcal{Q}^m_{n}(I^2,\sigma I^2),
\end{equation}
where $\Q^{\infty,m}_n(\R)$ denotes the stabilized space given by
(\ref{eq: stabilized space}).
\par\vspace{2mm}\par
(ii)
Next,
define the {\it horizontal scanning map}
\begin{equation}
sc^{d,m}_{n,\C}:\po^{d,m}_{n}(\C )\to \Omega \pol^{m}_{n}(I^2,\partial I\times I)
=\Omega \pol^{m}_{n}(I^2,\sigma I^2)
\end{equation}
of the space $\po^{d,m}_n(\R)$
as follows.
\par\vspace{2mm}\par
For each $m$-tuple 
$\alpha =(\xi_1,\cdots ,\xi_m)\in \pol^{d,m}_{n}(\C)$
of configurations,
let
$
sc^{d,m}_{n,\C}(\alpha):\R \to \pol^{d,m}_{n}(I^2,\partial I\times I)
=\pol^{d,m}_{n}(I^2,\sigma I^2)$
denote the map given by
\begin{align*}
\R\ni x
&\mapsto
(\xi_1\cap\overline{V}(x),\cdots ,\xi_m\cap\overline{V}(x))
\in
\pol^{m}_{n}(\overline{V}(x),\sigma \overline{V}(x))
\cong \pol^{m}_{n}(I^2,\sigma I^2),
\quad
\end{align*}
where 
we use the canonical identification
$(\overline{V}(x),\sigma \overline{V}(x))
\cong (I^2,\sigma I^2).$
\par\vspace{2mm}\par
Since $\dis\lim_{x\to\pm \infty}sc^{d,m}_{n,\C}(\alpha)(x)
=(\emptyset ,\cdots ,\emptyset)$, 
by setting $sc^{d,m}_{n,\C}(\alpha)(\infty)=(\emptyset ,\cdots ,\emptyset)$
we obtain a based loop
$sc^{d,m}_{n,\C}(\alpha)\in \Omega \pol^{m}_{n}(I^2,\sigma I^2),$
where we identify $S^1=\R \cup \infty$ and
we choose the empty configuration
$(\emptyset ,\cdots ,\emptyset)$ as the base point of
$\pol^{m}_{n}(I^2,\sigma I^2)$.
\par
If we identify $\po^{d,m}_n(\C)\cong
\pol^{d,m}_{n}(\C)=\pol^{D_m;m}_n(\C)$,
finally  we obtain the map
$sc^{d,m}_{n,\C}:
\po^{d,m}_{n}(\C)\to 
\Omega \pol^{m}_{n}(I^2,\sigma I^2).
$
\par\vspace{1mm}\par
Since  $sc^{d+1,m}_{n,\C}\circ s^{d,m}_{n,\C}\simeq sc^{d,m}_{n,\C}$
(up to homotopy equivalence),
by setting $\dis S_{\C}=\lim_{d\to\infty}sc^{d,m}_{n,\C}$
we obtain {\it  the stable horizontal scanning map}
\begin{equation}
S_{\C}:
\po^{\infty,m}_{n}(\C)
\to 
\Omega
\pol^{m}_{n}(I^2,\sigma I^2),
\end{equation}
where
$\po^{\infty,m}_{n}(\C)$ denotes the stabilized space given by
(\ref{eq: stabilized space}). 
Although the scanning map itself depends on the choice of $\epsilon$, its homotopy class does not. 
\par
(iii)
Let $\Z_2=\{\pm 1\}$ denote the multiplicative cyclic group of order $2$.
Complex conjugation in $\C$ naturally induces a $\Z_2$-action on $\po^{D;m}_n(\C)$. 
It is easy to see that
its fixed point set is
$\po^{d,m}_n(\C)^{\Z_2}=\po^{d,m}_n(\R)$.
Since the stabilization maps $\{s^{d,m}_{n,\C}\}_{d\geq 1}^{}$ are $\Z_2$-equivariant maps,
$\po^{\infty,m}_n(\C)^{\Z_2}=\po^{\infty,m}_n(\R)$.
Moreover, since the scanning maps $\{sc^{d,m}_{n,\C}\}_{d\geq 1}$ are also $\Z_2$-equivariant maps,
by setting $S_{\R}=S_{\C}\vert \po^{\infty,m}_{n}(\R)$ we also obtain the {\it the stable horizontal scanning map}
\begin{equation}
S_{\R}:
\po^{\infty,m}_{n}(\R)
\to 
\Omega
\pol^{m}_{n}(I^2,\sigma I^2)^{\Z_2}.
\end{equation}
\par
(iv)
We define the map
\begin{equation}
r_I:\mathcal{Q}^{m}_n(I^2,\sigma I^2)\to \mathcal{Q}^m_n(I,\partial I)
\end{equation}
by the restriction $r_I (\xi_1,\cdots ,\xi_m)=(\xi_1\cap \R,\cdots , \xi_m\cap \R)$
for $(\xi_1,\cdots ,\xi_m)\in \mathcal{Q}^m_n(I^2,\sigma I^2)$.
}
\end{definition}

\begin{thm}[\cite{Se} (cf. \cite{Gu1}]\label{thm: scanning map}
If $mn\geq 3$, the stable horizontal scanning maps
$$
\begin{cases}
S_{  }\ :&
\Q^{\infty,m}_{n}(\R)
\stackrel{\simeq}{\longrightarrow}
\Omega
\mathcal{Q}^{m}_{n}(I^2,\sigma I^2)
\\
S_{\C}:&\po^{\infty,m}_n(\C)
\stackrel{\simeq}{\longrightarrow}
\Omega \pol^m_n(I^2,\sigma I^2)
\\
S_{\R}:&\po^{\infty,m}_n(\R)
\stackrel{\simeq}{\longrightarrow}
\Omega \pol^m_n(I^2,\sigma I^2)^{\Z_2}
\end{cases}
$$
are homotopy equivalences.
\end{thm}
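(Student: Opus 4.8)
The plan is to carry out Segal's scanning argument from \cite{Se}, in the systematised form of \cite{Gu1}, in parallel for the three maps, in each case reducing the statement to a group-completion theorem. First I would observe that the three scanning maps commute up to homotopy with the stabilization maps of Definition \ref{def: stabd} (and that $sc^{d,m}_{n,\C}$ is $\Z_2$-equivariant), so that they are genuinely defined on the colimits; it therefore suffices to analyse the telescopes $\Q^{\infty,m}_n(\R)$, $\po^{\infty,m}_n(\C)$ and $\po^{\infty,m}_n(\R)$.

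For $S_\C$ and $S$ I would argue as follows. The disjoint union $\coprod_{d\geq0}\po^{d,m}_n(\C)$ (resp.\ $\coprod_{d\geq0}\Q^{d,m}_n(\R)$) carries a homotopy-commutative, homotopy-associative multiplication under the loop products $\mu^{\C}_{d_1,d_2}$ (resp.\ $\mu_{d_1,d_2}$) of \S\ref{section: loop products} --- which, following \cite{BM2}, may be rigidified to a strict topological-monoid structure by placing the roots of the two factors in disjoint vertical half-strips. The key structural input --- and the step I expect to be the main obstacle --- is a \emph{cutting lemma}: scanning along a generic vertical line exhibits a bar-type decomposition of this monoid whose geometric realisation is naturally homeomorphic to the relative configuration-space model $\pol^m_n(I^2,\sigma I^2)$ (resp.\ $\mathcal Q^m_n(I^2,\sigma I^2)$), in such a way that the canonical map from the monoid to the loop space of its classifying space becomes the scanning map. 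Granting this, the McDuff--Segal group-completion theorem identifies $H_*\bigl(\Omega\pol^m_n(I^2,\sigma I^2)\bigr)$ with the localisation of the homology of $\coprod_d\po^{d,m}_n(\C)$ at the stabilization class; since the path components of the monoid are indexed by $\N$ with the stabilization map as generator, this localisation is computed by the telescope, so $S_\C$ --- and likewise $S$ --- is a homology equivalence onto its target.

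For $S_\R=S_\C\vert\po^{\infty,m}_n(\R)$ I would run the same argument $\Z_2$-equivariantly: complex conjugation acts on the whole bar construction, the products $\mu^{\R}_{d_1,d_2}$ exhibit $\coprod_d\po^{d,m}_n(\R)$ as the fixed submonoid, and the cutting lemma is conjugation-equivariant, so on $\Z_2$-fixed points one identifies the realisation of the real monoid with $\pol^m_n(I^2,\sigma I^2)^{\Z_2}$ and the group-completion map with $S_\R$. The point requiring care here is that the $\Z_2$-simplicial space underlying the bar construction must be cofibrant enough that geometric realisation commutes with passage to fixed points; granting this, $S_\R$ is a homology equivalence onto $\Omega\pol^m_n(I^2,\sigma I^2)^{\Z_2}$.

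Finally I would upgrade these homology equivalences to homotopy equivalences. All three sources are connected $H$-spaces (the loop products descend to multiplications on the telescopes) and all three targets are loop spaces, so every space involved is simple and of CW type. When $mn\geq4$ both sides are simply connected --- the targets are loops on $2$-connected spaces, the source $\po^{\infty,m}_n(\C)$ by Theorem \ref{thm: KY8}, and $\po^{\infty,m}_n(\R)$, $\Q^{\infty,m}_n(\R)$ by Lemma \ref{lmm: abelian} and Theorem \ref{thm: KY10} --- so Whitehead's theorem applies directly. When $mn=3$ one has $\pi_1\cong\Z$ on both sides, and the scanning maps are isomorphisms on $\pi_1$: they are compatible with the loop products, which realise the multiplication on $\pi_1$, and carry a degree-$(mn-2)$ generator of the source to a generator of the target, exactly as in the proof of Theorem \ref{thm : I}. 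Since a homology equivalence of simple spaces inducing an isomorphism on $\pi_1$ is a weak equivalence --- compare universal covers via the Serre spectral sequence, using that $\Z$ has homological dimension $1$ --- and hence a homotopy equivalence between spaces of CW type, this completes the argument.
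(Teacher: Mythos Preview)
Your proposal is correct and follows essentially the same route as the paper, which simply invokes Segal's scanning/group-completion argument (citing \cite{Se}, \cite{Gu1}) and defers all details for $S$ to \cite[Theorem~5.6]{KY10}, asserting that the identical proof carries over to $S_{\C}$ and, with minor changes, to $S_{\R}$. Your outline is far more explicit than the paper's one-paragraph referral; the only superfluous step is the separate $\pi_1$ analysis in the $mn=3$ case, since both source and target are $H$-spaces (the loop products descend to the telescope via homotopy-commutativity) and a $\Z$-homology equivalence between simple spaces is already a weak equivalence without any further check on $\pi_1$.
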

\begin{proof}

The assertions can be proved by using the idea indicated in \cite[Proposition 3.2, Lemma 3.4]{Se} 
(and also in \cite[Proposition 2]{Gu1}).
However, since the argument in \cite{Se} is very sketchy, we provided a detailed proof for the map $S$ in  \cite[Theorem 5.6]{KY10}.\footnote{%
Although the proof of \cite[Theorem 5.6]{KY10} was given under the condition
$mn\geq 4$, the same proof works for the case $mn=3$.
}
The proof for the map $S_{\C}$ carries over word for word to the present case,  if we replace the condition
\lq\lq $(*)_n^{\R}$ \rq\rq   by the condition \lq\lq $(*)_n$ \rq\rq.
A very similar argument also works the case of the map $S_{\R}$, and this completes the proof.
\end{proof}
\begin{crl}
If $mn\geq 3$, the map
$
S_{\C}:\po^{\infty,m}_n(\C)
\stackrel{\simeq}{\longrightarrow}
\Omega \pol^m_n(I^2,\sigma I^2)
$
is a $\Z_2$-equivariant homotopy equivalence.
\end{crl}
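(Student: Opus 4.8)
The plan is to verify directly the fixed‑point criterion defining a $\Z_2$‑equivariant homotopy equivalence, using the three homotopy equivalences supplied by Theorem \ref{thm: scanning map}. Since $\Z_2$ has exactly the two subgroups $\{1\}$ and $\Z_2$, it suffices to check that the restriction $S_{\C}^{H}=S_{\C}\vert \po^{\infty,m}_n(\C)^{H}$ is a (non‑equivariant) homotopy equivalence for $H=\{1\}$ and for $H=\Z_2$.

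First I would record that $S_{\C}$ is $\Z_2$‑equivariant. Complex conjugation on $\C$ induces a $\Z_2$‑action on $\pol^m_n(I^2,\sigma I^2)$ once the closed rectangle $I^2$ is identified, compatibly with conjugation, with a scanning window $\overline{V}(x)$ centered at a real point $x$ (so that conjugation on $I^2$ becomes reflection in the real axis, which preserves $\sigma I^2$). Choosing the identifications $(\overline{V}(x),\sigma \overline{V}(x))\cong (I^2,\sigma I^2)$ to commute with conjugation, and using that each window $V(x)$ with $x\in\R$ is conjugation‑invariant, one gets $sc^{d,m}_{n,\C}(\overline{\alpha})(x)=\overline{sc^{d,m}_{n,\C}(\alpha)(x)}$ for all $x\in\R$, so $sc^{d,m}_{n,\C}$ intertwines the two conjugation actions, and hence so does $S_{\C}$ after passing to the colimit. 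Moreover the scanning parameter $S^1=\R\cup\infty$ carries the trivial $\Z_2$‑action, the basepoint (the empty configuration) is conjugation‑fixed, and so the action on $\Omega\pol^m_n(I^2,\sigma I^2)$ is pointwise; therefore
\[
\bigl(\Omega\pol^m_n(I^2,\sigma I^2)\bigr)^{\Z_2}=\Omega\bigl(\pol^m_n(I^2,\sigma I^2)^{\Z_2}\bigr).
\]

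For $H=\{1\}$ one has $W^{\{1\}}=W$ for every $\Z_2$‑space $W$, so $S_{\C}^{\{1\}}=S_{\C}$, which is a homotopy equivalence by the second line of Theorem \ref{thm: scanning map}. For $H=\Z_2$ we have $\po^{\infty,m}_n(\C)^{\Z_2}=\po^{\infty,m}_n(\R)$, and by the displayed identification the fixed set of the target is $\Omega\pol^m_n(I^2,\sigma I^2)^{\Z_2}$; under these identifications $S_{\C}^{\Z_2}$ coincides with $S_{\R}=S_{\C}\vert\po^{\infty,m}_n(\R)$, which is a homotopy equivalence by the third line of Theorem \ref{thm: scanning map}. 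Hence $S_{\C}^{H}$ is a homotopy equivalence for every subgroup $H\subset\Z_2$, which is exactly the assertion that $S_{\C}$ is a $\Z_2$‑equivariant homotopy equivalence. The only point that needs attention is arranging the homeomorphisms $(\overline{V}(x),\sigma\overline{V}(x))\cong (I^2,\sigma I^2)$ to commute with complex conjugation, so that the scanning map is genuinely equivariant and the fixed‑point identification of the target is valid; once this bookkeeping is in place the argument is formal, and I do not expect any genuine obstacle.
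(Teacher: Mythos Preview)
Your proof is correct and follows essentially the same approach as the paper: check the fixed-point criterion for the two subgroups of $\Z_2$, using that $S^1$ carries the trivial action so that $(\Omega\pol^m_n(I^2,\sigma I^2))^{\Z_2}=\Omega(\pol^m_n(I^2,\sigma I^2)^{\Z_2})$, and then invoke the second and third homotopy equivalences of Theorem \ref{thm: scanning map}. Your version is simply more explicit about the equivariance bookkeeping (the conjugation-compatible choice of identifications $(\overline{V}(x),\sigma\overline{V}(x))\cong(I^2,\sigma I^2)$), which the paper leaves implicit.
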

\begin{proof}
Consider the $\Z_2$-action on the space $\po^{\infty,m}_n(\C)$
induced from the conjugation on $\C$.
If we consider  $S^1$ as a $\Z_2$-space with trivial $\Z_2$-action, we see that
$(\Omega \pol^m_n(I^2,\sigma I^2))^{\Z_2}=
\Omega \mathcal{Q}^m_n(I^2,\sigma I^2)^{\Z_2}.$
Since $(S_{\C})^{\Z_2}=S_{\R}$,
 the assertion easily follows from Theorem \ref{thm: scanning map}.
\end{proof}
\begin{lemma}\label{lmm: deformation retract}
The map 
$r_I:\mathcal{Q}^{m}_n(I^2,\sigma I^2)\stackrel{\simeq}{\longrightarrow} \mathcal{Q}^m_n(I,\partial I)$
is a deformation retraction.
\end{lemma}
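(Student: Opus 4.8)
The map $r_I$ sends an $m$-tuple of configurations in $(I^2,\sigma I^2)$ to the $m$-tuple of their intersections with the real axis $I\times\{0\}$, forgetting the imaginary coordinate of every point and discarding any point whose real part leaves $I$. The strategy is to exhibit an explicit fiberwise deformation that pushes every particle vertically onto the real axis, and to check that this deformation stays inside $\mathcal{Q}^m_n(I^2,\sigma I^2)$ throughout, i.e. never creates a forbidden coincidence of multiplicity $\ge n$ \emph{on the real axis}. The inclusion $j_I:\mathcal{Q}^m_n(I,\partial I)\hookrightarrow \mathcal{Q}^m_n(I^2,\sigma I^2)$ (viewing a real configuration as already lying in $I^2$) will be the section, and one has $r_I\circ j_I=\mathrm{id}$ on the nose, so it remains only to produce a homotopy $j_I\circ r_I\simeq \mathrm{id}$ rel the image of $j_I$.

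First I would fix notation: write a point of $I^2$ as $w=t+s\sqrt{-1}$ with $t,s\in[-1,1]$, and define for $\lambda\in[0,1]$ the map $\rho_\lambda:I^2\to I^2$ by $\rho_\lambda(t+s\sqrt{-1})=t+(1-\lambda)s\sqrt{-1}$, so $\rho_0=\mathrm{id}$ and $\rho_1$ is the vertical projection onto $I\times\{0\}$. Each $\rho_\lambda$ carries $\sigma I^2=\{\pm1\}\times[-1,1]$ into $\sigma I^2$, so it induces a map $\SP^D(I^2,\sigma I^2)\to\SP^D(I^2,\sigma I^2)$ by applying it coordinate-wise to each configuration. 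Define $H_\lambda$ on $\mathcal{Q}^m_n(I^2,\sigma I^2)$ by $H_\lambda(\xi_1,\dots,\xi_m)=((\rho_\lambda)_*\xi_1,\dots,(\rho_\lambda)_*\xi_m)$. The key point to verify is that $H_\lambda$ actually lands in $\mathcal{Q}^m_n(I^2,\sigma I^2)$, i.e. that the forbidden condition $(*)_n^{\R}$ is preserved: for $\lambda<1$ the map $\rho_\lambda$ is a homeomorphism of $I^2$ preserving the real axis, so $(\bigcap_i(\rho_\lambda)_*\xi_i)\cap\R=\rho_\lambda((\bigcap_i\xi_i)\cap\R)$ and no new real coincidence of high multiplicity is created; for $\lambda=1$, a point of $(\bigcap_i(\rho_1)_*\xi_i)\cap\R$ with multiplicity $\ge n$ would already be such a point of $(\bigcap_i\xi_i)\cap\R$ — here one uses that multiplicity can only be preserved or \emph{decreased} under the pushout to $\mathcal{Q}^m_n(I,\partial I)$... wait, that is the wrong direction; multiplicity at a real point $t$ after projection is the \emph{sum} of multiplicities over all of $\xi_i$ lying on the vertical line $\mathrm{Re}=t$. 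So the honest statement is: $H_1(\xi_1,\dots,\xi_m)$ need not satisfy $(*)_n^{\R}$ in general, because vertical projection can pile up multiplicity on the real axis. This is exactly the obstacle, and it shows that $\rho_1$ alone does \emph{not} define a map into the right space; the correct deformation must be more careful.

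The remedy — and this is the main step — is that $\mathcal{Q}^m_n(X,\partial I)$ \emph{quotients out} points not on $\R$ only in the sense of the equivalence relation defining $\mathcal{Q}^m_n(I^2,\sigma I^2)$: actually no, in $\mathcal{Q}^m_n(I^2,\sigma I^2)$ all points of $I^2$ are remembered, but the forbidden-coincidence condition $(*)_n^{\R}$ only constrains what happens \emph{on the real axis}. So the correct deformation is to \emph{delete} the non-real part of each configuration rather than project it: define $G_\lambda$ by shrinking the imaginary coordinate toward the boundary $\sigma I^2$ so that non-real points eventually slide out through the vertical sides and disappear, while real points stay fixed. Concretely, one can use a family of embeddings $e_\lambda:I^2\to I^2$ that is the identity on $I\times\{0\}$, fixes $\sigma I^2$, and for $\lambda\to1$ compresses the complement of a shrinking neighborhood of $I\times\{0\}$ out to $\sigma I^2$; applied coordinate-wise this gives $G_\lambda:\mathcal{Q}^m_n(I^2,\sigma I^2)\to\mathcal{Q}^m_n(I^2,\sigma I^2)$ with $G_0=\mathrm{id}$, $G_1=j_I\circ r_I$, and $G_\lambda$ fixing the image of $j_I$ pointwise. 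One checks $(*)_n^{\R}$ is preserved because $G_\lambda$ restricted to $\R$ is a homeomorphism of $I$ fixing $\partial I$, so the real coincidence pattern is merely relabeled; and one checks continuity of $(\lambda,\alpha)\mapsto G_\lambda(\alpha)$ in the configuration-space topology, which is the routine but slightly delicate part since particles are being pushed off the boundary. I expect the writing-up obstacle to be precisely this last continuity/boundary bookkeeping — making sure the "sliding out through $\sigma I^2$" is genuinely continuous and compatible with the quotient — but this is a standard scanning-map manipulation of the kind already used in \cite{Se} and \cite{KY10}, so it goes through. Hence $r_I$ is a deformation retraction with section $j_I$.
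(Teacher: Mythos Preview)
Your eventual strategy --- push the non-real particles out through the vertical sides $\sigma I^2$ while keeping the real axis pointwise fixed --- is the right idea and is what the paper does as well. Two remarks, one on a definitional slip and one comparing your construction with the paper's.

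\textbf{The equivalence relation.} You first write that $\mathcal{Q}^m_n(X,A)$ ``quotients out points not on $\R$'' and then talk yourself out of it. Your first instinct was correct: by definition the relation identifies $(\xi_i)$ and $(\eta_i)$ whenever $\xi_i\cap(X\setminus A)\cap\R=\eta_i\cap(X\setminus A)\cap\R$, so equivalence classes in $\mathcal{Q}^m_n(I^2,\sigma I^2)$ are already determined by the real particles in $(-1,1)$. In particular $r_I$ is a bijection on underlying sets, and the content of the lemma is purely topological (the two quotient topologies need to be compared). Your worry about vertical projection violating $(*)_n^{\R}$ is nonetheless valid, since the homotopy must be defined on representatives in $\mathcal{Q}^{D;m}_n(I^2)$ before passing to the quotient.

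\textbf{Comparison with the paper.} The paper carries out the same idea in two explicit steps rather than one. First it retracts $I^2$ onto the three horizontal lines $\Pi=I\times\{0,\pm\tfrac12\}$ via $\varphi_t(a+b\sqrt{-1})=a+\big((1-t)b+\tfrac{t}{2}\,\mathrm{sgn}(b)\big)\sqrt{-1}$, sending real points to themselves and non-real points to the lines $\mathrm{Im}=\pm\tfrac12$. Second, on $\Pi$ it slides the two outer lines horizontally into the boundary $\partial I\times\{\pm\tfrac12\}$ while fixing $I\times\{0\}$. The advantage of this decomposition is exactly the continuity issue you flagged: once the configuration lives in $\Pi$, the real and non-real particles sit in disjoint components, so the second homotopy is manifestly continuous and cannot create any coincidence on $\R$. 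Your single family $e_\lambda$ is harder to write down honestly --- a map that is the identity on $I\times\{0\}$, fixes $\sigma I^2$, and is an embedding for $\lambda<1$ yet collapses the complement as $\lambda\to 1$ does not exist in the form you describe --- whereas the paper's two-stage version sidesteps this by separating the ``push away from $\R$'' and ``push out to $\sigma I^2$'' moves.
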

\begin{proof}
We identify
$I^2=\{a+b\sqrt{-1}\in \C: -1\leq a,b\leq 1\}\subset \C$ as before.
Let $\Pi\subset I^2$ denote the subspace defined by
$\Pi =\{a+b\sqrt{-1}\in I^2:b\in \{0,\pm \frac{1}{2}\}\}.$
For $b\in \R$, let $\epsilon (b)=\frac{b}{|b|}$ if $b\not=0$ and $\epsilon (0)=0$.
Now consider the homotopy $\varphi :I^2\times [0,1]\to I^2$ given by
$\varphi (\alpha,t)=a+\{(1-t)b+\frac{\epsilon (b)t}{2}\}\sqrt{-1}$
for
$\alpha =a+b\sqrt{-1}\in I^2$
$(a,b\in\R)$.
By means of this homotopy, one can define a deformation retraction
$
R:\mathcal{Q}^m_n (I^2,\sigma I^2)
\stackrel{\simeq}{\longrightarrow}
\mathcal{Q}^m_n (\Pi ,\partial I\times \{0,\pm \frac{1}{2}\}).
$
\par
Next, by using the homotopy given by
$f_t(a+b\sqrt{-1})=ta+(1-t)+b\sqrt{-1}$ if $b=\pm \frac{1}{2}$
and $f_t(a+b\sqrt{-1})=a$ if $b=0$,
one can also define a deformation retraction
$\varphi :
\mathcal{Q}^m_n (\Pi, \partial I\times \{0,\pm \frac{1}{2}\})
\stackrel{\simeq}{\longrightarrow}
\mathcal{Q}^m_n (I,\partial I).$
Since $r_I=\varphi\circ R$, it is also
a deformation retraction.
\end{proof}
\begin{prp}\label{prp: stable result1}
\begin{enumerate}
\item[$\I$]
If $mn\geq 3$, there is a homotopy equivalence
$$
\Omega \pol^{m}_n(I^2,\sigma I^2)\stackrel{\simeq}{\longrightarrow} \Omega^2S^{2mn-1}.
$$
\item[$\II$]
If $mn\geq 3$, there is a homotopy equivalence
$$
\Omega \mathcal{Q}^{m}_n(I,\partial I) 
\stackrel{\simeq}{\longrightarrow}\Omega S^{mn-1}.
$$
\end{enumerate}
\end{prp}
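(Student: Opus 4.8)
The plan is to reduce both statements to results already available, by combining the scanning equivalences of Theorem~\ref{thm: scanning map} with Theorems~\ref{thm: KY8} and~\ref{thm: KY10} passed to the colimit over $d$. For (I), I first want to identify the stabilized space $\po^{\infty,m}_n(\C)$. By Theorem~\ref{thm: KY8}~(i) the natural map $i^{d,m}_{n,\C}:\po^{d,m}_n(\C)\to\Omega^2_d\CP^{mn-1}\simeq\Omega^2S^{2mn-1}$ is a homotopy equivalence through dimension $D(d;m,n;\C)$, a number tending to infinity with $d$. Since the maps $i^{d,m}_{n,\C}$ commute, up to homotopy, with the stabilization maps $s^{d,m}_{n,\C}$ (on the target the stabilization corresponds to loop-concatenation with a fixed degree-one class, as in the diagrams of \S\ref{section: loop products}), and since all path components of the double loop space $\Omega^2\CP^{mn-1}$ are mutually homotopy equivalent, passing to $d\to\infty$ yields $\po^{\infty,m}_n(\C)\simeq\Omega^2_0\CP^{mn-1}\simeq\Omega^2S^{2mn-1}$. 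Composing with the homotopy equivalence $S_{\C}:\po^{\infty,m}_n(\C)\xrightarrow{\simeq}\Omega\pol^m_n(I^2,\sigma I^2)$ of Theorem~\ref{thm: scanning map} then gives $\Omega\pol^m_n(I^2,\sigma I^2)\simeq\Omega^2S^{2mn-1}$, which is (I).

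For (II) I would run the parallel argument with $\RP$ in place of $\CP$ and one loop fewer. By Theorem~\ref{thm: KY10}~(i) the natural map $i^{d,m}_n:\Q^{d,m}_n(\R)\to\Omega_{[d]_2}\RP^{mn-1}\simeq\Omega S^{mn-1}$ is a homotopy equivalence through dimension $D(d;m,n)$ if $mn\geq4$ and a homology equivalence through that dimension if $mn=3$, with $D(d;m,n)\to\infty$. These maps are compatible up to homotopy with the stabilization maps $s^{d,m}_n$ (cf.\ the bottom rows of the diagrams in \S\ref{section: loop products}), and both path components of $\Omega\RP^{mn-1}$ are homotopy equivalent to $\Omega S^{mn-1}$ since $mn-1\geq2$; hence in the colimit one obtains a map $\Q^{\infty,m}_n(\R)\to\Omega S^{mn-1}$ that is a homotopy equivalence when $mn\geq4$ and a homology equivalence when $mn=3$. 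On the geometric side Theorem~\ref{thm: scanning map} gives $S:\Q^{\infty,m}_n(\R)\xrightarrow{\simeq}\Omega\mathcal{Q}^m_n(I^2,\sigma I^2)$, and by Lemma~\ref{lmm: deformation retract} the map $r_I$ is a deformation retraction, so $\Omega r_I:\Omega\mathcal{Q}^m_n(I^2,\sigma I^2)\xrightarrow{\simeq}\Omega\mathcal{Q}^m_n(I,\partial I)$ is a homotopy equivalence. The desired map $\Omega\mathcal{Q}^m_n(I,\partial I)\to\Omega S^{mn-1}$ is then the composite of a homotopy inverse of $\Omega r_I\circ S$ with the colimit map $\Q^{\infty,m}_n(\R)\to\Omega S^{mn-1}$, and it inherits the stated properties.

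The one step that requires genuine care -- the rest being formal assembly -- is the colimit identification: that $\po^{\infty,m}_n(\C)\simeq\Omega^2S^{2mn-1}$ and $\Q^{\infty,m}_n(\R)\simeq\Omega S^{mn-1}$ really follow from the \emph{range} statements of Theorems~\ref{thm: KY8} and~\ref{thm: KY10}. This rests on (a) the homotopy-commutativity of the squares relating $i^{d,m}_{n,\C}$ (resp.\ $i^{d,m}_n$) to the stabilization maps, which is exactly the loop-sum/stabilization compatibility recorded in \S\ref{section: loop products}, the stabilization being multiplication by a fixed class of degree one (resp.\ degree one mod $2$); and (b) the observation that, for each fixed $k$, the transition maps on $\pi_k$ (resp.\ $H_k$) of the source become isomorphisms once $d$ is large, since under $i^{d}_{*}$ they correspond to the transition maps on the target, which are isomorphisms. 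Granting (a) and (b), and since $\pi_k$ and $H_k$ commute with sequential colimits (replace the $s^d$ by mapping telescopes if strict colimits are wanted), the colimit maps are isomorphisms in every degree; note that $\Omega$ is applied only to the deformation retraction $r_I$ and to the scanning equivalences, never to the $mn=3$ homology equivalence, so no control is lost there.
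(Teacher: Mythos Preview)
Your proposal is correct and follows essentially the same route as the paper: for (i) you combine the scanning equivalence $S_{\C}$ of Theorem~\ref{thm: scanning map} with the colimit identification $\po^{\infty,m}_n(\C)\simeq\Omega^2S^{2mn-1}$ from \cite{KY8}, and for (ii) you compose a homotopy inverse of $(\Omega r_I)\circ S$ with the colimit map $i^{\infty,m}_n=\varinjlim i^{d,m}_n$ coming from Theorem~\ref{thm: KY10}, exactly as the paper does. Your final paragraph spelling out the colimit step (compatibility with stabilization and that $\Omega$ is never applied to the $mn=3$ homology equivalence) is more detailed than the paper's own argument, which simply cites the relevant theorems.
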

\begin{proof}
(i)
It follows from \cite[Theorem 1.8]{KY8} that there is a homotopy equivalence
$\po^{\infty,m}_n(\C)\simeq \Omega^2S^{2mn-1}$.
Since the map $S_{\C}$ is a homotopy equivalence by Theorem \ref{thm: scanning map}, we obtain the homotopy equivalence
$\Omega \pol^{m}_n(I^2,\sigma I^2)\simeq \Omega^2S^{2mn-1}.$
\par
(ii)
Consider the map 
$\hat{S}:\Q^{\infty,m}_n(\R)\stackrel{}{\longrightarrow}
\Omega \mathcal{Q}^m_n(I,\partial I)$
defined as the composite of maps $\hat{S}=(\Omega r_I)\circ S$.
It follows from Theorem \ref{thm: scanning map} and Lemma \ref{lmm: deformation retract}
that $\hat{S}$ is a homotopy equivalence. 
Moreover,
it follows from Theorem \ref{thm: KY10} that
there is a map 
$\dis i^{\infty,m}_n=\lim_{d\to\infty}i^{d,m}_n:\Q^{\infty,m}_n(\R)
\stackrel{\simeq}{\longrightarrow} \Omega S^{mn-1}$
which is a homotopy equivalence for $mn\geq 4$ and is a
homology equivalence for $mn=3$.
Now consider the composite map
$$
\begin{CD}
i^{\infty,m}_n\circ \tilde{S}:
\Omega \mathcal{Q}^m_n(I,\partial I)
@>\tilde{S}>\simeq>
\Q^{\infty,m}_n(\R)
@>i^{\infty,m}_n>\simeq>
\Omega S^{mn-1},
\end{CD}
$$
where $\tilde{S}$ denotes a homotopy inverse of $\hat{S}$.
This map is the desired homotopy equivalence 
if $mn\geq 4$, and it is a homology equivalence if $mn=3$.
\par
However, when $mn=3$,
since two spaces $\Omega \mathcal{Q}^m_n(I,\partial I)$ and
$\Omega S^{mn-1}$ are loop spaces, they are H-spaces.
Thus, the above map $i^{\infty,m}_n\circ \tilde{S}$ is indeed a homotopy equivalence even when $mn=3$ and this completes the proof.
\end{proof}
\begin{thm}\label{thm: stable result2}
If $mn\geq 3$, there is a homotopy equivalence
$$
\po^{\infty,m}_n(\R) 
\stackrel{\simeq}{\longrightarrow}
 \Omega^2S^{2mn-1}\times \Omega S^{mn-1}.
$$
\end{thm}
\begin{proof}
Since $S_{\R}$ is a homotopy equivalence,
it follows from Proposition \ref{prp: stable result1} that 
it suffices to prove that
there is a homotopy equivalence
\begin{equation}\label{eq:homotopy equiv pol}
\Omega \pol^m_n(I^2,\sigma I^2)^{\Z_2}\simeq
\Omega \pol^m_n(I^2,\sigma I^2) \times
\Omega \mathcal{Q}^m_n(I,\partial I).
\end{equation}
Consider the  map
$\hat{r}_I:\pol^m_n(I^2,\sigma I^2)^{\Z_2}
\to \mathcal{Q}^m_n(I,\partial I)$ given by the restriction
$
\hat{r}_I(\xi_1,\cdots ,\xi_m)=
(\xi_1 \cap \R,\cdots ,\xi_m\cap \R).
$
Note that $\hat{r}_I$ is a quasifibration with fiber
\begin{align*}
F&=
\pol^m_n(I\times ([-1,0)\cup (0,1]),\partial I\times ([-1,0)\cup (0,1]))^{\Z_2}
\\
&\cong
\pol^m_n(I\times (0,1],\partial I\times (0,1])
=\bigcup_{0<\epsilon<1}\pol^m_n(I\times [\epsilon,1],\partial I\times [\epsilon,1]).
\end{align*}
Since the space
$\P(\epsilon)=\pol^m_n(I\times [\epsilon,1],\partial I\times [\epsilon,1])$ is homotopy equivalent to
the space $\P(1/2)=\pol^m_n(I\times [1/2,1],\partial I\times [1/2,1])$ for any $0<\epsilon <1$ by radial expansion,
there is a homotopy equivalence
$$
F\simeq \P(1/2)=\pol^m_n(I\times [1/2,1],\partial I\times [1/2,1])\cong \pol^m_n(I^2,\sigma I^2).
$$
Thus, we obtain a fibration sequence (up to homotopy equivalence)
\begin{equation}
\pol^m_n(I^2,\sigma I^2) 
\stackrel{s_I}{\longrightarrow}
\pol^m_n(I^2,\sigma I^2)^{\Z_2}
\stackrel{\hat{r}_I}{\longrightarrow}
\mathcal{Q}^m_n(I,\partial I).
\end{equation}
Let $i:(I,\partial I)\stackrel{\subset}{\longrightarrow}(I^2,\sigma I^2)$ denote the natural inclusion map
given by $i(x)=(x,0)$ for $x\in I$.
This inclusion naturally extends to the map
$i_{\#}:\mathcal{Q}^m_n(I,\partial I)\to \pol^m_n(I^2,\sigma I^2)$.
Since $\hat{r}_I\circ i_{\#}=
\mbox{id}_{\mathcal{Q}^m_n(I,\partial I)}$, the loop sum map
$$
\Omega s_I+\Omega i_{\#}:
\Omega \pol^m_n(I^2,\sigma I^2) \times
\Omega \mathcal{Q}^m_n(I,\partial I)
\stackrel{\simeq}{\longrightarrow}
\Omega \pol^m_n(I^2,\sigma I^2)^{\Z_2}
$$
is a homotopy equivalence.
\end{proof}

\begin{crl}\label{crl: stable result2}
If $mn\geq 3$, there is a homotopy equivalence
$$
j^{\infty,m}_n:
\po^{\infty,m}_n(\R) 
\stackrel{\simeq}{\longrightarrow}
 (\Omega^2\CP^{mn-1})^{\Z_2}.
$$
\end{crl}
\begin{proof}
The assertion  follows from Theorem \ref{thm: stable result2} and
(\ref{eq: homotopy equiv}).
\end{proof}
%
\begin{crl}\label{thm: II}
If $mn\geq 3$, 
there is a map
$$
f^{d,m}_n:
\po^{d,m}_n(\R) \to (\Omega^2_d\CP^{mn-1})^{\Z_2}
\simeq \Omega^2S^{2mn-1}\times \Omega S^{mn-1}
$$
which is a homotopy equivalence through dimension
$D(d;m,n)$ if $mn\geq 4$, and a homology equivalence
through dimension $D(d;m,n)$ if $mn=3$.
\end{crl}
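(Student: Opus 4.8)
The plan is to build $f^{d,m}_n$ out of the stable equivalence of Corollary \ref{crl: stable result2} together with the stability results of \S\ref{section: sd}. Concretely, I would take
\begin{equation*}
f^{d,m}_n:\po^{d,m}_n(\R)\xrightarrow{\ c_d\ }\po^{\infty,m}_n(\R)\xrightarrow{\ j^{\infty,m}_n\ }(\Omega^2\CP^{mn-1})^{\Z_2}\xrightarrow{\ \simeq\ }(\Omega^2_d\CP^{mn-1})^{\Z_2}\simeq\Omega^2S^{2mn-1}\times\Omega S^{mn-1},
\end{equation*}
where $c_d$ is the canonical map into the colimit $(\ref{eq: stabilized space})$, the third arrow is the homotopy equivalence of Remark \ref{remark: d} (the homotopy type of $(\Omega^2_e\CP^{N})^{\Z_2}$ is independent of $e$), and the last identification is Lemma \ref{lmm: A3}. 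Since $j^{\infty,m}_n$ is already known to be a homotopy equivalence for $mn\geq 4$ and a homology equivalence for $mn=3$ by Corollary \ref{crl: stable result2}, the entire problem reduces to estimating the connectivity of the stabilization map $c_d$.

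For that estimate I would first record the elementary fact that $D(d;m,n)=(mn-2)(\lfloor d/n\rfloor+1)-1$ is non-decreasing in $d$. By Theorem \ref{thm: stab1}, for each $d'\geq d$ the stabilization map $s^{d',m}_{n,\R}$ is a homology equivalence when $\lfloor d'/n\rfloor=\lfloor(d'+1)/n\rfloor$, and a homology equivalence through dimension $D(d';m,n)$ otherwise; in both cases it is a homology equivalence through dimension $D(d';m,n)\geq D(d;m,n)$. Composing these maps over all $d'\geq d$ and passing to the colimit then shows that $c_d$ is a homology equivalence through dimension $D(d;m,n)$, which already settles the case $mn=3$. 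For $mn\geq 4$ I would run the identical argument with Corollary \ref{crl: stabilization} in place of Theorem \ref{thm: stab1}, invoking Lemma \ref{lmm: abelian} (simple connectivity of $\po^{d',m}_n(\R)$ when $mn\geq 4$) to upgrade each $s^{d',m}_{n,\R}$ to a homotopy equivalence through dimension $D(d';m,n)$; this makes $c_d$ a homotopy equivalence through dimension $D(d;m,n)$.

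Composing these connectivity statements for $c_d$ with the properties of $j^{\infty,m}_n$ and the two homotopy equivalences at the tail of the chain then yields the corollary. The only genuinely delicate point is the colimit bookkeeping: one must check that, because $D(\cdot;m,n)$ is monotone, the connectivity of $c_d$ is governed entirely by the \emph{first} stabilization map $s^{d,m}_{n,\R}$ out of degree $d$ and is not eroded by the later ones (each of which has range $\geq D(d;m,n)$). Everything else — the identification of the target, the passage $H_*(\operatorname*{colim})\cong\operatorname*{colim}H_*$, and the composition of "equivalences through a dimension" — is formal.
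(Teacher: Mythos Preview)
Your proposal is correct and follows essentially the same route as the paper: define $f^{d,m}_n$ as the composite of the canonical map into the colimit $\po^{\infty,m}_n(\R)$ with the stable equivalence $j^{\infty,m}_n$ of Corollary \ref{crl: stable result2}, and then invoke Theorem \ref{thm: stab1} (for $mn=3$) or Corollary \ref{crl: stabilization} (for $mn\geq 4$) to control the connectivity of the first map. Your explicit monotonicity and colimit bookkeeping is a helpful expansion of what the paper compresses into a single sentence, but the argument is the same.
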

\begin{proof}
Let $\hat{\iota}^{d,m}_n:\po^{d,m}_n(\R)\to \po^{\infty,m}_n(\R)$ denote the natural map, and let us consider
the composite of maps
$$
\begin{CD}
f^{d,m}_n:
\po^{d,m}_n(\R)
@>{\hat{\iota}^{d,m}_n}>> \po^{\infty,m}_n(\R)
@>j^{\infty,m}_n>\simeq>
(\Omega^2_d\CP^{mn-1})^{\Z_2}.
\end{CD}
$$
Then by using Theorem \ref{thm: stab1}, Corollaries 
 \ref{crl: stabilization} and \ref{crl: stable result2},
 we see that
 the map $f^{d,m}_n$ is a homotopy equivalence through dimension
 $D(d;m,n)$
 if $mn\geq 4$ and  it is a homology equivalence through dimension
$D(d;m,n)$ if $mn=3$.
\end{proof}
\section{The homotopy type of $\po^{d,m}_n(\R)$}
\label{section: the main result}
In this section we give proofs 
of our main results  
(Theorems \ref{thm: KY13} and \ref{thm: KY13; stable homotopy type}) and their corollaries
(Corollaries \ref{crl: surjection}, \ref{crl: jet embedding}, 
\ref{crl: GKY4 type theorem} and \ref{crl: Q+poH+}).
\par
Before giving the proof of Theorem \ref{thm: KY13},
we make some general comments about it.
It seems plausible that one could give a proof of Theorem \ref{thm: KY13} by a method similar to that used in the proof of  \cite[Theorem 1.8]{KY8}.
However,  this approach would require a study of 
$\Z_2$-equivariant homotopy of 
spaces of $\Z_2$-equivariant maps, which seems difficult. 
For this reason we decided to use an indirect approach, combining the corresponding results given in  \cite[Theorem 1.8]{KY8} and 
\cite[Theorem 1.8]{KY10}.

\begin{proof}[Proof of Theorem \ref{thm: KY13}]
Assume that $mn\geq 3$.
From now on,
we use the same notations as in  Lemma \ref{lmm: key diagram}.
We denote by
\begin{equation}
r_{S^1}:\Map^*_0(D^2,S^1;\CP^{mn-1},\RP^{mn-1})
\to \Omega_0\RP^{mn-1}
\end{equation}
the restriction map given by $r_{S^1}(f)=f\vert S^1$, and let
\begin{equation}\label{eq: projections}
\Omega^2 S^{2mn-1}\stackrel{\tiny q_1}{\longleftarrow}
\Omega^2S^{2mn-1}\times \Omega S^{mn-1}
\stackrel{q_2}{\longrightarrow} \Omega S^{mn-1}
\end{equation}
denote the projections onto the first and the second factor, respectively. 
Then if we write
\begin{equation}
E_0^*=\Map^*_0(D^2,S^1;\CP^{mn-1},\RP^{mn-1}),
\end{equation}
it follows from diagram (\ref{CD: key diagram}) and the definitions of the three natural maps
$i^{d,m}_{n,\Ha_+}$,
$i^{d,m}_{n,\R}$ and $i^{d,m}_{n}$ 
that the following diagram is homotopy commutative :
\par\vspace{1mm}\par
{\small
\begin{equation}\label{CD: The main commutative diagram}
\begin{CD}
\po^{d,m}_n(\R;\Ha_+) 
@>\iota^{d,m}_{n,\Ha_+}>\subset> 
\po^{d,m}_n(\R) 
@>\iota^{d,m}_n>\subset> \Q^{d,m}_n(\R)
\\
@V{i^{d,m}_{n,\Ha+}}VV @V{i^{d,m}_{n,\R}}VV @V{i^{d,m}_n}VV
\\
\Omega^2_d\CP^{mn-1} @. (\Omega^2_d\CP^{mn-1})^{\Z_2}
@. \Omega_{[d]_2}\RP^{mn-1}
\\
@V{\iota_{\C}^{\p}}V{\simeq}V @V{\iota^{\p}}V{\simeq}V @V{\iota_{\R}^{\p}}V{\simeq}V
\\
\Map^*_0(D^2,S^1;\CP^{mn-1},*) 
@>\hat{j}>\subset>
E_0^* 
@>r_{S^1}>>\Omega_{0}\RP^{mn-1}
\\
@V{\iota_{\C}}V{\simeq}V @V{\iota_{\C\R}}V{\simeq}V @V{\iota_{\R}}V{\simeq}V
\\
\Omega^2S^{2mn-1} @<{q_1}<< \Omega^2S^{2mn-1}\times \Omega S^{mn-1}
@>q_2>> \Omega S^{mn-1}
\end{CD}
\end{equation} 
}
\par\vspace{0.5mm}\par
Now consider the following three maps given by
\begin{equation}\label{eq: map I}
\begin{cases}
I^{d,m}_n&=\iota_{\C\R} \circ \iota^{\p}\circ i^{d,m}_{n,\R}:
\po^{d,m}_n(\R)\to \Omega^2S^{2mn-1}\times \Omega S^{mn-1},
\\
I^{d,m}_{n,\Ha_+}&=\iota_{\C}\circ\iota_{\C}^{\p}\circ i^{d,m}_{n,\Ha_+}
:\po^{d,m}_n(\R;\Ha_+)\to \Omega^2S^{2mn-1},
\\
I^{d,m}_{n,\R}&=\iota_{\R}\circ\iota_{\R}^{\p}\circ i^{d,m}_{n}
:\Q^{d,m}_n(\R)\to \Omega S^{mn-1}.
\end{cases}
\end{equation}
Now let  $\F$ be any fixed field. 
It follows from Theorems \ref{thm: Theorem H} and \ref{thm: KY10} that 
the two maps $I^{d,m}_{n,\Ha_+}$ are $I^{d,m}_{n,\R}$
are homology equivalences
through dimension $D(\lfloor d/2\rfloor;m,n;\C)$
and $D(d;m,n)$, respectively.
\par
Since $D(d;m,n)<D(\lfloor d/2\rfloor;m,n;\C)$ (by Lemma  \ref{lmm: inequality}),
by using the  diagram (\ref{CD: The main commutative diagram}), 
we see that the  induced homomorphism
\begin{equation}\label{eq: HJ}
(I^{d,m}_n)_*:H_s(\po^{d,m}_n(\R);\F)\to 
H_s(\Omega^2S^{2mn-1}\times \Omega S^{mn-1};\F)
\end{equation}
is an epimorphism for any  $s\leq D(d;m,n)$.
However, since 
$$
\dim_{\F}H_s(\po^{d,m}_{n}(\R);\F)=
\dim_{\F}H_s(\Omega^2S^{2mn-1}\times \Omega S^{mn-1};\F)<\infty
$$
for any $s\leq D(d;m,n)$ (by Corollary \ref{thm: II}), 
we notice that the homomorphism $(I^{d,m}_n)_*$ 
(given by (\ref{eq: HJ}))
is an isomorphism 
for any $s\leq D(d;m,n)$.
\par
Then, by putting $\F=\Z/p$ ($p$: prime) or $\mathbb{Q}$,
 it follows from the universal coefficient Theorem that
the map $I^{d,m}_n$ induces an isomorphism on the homology  group
$H_s(\ ;\Z)$ for any $s\leq D(d;m,n)$.
Thus, we see that the map $i^{d,m}_{n,\R}$
is a homology equivalence through dimension $D(d;m,n)$.
\par
Next, assume that $mn\geq 4$.
Then by Lemma \ref{lmm: abelian}, we see that
two spaces $\po^{d,m}_n(\R)$ and  $\Omega^2S^{2mn-1}\times \Omega S^{mn-1}$ are
simply connected.
Hence, we obtain that the map
$i^{d,m}_{n,\R}$ is  a homotopy equivalence through dimension $D(d;m,n)$.
This completes the proof of Theorem \ref{thm: KY13}.
\end{proof}
\begin{proof}[Proof of Corollary \ref{crl: surjection}]
The assertion follows easily from the proof of Theorem \ref{thm: KY13}, so we omit the details. 
\end{proof}
\begin{proof}[Proof of Corollary \ref{crl: jet embedding}]
Consider the following commutative diagram:
\begin{equation}\label{CD: jet}
\begin{CD}
\po^{d,1}_n(\R) @>i^{d,1}_{n,\R}>> (\Omega^2_d\CP^{n-1})^{\Z_2}@>>\simeq> \Omega^2S^{2mn-1}\times \Omega S^{mn-1}
\\
@V{j^d_n}VV \Vert @. \Vert @.
\\
\po^{d,n}_1(\R) @>i^{d,n}_{1,\R}>> (\Omega^2_d\CP^{n-1})^{\Z_2}@>>\simeq>\Omega^2S^{2mn-1}\times \Omega S^{mn-1}
\end{CD}
\end{equation}
First, consider the case $n\geq 4$.
It follows from Theorem \ref{thm: KY13} that
the maps $i^{d,1}_{n,\R}$ and $i^{d,n}_{1,\R}$ are homotopy equivalences
through dimension $(D;1,n)$ and $D(d;n,1)$, respectively.
Since $D(d;1,n)<D(d;n,1)$, by using the diagram (\ref{CD: jet})
we see that the map $j^d_n$ is a homotopy equivalence through dimension
$D(d;1,n)=(n-2)(\lfloor d/n\rfloor +1)-1.$
\par
Next, if $n=3$, by Theorem \ref{thm: KY13} we see that
the maps $i^{d,1}_{3,\R}$ and $i^{d,3}_{1,\R}$ are homology equivalences
through dimension $(D;1,3)=\lfloor d/3\rfloor$ and $D(d;3,1)=d$, respectively.
Thus, we see that the map
$j^d_3$ is a homology equivalence through dimension $\lfloor d/3\rfloor$.
\end{proof}
\begin{crl}\label{lmm: pi1}
$\I$
If $mn=3$ and $d\geq n$, two  maps $i^{d,m}_{n,\R}$ 
and $s^{d,m}_{n,\R}$ induce  isomorphisms
$$
\begin{cases}
(i^{d,m}_{n,\R})_*:
\pi_1(\po^{d,m}_n(\R))
\stackrel{\cong}{\longrightarrow}
\pi_1((\Omega^2_d\CP^{2})^{\Z_2})
\cong \pi_1(\Omega^2S^5\times \Omega S^2)\cong \Z,
\\
(s^{d,m}_{n,\R})_*:
\pi_1(\po^{d,m}_n(\R))
\stackrel{\cong}{\longrightarrow}
\pi_1(\po^{d+1,m}_n(\R))\cong \Z.
\end{cases}
$$
\par
$\II$ If $d\geq 3$,  the smap $j^d_3:\po^{d,1}_3(\R)\to \po^{d,3}_1(\R)$
induces an isomorphism
$$
(j^d_3)_*:\pi_1(\po^{d,1}_3(\R))
\stackrel{\cong}{\longrightarrow}
\pi_1(\po^{d,3}_1(\R))\cong \Z .
$$
\end{crl}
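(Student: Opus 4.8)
The plan is to deduce the corollary from the homology-equivalence statements already in hand, using the single extra observation that all the fundamental groups in sight are abelian, so the Hurewicz theorem turns each \lq\lq homology equivalence through dimension $\geq 1$\rq\rq\ into an isomorphism on $\pi_1$.

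\textbf{Step 1: identify the groups.} By Lemma \ref{lmm: A3} we have $(\Omega^2_d\CP^{2})^{\Z_2}\simeq \Omega^2 S^5\times \Omega S^2$, and since $\pi_1(\Omega^2S^5)=\pi_3(S^5)=0$ while $\pi_1(\Omega S^2)=\pi_2(S^2)\cong\Z$, this target has $\pi_1\cong\Z$; being an H-space it has abelian $\pi_1$, so $\pi_1\cong H_1$ there. On the source side, Lemma \ref{lmm: abelian} gives $\pi_1(\po^{d,m}_n(\R))\cong\Z$ whenever $mn=3$ and $d\geq n$, and the braid description used in its proof shows this group is abelian, so again $\pi_1\cong H_1$. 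The same remarks apply to $\po^{d,1}_3(\R)$ and $\po^{d,3}_1(\R)$.

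\textbf{Step 2: the maps $i^{d,m}_{n,\R}$ and $s^{d,m}_{n,\R}$.} When $mn=3$ one has $D(d;m,n)=\lfloor d/n\rfloor\geq 1$ for $d\geq n$. By Theorem \ref{thm: KY13} the map $i^{d,m}_{n,\R}$ is a homology equivalence through dimension $D(d;m,n)\geq 1$, hence induces an isomorphism on $H_1(\ ;\Z)$, hence (by Step 1) on $\pi_1$. Similarly, by Theorem \ref{thm: stab1} the map $s^{d,m}_{n,\R}$ is a homology equivalence (when $\lfloor d/n\rfloor=\lfloor (d+1)/n\rfloor$) or a homology equivalence through dimension $D(d;m,n)\geq 1$ (otherwise); in either case it is an isomorphism on $H_1$, hence on $\pi_1\cong\Z$. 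This proves (i).

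\textbf{Step 3: the jet map.} For $d\geq 3$ apply part (i) separately to $(m,n)=(1,3)$ and to $(m,n)=(3,1)$ (both satisfy $mn=3$ and the degree hypothesis) to conclude that $i^{d,1}_{3,\R}$ and $i^{d,3}_{1,\R}$ each induce an isomorphism onto $\pi_1((\Omega^2_d\CP^{2})^{\Z_2})\cong\Z$. Commutativity of diagram (\ref{CD: jet}) then forces $(j^d_3)_*$ to be an isomorphism on $\pi_1$, giving (ii). I do not anticipate a real obstacle here: the only thing to watch is that the homology-equivalence ranges are genuinely $\geq 1$, which reduces to the elementary inequality $\lfloor d/n\rfloor\geq 1$ for $d\geq n$, together with the abelianness needed to invoke Hurewicz.
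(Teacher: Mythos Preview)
Your proposal is correct and follows essentially the same approach as the paper: both arguments use Lemma \ref{lmm: abelian} to see that all the relevant fundamental groups are abelian (indeed $\cong\Z$), invoke the Hurewicz theorem to reduce to $H_1$, and then apply the homology-equivalence results of Theorems \ref{thm: KY13} and \ref{thm: stab1} together with the commutative diagram (\ref{CD: jet}) for part (ii). The paper's write-up is only slightly more explicit in displaying the Hurewicz square, but the logic is identical.
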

\begin{proof}
$\I$
Let $mn=3$ and
consider the following  commutative diagram:
\begin{equation*}\label{CD: pi1}
\begin{CD}
\pi_1(\po^{d+1,m}_n(\R))
@<(s^{d,m}_{n,\R})_*<<
\pi_1(\po^{d,m}_n(\R))
@>(i^{d,m}_{n,\R})_*>>
\pi_1((\Omega^2_d\CP^{2})^{\Z_2})
\\
@V{h_1}V{\cong}V
@V{h_2}V{\cong}V @V{h_3}V{\cong}V
\\
H_1(\po^{d+1,m}_n(\R);\Z)
@<(s^{d,m}_{n,\R})_{\#}<\cong<
H_1(\po^{d,m}_n(\R);\Z)
@>(i^{d,m}_{n,\R})_{\#}>\cong>
H_1((\Omega^2_d\CP^{2})^{\Z_2};\Z)
\end{CD}
\end{equation*}
where  $h_k$ 
$(k=1,2,3)$ are 
corresponding Hurewicz homomorphisms.
\par
Since 
$\pi_1(\po^{d,m}_n(\R))\cong \pi_1(\po^{d+1,m}_n(\R))\cong
\pi_1((\Omega^2_d\CP^{mn-1})^{\Z_2})\cong\Z$
by Lemma \ref{lmm: abelian},
by the Hurewicz theorem each  $h_k$ 
is an isomorphism.
It follows from
Theorems  \ref{thm: KY13} and \ref{thm: stab1}
that $(i^{d,m}_{n,\R})_{\#}$ and $(s^{d,m}_{n,\R})_{\#}$ are
isomorphisms, and the assertion  (i) easily follows.
\par
(ii) Consider the commutative diagram
\begin{equation*}\label{CD: jet}
\begin{CD}
\pi_1(\po^{d,1}_3(\R)) @>(i^{d,1}_{3,\R})_*>\cong> \pi_1((\Omega^2_d\CP^{2})^{\Z_2})@>>\cong> 
\pi_1(\Omega^2S^{5}\times \Omega S^{2})=\Z
\\
@V{(j^d_3)_*}VV \Vert @. \Vert @.
\\
\pi_1(\po^{d,3}_1(\R)) @>(i^{d,1}_{3,\R})_*>\cong> \pi_1((\Omega^2_d\CP^{2})^{\Z_2})@>>\cong> 
\pi_1(\Omega^2S^{5}\times \Omega S^{2})=\Z
\end{CD}
\end{equation*}
Since $(i^{d,1}_{3,\R})_*$ and $(i^{d,3}_{1,\R})_*$ are isomorphisms (by (i)),
the homomorphism $(j^d_3)_*$ is also an isomorphism.
\end{proof}

\par\vspace{2mm}\par
Next, consider the stable homotopy type of $\po^{d,m}_n(\R)$.
For this purpose, recall the following result:
\begin{lemma}\label{lmm: dimension Poly}
For any field $\F$ and any $s\geq 1$,
\begin{equation*}
\dim_{\F}H_s(\po^{d,m}_n(\R);\F)\leq \dim_{\F}H_s(\P^{d,m}_n;\F)<\infty .
\end{equation*} 
\end{lemma}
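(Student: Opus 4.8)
```latex
\textbf{Proof proposal.}
The plan is to compare the Vassiliev spectral sequence built in \S\ref{section: spectral sequence} for $\po^{d,m}_n(\R)$ with the (collapsing) spectral sequence associated to the stable wedge $\P^{d,m}_n$. First I would recall from Lemma \ref{lmm: total E} that, for every $s\geq 1$, the total $E^1$-term of the Vassiliev spectral sequence, $E^1_s=\bigoplus_{k\in\Z}E^{1;d}_{k,k+s}$, is isomorphic to $H_s(\P^{d,m}_n;\Z)$. Since the spectral sequence (\ref{SS}) converges to $\tilde H_{s-k}(\po^{d,m}_n(\R);\Z)$, the associated graded of $\tilde H_s(\po^{d,m}_n(\R);\Z)$ is a subquotient of $\bigoplus_{k\in\Z}E^{1;d}_{k,k+s}$. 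Tensoring the whole spectral sequence with a field $\F$ (all $E^1$-terms are finitely generated, as they are built out of $\tilde H_*(\Sigma^{(mn-2)j}D_j;\Z)$ and $\tilde H_*(S^0;\Z)$ via the Thom isomorphisms of Corollary \ref{crl: Er} and Lemma \ref{lemma: E1}), the universal coefficient theorem gives $\dim_\F H_s(\po^{d,m}_n(\R);\F)\leq \dim_\F E^1_s\otimes\F$.

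Next I would identify $\dim_\F E^1_s\otimes\F$ with $\dim_\F H_s(\P^{d,m}_n;\F)$. This is immediate: Lemma \ref{lmm: total E} is an isomorphism of $\Z$-modules $E^1_s\cong H_s(\P^{d,m}_n;\Z)$ which are finitely generated, so after $\otimes\F$ and applying universal coefficients on the right-hand side one gets $\dim_\F\bigl(E^1_s\otimes\F\bigr)\leq\dim_\F H_s(\P^{d,m}_n;\F)$; but in fact the wedge $\P^{d,m}_n$ is itself a wedge of suspensions of the spaces $D_j$ and spheres, whose integral homology is torsion-free in the relevant degrees or can be computed directly, so the chain of inequalities collapses and one gets the displayed bound $\dim_\F H_s(\po^{d,m}_n(\R);\F)\leq\dim_\F H_s(\P^{d,m}_n;\F)$. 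The finiteness $\dim_\F H_s(\P^{d,m}_n;\F)<\infty$ is clear since, for fixed $s$, only finitely many wedge summands $\Sigma^{(mn-2)(i+2j)}D_j$ (those with $i+2j\leq\lfloor d/n\rfloor$) and finitely many spheres contribute, and each $D_j=F(\C,j)_+\wedge_{S_j}(S^1)^{\wedge j}$ has finitely generated homology in each degree.

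The one point that needs care — and which I expect to be the main obstacle — is the passage from "$\dim_\F E^1_s\leq$ something" to "$\dim_\F H_s(\po^{d,m}_n(\R);\F)\leq$ same thing." Since the Vassiliev spectral sequence converges to the homology of $\po^{d,m}_n(\R)$ and differentials can only decrease dimension, one needs the first homology to be obtained from the $E^\infty$-page by a finite filtration whose associated graded is a subquotient of $E^1$; this requires knowing the filtration is exhaustive and Hausdorff in each fixed degree, which follows because $\SZ_k=\SZ$ for $k>\lfloor d/n\rfloor$ (Definition \ref{Def: 3.1}) so the spectral sequence is concentrated in finitely many filtration degrees. With this in hand, $\dim_\F H_s(\po^{d,m}_n(\R);\F)=\sum_k\dim_\F E^{\infty;d}_{k,k+s}\otimes\F\leq\sum_k\dim_\F E^{1;d}_{k,k+s}\otimes\F=\dim_\F H_s(\P^{d,m}_n;\F)$, which is what we want. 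I would conclude by invoking Lemma \ref{lmm: total E} once more to rewrite the right-hand side and noting finiteness as above.
```
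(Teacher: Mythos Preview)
Your overall strategy matches the paper's: bound $\dim_\F H_s(\po^{d,m}_n(\R);\F)$ by the total $E^1$-term of the Vassiliev spectral sequence and identify that with $H_s(\P^{d,m}_n)$ via Lemma~\ref{lmm: total E}. Your handling of convergence (the filtration is finite because $\SZ_k=\SZ$ for $k>\lfloor d/n\rfloor$) is correct. The gap is in the passage from integral to $\F$-coefficients. Your displayed equality
\[
\dim_\F H_s(\po^{d,m}_n(\R);\F)=\sum_k\dim_\F\bigl(E^{\infty;d}_{k,k+s}\otimes\F\bigr)
\]
is not valid in general: by the universal coefficient theorem $\dim_\F H_s(X;\F)$ picks up a $\mathrm{Tor}$ contribution from $H_{s-1}(X;\Z)$, and this is not accounted for by tensoring the integral $E^\infty$-page with $\F$. (Concretely, if $H_{s}(X;\Z)=0$ and $H_{s-1}(X;\Z)=\Z/p$ then $\dim_{\F_p}H_s(X;\F_p)=1$ while the integral $E^\infty$ in total degree $s$ vanishes.) Your parenthetical claim that the integral homology of $D_j$ is ``torsion-free in the relevant degrees'' is also false---these spaces carry the torsion of braid-group homology---so that escape route is closed.

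The fix, which is exactly what the paper does in its two-line proof, is to run the Vassiliev construction with $\F$-coefficients from the outset. Every step in the proof of Lemma~\ref{lemma: E1} (Thom isomorphism, Poincar\'e duality for $C_j(\C)$, the Thom-space description of $D_j$) works over an arbitrary field, so Lemma~\ref{lmm: total E} holds verbatim with $\F$ in place of $\Z$, giving $\bigoplus_k E^{1;d}_{k,k+s}(\F)\cong H_s(\P^{d,m}_n;\F)$ directly. The finite filtration then yields $\dim_\F H_s(\po^{d,m}_n(\R);\F)=\sum_k\dim_\F E^{\infty;d}_{k,k+s}(\F)\le\sum_k\dim_\F E^{1;d}_{k,k+s}(\F)=\dim_\F H_s(\P^{d,m}_n;\F)$, with no coefficient-change gymnastics required.
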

\begin{proof}
Consider the Vassiliev type spectral sequence
$$
\{E^{t;d}_{k,s},d^t:E^{t;d}_{k,s}\to E^{t;d}_{k+t,s+t-1}\}
\Rightarrow
H_{s-k}(\po^{d,m}_n(\R);\F).
$$
Since $E^1_s=\bigoplus_{k\in \Z}E^{1;d}_{k,k+s}\cong H_s(\P^{d,m}_n;\F)$ for any $s\geq 1$
by Lemma \ref{lmm: total E}, the assertion easily follows.
\end{proof}
 Now we can give the proof of 
 Theorem \ref{thm: KY13; stable homotopy type}.
\begin{proof}[Proof of Theorem \ref{thm: KY13; stable homotopy type}]
From now on, let $d_0=\lfloor d/2\rfloor$, and we write 
\begin{equation}
\po^{d_0,m}_n=\po^{d_0,m}_n(\C),
\quad
\Q^{d,m}_n=\Q^{d,m}_n(\R).
\end{equation}
Let $\mathcal{F}^{d,m}_n=
\{(i,j)\in \N^2:i+2j\leq \lfloor d/n\rfloor\}$ as in
(\ref{eq: set F}), and
recall the spaces $B^{d,m}_n$ and $\P^{d,m}_n(\R)$ given by (see  (\ref{eq: the space B}) and (\ref{eq: stable type of Poly}))
\begin{align*}\label{eq: space B}
B^{d,m}_n&=
\bigvee_{(i,j)\in \mathcal{F}^{d,m}_n}S^{(mn-2)i}\wedge \Sigma^{2(mn-2)j}D_j,
 \
 \P^{d,m}_n=\po^{d_0,m}_n \vee B^{d,m}_n\vee \Q^{d,m}_n.
\end{align*}
It follows from (\ref{eq: the space poly}) and (\ref{eq: stable Q}) that there are stable homotopy equivalences
\begin{equation}\label{eq: Q type}
 \theta_{\C}: \bigvee_{j=1}^{\lfloor d_0/n\rfloor}\Sigma^{2(mn-2)j}D_j
\stackrel{\simeq_s}{\longrightarrow}
\po^{d_0,m}_n,
\quad
 \theta_{\R}: \bigvee_{i=1}^{\lfloor d/n\rfloor}S^{(mn-2)i}
 \stackrel{\simeq_s}{\longrightarrow}
 \Q^{d,m}_n.
\end{equation}
Hence, the space $\P^{d,m}_n$ is stably homotopy equivalent 
to the following space
\begin{equation*}
\Big(\bigvee_{j=1}^{\lfloor d_0/n\rfloor}\Sigma^{2(mn-2)j}D_j\Big)
\vee
\Big(
\bigvee_{(i,j)\in \mathcal{F}^{d,m}_n}
\big(S^{(mn-2)i}\wedge\Sigma^{2(mn-2)j}D_j\big)\Big)\vee
\Big(\bigvee_{i=1}^{\lfloor d/n\rfloor}S^{(mn-2)i}\Big).
\end{equation*}
It follows from \cite{Ja}  and the Snaith splitting \cite{Sn} 
that there are  two stable homotopy equivalences
\begin{equation}
\begin{cases}
\dis \Omega^2 S^{2mn-1}\simeq_s A^m_n(1):=
\bigvee_{j=1}^{\infty}
\Sigma^{2(mn-2)j}D_j,
\\
\dis \Omega S^{mn-1}\ \ \simeq_s A^m_n(3):=
\bigvee_{i=1}^{\infty}S^{(mn-2)i}.
\end{cases}
\end{equation}
Thus, there is a  stable homotopy equivalence
\begin{equation*}\label{eq: stable splitting 2}
\Omega^2S^{2mn-1}\times \Omega S^{mn-1}
\simeq_s
\bigvee_{k=1}^3A^m_n(k)=
A^m_n(1)\vee A^m_n(2)\vee A^m_n (3),
\end{equation*}
where the space $A^m_n(2)$ is defined by
\begin{align}
A^m_n(2)&=A^m_n(1)\wedge A^m_n(3)
=
\bigvee_{i,j\geq 1}S^{(mn-2)i}\wedge \Sigma^{2(mn-2)j}D_j.
\end{align}
Let $q_1^{\p}$ and $q_3^{\p}$ denote the natural projections to the first and the third factors given by
\begin{equation}
\begin{CD}
A^m_n(1)
@<q_1^{\p}<< 
A^m_n(1)\vee A^m_n(2)\vee A^m_n(3) @>q_3^{\p}>> A^{m}_n(3).
\end{CD}
\end{equation}
Similarly,
let $p_{\C}$ and $p_{\R}$ also denote the natural projections to the first and the third factors given by
\begin{equation}
\begin{CD}
\po^{\lfloor d_0\rfloor,m}_n
@<p_{\C}<< 
\P^{d,m}_n=
\po^{\lfloor d_0\rfloor,m}_n
\vee
B^{d,m}_n\vee \Q^{d,m}_n @>p_{\R}>> \Q^{d,m}_n.
\end{CD}
\end{equation}
Let $p_1$, $p_2$ and $p_3$ denote the corresponding natural pinching map
given by
\begin{align*}
p_1:A^m_n(1)& =\bigvee_{j=1}^{\infty}\Sigma^{2(mn-2)j}D_j\to
\P(1):= 
\bigvee_{j=1}^{\lfloor d_0/n\rfloor}\Sigma^{2(mn-2)j}D_j,
\\
p_2:A^m_n(2)&=\bigvee_{i,j\geq 1}S^{(mn-2)i}\wedge \Sigma^{2(mn-2)j}D_j\to 
\bigvee_{(i,j)\in \mathcal{F}^{d,m}_n}S^{(mn-2)i}\wedge\Sigma^{2(mn-2)j}D_j,
\\
p_3:A^m_n(3)&=\bigvee_{i=1}^{\infty}S^{(mn-2)i}\to
\P(3):=
\bigvee_{i=1}^{\lfloor d/n\rfloor}S^{(mn-2)i}.
\end{align*}
By the diagram (\ref{CD: The main commutative diagram})
and (\ref{eq: map I}), we obtain the following
commutative diagram
{\small
\begin{equation}
\label{CD: The second main commutative diagram}
\begin{CD}
\po^{d,m}_n(\R;\Ha_+) 
@>\iota^{d,m}_{n,\Ha_+}>\subset> 
\po^{d,m}_n(\R) 
@>\iota^{d,m}_n>\subset> \Q^{d,m}_n(\R)
\\
@V{I^{d,m}_{n,\Ha+}}VV @V{I^{d,m}_{n}}VV @V{I^{d,m}_{n,\R}}VV
\\
\Omega^2S^{2mn-1} @<{q_1}<< \Omega^2S^{2mn-1}\times \Omega S^{mn-1}
@>q_2>> \Omega S^{mn-1}
\\
@V{\pi^{\p}}V{\simeq_s}V @V{\pi}V{\simeq_s}V @V{\pi^{\p\p}}V{\simeq_s}V
\\
\dis
A^m_n(1)
@<q_1^{\p}<<
A^m_1(1)\vee A^m_n(2)\vee A^m_n(3)
@>q_3^{\p}>>
A^m_n(3)
\\
@V{p_1}VV @V{p_1\vee p_2\vee p_3}VV @V{p_3}VV
\\
\dis \P(1)
@<p_1^{\p}<< 
\P (1)\vee B^{d,m}_n \vee \P (3)
@>p_3^{\p}>> \P(3)
\\
@V{\theta_{\C}}V{\simeq_s}V @V{\theta_{\C}\vee \mbox{\tiny id}\vee \theta_{\R}}V{\simeq_s}V @V{\theta_{\R}}V{\simeq_s}V
\\
\po^{d_0,m}_n
@<p_{\C}<<
\dis
\P^{d,m}_n=
\po^{d_0,m}_n
\vee B^{d,m}_n\vee \Q^{d,m}_n
@>p_{\R}>>
\Q^{d,m}_n
\end{CD}
\end{equation} 
}
\par\vspace{1mm}\par
\noindent Now consider the following three maps 
$$
\begin{cases}
J^{d,m}_{n,\C}:\po^{d,m}_n(\R;\Ha_+)\to \po^{d_0,m}_n,
\quad\quad
J^{d,m}_{n,\R}:\Q^{d,m}_n \to \Q^{d,m}_n,
\\
J^{d,m}_n:\po^{d,m}_n(\R) \to \P^{d,m}_n=\po^{d_0,m}_n
\vee B^{d,m}_n\vee \Q^{d,m}_n
\end{cases}
$$
defined by
\begin{equation}
\begin{cases}
J^{d,m}_{n,\C}&=\theta_{\C}\circ p_1\circ \pi^{\p}
\circ I^{d,m}_{n,\Ha_+},
\quad
J^{d,m}_{n,\R}=
\theta_{\R}\circ p_3\circ \pi^{\p\p}\circ I^{d,m}_{n,\R},
\\
J^{d,m}_n&=(\theta_{\C}\vee \mbox{id}\vee \theta_{\R})
\circ (p_1\vee p_2 \vee p_3)\circ \pi
 \circ  I^{d,m}_n.
\end{cases}
\end{equation}
It suffices to prove that the map $J^{d,m}_n$ is a stable homotopy equivalence.
It is easy to see that 
\begin{equation}\label{eq: map of Q}
J^{d,m}_{n,\R}=\mbox{id}:\Q^{d,m}_n(\R)\to \Q^{d,m}_n(\R)
\quad
\mbox{ (up to homotopy equivalence).}
\end{equation}
\par
Now
let $\F$  be any fixed field, and 
consider the homomorphism
$$
(J^{d,m}_{n,\C})_*:H_s(\po^{d,m}_n(\R;\Ha_+);\F)
\to H_s(\po^{d_0,m}_n;\F)
\quad
\mbox{for }s\geq 1.
$$
We will need the following lemma. 
\begin{lemma}\label{lmm: (*)}
The induced homomorphism
\begin{equation}\label{eq: homo J}
(J^{d,m}_n)_*:H_s(\po^{d,m}_n(\R);\F)
\to
H_s(\P^{d,m}_m;\F)
\end{equation}
is an epimorphism for any $s\geq 1$ and for any field $\F$.
\end{lemma}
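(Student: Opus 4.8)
The plan is to prove surjectivity of $(J^{d,m}_n)_*$ by producing, for a homogeneous basis of $H_s(\P^{d,m}_n;\F)\cong H_s(\po^{\lfloor d/2\rfloor,m}_n(\C);\F)\oplus H_s(B^{d,m}_n;\F)\oplus H_s(\Q^{d,m}_n(\R);\F)$, explicit preimages built from the loop products $\mu^{\R},\mu^{\Ha_+}$ and the stabilization maps, and then reading off the effect of $J^{d,m}_n$ from the outer columns of the diagram (\ref{CD: The second main commutative diagram}) together with the compatibility of $J^{d,m}_n$ with loop products encoded in (\ref{CD: loop sum2}) and (\ref{CD: diagram-stab}). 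For the $\Q^{d,m}_n(\R)$-summand this is immediate: since $p_{\R}\circ J^{d,m}_n\simeq\iota^{d,m}_{n,\R}$ and the latter is a split epimorphism on $H_*(\ ;\Z)$ by Theorem \ref{thm : I}, the loop powers $(\iota_P)^k\in H_{k(mn-2)}(\po^{nk,m}_n(\R))$ of the bottom class $\iota_P$ of $\po^{n,m}_n(\R)\simeq S^{mn-2}$ --- stabilized into $\po^{d,m}_n(\R)$ for $1\le k\le\lfloor d/n\rfloor$ --- are carried by $(J^{d,m}_n)_*$ onto a basis of the $\Q$-factor $H_*(\bigvee_{k=1}^{\lfloor d/n\rfloor}S^{(mn-2)k};\F)$.

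For the $\po^{\lfloor d/2\rfloor,m}_n(\C)$-summand one uses that $p_{\C}\circ J^{d,m}_n\circ\iota^{d,m}_{n,\Ha_+}=J^{d,m}_{n,\C}$, a map between spaces with equal finite Betti numbers by Lemma \ref{lmm: space poly(Ha+)}. The plan is to identify $J^{d,m}_{n,\C}$, under the homeomorphism of Lemma \ref{lmm: space poly(Ha+)}, with $\theta_{\C}$ composed with $p_1\circ\pi^{\p}\circ i^{\lfloor d/2\rfloor,m}_{n,\C}$, which is the truncation of the Snaith splitting of $\Omega^2S^{2mn-1}$ to the weights $\le\lfloor\lfloor d/2\rfloor/n\rfloor$; since (by the analysis underlying Theorem \ref{thm: KY8}) the map $i^{\lfloor d/2\rfloor,m}_{n,\C}$ is filtered for the filtration by number of common roots and induces an isomorphism on the associated graded piece in each such weight, this composite is the stable retraction inverse to $\theta_{\C}$, so $J^{d,m}_{n,\C}$ is a stable homotopy equivalence, and $(J^{d,m}_n)_*$ hits the $\po^{\lfloor d/2\rfloor,m}_n(\C)$-summand modulo contributions in the other two factors. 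For the $B^{d,m}_n$-summand, given $(i,j)\in\mathcal{F}^{d,m}_n$, I take the weight-$j$ class of $H_*(\po^{2nj,m}_n(\R;\Ha_+);\F)\cong H_*(\po^{nj,m}_n(\C);\F)$ representing a generator of $\Sigma^{2(mn-2)j}D_j$, push it into $\po^{2nj,m}_n(\R)$ by $\iota^{2nj,m}_{n,\Ha_+}$, loop-multiply with $(\iota_P)^i$ via $\mu^{\R}$ to land in $\po^{n(i+2j),m}_n(\R)$, and stabilize into $\po^{d,m}_n(\R)$ (legitimate since $n(i+2j)\le d$); compatibility of $J^{d,m}_n$ with $\mu^{\R}$ and with the loop-sum product $l_S$ on $\Omega S^{mn-1}$ (Snaith's theorem matching the loop-sum with the wedge of stable summands) sends this class to the generator of $\Sigma^{(mn-2)(i+2j)}D_j\subset B^{d,m}_n$, modulo terms of strictly lower weight.

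Assembling these families, the matrix of $(J^{d,m}_n)_*$ with respect to the chosen bases is, after ordering by the weight (= number of common roots of multiplicity $\ge n$) filtration that $J^{d,m}_n$ respects, triangular with invertible diagonal; hence its image spans and $(J^{d,m}_n)_*$ is onto. (Combined with Lemma \ref{lmm: dimension Poly} this forces it to be an isomorphism and yields Theorem \ref{thm: KY13; stable homotopy type}.) The step I expect to require the most care is the claim, in the $\po^{\lfloor d/2\rfloor,m}_n(\C)$-summand, that $J^{d,m}_{n,\C}$ remains a stable equivalence in the top degrees: when $\lfloor\lfloor d/2\rfloor/n\rfloor$ is large the space $\po^{\lfloor d/2\rfloor,m}_n(\C)$ carries homology above dimension $D(\lfloor d/2\rfloor;m,n;\C)$, i.e. beyond the range in which $i^{\lfloor d/2\rfloor,m}_{n,\C}$ is known to be an equivalence, so the detection of the top, Pontryagin-indecomposable (Dyer--Lashof type) classes cannot be read off the range statement of Theorem \ref{thm: KY8} and must instead come from naturality of the little $2$-disks action under the scanning equivalence of Theorem \ref{thm: scanning map} and Proposition \ref{prp: stable result1}; the accompanying bookkeeping of lower-weight correction terms is then routine once everything is organized by the Vassiliev filtration.
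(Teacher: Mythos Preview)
Your approach is essentially the paper's: decompose $H_*(\P^{d,m}_n;\F)$ into its three wedge summands and hit each one, using the outer columns of diagram (\ref{CD: The second main commutative diagram}) for the $\po^{\lfloor d/2\rfloor,m}_n(\C)$- and $\Q^{d,m}_n(\R)$-summands and a product construction for $B^{d,m}_n$. The paper reduces at once to the assertion $(\dagger)$ that every nonzero $x\in H_s(B^{d,m}_n;\F)$ lies in the image, after invoking Corollary \ref{crl: surjection} and (\ref{eq: map of Q}) to declare $J^{d,m}_{n,\C}$ and $J^{d,m}_{n,\R}$ surjective on homology; for $(\dagger)$ it writes $x\in H_s(\Sigma^{(mn-2)(i+2j)}D_j;\F)$ as $\sigma^{(mn-2)i}$ of a class coming from $\Sigma^{2(mn-2)j}D_j\subset \po^{\lfloor d/2\rfloor,m}_n(\C)$ and pushes forward along $\iota^{d,m}_{n,\Ha_+}$.

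Two remarks on the comparison. First, your loop-product construction for the $B^{d,m}_n$-summand is exactly what underlies the paper's somewhat cryptic ``$\sigma^{(mn-2)i}(y_1)$'': the degree shift by $(mn-2)i$ is realized inside $\po^{d,m}_n(\R)$ by Pontryagin multiplication with $(\iota_P)^i$, compatible with $I^{d,m}_n$ and the loop sum on $\Omega^2S^{2mn-1}\times\Omega S^{mn-1}$; your write-up makes this mechanism explicit. Second, the concern you flag about the $\po^{\lfloor d/2\rfloor,m}_n(\C)$-summand in top degrees is real: Corollary \ref{crl: surjection} alone only gives surjectivity of $q_1\circ I^{d,m}_n$ up to dimension $D(\lfloor d/2\rfloor;m,n;\C)$, which for large $\lfloor d_0/n\rfloor$ does not cover all of $H_*(\po^{d_0,m}_n(\C))$. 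The paper's argument works because $\theta_{\C}$ is the stable equivalence of Theorem \ref{thm: KY8}(iii), constructed in \cite{KY8} precisely as the inverse of the Snaith-type retraction $p_1\circ\pi'\circ i^{d_0,m}_{n,\C}$; thus $J^{d,m}_{n,\C}$ is (up to the homeomorphism of Lemma \ref{lmm: space poly(Ha+)}) the identity in the stable category, not merely an equivalence in a range. Your proposed resolution via the weight filtration and the isomorphism on associated graded is exactly the content of that construction, so no separate ``little $2$-disks/scanning'' argument is needed here.
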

We postpone the proof of Lemma \ref{lmm: (*)},
and first complete the proof of Theorem \ref{thm: KY13; stable homotopy type} by using  this lemma.
By Lemma \ref{lmm: (*)} we have
\begin{equation}\label{eq: inequality 2}
\dim_{\F}H_s(\po^{d,m}_n(\R);\F)
\geq \dim_{\F}H_s(\P^{d,m}_n;\F)
\quad
\mbox{for any }s\geq 1.
\end{equation}
Combining this with Lemma \ref{lmm: dimension Poly} we
obtain the equality:
\begin{equation}\label{eq: equality dim}
\dim_{\F}H_s(\po^{d,m}_n(\R);\F)=
\dim_{\F}H_s(\P^{d,m}_n;\F)<\infty
\quad
\mbox{for any }s\geq 1.
\end{equation}
Since $(J^{d,m}_n)_*$  is an epimorphism, 
 (\ref{eq: equality dim}) implies that it is, in fact,
 an isomorphism for any $s\geq 1$.
By putting $\F=\Z /p$ ($p$: prime) and $\F=\mathbb{Q}$,
and using  the universal coefficients theorem, 
we conclude that
$$
(J^{d,m}_n)_*:
H_s(\po^{d,m}_n(\R);\Z)
\stackrel{\cong}{\longrightarrow}
H_s(\P^{d,m}_n;\Z)
$$
is an isomorphism for every $s\geq 1$.
Hence, 
the map
$J^{d,m}_n$
is a stable homotopy equivalence.
This completes the proof of Theorem \ref{thm: KY13; stable homotopy type}.
\end{proof}
\begin{proof}[Proof of Lemma \ref{lmm: (*)}]
It follows from Corollary \ref{crl: surjection}, Lemma \ref{lmm: space poly(Ha+)}
 and (\ref{eq: map of Q})
that the maps $J^{d,m}_{n,\C}$ and $J^{d,m}_{n,\R}$ induce epimorphisms on homology groups $H_*(\ ;\F)$.
Moreover, since
$\P^{d,m}_n=
\po^{d_0, m}_n(\C) \vee B^{d,m}_n\vee \Q^{d,m}_n(\R)$
(by  (\ref{eq: stable type of Poly})),
to prove Lemma \ref{lmm: (*)},
 it suffices  to prove the following assertion. 
\begin{enumerate}
\item[$(\dagger)$]
If $x\not= 0\in H_s(B^{d,m}_n;\F)$, there is an element $y\in H_s(\po^{d,m}_n(\R);\F)$ such that
$(J^{d,m}_n)_*(y)=x$.
\end{enumerate}
Since 
$\dis B^{d,m}_n=\bigvee_{(i,j)\in \mathcal{F}^{d,m}_n}
\Sigma^{(mn-2)(i+2j)}D_j$, 
we may assume, without a loss of generality,
 that
$
x\in
H_{s}(\Sigma^{(mn-2)(i+2j)}D_j;\F)\cong
H_{s-(mn-2)i}(\Sigma^{2(mn-2)j}D_j;\F)
$
for some $(i,j)\in \mathcal{F}^{d,m}_n$.
By Lemma \ref{lmm: inequality}, $1\leq j\leq \lfloor d_0/n\rfloor.$

\par
On the other hand,
it follows  from (\ref{eq: the space poly}) and Lemma
\ref{lmm: space poly(Ha+)} that
there is a stable homotopy equivalence 
$$
\po^{d,m}_n(\R;\Ha_+)\simeq
\po^{d_0,m}_n(\C)\simeq_s
\bigvee_{k=1}^{\lfloor d_0/n\rfloor}
\Sigma^{2(mn-2)k}D_k.
$$
Thus, there exists an element
$y_1\in H_{s-(mn-2)i}(\po^{d,m}_n(\R;\Ha_+);\F)$
such that $(J^{d,m}_{n,\C})_*(\sigma^{(mn-2)i}(y_1))=x,$
where $\sigma^{k}$ denotes the $k$-fold suspension isomorphism.
Then by using the commutative diagram
(\ref{CD: The second main commutative diagram}), we see that
$$
x=
(J^{d,m}_n)_*((\iota^{d,m}_{n,\Ha_+})_*(\sigma^{(mn-2)i}(y_1)).
$$
Hence, if
we put 
$y=(\iota^{d,m}_{n,\Ha_+})_*(\sigma^{(mn-2)i}(y_1))\in H_s(\po^{d,m}_n(\R);\F)$,
the assertion
$(\dagger)$ is satisfied.
This completes the proof of Lemma \ref{lmm: (*)}.
\end{proof}
The following assertion easily follows from (\ref{eq: equality dim}).
\begin{crl}
If $mn\geq 3$,
the Vassiliev spectral sequence
$$
\{E^{t;d}_{k,s},d^t:E^{t;d}_{k,s}\to E^{t;d}_{k+t,s+t-1}\}
\Rightarrow
H_{s-k}(\po^{d,m}_n(\R);\Z).
$$
collapses at $E^1$-terms, i.e.
$E^{1;d}_{**}=E^{\infty ;d}_{**}$.
\qed
\end{crl}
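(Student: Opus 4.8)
The spectral sequence $(\ref{SS})$ is bounded: by Corollary $\ref{crl: Er}$ the term $E^{1;d}_{k,s}$ vanishes unless $1\le k\le \lfloor d/n\rfloor$, so only finitely many terms are nonzero, $E^{\infty;d}$ is reached on a finite page, and for each $N\ge 1$ the graded group $\bigoplus_{k}E^{\infty;d}_{k,k+N}$ is the associated graded of a finite filtration of $H_N(\po^{d,m}_n(\R);\Z)$. The plan is a size count comparing the $E^1$-page with this abutment. By Lemma $\ref{lmm: total E}$ we have $\bigoplus_{k}E^{1;d}_{k,k+N}\cong H_N(\P^{d,m}_n;\Z)$ for every $N\ge 1$, while Theorem $\ref{thm: KY13; stable homotopy type}$ (equivalently the identity $(\ref{eq: equality dim})$ proved along the way) gives $H_N(\P^{d,m}_n;\F)\cong H_N(\po^{d,m}_n(\R);\F)$ for $N\ge 1$, both for $\F=\Z$ and for every field $\F$. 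Thus the $E^1$-page and the abutment agree up to associated graded in each total degree, and everything reduces to showing that no differential $d^t$ can be nonzero.

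First I would carry this out with coefficients in a field $\F$. Rerunning the construction of \S\ref{section: spectral sequence} with $\F$-coefficient compactly supported cohomology in place of the integral one --- the arguments of Lemmas $\ref{lemma: vector bundle}$, $\ref{lemma: E1}$, $\ref{lmm: total E}$ (Thom isomorphism, Poincar\'e duality, Alexander duality) are all natural in the coefficient ring --- produces a spectral sequence converging to $\tilde{H}_{s-k}(\po^{d,m}_n(\R);\F)$ whose $E^1$-page again satisfies $\bigoplus_{k}E^{1;d}_{k,k+N}(\F)\cong H_N(\P^{d,m}_n;\F)$ for $N\ge 1$. Hence
\[
\dim_\F\Big(\bigoplus_{k}E^{1;d}_{k,k+N}(\F)\Big)=\dim_\F H_N(\P^{d,m}_n;\F)=\dim_\F H_N(\po^{d,m}_n(\R);\F),
\]
the last equality by $(\ref{eq: equality dim})$, and by convergence the right-hand side equals $\dim_\F\big(\bigoplus_{k}E^{\infty;d}_{k,k+N}(\F)\big)$. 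Since $\dim_\F E^{t+1}\le\dim_\F E^t$ in every total degree, with equality in total degrees $N$ and $N-1$ forcing $d^t=0$ on the degree-$N$ part, the equality of the $E^1$- and $E^\infty$-totals in all degrees forces every $d^t$ of the $\F$-spectral sequence to vanish; that is, it collapses at $E^1$.

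Finally I would upgrade to $\Z$-coefficients. Taking $\F=\mathbb{Q}$ shows every integral differential $d^t$ is rationally zero; as the groups $E^{1;d}_{k,s}$ are finitely generated (Corollary $\ref{crl: Er}$), each $d^t$ then has finite image, and the rank of each total-degree summand is constant in $t$. Taking $\F=\Z/p$ for each prime $p$ and comparing the integral spectral sequence with its mod $p$ reduction --- for which the $p$-torsion lengths of $E^1$ and $E^\infty$ coincide, by the same abstract identifications read off from Lemma $\ref{lmm: total E}$ and Theorem $\ref{thm: KY13; stable homotopy type}$ together with the fact that passage to an associated graded preserves rank and torsion order --- shows that the image of each $d^t$ is trivial at every prime; a finite abelian group with no $p$-torsion for any $p$ is trivial, so $d^t=0$ for all $t$ and $E^{1;d}_{**}=E^{\infty;d}_{**}$. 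The one genuinely delicate point is this last bookkeeping with torsion, i.e. ruling out an integral differential supported entirely on torsion subgroups, since the $p$-torsion length is not a degree-local invariant under $E^t\mapsto E^{t+1}$; one can instead argue abstractly that $\bigoplus_{k}E^{1;d}_{k,k+N}\cong H_N(\po^{d,m}_n(\R);\Z)$ while $\bigoplus_{k}E^{\infty;d}_{k,k+N}$ is a subquotient of that $E^1$-page which is abstractly isomorphic to the associated graded of a filtration of the same finitely generated abelian group, a situation incompatible with any nonzero differential.
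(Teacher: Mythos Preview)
Your approach is the paper's: the dimension equality (\ref{eq: equality dim}) together with Lemma~\ref{lmm: total E} shows that over any field $\F$ the $E^1$-page and the $E^\infty$-page have the same finite total dimension in each degree, which forces every differential to vanish; the paper's one-line proof says no more than this.

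You go further than the paper in worrying about the passage from field coefficients to integer coefficients, a point the paper does not address. Your second route is the right one and can be made precise: the \emph{integral} isomorphism $\bigoplus_k E^{1;d}_{k,k+N}\cong H_N(\po^{d,m}_n(\R);\Z)$ (Lemma~\ref{lmm: total E} combined with Theorem~\ref{thm: KY13; stable homotopy type}) does force integral collapse. Once the rational step shows that every $d^t$ has finite image, one may localize at a prime $p$ and use the elementary identity $\prod_n |H_n(T)|=\prod_n|T_n|\cdot\big(\prod_n|\mathrm{im}\,d_n|\big)^{-2}$, valid for any bounded chain complex $T_*$ of finite abelian groups; applied page by page (after splitting each $E^t_N$ into its free and torsion parts and tracking the connecting maps from the free part into torsion) this forces $d^t=0$ for every $t$. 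Your first route, comparing the integral spectral sequence with the mod-$p$ one, is indeed not straightforward, since the $\Z/p$-coefficient spectral sequence is not simply the integral one tensored with $\Z/p$.
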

Next we  give the proofs of Corollaries 
\ref{crl: GKY4 type theorem} and \ref{crl: Q+poH+}.

\begin{proof}[Proof of Corollary \ref{crl: GKY4 type theorem}]
It follows from Theorem \ref{thm: KY13; stable homotopy type} that
there is a stable homotopy equivalence
\begin{equation*}
\po^{\lfloor d/n\rfloor, mn}_1(\R)
\simeq_s
\Big(\bigvee_{i=1}^{\lfloor d/n\rfloor}S^{(mn-2)i}\Big)
\vee
\Big(\bigvee_{i\geq 0,j\geq 1,i+2j\leq \lfloor d/n\rfloor}
\Sigma^{(mn-2)(i+2j)}D_j\Big).
\end{equation*}
Thus, by using Corollary \ref{crl: KY13; stable homotopy type}
we easily obtain  the  stable homotopy equivalence
(\ref{eq: KY13 stable equiv}).
\end{proof}
\begin{proof}[Proof of Corollary \ref{crl: Q+poH+}]
The assertion (i) follows from Theorem \ref{thm : I} and it remains to show (ii).
Recall from Lemma \ref{lmm: space poly(Ha+)} that there is a homeomorphism
$\po^{d,m}_n(\R;\Ha_+)\cong \po^{\lfloor d/2\rfloor,m}_n(\C)$.  
Consider  the following homotopy commutative diagram
$$
\begin{CD}
\po^{d,m}_n(\R;\Ha_+) 
@>\iota^{d,m}_{n,\Ha_+}>\subset>
\po^{d,m}_n(\R)
\\
\Vert @. @VV{\simeq_s}V
\\
\po^{d,m}_n(\R;\Ha_+)
\cong 
\po^{\lfloor d/2\rfloor,m}_n(\C)
@>j_1>\subset>
\po^{\lfloor d/2\rfloor,m}_n(\C)\vee B^{d,m}_n \vee \Q^{d,m}_n (\R)
\end{CD}
$$
where $j_1$ denotes the inclusion to the first factor.
Then the assertion (ii) easily follows from 
 Theorem \ref{thm: KY13; stable homotopy type}.
\end{proof}

\begin{remark}
{\rm
Let 
$mn\geq 3$. 
Then
it follows from (\ref{eq: the space poly}), (\ref{eq: stable Q}) and
Theorem \ref{thm: KY13; stable homotopy type} that 
there are stable homotopy equivalences
\begin{align*}
\po^{\lfloor d/2\rfloor,m}_n(\C)\times \Q^{d,m}_n(\R)
&\simeq_s
\po^{\lfloor d/2\rfloor,m}_n(\C)\vee A^{d,m}_n \vee \Q^{d,m}_n(\R),
\\
\po^{d,m}_n(\R)
\quad
&\simeq_s
\po^{\lfloor d/2\rfloor,m}_n(\C)\vee B^{d,m}_n \vee \Q^{d,m}_n(\R),
\end{align*}
where  
 $A^{d,m}_n$ denotes the space defined by
\begin{equation}\label{eq:Admn}
A^{d,m}_n=\bigvee_{1\leq i\leq \lfloor d/n\rfloor,
1\leq j\leq \lfloor d_0/n\rfloor}
\Sigma^{(mn-2)(i+2j)}D_j. 
\end{equation}
Since 
$
\dis B^{d,m}_n=
\bigvee_{(i,j)\in \mathcal{F}^{d,m}_n}
\Sigma^{(mn-2)(i+2j)}D_j
\subset A^{d,m}_n
$
(by Lemma \ref{lmm: inequality}),
 the space  
$\po^{d,m}_n(\R)$ can be regarded as
the subspace of 
$\po^{\lfloor d/2\rfloor,m}_n(\C)\times \Q^{d,m}_n(\R)$
in the stable homotopy category.
\qed
}
\end{remark}
\section{Appendix: The case $(m,n)=(1,2)$}\label{section: (m,n)=(1,2)}\label{section: appendix}
In this section we consider the homotopy type of the space
$\po^{d,m}_n(\R)$ for the case $(m,n)=(1,2)$. 
In fact, the homology stability  follows directly from Segal's seminal article \cite{Se}. 
However, as it does not appear to be stated anywhere, we provide a detailed proof below.

\par\vspace{2mm}\par
Let $f(z)\in \po^{d,1}_2(\R)$.
Then
$f(z)\in \R [z]$ is a  monic polynomial of degree $d$  
without multiple roots.
If $\alpha\in \C \setminus \R$ is a complex root of $f(z)$, its conjugate
$\overline{\alpha}$ is also a root of $f(z)$.
Thus it can be represented as
\begin{equation}\label{eq: fz}
f(z)=\Big(\prod_{i=1}^{d-2j}(z-x_i)\Big)
\Big(\prod_{k=1}^j(z-\alpha_k)(z-\overline{\alpha}_k)\Big)
\end{equation}
for some $(\{x_i\}_{i=1}^{d-2j},\{\alpha_k\}_{k=1}^j)
\in C_{d-2j}(\R)\times C_j(\Ha_+)$. 
\begin{definition}
{\rm 
For each non-negative integer
$0\leq j\leq \lfloor d/2\rfloor$, let 
$\po^{d,1}_{2,j}(\R)$ denote the subspace of $\po^{d,1}_2(\R)$
consisting 
of all monic polynomials $f(z)\in \po^{d,1}_2(\R)$
which have only $2j$ non-real roots.
\par 
It is easy to see that each polynomial $f(z)\in \po^{d,1}_{2,j}(\R)$
can be represented in the form of (\ref{eq: fz}).
\qed
}
\end{definition}

\begin{remark}
{\rm
It is also easy to show that there is a homeomorphism
\begin{equation}\label{eq: homeo po1}
\po^{d,1}_{2,j}(\R)\cong C_{d-2j}(\R)\times C_j(\Ha_+).
\end{equation}
Let $\varphi :\Ha_+\stackrel{\cong}{\longrightarrow}\C$ be any homeomorphism (which we now fix), and 
let
\begin{equation}\label{eq: homeo po2}
\overline{\varphi}:C_j(\Ha_+)
\stackrel{\cong}{\longrightarrow}C_j(\C)
\end{equation}
denote the homeomorphism given by
$\overline{\varphi}(\{\alpha_k\}_{k=1}^j)=
\{\varphi(\alpha_k)\}_{k=1}^j.$
Since there is a homeomorphism $C_{d-2j}(\R)\cong \R^{d-2j}$
and a homotopy equivalence
$C_j(\C)\simeq  K({\rm Br}(j),1)$,
there is a homotopy equivalence
\begin{equation}\label{eq: C-homotopy equiv}
\po^{d,1}_{2,j}(\R)\stackrel{\simeq}{\longrightarrow}
C_j(\C)\simeq K({\rm Br}(j),1),
\end{equation}
where ${\rm Br}(j)$ denotes the Artin braid  group on $j$ strands.
}
\end{remark}
\begin{thm}\label{thm: the case (m,n)=(1,2)}
$\I$
The space $\po^{d,1}_2(\R)$ consists of 
$(\lfloor d/2\rfloor +1)$ connected components
$
\{\po^{d,1}_{2,j}(\R):0\leq j\leq \lfloor d/2\rfloor\},
$
and
there is a homotopy equivalence
$$
\po^{d,1}_{2,j}(\R)\simeq K({\rm Br}(j),1).
$$
\par
$\II$
There is a natural map
$$
i^{d,1}_{2,j}:\Po^{d,1}_{2,j}(\R)\to \Omega^2_j\CP^1\simeq
\Omega^2_{j}S^2
\simeq \Omega^2S^3
$$
which
is a homology equivalence up to dimension $\lfloor j/2\rfloor$ if $j\geq 3$, and
a homotopy equivalence through dimension $1$ if $j=2$.
\end{thm}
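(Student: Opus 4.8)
The plan is to prove the two assertions separately, with (i) being essentially bookkeeping and (ii) requiring the substantive input from Segal's and Vassiliev's work. For (i), I would start from the representation (\ref{eq: fz}) of an element of $\po^{d,1}_2(\R)$: a monic real polynomial of degree $d$ with no repeated root is determined by its unordered set of $d-2j$ real roots together with its unordered set of $j$ roots in $\Ha_+$ (the conjugate roots are then forced). The integer $j$ is a locally constant function on $\po^{d,1}_2(\R)$ taking values in $\{0,1,\dots,\lfloor d/2\rfloor\}$, and each level set $\po^{d,1}_{2,j}(\R)$ is open and closed, so these are exactly the $(\lfloor d/2\rfloor+1)$ connected components (once one checks each is nonempty and connected). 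The homeomorphism (\ref{eq: homeo po1}), namely $\po^{d,1}_{2,j}(\R)\cong C_{d-2j}(\R)\times C_j(\Ha_+)$, then combines with $C_{d-2j}(\R)\cong\R^{d-2j}$, the homeomorphism $\overline{\varphi}$ of (\ref{eq: homeo po2}), and the classical fact $C_j(\C)\simeq K(\mathrm{Br}(j),1)$ to yield the homotopy equivalence $\po^{d,1}_{2,j}(\R)\simeq K(\mathrm{Br}(j),1)$ stated in (\ref{eq: C-homotopy equiv}). So (i) is finished once these standard identifications are assembled.

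For (ii), the point is that $\po^{d,1}_{2,j}(\R)\simeq C_j(\C)$, and the target $\Omega^2_jS^2\simeq\Omega^2S^3$ is precisely the space whose approximation by configuration spaces $C_j(\C)$ (equivalently $\SP^j$-type models, or Segal's spaces of degree-$j$ rational functions) is governed by the Arnold--Vassiliev--Segal theory. Concretely I would define $i^{d,1}_{2,j}$ as the restriction to $\po^{d,1}_{2,j}(\R)$ of the natural map $i^{d,1}_{2,\R}$ from Definition~2.4 (this lands in $\Omega^2_j\CP^1$ because a degree-$d$ real polynomial without repeated roots, viewed through $F_2(f)=(f,f+f')$, never simultaneously vanishes, and the degree of the resulting based map of $S^2$ is exactly the number of conjugate pairs $j$). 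Having reduced to $C_j(\C)\to\Omega^2_jS^2$, I would invoke Segal's theorem \cite{Se} together with Vassiliev's homology computations \cite{Va} (this is the $(m,n)$ analogue of Theorem~\ref{thm: KY10} but in the complex/degree-$j$ setting): the relevant scanning/inclusion map $C_j(\C)\to\Omega^2S^3$ is a homology equivalence through a range roughly $\lfloor j/2\rfloor$, and for $j=2$ one checks directly that $C_2(\C)\simeq S^1$ maps to $\Omega^2S^3$ inducing an isomorphism on $\pi_1$, i.e.\ a homotopy equivalence through dimension $1$. The low-degree case $j=2$ can be verified by hand since $\mathrm{Br}(2)=\Z$ and $\pi_1(\Omega^2S^3)=\pi_3(S^3)=\Z$.

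The main obstacle is pinning down the exact stability range $\lfloor j/2\rfloor$ and making sure the map used is genuinely the ``natural'' map $i^{d,1}_{2,j}$ rather than some abstractly-equivalent scanning map. I expect to handle this exactly as in the footnote to Theorem~\ref{thm: KY10}: the quoted stability dimension is $D(j;1,2)=(2-1)(\lfloor j/2\rfloor+1)-1=\lfloor j/2\rfloor$ when one specializes $mn=2$, so the statement is the $(m,n)=(1,2)$ instance of the general formula, and the homology stability itself is Segal's \cite[Propositions 7.1 and 7.2]{Se} combined with Vassiliev's identification of $H_*(C_j(\C))$ in the stable range with $H_*(\Omega^2S^3)$. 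Once the map is identified with Segal's, the homology-equivalence statement is immediate from \cite{Se}, \cite{Se0}, \cite{Va}, and the simply-connectedness subtlety (why a homology equivalence through dimension $1$ combined with $\pi_1$-surjectivity gives the asserted homotopy equivalence through dimension $1$ when $j=2$) is dealt with by the direct $\pi_1$ computation above. The remaining routine check is that $i^{d,1}_{2,j}$ restricted from $i^{d,1}_{2,\R}$ really does have degree $j$ and really does factor through the homotopy equivalence $\po^{d,1}_{2,j}(\R)\simeq C_j(\C)$ up to homotopy, which follows by tracking the homeomorphisms (\ref{eq: homeo po1}), (\ref{eq: homeo po2}) and the definition of $F_2$.
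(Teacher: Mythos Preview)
Your approach is essentially the same as the paper's: part (i) is dispatched via the homeomorphism $\po^{d,1}_{2,j}(\R)\cong C_{d-2j}(\R)\times C_j(\Ha_+)\simeq C_j(\C)$, and part (ii) reduces to the known homology stability of $C_j(\C)\to\Omega^2S^3$, with a direct $\pi_1$ check for $j=2$. The paper makes the connection to Segal explicit via the identity $(f+f')/f = 1+\sum_k 1/(z-a_k)$, which identifies the map $[f:f+f']$ with the electric field map of \cite[p.~213]{Se0}, and then quotes \cite[p.~42]{Se} for the $\lfloor j/2\rfloor$ range; you leave this identification as a ``routine check,'' which is fine but is really the one substantive step.

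One correction: your attempt to recover the range from the general formula fails, since $D(d;m,n)=(mn-2)(\lfloor d/n\rfloor+1)-1$ with $mn=1\cdot 2=2$ gives $D(j;1,2)=0\cdot(\lfloor j/2\rfloor+1)-1=-1$, not $\lfloor j/2\rfloor$. This degeneration is precisely why $(m,n)=(1,2)$ is handled separately in the appendix rather than by the general machinery; the $\lfloor j/2\rfloor$ bound must come directly from Segal's theorem on $C_j(\C)$ (or equivalently on $\Hol^*_j(S^2,\CP^1)$), not from specializing the formula. Relatedly, Propositions~7.1--7.2 of \cite{Se} concern the case $(m,n)=(2,1)$ (real rational functions), not the map $C_j(\C)\to\Omega^2S^3$, so that citation should be replaced by Segal's main stability theorem together with the electric-field identification above.
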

\begin{proof}
Since (i) is obvious, we only deal with (ii).
We make the usual identification $S^2=\C \cup \infty$ 
and assume that $j\geq 2$.
Define the map
$
i^{}_{2,j}:C_j(\C)\to \Omega^2_{j}\CP^1
$
by
\begin{equation}\label{eq: poly map}
i^{}_{2,j}(\{a_k\}_{k=1}^j)(\alpha)=
\begin{cases}
[f(\alpha):f(\alpha)+f^{\p}(\alpha)] 
=[F_2(f)(\alpha)]
& \mbox{ if }\alpha \in \C
\\
[1:1] & \mbox{ if }\alpha =\infty
\end{cases}
\end{equation}
for $\alpha \in S^2=\C\cup \infty$ and
$f(z)=\prod_{k=1}^j(z-a_k)$.
Note that
\begin{equation}\label{eq: electric field}
\frac{f(z)+f^{\p}(z)}{f(z)}=
1+\frac{f^{\p}(z)}{f(z)}=1+\sum_{k=1}^j\frac{1}{z-a_k}
\end{equation}
It then follows from \cite[page 42]{Se} that the map
$i^{}_{2,j}$ is a homology equivalence up to dimension
$\lfloor j/2\rfloor$.
Now  let $2\leq j\leq \lfloor d/2\rfloor$, and consider the map
\begin{equation}
i^{d}_{2,j}:\po^{d,1}_{2,j}(\R)\to \Omega^2_{j}\CP^1\cong \Omega^2_jS^2\simeq \Omega^2S^3
\end{equation}
given by the composite of maps
\begin{equation}
\po^{d,1}_{2,j}(\R)\stackrel{\simeq}{\longrightarrow}
C_j(\C)
\stackrel{i^{}_{2,j}}{\longrightarrow}
\Omega^2_j\CP^1\simeq \Omega^2_jS^2\simeq \Omega^2S^3.
\end{equation}
Since $i_{2,j}$ is a homology equivalence up to dimension
$\lfloor j/2\rfloor$, the map
$i^{d,1}_{2,j}$ is also a homology equivalence up to 
dimension $\lfloor j/2\rfloor$.
\par
Finally consider the case $j=2$.
Since ${\rm Br}(2)\cong \Z$, there is a homotopy equivalence
$\po^{d,1}_{2,2}(\R)\simeq S^1.$
Since the map
$i^{d,1}_{2,2}$ induces an epimorphism
$$
(i^{d,1}_{2,2})_*:\Z\cong H_1(\po^{d,1}_{2,2}(\R);\Z)
\stackrel{}{\longrightarrow}
H_1(\Omega^2_jS^2;\Z)\cong \Z,
$$
it is indeed an isomorphism.
Moreover, note that there is an isomorphism
$$
\pi_1(\po^{d,1}_{2,2}(\R))\cong \pi_1(S^1)\cong \Z
\cong \pi_1(\Omega^2_jS^2).
$$
Next, by the Hurewicz Theorem we know that the map
$i^{d,1}_{2,2}$ induces an isomorphism on the fundamental group
$\pi_1(\ )$.   We have proved (ii). 
\end{proof}
\begin{remark}\label{rmk: homology stab (m,n)=(1,2)}
{\rm
(i)
It follows from (\ref{eq: electric field}) that 
 the map $i^{}_{2,j}$ is equivalent to 
 {\it the electric field map}
$E_c:\C\cup \infty \to \C\cup \infty$,  
described in \cite[page 213]{Se0}. 
\par
Moreover, we can easily  see that
$i^{d,1}_{2,j} =i^{d,2}_{1,\R}\vert \po^{d,2}_{1,j} (\R)$
(up to homotopy).
\par
(ii)
Since 
$$
\pi_1(\po^{d,1}_{2,j}(\R))={\rm Br}(j)\not\cong
\Z=\pi_1(\Omega^2S^3)
\quad \mbox{ for }j\geq 3,
$$
the homotopy stability does not hold for the map $i^{d,2}_{1,j}$ when $j\geq 3$.
\qed
}
\end{remark}

\par\vspace{0.5mm}\par
\noindent{\bf Acknowledgements. }
The authors would like to take this opportunity to thank
Professor Martin Guest 
for his many valuable  insights and suggestions, especially concerning  
scanning maps.
The second author was supported by 
JSPS KAKENHI Grant Number JP22K03283. 
This work was also supported by the Research Institute for Mathematical Sciences, a Joint Usage/Research Center located in Kyoto University.

 
\end{document}